\title[Interpreter for topologists]{Interpreter for topologists}
\author[Jind{\v r}ich Zapletal]{Jind{\v r}ich Zapletal}
\address{Department of Mathematics\\
University of Florida\\\newline
Gainesville FL 32611}
\email{zapletal@math.ufl.edu}
\keywords{forcing, interpretation, topology}
\newcommand{\ga}{\alpha}
\newcommand{\gb}{\beta}
\newcommand{\gd}{\delta}
\newcommand{\gw}{\omega}
\newcommand{\gS}{\Sigma}
\newcommand{\gs}{\sigma}
\newcommand{\eps}{\varepsilon}
\newcommand{\liff}{\leftrightarrow}
\newcommand{\baire}{\gw^\gw}
\newcommand{\bintree}{2^{<\gw}}
\newcommand{\gwtree}{\gw^{<\gw}}
\newcommand{\supp}{\mathrm{supp}}
\newcommand{\dom}{\mathrm{dom}}
\newcommand{\rng}{\mathrm{rng}}
\newcommand{\power}{\mathcal{P}}
\newtheorem{theorem}{Theorem}[section]
\newtheorem{claim}[theorem]{Claim}
\newtheorem{corollary}[theorem]{Corollary}
\newtheorem{fact}[theorem]{Fact}
\newtheorem{proposition}[theorem]{Proposition}
\theoremstyle{definition}
\newtheorem{definition}[theorem]{Definition}
\newtheorem{example}[theorem]{Example}
\newtheorem{question}[theorem]{Question}
\begin{document}

\begin{abstract}
Let $M$ be a transitive model of set theory. There is a canonical interpretation functor between the category of regular Hausdorff, continuous open images of {\v C}ech-complete spaces of $M$ and the same category in $V$, preserving many concepts of topology, functional analysis, and dynamics. The functor can be further canonically extended to the category of Borel subspaces. This greatly simplifies and extends similar results of Fremlin.
\end{abstract} 

\maketitle

\section{Introduction}
A powerful trick set theorists may employ in proving a statement $\phi$ with parameters is to move to a different model of set theory, prove that $\phi$ holds there, and then pull back the statement to the original universe. The trick requires knowledge about how
the parameters and the formula $\phi$ survive the transportation between various models of set theory. While there are deep results in this direction such as Shoenfield absoluteness \cite[Theorem 25.20]{jech:set} or Woodin's $\Sigma^2_1$ absoluteness \cite[Theorem 3.2.1]{larson:book}, the current  wave of applications
of logic and set theory to complicated topological structures seems to present new challenges here. In this paper, I will show that for transitive models $M\subset V$ of set theory with the axiom of choice, there is an intepretation functor from a broad category of topological
structures in the model $M$ to a similar category in $V$. The functor satisfies most if not all reasonable demands on canonicity, it commutes with most natural topological operations, and happily interacts with many fundamental theorems
of various areas of mathematics. As a result, a rich theory is obtained that supports and clarifies various absoluteness tricks popular among set theorists, and extends them to a much larger category of spaces than the usual Polish spaces.

The definition of interpretation of a topological space is natural, but necessarily a little verbose. It is reminiscent of the construction of {\v C}ech-Stone compactification. Just to be explicit, a topological
space is a pair $\langle X,{\tau}\rangle$ where $X$ is a set and ${\tau}$ is a collection of its ``open" subsets which is closed under arbitrary unions and finite intersections. I will always require the topological spaces in this paper to be $\mathrm{T}_0$.

\begin{definition}
Suppose that $M$ is a transitive model of set theory, $M\models \langle X, {\tau}\rangle$ is a topological space. A \emph{topological preinterpretation} of $X$ (over $M$) is a topological space $\langle\hat X, \hat {\tau}\rangle$ together with a map $\pi\colon X\to\hat X$ and $\pi\colon {\tau}\to\hat {\tau}$ such that

\begin{enumerate}
\item for every point $x\in X$ and every set $O\in {\tau}$, $x\in O$ if and only if $\pi(x)\in\pi(O)$;
\item $\pi$ commutes with finite intersections and arbitrary unions of open sets in the model $M$: if $M\models O=\bigcup_{i\in I}O_i$ where
$O, O_i\in {\tau}$, then $\pi(O)=\bigcup_{i\in I}\pi(O_i)$;
\item $\pi(0)=0$ and $\pi(X)=\hat X$;
\item $\pi''{\tau}$ is a basis of the topology $\hat {\tau}$.
\end{enumerate}
\end{definition}

\noindent It is easy to observe that the two parts of the map $\pi$ can be reconstructed from each other if (1-3) hold, which justifies using the same letter for the map on points and on open sets in the model $M$.
There are many preinterpretations of a given topological space and it is necessary to organize them.

\begin{definition}
Suppose that $M\models \langle X, {\tau}\rangle$ is a topological space. Suppose that $\pi_0\colon X\to\hat X_0$ and $\pi_1\colon X \to\hat X_1$
are two preinterpretations of $X$. Say that $\pi_0\leq \pi_1$ if there is a \emph{reduction} $h\colon\hat X_0\to\hat X_1$; this is a map such that $\pi_1=h\circ\pi_0$ and $h^{-1}\pi_1(O)=\pi_0(O)$ for every open set $O\in {\tau}$. Say that $\pi_0, \pi_1$ are \emph{equivalent} if there is a reduction $h$ which is at the same time a bijection of $\hat X_0$ and $\hat X_1$.
\end{definition}

It is not difficult to show that for preinterpretations $\pi_0, \pi_1$, they are equivalent if and only if each of them is reducible to the other. I will not distinguish between two equivalent preinterpretations. The stage is now set for the definition of an interpretation as the most complicated  preinterpretation
of a given space.

\begin{definition}
\label{i1definition}
Suppose that $M\models \langle X, {\tau}\rangle$ is a topological space. An \emph{interpretation} of $X$ is the $\leq$-largest preinterpretation, if it exists.
\end{definition}

\begin{definition}
\label{i2definition}
Suppose that $M\models X,Y$ are topological spaces and $f\colon X\to Y$ is a continuous function. Suppose that $\pi\colon X\to\hat X$ and $\chi\colon Y\to\hat Y$ are interpretations. An interpretation $\hat f$
of $f$ is a continuous function from $\hat X$ to $\hat Y$ such that $\hat f(\pi(x))=\chi(y)$ whenever $f(x)=y$.
\end{definition}

The obvious question regarding the existence and uniqueness of interpretations is answered in the affirmative in the very wide case of regular Hausdorff spaces. However, one also has to consider the general expectation that the notion of interpretation will commute with the most usual topological operations. To satisfy expectations of this kind, it appears to be necessary to restrict attention to the category of interpretable spaces: the regular Hausdorff continuous open images of {\v C}ech-complete spaces.
In this category, I develop a completely harmonious theory of interpretations, starting with continuous functions, interpretations of Borel sets, subspaces, product etc.\ all the way to interpretations of structures such as duals of Banach spaces etc. I supply a long list of natural operations which commute with the interpretation functor, and a long list of properties which are inherited by the interpretations from their original spaces and structures.

To deal with the very large class of spaces which are not interpretable, I develop the notion of an interpretable Borel space: this is a topological space with a Borel structure which is a Borel subspace of an interpretable space. An interpretation of a Borel interpretable space is required to commute not only with finite intersections and arbitrary unions of open sets, but also with complements and countable unions and intersections of Borel sets. Many expected commutativity properties do hold for the interpretation functor on the class of Borel interpretable spaces.

In preexisting work, Fremlin \cite{fremlin:top} described a similar interpretation functor for topological spaces in the special case where $V$ is a generic extension of $M$. The Fremlin interpretation and interpretations of the current paper coincide on interpretable topological spaces and interpretable Borel spaces, and this is proved in Theorem~\ref{fremlintheorem}. The current treatment has the advantage of avoiding the forcing relation altogether. As a result, the theory is easier to develop and understand, and there is a number of central results with no counterpart in Fremlin's work, such as Theorems~\ref{openmappingtheorem} (interpretation of an open continuous map is open continuous), ~\ref{absolutenesstheorem} (the interpretation map is $\Pi_1$-elementary embedding between topological structures), ~\ref{faithfulnesstheorem} (interpreting through an intermediate model is the same as interpreting with one step), or ~\ref{hardbanachtheorem} (regarding interpretations of Banach spaces and their duals).

The terminology of the paper follows the set theoretic standard of \cite{jech:set}. A tree is a partial ordering $\langle T, \leq\rangle$ such that for every $t\in T$ the set $\{s\in T\colon s\geq t\}$ is finite and linearly ordered by $\leq$. All trees are assumed to have a largest element, denoted by $0$. A branch $b$ through the tree $T$ is an inclusion-maximal linearly ordered subset of $T$, and for every $n\in\gw$ I will write $b\restriction n$ for the unique element $t\in b$ (if it exists) such that  the set $\{s\in T\colon s\geq t\}$ has size $n$. One piece of parlance is used constantly. Suppose that $M$ is a model of set theory and $M\models \langle X, \tau\rangle$ is a topological space.
Suppose that $\langle\hat X, \hat\tau\rangle$ is a topological space. If $\pi\colon X\to\hat X$ is a map, I say that $\pi$ extends to an interpretation if there is a way of defining $\pi$ for all open sets such that the resulting map is a topological interpretation of $\langle X, \tau\rangle$ to $\langle\hat X, \hat\tau\rangle$. The extension is unique as described in Definition~\ref{extensiondefinition}.

\section{Breakdown of results}

The paper is quite long and contains many results. To simplify navigation for the reader, I include the list of main results in an instructive order in this section. The starting point is the proof of existence of interpretations of topological spaces and of continuous functions between them.

\begin{theorem}
\textnormal{(Theorem~\ref{existencetheorem} simplified)}
Interpretations of regular Hausdorff spaces exist, they are unique, and they are regular Hausdorff again.
\textnormal{(Theorem~\ref{functiontheorem} simplified)}
Interpretations of continuous functions between regular Hausdorff spaces exist and they are unique.
\end{theorem}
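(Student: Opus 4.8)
The plan is to realize the interpretation concretely as a space of filters, in the manner of the point constructions behind the {\v C}ech--Stone compactification. Fix a regular Hausdorff $\langle X,\tau\rangle$ in $M$, and write $\mathrm{cl}_M$ for closure computed in $M$. Call a set $F\subseteq\tau$ a \emph{good filter} if, as evaluated in $M$, it is a proper filter on $\langle\tau,\subseteq\rangle$ (upward closed in $\tau$, closed under finite intersections, not containing $0$), it is \emph{prime} for the unions that exist in $M$ (whenever $O=\bigcup_{i\in I}O_i$ holds in $M$ with $O\in F$, some $O_i\in F$), and it is \emph{regular} (for every $O\in F$ there is $O'\in F$ with $\mathrm{cl}_M(O')\subseteq O$). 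Let $\hat X$ be the set of good filters, let $\pi(x)\in\hat X$ be the neighborhood filter of $x\in X$, and topologize $\hat X$ by declaring $\{\pi(O):O\in\tau\}$, where $\pi(O)=\{F\in\hat X:O\in F\}$, a basis. First I would check $\langle\hat X,\pi\rangle$ is a preinterpretation: neighborhood filters are good because $X$ is regular and such filters are completely prime; condition (1) is the definition of $\pi(x)$; conditions (2)--(3) amount to $\pi(O_0\cap O_1)=\pi(O_0)\cap\pi(O_1)$, $\pi(\bigcup_iO_i)=\bigcup_i\pi(O_i)$, $\pi(0)=\emptyset$ and $\pi(X)=\hat X$, all immediate from the filter/primeness/properness clauses; condition (4) and $\mathrm{T}_0$-ness then follow formally. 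One also notes at once that $\pi''X$ is dense in $\hat X$, since a nonempty $\pi(O)$ forces $O\ne 0$ and hence meets $\pi''X$.

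The step that takes real work is that $\hat X$ is again regular Hausdorff, and I expect this to be the main obstacle. The crucial observation is that for $U\in\tau$ a good filter $F'$ lies in $\mathrm{cl}_{\hat X}\pi(U)$ exactly when every $O'\in F'$ meets $U$. Given $F\in\pi(O)$, regularity of $F$ supplies $U\in F$ with $\mathrm{cl}_M(U)\subseteq O$, and the identity $X=O\cup(X\setminus\mathrm{cl}_M U)$ is a union of two open sets \emph{that $M$ recognizes}; hence primeness forces any good filter $F'$ with $O\notin F'$ to contain $X\setminus\mathrm{cl}_M U$, an open set disjoint from $U$. This gives $\mathrm{cl}_{\hat X}\pi(U)\subseteq\pi(O)$, so $\hat X$ is regular; and the same dichotomy, applied to a set $O\in F\setminus F'$, produces the disjoint basic neighborhoods $\pi(U)\ni F$ and $\pi(X\setminus\mathrm{cl}_M U)\ni F'$, so $\hat X$ is Hausdorff.

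Next I would prove that $\langle\hat X,\pi\rangle$ is the $\leq$-largest preinterpretation, which gives existence, and then uniqueness follows formally. For an arbitrary preinterpretation $\pi'\colon X\to\hat X'$, put $h(\hat x')=\{O\in\tau:\hat x'\in\pi'(O)\}$. From conditions (1)--(3) one sees $h(\hat x')$ is a proper prime filter, and applying (2) to the identity $O=\bigcup\{O':\mathrm{cl}_M(O')\subseteq O\}$ (valid in $M$ since $X$ is regular there) shows $h(\hat x')$ is regular, hence a point of $\hat X$; the reduction properties $\pi=h\circ\pi'$, $h^{-1}\pi(O)=\pi'(O)$ and continuity of $h$ are a direct unwinding of the definitions, so $\pi'\leq\pi$. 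Uniqueness of the interpretation up to equivalence is then immediate from antisymmetry of $\leq$ together with the fact, noted before the theorem, that mutually reducible preinterpretations are equivalent.

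For continuous $f\colon X\to Y$ in $M$ with interpretations $\pi\colon X\to\hat X$ and $\chi\colon Y\to\hat Y$, I would set $\hat f(F)=\{P\in\tau_Y:f^{-1}(P)\in F\}$. Continuity of $f$ in $M$ makes $f^{-1}$ carry $\tau_Y$ into $\tau_X$ and commute with finite intersections and $M$-unions, and pulling back the regularity cover of $P$ shows $\hat f(F)$ is a good filter on $\tau_Y$; the identity $\hat f^{-1}(\chi(P))=\pi(f^{-1}(P))$ yields continuity of $\hat f$, and $\hat f(\pi(x))=\chi(f(x))$ holds by inspection, so $\hat f$ is an interpretation of $f$ in the sense of Definition~\ref{i2definition}. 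Its uniqueness is clear: any two interpretations of $f$ agree on the dense set $\pi''X$ and take values in the Hausdorff space $\hat Y$. Throughout, the one device that makes everything go through is to carry out every shrinking step $\mathrm{cl}_M(U)\subseteq O$ and the ensuing two-piece cover of $X$ inside $M$, so that primeness of good filters---which only sees unions present in $M$---can be applied; this is also exactly why both the primeness and regularity clauses belong in the definition of good filter and why they are preserved under $h$ and under $\hat f$.
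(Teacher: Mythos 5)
Your construction is correct, and it is a close cousin of the paper's but not the same in execution. The paper also builds a Wallman-style point space from the lattice $\tau$ in $M$, but its points are maximal ideals of open sets (no finite subcover, maximal), the map into that space is ``most likely not a preinterpretation,'' and the interpretation is then defined impredicatively as the union of the images $h_\pi$ of \emph{all} preinterpretations, a subspace of the big ideal space; regular Hausdorffness is obtained from the observation that every preinterpretation target is regular Hausdorff because regularity is expressible by unions and intersections of open sets in $M$. You instead isolate an intrinsic characterization of the points -- proper filters of $M$-open sets that are prime for $M$-unions and regular -- verify directly that the space of all such filters is itself a preinterpretation (primeness is exactly commutation with $M$-unions) and that it is $\leq$-largest via the trace map $h(\hat x')=\{O:\hat x'\in\pi'(O)\}$, whose primeness and regularity you correctly extract from conditions (1)--(3) and the $M$-regularity cover $O=\bigcup\{O':\mathrm{cl}_M(O')\subseteq O\}$; under complementation your good filters are precisely the maximal ideals the paper reaches, so the two spaces agree, but your description avoids quantifying over all preinterpretations and yields the Hausdorff/regularity proof by a direct two-piece-cover argument. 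For continuous functions the routes also differ: the paper produces the value $\hat f(x)$ by attaching a single new point to $Y$, checking the resulting map is a preinterpretation, and invoking maximality of $\chi$, with uniqueness built into that claim, whereas you define $\hat f$ by pullback of filters and get uniqueness from density of $\pi''X$ plus Hausdorffness of $\hat Y$ -- a shorter argument that the paper could have used as well. The only presentational point to add is that your $\hat f$ is defined on the concrete filter realizations, so one should remark that interpretations are unique up to a bijective reduction (which is a homeomorphism, since the images of $\tau$ generate both topologies), and transport $\hat f$ along these equivalences to get the statement for arbitrary interpretations $\pi,\chi$; this is routine and not a gap.
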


It should be stressed that for the general category of regular Hausdorff spaces, the interpretation functor is fairly poorly behaved: injective functions may cease to be injective, interpretations may fail to commute with product and so on. Many examples of pathologies are provided throughout the paper. It is natural to immediately restrict to various subcategories of interpretable spaces. It turns out that if a space has a certain completeness feature then the feature typically survives the interpretation process. 

\begin{theorem}
The following spaces are interpreted as spaces in the same category:

\begin{enumerate}
\item \textnormal{(Corollary~\ref{compact1corollary} simplified)} compact Hausdorff spaces;
\item \textnormal{(Corollary~\ref{metrizablecorollary} simplified)} completely metrizable spaces;
\item \textnormal{(Corollary~\ref{uniformcorollary})} complete uniform spaces as long as the uniformity consists of countably many covers;
\item \textnormal{(Corollary~\ref{cechcorollary})} {\v C}ech complete spaces;
\item \textnormal{(Corollary~\ref{interpretablecorollary})} interpretable spaces.
\end{enumerate}
\end{theorem}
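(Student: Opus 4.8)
The plan is to make item~(4) the heart of the argument, obtain (1)--(3) as instances of it, and treat (5) separately. The common mechanism is this: by clause~(2) of the definition of a preinterpretation, $\pi$ commutes with arbitrary unions of open sets of $M$, and in particular it is monotone on ${\tau}$; hence if $\mathcal U\in M$ is an open cover of $X$ (that is, $M\models\bigcup\mathcal U=X$), then $\pi''\mathcal U=\{\pi(O):O\in\mathcal U\}$ is an open cover of $\hat X$, and refinements between covers of $M$ pass to refinements between their $\pi$-images. Now recall the characterization of {\v C}ech-completeness by a \emph{complete sequence of open covers} $\langle\mathcal U_n:n\in\gw\rangle$: a countable family of open covers such that every filter $\mathcal F$ of closed sets with the finite intersection property which is subordinated to each $\mathcal U_n$ (some member of $\mathcal F$ lies inside some member of $\mathcal U_n$) has $\bigcap\mathcal F\neq\emptyset$. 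This countable datum is exactly what is transported by $\pi$, and the entire theorem rests on the lemma that it is transported faithfully.

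For (4), given $\langle\mathcal U_n:n\in\gw\rangle$ in $M$ as above, I would show that $\langle\pi''\mathcal U_n:n\in\gw\rangle$ witnesses {\v C}ech-completeness of $\hat X$; together with the fact that $\hat X$ is regular Hausdorff (Theorem~\ref{existencetheorem}) this lands $\hat X$ in the category. So let $\mathcal F$ be a filter of closed subsets of $\hat X$ with the finite intersection property, subordinated to each $\pi''\mathcal U_n$, as witnessed by $F_n\in\mathcal F$ and $O_n\in\mathcal U_n$ with $F_n\subseteq\pi(O_n)$. Push $\mathcal F$ down to $G=\{O\in{\tau}:F\subseteq\pi(O)$ for some $F\in\mathcal F\}$; since $\pi$ commutes with finite intersections and $\mathcal F$ has the finite intersection property, $G$ is a proper filter on the open set lattice ${\tau}$ of $M$, and $O_n\in G$ for every $n$, so $G$ is subordinated to each $\mathcal U_n$. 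Now I would invoke the construction of $\hat X$ from the existence theorem, in which the points of $\hat X$ are presented as filters on ${\tau}$ of an appropriate kind, and refine $G$ --- stage by stage, shrinking along the successive covers $\mathcal U_n$ --- to a point $p$ of $\hat X$. Finally one checks $p\in\bigcap\mathcal F$: if $F\in\mathcal F$, then $\hat X\setminus F$ is the union of those basic open sets $\pi(O)$ disjoint from $F$, every $O\in G$ has $\pi(O)\cap F\neq\emptyset$, and the stage-by-stage refinement can be arranged so that $p$ avoids every $\pi(O)$ disjoint from $F$, giving $p\in F$. I expect the genuine difficulty to lie precisely here: reconciling the abstract definition of the points of $\hat X$ (the $\leq$-largest preinterpretation) with this concrete filter bookkeeping, and verifying that subordination to the countably many covers really licenses the refinement-to-a-point and keeps it inside $\bigcap\mathcal F$. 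This is the one step I regard as substantive; everything else below is routine once it is in hand.

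Items (1)--(3) then reduce to (4) by standard facts applied inside $M$. A compact Hausdorff $X$ carries the constant complete sequence $\mathcal U_n=\{X\}$, for which the condition above is exactly compactness; the argument for (4) with $\pi''\mathcal U_n=\{\hat X\}$ then shows that every filter of closed subsets of $\hat X$ with the finite intersection property has nonempty intersection, i.e., $\hat X$ is compact (and Hausdorff by Theorem~\ref{existencetheorem}). A completely metrizable $X$ carries the complete sequence of covers by balls of radius $2^{-n}$, completeness of the metric supplying the common point. A complete Hausdorff uniform space whose uniformity has a countable base of covers carries that base, refined to open covers, as a complete sequence, a Cauchy filter being precisely a filter subordinated to every member of the base; moreover such a space is Tychonoff, hence regular Hausdorff, so the case hypotheses of the existence theorem are met. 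In each case (4) delivers $\hat X$ as {\v C}ech-complete, and the classical implications ``compact $\Rightarrow$ {\v C}ech-complete'', etc., run in reverse to place $\hat X$ in the exact subcategory claimed.

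For (5), let $Y=f[X]$ with $f\colon X\to Y$ a continuous open surjection and $X$ {\v C}ech-complete, all in $M$, and let $\hat f\colon\hat X\to\hat Y$ be the interpretation of $f$ (Theorem~\ref{functiontheorem}). By part~(4), $\hat X$ is {\v C}ech-complete; by Theorem~\ref{existencetheorem}, $\hat Y$ is regular Hausdorff; by Theorem~\ref{openmappingtheorem}, $\hat f$ is open and continuous. It remains to see that $\hat f$ is onto. Fix a complete sequence $\langle\mathcal U_n:n\in\gw\rangle$ of open covers of $X$ in $M$ and a point $q$ of $\hat Y$, regarded as a filter on ${\tau}_Y$ of $M$; since $f$ is open and surjective, $\{f[U]:U\in\mathcal U_n\}$ is an open cover of $Y$ in $M$, so $q$ contains some $f[U_n]$ with $U_n\in\mathcal U_n$. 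Build, by recursion on $n$, a filter on ${\tau}_X$ of $M$ subordinated to the $\mathcal U_n$ via the $U_n$ and containing $f^{-1}(O)$ for every $O$ in a fixed generating set of $q$; the recursion is possible because $U\cap f^{-1}(O)\neq\emptyset$ whenever $f[U],O\in q$ (as $q$ is a filter), and {\v C}ech-completeness of $X$ guarantees that the construction can be driven to a genuine point $p$ of $\hat X$, which then satisfies $\hat f(p)=q$. Hence $\hat Y$ is a regular Hausdorff continuous open image of the {\v C}ech-complete space $\hat X$, i.e., interpretable. Apart from this surjectivity lemma, (5) is bookkeeping and citation.
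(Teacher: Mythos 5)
Your overall shape (transport a countable completeness datum along $\pi$ and read off the five categories) is in the spirit of the paper, which organizes all five items around a single result, Theorem~\ref{sievetheorem}: a completeness system in $M$ is transported by the interpretation to a completeness system on $\hat X$, and complete sequences of covers, complete metrics/uniformities, and complete sieves are all instances. But your proposal leaves the central lemma unproved, and that lemma is the entire content. Your plan for it --- push the filter $\mathcal F$ down to a filter $G$ on $\tau^M$ and then ``refine $G$ stage by stage to a point $p$ of $\hat X$'' --- begs the question of why the maximal ideal you build is a point of $\hat X$ at all. In the existence theorem the points of $\hat X$ are only described as the union of the ranges of the reductions $h_\pi$ over \emph{all} preinterpretations; there is no intrinsic criterion to check. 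To produce a point you must exhibit a preinterpretation containing it, i.e.\ verify that adjoining your candidate ideal still commutes with \emph{arbitrary} unions of open sets taken in $M$, and that is exactly where the paper works: it constructs the space of maximal ideals admitting a descending sequence through the completeness system and proves commutation with $M$-unions by a wellfounded-tree argument that uses the transitivity of $M$ essentially. Your sketch never invokes transitivity, yet the statement genuinely depends on it (see the illfounded example following Corollary~\ref{metrizablecorollary}), so the missing step cannot be ``routine bookkeeping''; it is the theorem. The same unproved point-production step reappears in your surjectivity argument for (5) (which is in any case already asserted as part of Theorem~\ref{openmappingtheorem}, whose proof in the paper again runs through Theorem~\ref{sievetheorem}).

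Two secondary problems with the reductions. For (2), Čech completeness of $\hat X$ does not give complete metrizability; you also need $\hat X$ metrizable, which the paper gets by transporting the metric itself ($\hat d$), and its Corollary~\ref{metrizablecorollary} in fact identifies $\hat X$ with the completion of $(X,d)$ --- ``the classical implications run in reverse'' is not true as stated. For (3), landing in the category requires that the \emph{transported} countably based uniformity on $\hat X$ be complete, not merely that the topology of $\hat X$ be Čech complete (a completely metrizable topology can carry incomplete compatible uniformities); the paper gets completeness of the transported structure directly because the uniform covers form a completeness system and Theorem~\ref{sievetheorem} transports it. Item (1) via the constant covers and item (5) via a complete sieve are fine in outline, but only once the transport lemma is actually proved.
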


\noindent The method of proof also provides for a testable criterion (Proposition~\ref{testproposition}) as to whether a given map into a ``complete'' space extends to an interpretation or not.
 Next, I spend a great deal of effort to show that the interpretation functor commutes with natural operations on topological spaces. 

\begin{theorem}
In the category of interpretable spaces, the interpretation functor commutes with the following operations:

\begin{enumerate}
\item \textnormal{(Corollary~\ref{subspacecorollary} simplified)} taking a closed or $G_\gd$ subspace;
\item \textnormal{(Theorem~\ref{compactproducttheorem} simplified)} product of compact Hausdorff spaces of any size;
\item \textnormal{(Theorem~\ref{countableproducttheorem} simplified)} product of countably many spaces;
\item \textnormal{(Corollary~\ref{openquotientcorollary} simplified)} quotients modulo an open equivalence relation;
\item \textnormal{(Corollary~\ref{perfectquotientcorollary} simplified)} quotients modulo a perfect equivalence relation;
\item \textnormal{(Theorem~\ref{cktheorem} simplified)} the $C(X,Y)$ operation where $X$ is compact Hausdorff, $Y$ is completely metrizable, and the topology is the compact-open one;
\item \textnormal{(Theorem~\ref{hyperspacetheorem} simplified)} the hyperspace operation on interpretable spaces. Here, the hyperspace is understood to consist of nonempty compact sets and the topology is Vietoris.
\end{enumerate}
\end{theorem}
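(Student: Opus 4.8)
The seven clauses are proved separately in the body of the paper, but they all instantiate one common scheme, and I would present that scheme first. Fix the operation $F$ in question --- a subspace, a product, a quotient, $C(X,Y)$, or the hyperspace --- and interpretable inputs in $M$ with interpretation maps $\pi_i\colon X_i\to\hat X_i$. In each case there is an evident candidate map $\rho$ from $F$ computed in $M$ into $F$ applied in $V$ to the interpretations: the restriction of $\pi$ for a subspace; $(x_i)_i\mapsto(\pi_i(x_i))_i$ for a product; $[x]_E\mapsto[\pi(x)]_{\hat E}$ for a quotient; $f\mapsto\hat f$, using the interpretation of functions (Theorem~\ref{functiontheorem}), for $C(X,Y)$; and $K\mapsto\pi''K$ for the hyperspace, the image being compact because $\pi\colon X\to\hat X$ is continuous (preimages of basic open sets $\pi(O)$ are the sets $O$ themselves). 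I would then verify, in each case: \emph{(i)} that $F$ applied to the interpretations again lands in the relevant ``complete'' subcategory --- compact Hausdorff, completely metrizable, {\v C}ech complete, or interpretable, by Corollaries~\ref{compact1corollary}, \ref{metrizablecorollary}, \ref{cechcorollary}, \ref{interpretablecorollary} --- so that its interpretation coincides with its $\leq$-largest preinterpretation and the criterion of Proposition~\ref{testproposition} becomes available; and \emph{(ii)} that $\rho$, with the obvious action on open sets, is a preinterpretation and is the $\leq$-largest one, the latter by Proposition~\ref{testproposition}, the only non-formal point throughout being that $\rho''\tau$ is a basis of the target. Once (i) and (ii) hold, $\rho$ \emph{is} the interpretation, and by uniqueness this is exactly the statement that the functor commutes with $F$.

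For clauses (1), (2) and (3) the real work is (i). A closed or $G_\delta$ subspace $Y\subseteq X$ is interpretable because one restricts an open presentation $f\colon W\to X$, with $W$ {\v C}ech complete, to $f^{-1}(Y)$: this preimage is a closed, resp.\ $G_\delta$, subspace of $W$ and hence {\v C}ech complete, and $f\restriction f^{-1}(Y)$ is again an open surjection onto $Y$. A countable product of {\v C}ech complete spaces is {\v C}ech complete, and the product of open presentations is an open surjection from a {\v C}ech complete space, so countable products of interpretable spaces are interpretable; the uncountable-product clause is confined to compact Hausdorff factors precisely because there one has Tychonoff together with Corollary~\ref{compact1corollary}, whereas for general factors the functor genuinely fails to commute with products. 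Condition (ii) is routine in all of these cases: basic open sets of a subspace, a product or a hyperspace are finite Boolean combinations of (pre)images of basic opens, $\rho$ is defined so as to respect exactly those combinations, the basis point reduces to $\pi''\tau$ --- resp.\ the $\pi_i''\tau_i$ --- being bases, and maximality follows from Proposition~\ref{testproposition}, which in the compact case amounts to the convergence of a suitable ultrafilter.

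The quotient clauses are carried by the open mapping theorem. If $E$ is an open equivalence relation on an interpretable $X$, then $X/E$, assumed regular Hausdorff, is an open continuous image of $X$ and hence interpretable, and the interpretation $\hat q$ of the quotient map $q\colon X\to X/E$ is open and continuous by Theorem~\ref{openmappingtheorem}; I would check that the candidate $[x]_E\mapsto[\pi(x)]_{\hat E}$, with $\hat E$ the interpretation of the relation, is a preinterpretation --- using openness of $\hat q$ --- and then maximal via Proposition~\ref{testproposition}, obtaining $\widehat{X/E}=\hat X/\hat E$. The perfect case runs in parallel once one has the perfect-map analogue of Theorem~\ref{openmappingtheorem}, namely that the interpretation of a perfect map is perfect --- the compactness of its fibres being handled as in the hyperspace computation, the closedness by a separate argument --- together with the closure of the interpretable category under perfect images; here one uses that perfect images of {\v C}ech complete spaces are {\v C}ech complete.

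The genuinely analytic cases are clauses (6) and (7), where the basis point in (ii) is a real approximation statement. For the hyperspace it is still cheap: a basic Vietoris neighbourhood $\langle\pi(O_1),\dots,\pi(O_n)\rangle$ of a compact $L\subseteq\hat X$ is met by $\pi''\{x_1,\dots,x_n\}$ for any choice $x_i\in O_i$, and this produces a basis of the Vietoris topology since $\pi''\tau$ is a basis of $\hat X$; the only point needing care, that $\prettyk(X)$ is interpretable when $X$ is, rests on the closure properties of the interpretable category together with the fact that $\prettyk(W)$ is {\v C}ech complete for {\v C}ech complete $W$. I expect the main obstacle to be clause (6). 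There (i) is cheap --- $\hat X$ is compact Hausdorff and $\hat Y$ is completely metrizable, so $C(\hat X,\hat Y)$ with the compact-open, equivalently uniform, topology is completely metrizable --- and the identification of a basic open $\langle C,V\rangle$ of $C(X,Y)^M$ with $\langle\pi''C,\chi(V)\rangle$ is immediate from the defining property of $\hat f$; but the basis point forces one to prove that every $g\in C(\hat X,\hat Y)$ is a uniform limit of interpretations $\hat f$ of functions $f\in C(X,Y)^M$. The natural attempt is to use compactness of $\hat X$ to cover it by finitely many basic opens $\pi(O_i)$ on which $g$ varies by less than $\varepsilon$, to choose $y_i\in Y$ with $\chi(y_i)$ within $\varepsilon$ of the corresponding values of $g$ (using density of $\chi''Y$ in $\hat Y$), and to assemble inside $M$ a continuous function approximately equal to $y_i$ on $O_i$. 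This last gluing step is immediate when $Y$ supports partitions of unity --- an ANR, or a convex subset of a topological vector space --- but for a general completely metrizable $Y$ it is exactly the difficulty, and it must be circumvented, for instance by reducing to a Banach-space-valued target via a Kuratowski-type embedding of $Y$ and then controlling the correction back into $Y$, or by pulling $g$ back through an explicit presentation of $\hat X$ as an open continuous image of a {\v C}ech complete space. I would isolate this approximation lemma, prove it once, and let the remaining clauses follow from the common scheme above.
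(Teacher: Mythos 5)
Your overall scheme---write down the natural candidate map, check it is a preinterpretation, and certify maximality through a completeness system (Theorem~\ref{sievetheorem}) or Proposition~\ref{testproposition}---is exactly the paper's strategy for clauses (1)--(5) and (7), but you misplace where the work sits in several of them. For (1) the open-set map on the subspace must first be seen to be well defined (that $\pi(O)\cap\pi(Y)$ depends only on $O\cap Y$ needs Theorem~\ref{boreltheorem}), and maximality is not ``routine'': the paper builds a complete sieve on $Y$ specially adapted to the $G_\delta$ decomposition. For (2)--(3) the non-formal point is not the basis condition but the finite-cover clause of Proposition~\ref{testproposition}, supplied by Proposition~\ref{productproposition}. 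For (7) your candidate $K\mapsto\pi''K$ is not even well defined: a set compact in $M$ is in general no longer compact in $V$ (after adding a real, $(2^\omega)^M$ is a dense non-closed subset of $2^\omega$), so $\pi''K$ need not be compact; the correct map is $K\mapsto\pi(K)={\mathrm{cl}}(\pi''K)$, compact by Corollary~\ref{compactcorollary}, and the substantive step is again the finite-cover preservation for Vietoris basic sets, not interpretability of $K(X)$.

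The genuine gap is clause (6), and you name it yourself: density of $\{\pi(f)\colon f\in C(X_0,X_1)^M\}$ in $C(\hat X_0,\hat X_1)$. Your proposed routes do not close it: partition-of-unity gluing needs the target to be an ANR or convex; a Kuratowski-type embedding of $X_1$ into a Banach space does not help, because a completely metrizable space need not be a neighborhood retract of the ambient space, so there is no way to ``control the correction back into $Y$''; and ``pulling back through a presentation'' is not carried out. The paper avoids any gluing inside $X_1$ by a wellfoundedness argument. Given $g\in C(\hat X_0,\hat X_1)$ and $\varepsilon>0$, compactness of the graph of $g$ produces in $V$ an infinite sequence of finite families of open boxes $O_0(n,i)\times O_1(n,i)$ covering the graph, nested in closure, whose vertical sections have diameter at most $2^{-n}\varepsilon$. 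The tree of finite initial segments of such sequences, with the first level fixed so as to pin down $\varepsilon$-proximity to $g$, lies in $M$; it is illfounded in $V$, hence by absoluteness of wellfoundedness illfounded in $M$, and a branch in $M$ yields, as $\bigcap_n\bigcup_i{\mathrm{cl}}(O_0(n,i))\times{\mathrm{cl}}(O_1(n,i))$, a continuous $f\in C(X_0,X_1)^M$ with $\hat e(\pi(f),\pi(g))<\varepsilon$. This density, combined with the observation that $f\mapsto\pi(f)$ is an isometry for the sup metrics and with Corollary~\ref{metrizablecorollary} (the interpretation of a complete metric space is its completion), is the entire proof of Theorem~\ref{cktheorem}; it is precisely the step your proposal leaves open.
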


As soon as one steps out of the interpretable category, the commutativity features begin failing. Thus, the commutation with product may fail for Baire space times the space of wellfounded trees or the product of Sorgenfrey line with itself. The commutation with uncountable product will fail for $\gw^{\gw_1}$.

It is interesting to see how the interpretation functor acts on various topological structures. One common circumstance is that various predicates on such structures are Borel, and a theorem to the effect that they are interpreted faithfully is needed:

\begin{theorem}
\textnormal{(Theorem~\ref{boreltheorem})}
Let $M$ be a transitive model of set theory and $M\models \langle X, \tau\rangle$ is an interpretable space, and $\mathcal{B}$ is the $\gs$-algebra of Borel subsets of $X$.
Let $\pi\colon X \to\hat X$ be an interpretation, and let $\hat{\mathcal{B}}$ be the $\gs$-algebra of Borel subsets of $\hat X$. There is a unique extension $\pi\colon\mathcal{B}\to\hat{\mathcal{B}}$ commuting with complements and countable unions and intersections in the model $M$.
\end{theorem}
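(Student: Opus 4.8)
The plan is to define $\pi$ on a Borel set by recursion along a Borel code chosen inside $M$, and to spend essentially all of the work showing that the resulting set does not depend on the code. Fix in $M$ the usual notion of a Borel code: a wellfounded tree $c$ whose leaves are labelled by members of $\tau$ and whose inner nodes are labelled either ``complement'' (one immediate successor) or ``countable union'' (countably many immediate successors), together with the recursive evaluation producing $B=B_c\in\mathcal B$; in $M$ every member of $\mathcal B$ has such a code. Given the interpretation $\pi$ on points and open sets, define $\pi(B_c)\subseteq\hat X$ by the parallel recursion carried out inside $\hat X$: a leaf labelled $O\in\tau$ goes to the already-defined $\pi(O)$, a complement node to the complement in $\hat X$, a union node to the union in $\hat X$. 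Since wellfoundedness is absolute between transitive models, $c$ is genuinely wellfounded in $V$, so the recursion terminates and produces a Borel subset of $\hat X$ of the same rank as $c$. Uniqueness is then immediate: any extension commuting with complements and countable unions and intersections as computed in $M$ must, by induction along $c$, agree with this set.

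Two elementary properties of the candidate are recorded by induction along $c$. First, for every $x\in X$ one has $\pi(x)\in\pi(B_c)$ if and only if $x\in B_c$; the leaf case is clause (1) of the definition of a preinterpretation, and the complement and union cases are trivial. In particular $\pi^{-1}\pi(B_c)\cap X=B_c$, so $\pi(B_c)$ meets $\pi''X$ in exactly the image of $B_c$, and $\pi(B_c)\subseteq\hat X\setminus\pi''X$ whenever $M\models B_c=\emptyset$. Second, granting well-definedness, the assignment $B\mapsto\pi(B)$ obviously commutes with complements and countable unions and intersections performed in $M$, so the theorem reduces to well-definedness.

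For well-definedness, suppose $c_0,c_1\in M$ both code $B$. Form in $M$ the code $c$ for $B_{c_0}\triangle B_{c_1}$; then $M\models B_c=\emptyset$, and since the recursive candidate is compositional in the code one computes $\pi(B_{c_0})\triangle\pi(B_{c_1})=\pi(B_c)$. Thus it suffices to prove the \emph{emptiness lemma}: if $c\in M$ and $M\models B_c=\emptyset$, then $\pi(B_c)=\emptyset$. By the first property above, $\pi(B_c)$ is already disjoint from $\pi''X$, so the whole content lies with the ``new'' points of $\hat X$. To get at these, I would first reduce the space. Since $X$ is interpretable there are in $M$ a {\v C}ech-complete $Y$ and a continuous open surjection $\phi\colon Y\to X$; the interpretation of an open surjection is again a surjection (Corollary~\ref{openquotientcorollary}), and interpretations of continuous functions (Theorem~\ref{functiontheorem}) commute with preimages of open sets, hence by induction along codes with preimages of Borel sets, so $\hat\phi^{-1}(\pi(B_c))=\chi(\phi^{-1}(B_c))$ for the interpretation $\chi\colon Y\to\hat Y$. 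As $\phi$ is onto, $M\models\phi^{-1}(B_c)=\emptyset$, and since $\hat\phi$ is onto it is enough to show $\chi(\phi^{-1}(B_c))=\emptyset$; so we may assume $X$ is {\v C}ech-complete.

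The heart of the matter is then the emptiness lemma for {\v C}ech-complete $X$, and this is where I expect the real difficulty. The plan is to use the filter-style construction underlying Theorem~\ref{existencetheorem} and Corollary~\ref{cechcorollary}, in which the points of $\hat X$ are represented as $M$-filters $F$ on a basis of $\tau$ that are regular (each $O\in F$ contains some $O'\in F$ with $\overline{O'}\subseteq O$ in $M$) and compatible with a fixed sequence of open covers of $X$ witnessing {\v C}ech-completeness; evaluating the code $c$ at such a filter (``$O\in F$'' at a leaf, Boolean operations at inner nodes) one has $\hat y\in\pi(B_c)$ precisely when the evaluation at the filter attached to $\hat y$ is true. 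The claim to prove is that a filter of this kind realizes, with respect to Borel codes lying in $M$, the same sets as an actual point of $X$; equivalently, if $M\models B_c=\emptyset$ no such filter realizes $B_c$. This should follow from a Baire-category (fusion) argument: {\v C}ech-completeness makes $X$ a Baire space, and the cover-compatibility of $F$ allows one to refine any attempted witness that $F$ realizes $B_c$ down to a genuine point of $X$ realizing $B_c$ in $M$, a contradiction. To keep the argument valid for an arbitrary transitive pair $M\subseteq V$ rather than, say, a forcing extension, the conclusion should be phrased as emptiness of the analytic set obtained by applying the Souslin operation to a tree built in $M$ from the code and the covers, so that the transfer from $M$ to $V$ is again just absoluteness of wellfoundedness of that tree. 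Getting the completeness structure, the filter representation, and this wellfoundedness absoluteness to cooperate is the one genuinely hard step; the rest is bookkeeping.
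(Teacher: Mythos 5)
Your reduction of the theorem is sound and in fact mirrors the paper's architecture: uniqueness by induction on a code, existence by evaluating a code in $\hat X$, and well-definedness reduced (via symmetric-difference codes) to the single ``emptiness lemma'' that $M\models B_c=\emptyset$ implies $\hat B_c=\emptyset$ --- this is exactly the paper's Claims~\ref{borelclaim3} and~\ref{borelclaim4}. Your preliminary reduction to the {\v C}ech-complete case via an open continuous surjection is a legitimate variant the paper does not need (it works directly with a complete sieve on the interpretable space, which is available by Fact~\ref{sieveproposition}); it is not circular, since Theorems~\ref{functiontheorem} and~\ref{openmappingtheorem} do not rely on Theorem~\ref{boreltheorem}, though the surjectivity citation should be to Theorem~\ref{openmappingtheorem} rather than Corollary~\ref{openquotientcorollary}.

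The genuine gap is that the emptiness lemma itself --- which you correctly identify as ``the one genuinely hard step'' --- is only asserted, not proved, and the gloss you give for it does not deliver it. A Baire-category/fusion argument phrased at the level of filters does not by itself handle the countable-intersection nodes of a code: a point of $\hat B_c$ with $c=\{\cap,\{d_i\colon i\in\gw\}\}$ carries witnesses for infinitely many subcodes simultaneously, and one must interleave all of them while descending through the completeness structure, and then transfer the resulting object from $V$ back to $M$. The paper's proof consists precisely of making this bookkeeping explicit: it defines by $\in$-recursion on $c$ an ordering $T_c\in M$ whose elements record a strictly descending sequence of sieve nodes, nonempty closed sets $E_i\subset O(s_i)$, and (at intersection nodes) coordinates in each $T_{d_i}$; it then proves Claim~\ref{borelclaim1} (in $M$, an infinite descending sequence in $T_c$ forces $\bigcap_n E(p(n))$ to be a nonempty subset of $B_c$, using completeness of the sieve applied to the filter of closed sets along the branch) and Claim~\ref{borelclaim2} (in $V$, any $x\in\hat B_c$ yields such a sequence with $x\in\bigcap_n\pi(E(p(n)))$, by an induction whose intersection case builds the interleaving step by step), after which absoluteness of wellfoundedness between the transitive $M$ and $V$ closes the loop. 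Your final sentence gestures at exactly this (``a tree built in $M$ from the code and the covers'' plus wellfoundedness absoluteness), but constructing that tree and proving the two transfer claims is the substance of the theorem, not bookkeeping; as written, the proposal stops where the paper's proof begins.
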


Now, define an \emph{interpretable structure} to be a tuple $\mathfrak{X}=\langle X_i\colon i\in I, R_j\colon j\in J, f_k\colon k\in K\rangle$ such that $X_i$ are 
interpretable spaces, $R_j$ are Borel relations between the various spaces $X_i$ for various finite arities, and $f_k$ are continuous functions between the various spaces, with closed or $G_\gd$ domains and various finite arities. Interpretable structures can be naturally interpreted between transitive models of set theory and its extensions via the previous theorems. The main result in this direction is 

\begin{theorem}
\textnormal{(Analytic absoluteness, Theorem~\ref{absolutenesstheorem})} Suppose that $M$ is a transitive model of set theory and $M\models\mathfrak{X}$ is an interpretable structure. Let $\pi\colon\mathfrak{X}\to\hat{\mathfrak{X}}$ be an interpretation. Then $\pi$ is a $\gS_1$-elementary embedding.
\end{theorem}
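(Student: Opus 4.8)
The plan is to reduce the claim, via the commutation results of the earlier sections, to a general form of Mostowski's $\gS^1_1$-absoluteness theorem. A $\gS_1$ formula in the language of $\mathfrak X$ has the shape $\gf(\bar v)=\exists\bar w\,\psi(\bar v,\bar w)$ with $\psi$ quantifier-free, $\bar w$ ranging over finitely many of the sorts, and the atomic subformulas of $\psi$ built from equalities, the partial continuous functions $f_k$ with their closed or $G_\gd$ domains, and the Borel relations $R_j$. Fix such a $\gf$ and a tuple $\bar a$ of points of the spaces of $\mathfrak X$ lying in $M$; the goal is $M\models\gf(\bar a)\liff\hat{\mathfrak X}\models\gf(\pi(\bar a))$. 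Combining Definition~\ref{i2definition} with the commutation of the interpretation functor with countable products (Theorem~\ref{countableproducttheorem}), with closed and $G_\gd$ subspaces (Corollary~\ref{subspacecorollary}), and with the Borel $\gs$-algebra (Theorem~\ref{boreltheorem}), a straightforward induction on $\psi$ produces an interpretable space $Y$ (a finite product of sorts of $\mathfrak X$), a Borel set $B\subseteq Y$ coded in $M$ with $B=\{\bar w\colon M\models\psi(\bar a,\bar w)\}$, and the identity $\pi(B)=\{\bar w\in\hat Y\colon\hat{\mathfrak X}\models\hat\psi(\pi(\bar a),\bar w)\}$, where $\pi\colon Y\to\hat Y$ is the induced interpretation. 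The only real ingredient in this induction is that for a Borel set $C$ coded in $M$ and a point $x$ of the relevant space lying in $M$ one has $x\in C\liff\pi(x)\in\pi(C)$ — true for open $C$ by clause (1) of the definition of a preinterpretation, and propagating through complements and countable unions because $\pi$ commutes with them. Thus the theorem reduces to: for an interpretable space $Y$ over $M$ with interpretation $\pi\colon Y\to\hat Y$ and a Borel set $B\subseteq Y$ coded in $M$, $B\neq\emptyset$ if and only if $\pi(B)\neq\emptyset$ in $V$.

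One implication is immediate: a point of $B$ lying in $M$ maps under $\pi$ into $\pi(B)$. For the other, fix in $M$ a {\v C}ech-complete space $Z$, a continuous open surjection $g\colon Z\to Y$, and a complete sequence of open covers of $Z$. The existence argument behind Corollaries~\ref{cechcorollary} and~\ref{interpretablecorollary} (with the testable criterion of Proposition~\ref{testproposition}) builds in $M$ a tree $T_0$ of finite approximations such that, recomputed in any transitive model $N\supseteq M$, the suitably decreasing branches of $T_0$, pushed through $\hat g$, are exactly the points of the interpretation of $Y$ over $N$; in particular the branches computed in $M$ give $Y$ itself, and those computed in $V$ give $\hat Y$. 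Since the Borel code of $B$ and the data presenting $\hat g$ lie in $M$, membership of such a point in the interpretation of $B$ over $N$ is witnessed by an evaluation of the (transferred) Borel code along the branch, a condition that is approximable by finite pieces; folding those pieces into $T_0$ yields a single tree $T\in M$ with the property that, computed in any transitive $N\supseteq M$, $T$ is illfounded if and only if the interpretation of $B$ over $N$ is nonempty. Taking $N=M$ and $N=V$: $B\neq\emptyset$ iff $T$ is illfounded in $M$, and $\pi(B)\neq\emptyset$ in $V$ iff $T$ is illfounded in $V$.

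It remains to see that illfoundedness of $T$ is absolute between $M$ and $V$. Since $M$ satisfies the axiom of choice, the labels occurring in $T$ can be linearly ordered in $M$, so the Kleene--Brouwer linear order $<_{\mathrm{KB}}$ on $T$ is available in $M$, and $T$ is wellfounded if and only if $<_{\mathrm{KB}}$ is a wellordering. Being a wellordering is absolute for transitive models of set theory — the order isomorphism of $<_{\mathrm{KB}}$ onto an ordinal, once it exists in $M$, is a set that persists to $V$ and rules out any infinite $<_{\mathrm{KB}}$-descending sequence there. Hence $T$ is illfounded in $V$ if and only if it is illfounded in $M$, and chaining the equivalences gives that $\pi(B)\neq\emptyset$ in $V$ implies $B\neq\emptyset$, proving the reduced assertion. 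Since $\gf$ was an arbitrary $\gS_1$ formula, this establishes $M\models\gf(\bar a)\liff\hat{\mathfrak X}\models\gf(\pi(\bar a))$ for all of them; thus $\pi$ is a $\gS_1$-elementary embedding, and, negating, a $\Pi_1$-elementary one as well.

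The step I expect to require the most care is the identification, in the second paragraph, of the abstract interpretations of the {\v C}ech-complete space $Z$ and of the Borel set $B$ with the concrete ``branches through $T$'' description: one must verify that $T_0$ neither overlooks points of $\hat Z$ nor creates spurious ones — which is exactly the content of the existence argument behind Corollary~\ref{cechcorollary} and the criterion of Proposition~\ref{testproposition} — and that, given such a point, the Borel code of $B$ together with the presentation of $\hat g$ really do unfold into a finitely-approximable evaluation object, so that the combined tree $T$ computes the nonemptiness of the interpretation of $B$ correctly over every transitive $N\supseteq M$.
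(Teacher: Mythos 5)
Your first paragraph is exactly the paper's argument: an induction on quantifier-free formulas, using Theorem~\ref{countableproducttheorem}, Corollary~\ref{subspacecorollary} and Theorem~\ref{boreltheorem}, shows that the set defined by $\psi(\bar a,\cdot)$ is a Borel set $B$ in $M$ whose interpretation is the set defined by $\psi(\pi(\bar a),\cdot)$ in $\hat{\mathfrak X}$, after which the $\gS_1$ direction is witnessed by a point of $M$. Where you diverge is the $\Pi_1$ direction. The paper disposes of it in one line: since the extension of $\pi$ to the Borel $\gs$-algebra provided by Theorem~\ref{boreltheorem} is a well-defined map on Borel \emph{sets} (not codes) that agrees with $\pi$ on open sets, an empty $B$ satisfies $\pi(B)=\pi(\emptyset)=\emptyset$, and all of the wellfoundedness of $M$ has already been spent inside the proof of Theorem~\ref{boreltheorem} to secure exactly that well-definedness. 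You instead set out to re-prove the nonemptiness absoluteness from scratch, by building a tree in $M$ out of a complete cover sequence for a {\v C}ech-complete preimage together with the Borel code, and invoking Kleene--Brouwer/Mostowski absoluteness. That is not wrong in spirit---it is essentially the construction of the orderings $T_c$ and Claims~\ref{borelclaim1}--\ref{borelclaim3} inside the paper's proof of Theorem~\ref{boreltheorem}---but as written it is only asserted, and its headline identification is inaccurate: decreasing branches of your $T_0$ do not correspond bijectively to points of the interpretation; a branch only pins down a nonempty compact set of points, while a point only yields \emph{some} branch through the interpreted sieve sets containing it. These two one-sided correspondences are what the argument actually needs (point of $\pi(B)$ in $V$ gives a branch in $V$, hence by transitivity a branch in $M$, hence a nonempty subset of $B$), and carrying them out is precisely the nontrivial bookkeeping the paper does with $T_c$. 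So either cite Theorem~\ref{boreltheorem} and finish in one sentence as above, or accept that your second and third paragraphs must be expanded into that construction; as a sketch they leave the main burden ("folding those pieces into $T_0$ yields a single tree $T\in M$ computing nonemptiness over every transitive $N\supseteq M$") unproved, even though it is provable and is in fact already in the paper.
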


In particular, structures such as topological groups and semigroups,their continuous actions,  Banach spaces, $C^*$ algebras are interpreted faithully as structures of the same class, as long as their functions are continuous and their relations are Borel, the axiomatization of their class consists of $\gS_1$ and $\Pi_1$ sentences, and their domains are interpretable. I also prove a version of Shoenfield absoluteness, Theorem~\ref{shoenfieldabsoluteness}, which shows that in many cases, if a $\Pi_1$ formula defines a closed set $C$ in a structure $\mathfrak{X}$, then it defines the interpretation of the closed set $C$ in the interpreted structure $\hat{\mathfrak{X}}$. 

The main source of topological structures is functional analysis. There, I prove for example

\begin{theorem}
\textnormal{(Theorem~\ref{weakstartheorem} simplified)}
The interpretation of the unit ball in the dual space of a normed Banach space $X$ with weak${}^*$ topology is the unit ball with the weak${}^*$ topology of the dual space of the interpretation of $X$.
\end{theorem}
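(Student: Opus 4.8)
The plan is to realize $B$, the weak${}^*$ unit ball of the dual $X^*$ computed inside $M$, as a closed subspace of a product of compact intervals, and then read off its interpretation from the commutation theorems for products and for closed subspaces. Work first in $M$. By the Banach--Alaoglu theorem $B$ is compact Hausdorff. For $x\in X$ put $I_x=[-\|x\|,\|x\|]$ (a disk, in the complex case) and let $P=\prod_{x\in X}I_x$, a product of compact Hausdorff spaces indexed by the bare set $X$. The evaluation map $e\colon B\to P$, $e(\phi)=\langle\phi(x)\colon x\in X\rangle$, is a homeomorphic embedding onto the set
\[
C=\{\,t\in P\colon t_{x+y}=t_x+t_y\text{ for all }x,y\in X\,\},
\]
which is closed in $P$; here one uses that an additive $t$ with $|t_x|\le\|x\|$ is automatically $\mathbb R^M$-homogeneous, being $\mathbb Q$-linear and $1$-Lipschitz on $X$. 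Thus in $M$ the space $B$ is presented as a closed subspace of $P$.

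Next, pass to $V$. By Theorem~\ref{compactproducttheorem} the interpretation of $P$ is $\hat P=\prod_{x\in X}\hat I_x$, and since $I_x$ is a compact interval of reals with endpoints in $M$, its interpretation $\hat I_x$ is the interval $[-\|x\|,\|x\|]$ recomputed in $V$ (Corollary~\ref{metrizablecorollary}, or Corollary~\ref{compact1corollary}). The set $C$ is cut out in $P$ by the $\Pi_1$ condition ``$t_{x+y}=t_x+t_y$ for all $x,y$'' in the interpretable structure consisting of $P$ together with the continuous difference functions $t\mapsto t_{x+y}-t_x-t_y$; combining the Shoenfield-type absoluteness of Theorem~\ref{shoenfieldabsoluteness} with the commutation of the interpretation functor with closed subspaces (Corollary~\ref{subspacecorollary}), the interpretation $\hat B$ is exactly the closed subspace of $\hat P$ defined by the same condition,
\[
\hat B=\{\,t\in\hat P\colon t_{x+y}=t_x+t_y\text{ for all }x,y\in X\,\},
\]
with $\pi\colon B\to\hat B$ the restriction of the product interpretation map.

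It remains to identify $\hat B$ with the weak${}^*$ unit ball $B'$ of $(\hat X)^*$. The interpretation $\hat X$ of the normed space $X$ is again a Banach space and $\pi_X\colon X\to\hat X$ is a linear isometry with dense range (Theorem~\ref{hardbanachtheorem}). Given $t\in\hat B$, the map $x\mapsto t_x$ is additive and $1$-Lipschitz on the dense subspace $\pi_X''X$ of $\hat X$, hence extends uniquely to a continuous map $\bar t\colon\hat X\to\mathbb R$, which is additive, $\mathbb Q$-linear, and therefore (continuity plus density of $\mathbb Q$ in $\mathbb R^V$) $\mathbb R^V$-linear of norm $\le 1$; conversely every $\psi\in B'$ restricts to such a point $t$, and the two assignments are mutually inverse. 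This bijection carries the subspace topology on $\hat B$, which is pointwise convergence of the coordinates $t_x$ for $x\in X$, to the weak${}^*$ topology on $B'$, which is pointwise convergence on all of $\hat X$: a uniformly bounded net of functionals converges pointwise on $\hat X$ if and only if it converges pointwise on the dense set $\pi_X''X$, by equicontinuity. The bijection commutes with the canonical maps out of $B$ by construction, so it realizes $B'$ as the interpretation of $B$, which is the assertion of Theorem~\ref{weakstartheorem}.

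The main obstacle is the middle step, pinning down which closed subspace of $\hat P$ the interpretation actually is. The inclusion $\hat B\subseteq C^{V}$ is the routine, functorial direction, following from monotonicity of the interpretation functor and the commutation with closed subspaces; the substantive point is the reverse inclusion, namely that \emph{every} norm-one bounded functional on $\hat X$ already arises as a point of the interpretation, i.e.\ that the interpretation ``loses no functionals.'' This is precisely where the force of the absoluteness machinery (Theorem~\ref{absolutenesstheorem}, Theorem~\ref{shoenfieldabsoluteness}) is needed; alternatively one can bypass it and verify directly that $B'$ together with the evident map is the $\le$-largest preinterpretation of $B$, extracting the required reduction out of an arbitrary preinterpretation coordinate by coordinate via a bisection and compactness argument on each evaluation $\phi\mapsto\phi(x)$.
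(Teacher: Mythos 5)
Your argument is correct in substance but follows a genuinely different route from the paper. The paper works directly with the weak${}^*$ basis: it defines basic open sets $U(g)$ from finite subsets of $X$ and rational intervals, shows their $\pi$-images form a basis of the unit ball of $(\hat X)^*$, proves that membership and finite covering relations among basic sets transfer (this is where the Hahn--Banach theorem inside $M$ together with analytic absoluteness on finite-dimensional subspaces is used), and then applies the canonical-extension criterion of Proposition~\ref{testproposition}, the completeness hypothesis being supplied by compactness of the dual ball in $V$ (Banach--Alaoglu). You instead realize the dual ball, inside $M$, as the closed set $C$ of additive points of the compact product $\prod_{x\in X}[-\|x\|,\|x\|]$, invoke Theorem~\ref{compactproducttheorem} for the arbitrary product of compact factors and the closed-subspace case of Theorem~\ref{subspacetheorem} (note that Corollary~\ref{subspacecorollary}, which you cite, is about $G_\delta$ subsets; a closed subset of this product need not be $G_\delta$), identify the interpretation of $C$ with the additive points of the interpreted product, and finish with a purely functional-analytic identification of that set with the dual ball of $\hat X$: unique continuous extension of a $1$-Lipschitz additive map from the dense isometric copy $\pi''X$, plus the equicontinuity fact that bounded nets of functionals converge pointwise on $\hat X$ iff they converge on a dense set. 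This buys a proof that avoids Hahn--Banach in $M$ altogether and delegates the set-theoretic content to the product and subspace theorems; the paper's proof needs no auxiliary product and its basis correspondence is reused for the bracket clause of Theorem~\ref{weakstartheorem}, which your argument does not address (and the simplified statement does not require).

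One step needs repair as written: the identification $\pi(C)=\{t\in\hat P\colon t_{x+y}=t_x+t_y \text{ for all } x,y\in X\}$ cannot be obtained from Theorem~\ref{shoenfieldabsoluteness} as you invoke it. That theorem assumes $M$ contains all ordinals, an assumption absent from Theorem~\ref{weakstartheorem}, and your defining condition is an infinite conjunction indexed by pairs from $X$ rather than a single $\Pi_1$ formula of an interpretable structure (joint evaluation on $X\times P$ is not continuous, so it cannot be packaged as one continuous function of the structure). The repair stays inside your framework and needs no absoluteness machinery: for a fixed pair $x,y$, the set $C_{x,y}$ is the zero set of the continuous map $g_{x,y}(t)=t_{x+y}-t_x-t_y$, whose interpretation is given by the same formula on $\hat P$ by the uniqueness clause of Theorem~\ref{functiontheorem}(1), since coordinate projections are interpreted as coordinate projections and the interpretation maps of the interval factors are inclusions; the interpretation of the open set $\mathbb{R}\setminus\{0\}$ is $\mathbb{R}\setminus\{0\}$ computed in $V$, so Theorem~\ref{functiontheorem}(2) yields $\pi(C_{x,y})=\hat g_{x,y}^{-1}(\{0\})$; and since interpretations commute with arbitrary unions of open sets from $M$, hence with arbitrary intersections of closed sets, $\pi(C)=\bigcap_{x,y}\pi(C_{x,y})$ is exactly the set of additive points, which is what you needed. (A minor further caveat: over complex scalars additivity plus the coordinate bounds gives only real-linearity, so $C$ must also be cut out by $t_{ix}=it_x$; over $\mathbb{R}$, as throughout the paper, your computation is fine.)
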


\begin{theorem}
\textnormal{(Theorem~\ref{hardbanachtheorem} simplified)}
The interpretation of the normed dual of a uniformly convex Banach space $X$ is the normed dual of the interpretation of $X$.
\end{theorem}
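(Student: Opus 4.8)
The plan is to construct a canonical isometric isomorphism $\Phi\colon\widehat{X^*}\to(\hat X)^*$ from the interpretation of the normed dual $X^*$ of $X$, regarded as a structure $\langle X^*,X,+,\cdot,\|\cdot\|,\langle\cdot,\cdot\rangle\rangle$ (the evaluation pairing $\langle\cdot,\cdot\rangle\colon X^*\times X\to\mathbb{R}$ is jointly norm-continuous, since a separately continuous bilinear map on a product of Banach spaces is jointly continuous, so it is a legitimate continuous function of the structure with closed domain), onto the genuine normed dual of $\hat X$. Since Banach spaces are interpreted faithfully as Banach spaces, $\widehat{X^*}$ is a Banach space under the interpreted norm and $\pi''X^*$ is a dense subset of it; and by the product and Borel machinery the pairing has an interpretation $\widehat{\langle\cdot,\cdot\rangle}\colon\widehat{X^*}\times\hat X\to\mathbb{R}$. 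Set $\Phi(\hat f)(\hat x)=\widehat{\langle\cdot,\cdot\rangle}(\hat f,\hat x)$. Bilinearity of the pairing and the inequality $|\langle f,x\rangle|\le\|f\|\,\|x\|$ are $\Pi_1$ over the structure, hence by analytic absoluteness (Theorem~\ref{absolutenesstheorem}) they hold in the interpreted structure; so $\Phi(\hat f)\in(\hat X)^*$ with $\|\Phi(\hat f)\|\le\|\hat f\|$, the map $\Phi$ is linear, and $\Phi\circ\pi$ is the evident embedding $X^*\to(\hat X)^*$.

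First I would show $\Phi$ is isometric. Fix $f_n\in X^*$ with $\pi(f_n)\to\hat f$ in $\widehat{X^*}$. For each $n$ and $k\in\gw$ the $\gS_1$ assertion ``there is $x$ with $\|x\|\le1$ and $|\langle f_n,x\rangle|>\|f_n\|-1/k$'' is true in $M$, as the dual norm is the supremum of $|\langle f_n,\cdot\rangle|$ over the unit ball, hence it is true in the interpreted structure, so $\|\Phi(\pi f_n)\|>\|f_n\|-1/k$; letting $k\to\infty$ and using the contraction bound, $\|\Phi(\pi f_n)\|=\|f_n\|$, which is the interpreted norm of $\pi(f_n)$. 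By continuity of $\Phi$ and of the two norms, $\|\Phi(\hat f)\|=\|\hat f\|$, so $\Phi$ is an isometric embedding.

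The hard part is surjectivity of $\Phi$, and this is exactly where uniform convexity enters. In $M$, uniform convexity of $X$ implies, by the Lindenstrauss duality of the moduli of convexity and smoothness, that $X^*$ is uniformly smooth with a fixed modulus $\psi$ satisfying $\psi(\tau)/\tau\to0$ as $\tau\to0$; and ``the modulus of smoothness of $X^*$ is dominated by $\psi$'' is a $\Pi_1$ sentence about $\langle X^*,+,\cdot,\|\cdot\|\rangle$, so it transfers and $\widehat{X^*}$ is uniformly smooth, hence reflexive. Therefore $B_{\widehat{X^*}}$ is weakly compact. Moreover $\Phi$ is continuous from the weak topology of $\widehat{X^*}$ to the weak${}^*$ topology of $(\hat X)^*$, because for each fixed $\hat x\in\hat X$ the map $\hat f\mapsto\widehat{\langle\cdot,\cdot\rangle}(\hat f,\hat x)$ is a bounded linear functional on $\widehat{X^*}$. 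Hence $\Phi(B_{\widehat{X^*}})$ is weak${}^*$-compact, so weak${}^*$-closed in $(\hat X)^*$, and it lies inside $B_{(\hat X)^*}$ because $\Phi$ is isometric. On the other hand, Theorem~\ref{weakstartheorem} identifies $\widehat{(B_{X^*},w^*)}$ with $(B_{(\hat X)^*},w^*)$, and under this identification the dense set $\pi''B_{X^*}$ is sent to $\{\Phi(\pi f)\colon f\in B_{X^*}\}\subseteq\Phi(B_{\widehat{X^*}})$; so $\Phi(B_{\widehat{X^*}})$ is weak${}^*$-dense in $B_{(\hat X)^*}$. A weak${}^*$-closed subset of $B_{(\hat X)^*}$ that is weak${}^*$-dense in it equals it, and by homogeneity $\Phi$ is onto. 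Being also an isometry, $\Phi$ is an isometric isomorphism; since $\Phi(\hat f)(\hat x)=\widehat{\langle\cdot,\cdot\rangle}(\hat f,\hat x)$ it intertwines the two pairings with $\hat X$, which is the required identification.

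I expect the main obstacle to be precisely the weak compactness of $B_{\widehat{X^*}}$ that makes $\Phi(B_{\widehat{X^*}})$ weak${}^*$-closed: reflexivity is not preserved by interpretation in general, so mere reflexivity of $X$ would not suffice, and the role of uniform convexity is to force reflexivity of the dual through a $\Pi_1$ modulus-of-smoothness estimate, which the interpretation functor does respect. A more routine but still necessary layer is the bookkeeping identifying the image of $f\in B_{X^*}$ under the weak${}^*$-interpretation with $\Phi(\pi f)$ and checking that $\Phi$ respects the linear structure and the pairing; these follow from uniqueness of interpretations together with $\Pi_1$-transfer of the relevant equations.
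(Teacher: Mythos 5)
Your proposal is correct, but it reaches surjectivity by a genuinely different mechanism than the paper. The paper first shows (via Corollary~\ref{metrizablecorollary} and analytic absoluteness) that the interpretation of $X^*$ sits inside $(\hat X)^*$ as a norm-closed, convex, isometrically embedded subspace, proves by a Hahn--Banach argument (Claim~\ref{weakstarclaim}) that the image of $X^*$ is weak${}^*$-dense in $(\hat X)^*$, and then transfers uniform convexity to $\hat X$ itself as a single $\Pi_1$ sentence (Theorem~\ref{easybanachtheorem}); Milman--Pettis makes $\hat X$ reflexive, so the weak and weak${}^*$ topologies on $(\hat X)^*$ coincide, and Mazur's theorem (norm-closed convex sets are weakly closed) forces the embedded copy to be all of $(\hat X)^*$. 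You instead transfer uniform smoothness of $X^*$ (Lindenstrauss duality plus a $\Pi_1$ modulus-of-smoothness bound) to make $\widehat{X^*}$ reflexive, and close by compactness: the unit ball of $\widehat{X^*}$ is weakly compact, $\Phi$ is weak-to-weak${}^*$ continuous, so $\Phi(B_{\widehat{X^*}})$ is weak${}^*$-closed in $B_{(\hat X)^*}$, and its weak${}^*$-density --- imported from Theorem~\ref{weakstartheorem}, whose proof does not depend on the present theorem, so there is no circularity --- gives equality. Both routes share the essential insight that non-transferable reflexivity must be replaced by a transferable $\Pi_1$ rotundity/smoothness estimate, and both need the same Hahn--Banach density input; yours yields, as a by-product, uniform smoothness and reflexivity of the interpreted dual and avoids Mazur, while the paper's is slightly leaner in that it transfers only the uniform convexity of $X$, whereas your modulus transfer requires either placing the modulus function into the interpretable structure or replacing it by a countable family of rational inequalities. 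Do make that point explicit, along with the identification of the point map of Theorem~\ref{weakstartheorem} with $f\mapsto\Phi(\pi f)$ (two continuous functions on $\hat X$ agreeing on the dense set $\pi''X$), which you correctly flag as bookkeeping.
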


\begin{question}
Is an interpretation of a reflexive space always reflexive? Is the normed dual of a reflexive space always interpreted as the normed dual of the interpretation?
\end{question}

\begin{question}
\textnormal{(Ilijas Farah)} Is the interpretation of a simplex again a simplex?
\end{question}

An important feature of interpretations is that they behave in a predictable way if more models of set theory are present:

\begin{theorem}
Suppose that $M_0\subset M_1$ are transitive models of set theory, $M_0\models \langle X_0, {\tau}_0\rangle$ is an interpretable space, $M_1\models \pi_0\colon\langle X_0, {\tau}_0\rangle\to \langle X_1, {\tau}_1\rangle$ is an interpretation over $M_0$, and $\pi_1\colon\langle X_1, {\tau}_1\rangle\to\langle X_2, {\tau}_2\rangle$ is an interpretation over $M_1$. Then $\pi_1\circ\pi_0\colon\langle X_0, {\tau}_0\rangle\to\langle M_2, {\tau}_2\rangle$ is an interpretation over $M_0$.
\end{theorem}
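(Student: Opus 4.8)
The plan is to verify the four clauses of the preinterpretation definition for the composite map $\pi = \pi_1 \circ \pi_0$, viewed as a map from $\langle X_0, \tau_0\rangle$ in $M_0$ into $\langle X_2, \tau_2\rangle$, and then to establish its $\leq$-maximality among all preinterpretations over $M_0$. Here $\pi$ acts on points by literal composition, and on open sets of $\tau_0$ by $O \mapsto \pi_1(\pi_0(O))$ — noting that $\pi_0(O) \in \tau_1$ lies in the domain of $\pi_1$'s action on open sets, so this is well-defined. For clause (1), the chain of equivalences $x \in O \liff \pi_0(x) \in \pi_0(O) \liff \pi_1(\pi_0(x)) \in \pi_1(\pi_0(O))$ follows by applying clause (1) of the two given interpretations in turn; one must be slightly careful that the second equivalence requires $\pi_0(O) \in \tau_1$, which holds. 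For clause (2), if $M_0 \models O = \bigcup_{i \in I} O_i$, then since $\pi_0$ is an interpretation over $M_0$ we get $\pi_0(O) = \bigcup_i \pi_0(O_i)$ in $M_1$; but $M_1$ is a transitive model, so $M_1$ correctly sees this union, and then $\pi_1$ being an interpretation over $M_1$ yields $\pi_1(\pi_0(O)) = \bigcup_i \pi_1(\pi_0(O_i))$. The same argument with finite intersections handles the other half of (2), and clause (3) is immediate from $\pi_0(0)=0$, $\pi_0(X_0)=X_1$, $\pi_1(0)=0$, $\pi_1(X_1)=X_2$.

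Clause (4) requires more care: I must show $\pi''\tau_0 = \{\pi_1(\pi_0(O)) : O \in \tau_0\}$ is a basis for $\tau_2$. Since $\pi_1$ is an interpretation over $M_1$, the collection $\pi_1''\tau_1$ is a basis for $\tau_2$; so it suffices to show every $\pi_1(U)$ with $U \in \tau_1$ is a union of sets of the form $\pi_1(\pi_0(O))$ with $O \in \tau_0$. Now $\pi_0$ is an interpretation over $M_0$, so $\pi_0''\tau_0$ is a basis for $\tau_1$ \emph{as computed in $M_1$}; thus $M_1 \models U = \bigcup\{\pi_0(O) : O \in \tau_0,\ \pi_0(O) \subseteq U\}$. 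Applying clause (2) of the composite (just proved) — or directly the commutation of $\pi_1$ with unions existing in $M_1$ — gives $\pi_1(U) = \bigcup\{\pi_1(\pi_0(O)) : O \in \tau_0,\ \pi_0(O)\subseteq U\}$, as desired. This establishes that $\pi$ is a preinterpretation of $\langle X_0,\tau_0\rangle$ over $M_0$.

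It remains to show $\pi$ is the $\leq$-largest such preinterpretation, i.e. that $\pi$ \emph{is} the interpretation; since $\langle X_0,\tau_0\rangle$ is interpretable (hence regular Hausdorff), the interpretation over $M_0$ exists by the cited existence theorem, so we only need that it is reducible to $\pi$. Equivalently, by the characterization of interpretations as maximal preinterpretations, it suffices to check that $\pi$ reduces every preinterpretation of $\langle X_0, \tau_0\rangle$ over $M_0$ — or, more efficiently, to invoke uniqueness: the interpretation over $M_0$ is unique, so I just need to show $\pi$ satisfies whatever characterizing property the existence theorem provides. The cleanest route is this: let $\rho\colon X_0 \to \bar X$ be \emph{the} interpretation over $M_0$. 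Then $\pi_0$ factors through $\rho$ via a reduction $h_0\colon \bar X \to X_1$ with $\pi_0 = h_0 \circ \rho$ and $h_0^{-1}\rho(O) = \pi_0(O)$. I claim $\pi_1 \circ h_0\colon \bar X \to X_2$ witnesses $\pi \leq \rho$ is actually an equivalence, by producing an inverse reduction. The genuine obstacle is the reverse direction: showing $\rho \leq \pi$, i.e. that $\pi$ is \emph{at least as large} as the true interpretation.

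For this I expect to argue as follows. The existence theorem constructs the interpretation of a regular Hausdorff space concretely (as a space of suitable ultrafilter-like or Cauchy-filter-like objects on $\tau_0$), and the universal property is that any preinterpretation reduces to it. So it is enough to show: $\pi$ reduces to $\rho$ (done above, giving a reduction $h\colon X_2 \to \bar X$), and $h$ is a bijection. Injectivity and surjectivity of $h$ will follow from the fact that $\langle X_2, \tau_2\rangle$, being the interpretation over $M_1$ of the interpretable space $\langle X_1, \tau_1\rangle$, is itself interpretable and in particular carries no ``redundant'' or ``missing'' points relative to the filters on $\tau_0$ — concretely, one checks that the map sending a point $y \in X_2$ to the filter $\{O \in \tau_0 : \pi(O) \ni y\}$ on $\tau_0$ lands in the underlying set of $\bar X$ and is inverse to $\pi$ composed with $\rho$'s structure map. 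I anticipate the main work is precisely in verifying that this filter is ``complete'' in the sense demanded by the construction of $\bar X$ — this is where the interpretability hypothesis on $X_0$ (Čech-completeness of an open preimage, hence a workable completeness structure surviving to $X_1$ and then to $X_2$) is genuinely used, and where one must trace the completeness structure through two interpretation steps. Once $h$ is seen to be a bijective reduction, $\pi$ and $\rho$ are equivalent preinterpretations, so $\pi$ is the interpretation of $\langle X_0,\tau_0\rangle$ over $M_0$, completing the proof.
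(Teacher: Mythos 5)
Your first two paragraphs (the verification that $\pi_1\circ\pi_0$ is a preinterpretation over $M_0$) are fine and correspond to the paper's one-sentence remark that the composition is clearly a preinterpretation of any space. The problem is the second half. Everything that makes the theorem true for \emph{interpretable} spaces is concentrated in showing that the composite is $\leq$-maximal, and there your text is a plan rather than a proof: after producing the reduction $h\colon X_2\to\bar X$ to the interpretation $\rho$ over $M_0$ (where, incidentally, you state the reduction with the arrows and the factorization reversed relative to the paper's definition — the reduction witnessing $\pi\leq\rho$ goes from $X_2$ to $\bar X$ and satisfies $\rho=h\circ\pi$), the decisive point is surjectivity of $h$, i.e.\ that $\bar X$ has no points invisible to $X_2$. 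You correctly identify that this is where interpretability enters and that one must ``trace the completeness structure through two interpretation steps,'' but you then defer exactly that verification (``I anticipate the main work is precisely in verifying that this filter is complete''). Since this is the step that fails for non-interpretable spaces such as $\gw^{\gw_1}$ (Example~\ref{productexample}), leaving it unexecuted is a genuine gap, not a routine omission.

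The paper closes this gap with Theorem~\ref{sievetheorem}, which was designed for precisely this purpose: a preinterpretation which carries a completeness system (equivalently, a complete sieve) of the ground model to a completeness system on its target is automatically \emph{the} interpretation, and conversely the interpretation always preserves such systems. Fix a complete sieve $\langle S, O(s)\colon s\in S\rangle$ on $X_0$ in $M_0$; applying Theorem~\ref{sievetheorem} in $M_1$ to the interpretation $\pi_0$ gives that $\langle S,\pi_0(O(s))\colon s\in S\rangle$ is a complete sieve on $X_1$ in $M_1$, applying it again in $V$ to $\pi_1$ gives that $\langle S,\pi_1(\pi_0(O(s)))\colon s\in S\rangle$ is a complete sieve on $X_2$, and then the uniqueness clause of Theorem~\ref{sievetheorem}, applied over $M_0$ to the preinterpretation $\pi_1\circ\pi_0$, yields at once that it is the interpretation. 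If you prefer your filter-theoretic route, you would in effect be reproving that theorem by hand inside the Wallman-type construction of $\bar X$; invoking it directly both completes your argument and shortens it to a few lines.
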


\noindent The conclusion of this theorem may fail for such spaces as $\gw^{\gw_1}$ or $\mathbb{R}^{\mathbb{R}}$.

\begin{theorem}
Suppose that $X$ is an interpretable space, and $M$ is an elementary submodel of some large structure containing
$X$ as an element and some basis of $X$ as an element and a subset. Then the identity map from $X\cap M$ to $X$
can be extended to an interpretation of $X^M$ to $X$.
\end{theorem}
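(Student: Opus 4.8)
The plan is to work with the transitive collapse. Let $N$ be the transitive collapse of $M$ and $j\colon N\to M$ the inverse collapsing isomorphism, so that $j$ is an elementary embedding of $N$ into the ambient structure, which we may take to be $H_\gq$ for a suitably large regular $\gq$; write $\bar a=j^{-1}(a)$ and identify $\bar a\in N$ with $a=j(\bar a)$ where convenient. Under this identification the space $X^M$ has underlying set $X\cap M$ and the family $\{O\cap M\colon O\in\basis\}$ as a basis --- here the hypothesis $\basis\subseteq M$ is exactly what makes each basic open set $O$ of $X$ descend to the basic open set $O\cap M$ of $X^M$ --- and ``the identity map from $X\cap M$ to $X$'' is the map $\pi$ that sends $\bar x$ to $x$ and, once extended, sends an open set $U\in\tau\cap M$ of $X^M$ to $U$ itself. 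Two consequences of the hypotheses are worth recording at the outset. First, ``regular Hausdorff'' and ``interpretable'' are first-order properties of $\langle X,\basis\rangle$, hence reflected down to $X^M$, so by Theorem~\ref{existencetheorem} and Corollary~\ref{interpretablecorollary} the space $X^M$ has an interpretation and it is again interpretable; in particular $X^M$, like $X$, is one of the ``complete'' spaces to which Proposition~\ref{testproposition} applies. Second, $X\cap M$ is dense in $X$: every nonempty $O\in\basis$ lies in $M$, so $M\models O\neq 0$ and $O$ meets $X\cap M$.

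The next step is to verify that $\pi$, extended by $\pi(U)=U$ for open $U\in\tau\cap M$, is a preinterpretation of $X^M$ into $X$. Clause~(1) holds because membership $x\in U$ is absolute between $M$ and $V$ for $x\in X\cap M$; clause~(3) is immediate; clause~(4) holds because $\pi''\tau^{X^M}=\tau\cap M$ contains the basis $\basis$ of $\tau$; and the finite-intersection half of clause~(2) transfers by elementarity of $M$ in $H_\gq$ and then down to $V$. The one point needing a small argument is the arbitrary-union half of clause~(2): if an open set $U$ of $X$ in $M$ is $\bigcup\mathcal A$ for a family $\mathcal A\in M$ of open sets, one refines $\mathcal A$ to the family $\mathcal C=\{O\in\basis\colon O\subseteq V\text{ for some }V\in\mathcal A\}$, which still has union $U$, which lies in $M$, and which --- crucially, since $\basis\subseteq M$ --- is a subset of $M$; by Tarski--Vaught each $O\in\mathcal C$ already has a member of $\mathcal A\cap M$ above it, so $\bigcup(\mathcal A\cap M)=\bigcup\mathcal C=U$, which is exactly $\pi(U)=\bigcup_{V\in\mathcal A}\pi(V)$.

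It remains to show this preinterpretation is $\leq$-largest. The cleanest route is to invoke Proposition~\ref{testproposition}: the map $\pi$ on points goes into the complete space $X$, and its criterion for extending to an interpretation is phrased in terms of a fixed completeness structure for $X$ --- by the definition of interpretability, a continuous open surjection onto $X$ from a {\v C}ech-complete space together with its associated sequence of open covers --- which by elementarity can be chosen inside $M$, so that its collapse is precisely the completeness structure of $X^M$; the test condition then becomes a statement about basic open sets of $X$ all lying in $M$, and holds by absoluteness between $M$ and $V$. I would also carry out the equivalent hands-on check, since it isolates the real issue. Let $\chi\colon X^M\to\hat X^*$ be the interpretation and $h\colon X\to\hat X^*$ a reduction witnessing $\pi\leq\chi$, so $\chi=h\circ\pi$ and $h^{-1}(\chi(U))=U$ for open $U\in\tau\cap M$; the goal is that $h$ be a homeomorphism, whence $\pi$ is equivalent to $\chi$ and is itself an interpretation. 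Continuity of $h$ is immediate from those equalities (the sets $\chi(U)$ form a basis of $\hat X^*$), openness follows once $h$ is onto (then $h''O=\chi(O)$ for $O\in\basis$), and injectivity is short: given $y_0\neq y_1$, separate them by disjoint $O_0,O_1\in\basis$; these lie in $M$, so their disjointness descends to $X^M$, whence $\chi(O_0)\cap\chi(O_1)=\chi(0)=0$ by clause~(2) for $\chi$, while $y_\ell\in O_\ell=h^{-1}(\chi(O_\ell))$ puts $h(y_\ell)$ in $\chi(O_\ell)$.

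The main obstacle is surjectivity of $h$, which is where the completeness of $X$ and the elementarity of $M$ genuinely enter. Given $z\in\hat X^*$, form $F=\{O\in\basis\colon z\in\chi(O)\}$; by the construction behind Theorem~\ref{existencetheorem} this is a filter base of basic open sets of $X^M$, regular (using regularity of $\hat X^*$ and continuity of $h$) and ``complete'' relative to the completeness structure that $\hat X^*$ inherits from $X^M$, and $z$ is the unique point of $\hat X^*$ in $\bigcap_{O\in F}\chi(O)$. The crux is to show $\bigcap_{O\in F}\mathrm{cl}_X(O)\neq 0$: since the completeness structure of $X^M$ used to build $\chi$ is the collapse of one chosen inside $M$ for $X$, ``completeness'' of $F$ relative to $X^M$ translates to ``completeness'' of $F$ relative to $X$, and then the genuine completeness of $X$ (Corollary~\ref{cechcorollary}, Corollary~\ref{interpretablecorollary}) supplies a point of $\bigcap_{O\in F}\mathrm{cl}_X(O)$. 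Any such point $y$ lies, by regularity of $F$, in every $O\in F$, so $h(y)\in\chi(O)$ for all such $O$ and hence $h(y)=z$. With $h$ a homeomorphism, the theorem follows. The part I expect to require the most care is matching the word ``complete'' as it is applied to the filter $F$ inside the construction of $\chi$ with what elementarity of $M$ actually delivers about $X$; this is where the hypothesis that a basis of $X$ lies in $M$ does its essential work, both here and through the density of $X\cap M$ used for injectivity.
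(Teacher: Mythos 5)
Your route is the paper's: pass to the transitive collapse, check by elementarity (together with the basis hypothesis and a Tarski--Vaught argument) that the identity map is a preinterpretation, and then certify that it is the interpretation by transporting a completeness structure chosen inside $M$, through Theorem~\ref{sievetheorem} or Proposition~\ref{testproposition}. The preinterpretation half of your argument is correct, and in fact more explicit than the paper's own text about why arbitrary unions taken in $M$ are preserved.

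The gap is the step you yourself flag at the end as needing the most care, and it is precisely the content of the paper's proof. Choosing a completeness witness for $X$ as an \emph{element} of $M$ does not make ``its collapse precisely the completeness structure of $X^M$'' in any usable sense: the collapse records only the trace of the witness on $M$, so what appears in the collapsed model is the collapse of the $M$-part of the witness, and you still owe the argument that this $M$-part remains a complete structure for $X$ in $V$ --- which is exactly the translation of ``complete'' that you postpone. With the witness you propose (a continuous open surjection onto $X$ from a {\v C}ech-complete space together with its covers) this is especially awkward, since the auxiliary space has no reason to meet $M$ in anything relevant. The paper's fix is to work with a complete sieve for $X$ and refine it so that every set $O(s)$ is drawn from the basis $\gs\subseteq M$ and distinct sibling nodes carry distinct sets; by elementarity such a sieve can be found in $M$, and, its nodes then being coded by finite sequences from $\gs$, it is literally a \emph{subset} of $M$. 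Consequently its collapse is a complete sieve on $X^M$ whose image under your map is the original complete sieve on $X$, and Theorem~\ref{sievetheorem} applies at once. Your hands-on surjectivity argument for the reduction $h$ can be completed with the same ingredient (thread an infinite branch of a sieve lying inside $M$ through the $\chi$-neighborhoods of $z$, get the finite intersection property by elementarity, and invoke the genuine completeness of that sieve on $X$), but as written the crucial passage of completeness from $X^M$ to $X$ is asserted rather than proved, and that assertion is where the basis hypothesis has to be cashed in.
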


\noindent The conclusion fails for every non-Polish second countable space $X$ and countable model $M$.

As a final note, I list the regularity properties of topological spaces which are preserved under interpretations.

\begin{theorem}
\textnormal{(Corollary~\ref{preservationcorollary})}
The following properties of interpretable spaces are preserved under interpretations:

\begin{enumerate}
\item local Lindel{\"o}fness;
\item local connectedness;
\item local paracompactness;
\item local metacompactness.
\end{enumerate}
\end{theorem}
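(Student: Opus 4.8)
The four properties are all local, and for each of them ``$X$ is locally P'' means, for a regular space, that $X$ has a basis of open sets $O$ for which $\bar O$ (the closure, a neighbourhood of each of its points) has property P: for the last three because P passes to closed subspaces, so after shrinking a given P\nobreak-neighbourhood inside a closed P\nobreak-set one invokes regularity, and for connectedness because components of open sets are open. Since $M\models X$ is locally P, such a basis $\mathcal O\in M$ exists. The plan is to check that $\pi''\mathcal O$ is a basis of $\hat X$ and that each $\overline{\pi(O)}$ again has the relevant property. The first point is immediate from the axioms of an interpretation: by clause~(2), every $\pi(T)$ with $T\in{\tau}$ is the union of the sets $\pi(O)$, $O\in\mathcal O$, $O\subseteq T$, and by clause~(4) the sets $\pi(T)$ form a basis of $\hat X$.

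Local connectedness then follows with no further work. The map $\pi\colon X\to\hat X$ is continuous, since $\pi^{-1}\pi(O)=O$ by clause~(1). If $O\in\mathcal O$ is connected in $M$ it is still connected in $V$, because any separation of $O$ consists of the restrictions of two members of ${\tau}$ and ${\tau}\in M$; hence $\pi''O$ is a connected subset of $\hat X$. Moreover $\pi''O$ is dense in $\pi(O)$: a nonempty basic open subset of $\pi(O)$ has the form $\pi(O'\cap O)$ by clause~(2), hence contains $\pi(x)$ for any $x\in O'\cap O$. A set lying between a connected set and its closure is connected, so $\pi(O)$ is connected, and $\hat X$ carries the basis $\pi''\mathcal O$ of connected open sets; thus $\hat X$ is locally connected.

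For the remaining three properties one must identify $\overline{\pi(O)}$. A direct computation from clauses~(1) and~(2) gives $\hat X\setminus\pi(X\setminus\bar O)=\overline{\pi(O)}$; since the interpretation functor commutes with passage to a closed subspace (Corollary~\ref{subspacecorollary}), this closed set, with its subspace topology, is exactly the interpretation of the closed --- hence interpretable --- subspace $\bar O$ of $X$. So everything reduces to the whole\nobreak-space assertion that the interpretation of an interpretable space that is Lindel\"of, paracompact, or metacompact is again Lindel\"of, paracompact, or metacompact respectively; granting this, $\hat X$ has the basis $\pi''\mathcal O$ of open sets with P\nobreak-closures and is locally P.

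This whole\nobreak-space assertion is the heart of the matter, and I would prove it through the structure theory of interpretable spaces together with preservation facts already established. An interpretable space is a continuous open image of a {\v C}ech\nobreak-complete space; the structure theory of such spaces shows that when it is in addition Lindel\"of or paracompact it is {\v C}ech\nobreak-complete, and that a Lindel\"of (respectively paracompact) {\v C}ech\nobreak-complete space is a perfect preimage of a Polish (respectively completely metrizable) space. Applying the interpretation functor --- which sends completely metrizable spaces to completely metrizable spaces (Corollary~\ref{metrizablecorollary}) and commutes with perfect quotients, hence with perfect maps (Corollary~\ref{perfectquotientcorollary}) --- turns this into the statement that the interpretation is a perfect preimage of a Polish (respectively completely metrizable) space, and such spaces are Lindel\"of (respectively paracompact). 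The metacompact case reduces to the paracompact one, since the same structure theory makes a metacompact interpretable space paracompact, so its interpretation is paracompact and a fortiori metacompact. The main obstacle is precisely this structural analysis and the verification that the functor respects it; the localization carried out above is then routine bookkeeping with clauses~(1)--(4), the continuity of $\pi$, and the density of $\pi''X$ in $\hat X$.
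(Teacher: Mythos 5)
Your proposal diverges from the paper's proof (which handles all four items at once by the ``good cover property'' machinery of Definition~\ref{gooddefinition} and a sieve argument), and two of your four cases have genuine gaps. First, local connectedness: your claim that $\pi''O$ is connected is false, and the absoluteness argument behind it (``any separation of $O$ consists of the restrictions of two members of $\tau$'') overlooks that in $V$ the relatively open subsets of $O$ are arbitrary $V$-unions of members of $\tau$, not single members. Concretely, let $O=(0,1)$ in $M$ and pass to an extension with a new real $r\in(0,1)$: then $\pi''O=(0,1)\cap M$ is a totally disconnected subspace of $\hat X=\mathbb{R}$, cut by $r$; the interpretation $\pi(O)=(0,1)^V$ is connected, but that is exactly the nontrivial fact, and it cannot be reached through $\pi''O$ and its closure. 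Proving that $\pi(O)$ is connected requires pulling a putative disconnection (a $V$-family of $M$-open sets) back into $M$, which is precisely where the paper's wellfoundedness/complete-sieve argument (Theorem~\ref{sievetheorem}, via the ``chained cover'' property) does the work.

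Second, local metacompactness: the reduction ``a metacompact interpretable space is paracompact'' is false. The Dieudonn\'e plank is locally compact Hausdorff, hence {\v C}ech complete, hence interpretable, and it is metacompact but not paracompact (it is not even normal). So the metacompact case does not follow from the Frol{\'\i}k-type structure theory at all, and you are left with no argument for it. For the Lindel{\"o}f and paracompact cases your route is sound in outline --- it is essentially the sharpening the paper itself mentions after Corollary~\ref{preservationcorollary}, using Frol{\'\i}k's theorem together with Theorem~\ref{perfectmappingtheorem} and Corollary~\ref{metrizablecorollary} --- but it leans on the unproved (though true, and in the cited sieve-completeness literature) fact that a paracompact sieve-complete space is {\v C}ech complete, which you should state and source; also, the identification of $\overline{\pi(O)}$ with the interpretation of the closed subspace $\overline{O}$ is Theorem~\ref{subspacetheorem}(2), not Corollary~\ref{subspacecorollary}, since closed subsets of interpretable spaces need not be $G_\delta$. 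The paper's uniform argument avoids all of these external inputs and covers the two cases your approach cannot.
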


In Section~\ref{borelsection} I show how the interpretation functor can be canonically extended to topological spaces which are not interpretable, such as $C_p(\mathbb{R})$. Instead of viewing them as pure topological spaces, equip them with a Borel structure and then ask for the interpretations to respect the countable union and intersection operations on Borel sets. The basic features of the theory of interpretations of Borel spaces are parallel to pure topological spaces. I also show that in the class of proper bounding forcing extensions, this notion essentially coincides with interpretations of pure topological spaces.

\begin{theorem}
\textnormal{(Theorem~\ref{boundingtheorem} simplified)}
Suppose that $V$ is a proper bounding extension of $V$.
Suppose $M\models\langle X, \tau, \mathcal{B}\rangle$ is a regular Hausdorff space with a Borel $\gs$-algebra. If $\pi\colon \langle X, \tau\rangle\to\langle \hat X, \hat\tau\rangle$ is a interpretation of the topological space then $\pi$ extends to an interpretation of the Borel space.
\end{theorem}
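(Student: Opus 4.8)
The plan is to extend $\pi$ to all of $\mathcal{B}$ by recursion on Borel codes computed in $M$, and essentially all of the content is the verification that this recursion is well defined; this is the step where properness and boundedness are consumed. For each $B\in\mathcal{B}$ fix a Borel code $c\in M$: a wellfounded tree $T_c$ with elements of $\tau$ at its terminal nodes and formal symbols for complement and countable union at its interior nodes, evaluating to $B$ in $M$. Given such a $c\in M$, define a subset $\pi(c)\subseteq\hat X$ by recursion along $T_c$: at a terminal open set $O$ put $\pi(c)=\pi(O)$, at a complement node put $\pi(c)=\hat X\setminus\pi(d)$, and at a countable union node put $\pi(c)=\bigcup_n\pi(d_n)$. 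Wellfoundedness of the set-sized tree $T_c$ is absolute between $M$ and $V$, so the recursion makes sense in $V$ and produces a definite Borel subset $\pi(c)$ of $\hat X$. By construction $c\mapsto\pi(c)$ agrees with $\pi$ on the code of a single open set and commutes with complements and countable unions (hence countable intersections); an induction along $T_c$ using clause (1) of the topological interpretation gives $x\in B_c\liff\pi(x)\in\pi(c)$ for every $x\in X$.

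To see that $B\mapsto\pi(c)$ does not depend on the choice of code, it suffices to prove that $\pi(c)=\emptyset$ whenever $c\in M$ codes the empty subset of $X$: if $c_0,c_1$ code the same Borel set, then the symmetric difference code $c=(c_0\setminus c_1)\cup(c_1\setminus c_0)$, formed inside $M$, codes $\emptyset$, and $\pi(c)=\pi(c_0)\mathbin{\triangle}\pi(c_1)$ by the commutation properties above. So the theorem reduces to the following. \emph{Main Lemma: if $M\models$ ``$c$ is a Borel code over $\langle X,\tau\rangle$ coding $\emptyset$'', then $\pi(c)=\emptyset$ in $V$.}

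For the Main Lemma I would read the hypothesis through the countably many open sets $O_i\in\tau$, $i\in\gw$, appearing at the terminal nodes of $T_c$. The map $e\colon X\to\cantor$, $e(x)=\{i\colon x\in O_i\}$, is continuous from the regular Hausdorff refinement $\tau'\supseteq\tau$ obtained by declaring each $O_i$ clopen (refining a regular Hausdorff topology by countably many clopen sets preserves both properties), the set $D=\{a\in\cantor\colon c\text{ evaluates to }1\text{ on the incidence pattern }a\}$ is Borel with a code of the same rank as $c$, and $B_c=e^{-1}(D)$; thus ``$c$ codes $\emptyset$'' says exactly $e''X\cap D=\emptyset$. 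Since $\cantor$ is compact metrizable, hence interpretable with interpretation $\cantor^V$, its Borel $\gs$-algebra interprets canonically by Theorem~\ref{boreltheorem} and $D$ is interpreted as the Borel set $D^V$ obtained by re-reading the same code in $V$; moreover $D^V\cap M=D$ by Borel absoluteness, so $e''X\cap D^V=\emptyset$ as well. It remains to show that no \emph{new} point $\hat x\in\hat X$ — one whose incidence pattern $\{i\colon\hat x\in\pi(O_i)\}$ is not of the form $e(x)$ — has its incidence pattern in $D^V$. This is the place the hypotheses are used: one enters the construction of the interpretation, uses properness to reflect a would-be counterexample into a countable elementary submodel of $M$, where the relevant space is second countable and ``$c$ codes $\emptyset$'' becomes a $\Pi^1_1$ statement about a real code and hence Mostowski absolute, and uses $\baire$-boundedness to see that the incidence pattern of an arbitrary point of $\hat X$ is, coordinate by coordinate, captured by countably many ground model points, so it cannot be the first pattern to fall into $D^V$.

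With the Main Lemma, $\pi\colon\mathcal{B}\to\hat{\mathcal{B}}$ is well defined; it extends $\pi$ on open sets, commutes with complements and countable unions and intersections, and satisfies clause (1) for Borel sets, so it is an interpretation of the Borel space $\langle X,\tau,\mathcal{B}\rangle$. Uniqueness is automatic: any map commuting with the Borel operations and agreeing with $\pi$ on $\tau$ must agree with $c\mapsto\pi(c)$ on every code. The delicate point, where I expect all the work to lie, is the final step of the Main Lemma — controlling the incidence patterns of the new points of $\hat X$, i.e.\ the new completely prime filters of $M$-open sets, via properness for the reflection and boundedness for the domination.
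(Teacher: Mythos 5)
Your first two steps coincide with the paper's: extend $\pi$ by recursion along Borel codes computed in $M$, and reduce well-definedness (and with it the whole theorem) to the single statement that a code $c\in M$ with $B_c=\emptyset$ has $\hat B_c=\emptyset$. That reduction is correct and is exactly how the paper (following Theorem~\ref{boreltheorem}) sets things up. The gap is your Main Lemma itself, which is precisely where the covering and bounding hypotheses must be consumed, and the sketch you give for it does not work as stated. Recasting the problem via incidence patterns $e(x)=\{i\colon x\in O_i\}\in\cantor$ is harmless but only reformulates it: $e''X$ is an arbitrary set of reals of $M$ with no definability whatsoever, and $D^V$ is merely Borel, so knowing that the pattern of a new point $\hat x\in\hat X$ is ``captured coordinate by coordinate'' by ground model patterns says nothing about membership in $D^V$ (a pointwise limit of points outside a Borel set can perfectly well lie inside it). The proposed reflection is also problematic: the hypothesis actually used is the countable covering property, not forcing properness, and passing to a countable elementary submodel of $M$ changes the object of study --- the paper's example following Theorem~\ref{submodeltheorem} shows that interpretations over countable models are Polish and that the restriction of the data to a countable submodel need not be an interpretation at all, so the $\Pi^1_1$ statement you would render absolute concerns the collapsed space rather than $\hat X$, and no link back to the given point $\hat x$ is provided.

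What is actually needed, and what the paper does, is a fusion argument through the code. In $M$ one attaches to $c$ a partial order $T_c$ of finite attempts decorated with nonempty closed sets $E(p)\subseteq X$, arranged so that in $M$ every infinite descending sequence $p$ satisfies $\bigcap_n E(p(n))\subseteq B_c$, while in $V$ every $\hat x\in\hat B_c$ yields an infinite descending sequence $p$ with $\hat x\in\bigcap_n\pi(E(p(n)))$. The two hypotheses are then used exactly once: the countable covering property together with domination traps this $V$-built sequence inside a finitely branching subtree $S\in M$ of the tree of attempts. Setting $C_n$ to be the finite union of the closed sets occurring at level $n$ of $S$, one gets a sequence in $M$ with $\hat x\in\bigcap_n\pi(C_n)$; since $\pi$ is a topological interpretation, $\bigcap_n C_n\neq\emptyset$ in $X$, and a K\"onig's lemma argument along the finitely branching $S$ produces a branch whose closed sets all contain some ground model point $y$, whence $y\in B_c$, contradicting $B_c=\emptyset$. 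Nothing in your sketch plays the role of this trapping-into-a-ground-model-finitely-branching-tree step; without it the hypotheses are never genuinely used, and the Main Lemma --- which you correctly identify as carrying all the work --- remains unproved.
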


Finally, in Section~\ref{fremlinsection} I prove that the interpretations introduced by Fremlin \cite{fremlin:top} coincide with the interpretations introduced in the present paper in the special case when $V$ is a generic extension of $M$ in the appropriate categories of spaces.

\section{The category of interpretable spaces}

The basic stepping stone for the interpretation theory developed in this paper is the category of
{\v C}ech complete spaces:

\begin{definition}
A topological space $X$ is \emph{{\v C}ech complete} if it is a $G_\gd$ subspace of a compact Hausdorff space.
\end{definition}

\noindent Thus, every compact Hausdorff space is {\v C}ech complete. So is every locally compact space (as its Alexandroff compactification is a Hausdorff space) and every completely metrizable space (by Fact~\ref{littlefact} below). A classical internal characterization of the class of {\v C}ech complete spaces will be useful:

\begin{definition}
Let $X$ be a topological space. A \emph{complete sequence of covers} for $X$ is a sequence $\langle C_n\colon n\in\gw\rangle$ of open covers such that for every collection $F$ of closed subsets of $X$ which has the finite intersection property and for every $n\in\gw$ contains a subset of ${\mathrm{cl}}(O_n)$ for some $O_n\in C_n$, the intersection $\bigcap F$ is nonempty.
\end{definition}

\begin{fact}
\label{littlefact}
\textnormal{\cite[Theorem 3.9.2]{engelking:general}} A regular Hausdorff space is {\v C}ech complete if and only if it has a complete sequence of covers.
\end{fact}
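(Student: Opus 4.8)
The plan is to prove the two implications separately, extracting the covers from a compactification in one direction and reconstructing a compactification from the covers in the other. \emph{($\Rightarrow$)} Suppose $X=\bigcap_{n}U_{n}$ is a $G_\gd$ subspace of a compact Hausdorff (hence normal) space $Y$, with each $U_n$ open in $Y$. Let $C_n$ consist of all sets $V\cap X$ where $V$ is open in $Y$ and $\mathrm{cl}_Y(V)\subseteq U_n$; normality makes each $C_n$ an open cover of $X$. Now take a family $F$ of sets closed in $X$ with the finite intersection property which for each $n$ contains some $A_n\subseteq\mathrm{cl}_X(O_n)$ with $O_n\in C_n$. The $Y$-closures $\{\mathrm{cl}_Y(A)\colon A\in F\}$ still have the finite intersection property, so by compactness of $Y$ there is $y\in\bigcap_{A\in F}\mathrm{cl}_Y(A)$. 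Writing $O_n=V_n\cap X$ with $\mathrm{cl}_Y(V_n)\subseteq U_n$, one gets $y\in\mathrm{cl}_Y(A_n)\subseteq\mathrm{cl}_Y(O_n)\subseteq U_n$ for every $n$, hence $y\in\bigcap_n U_n=X$; and since each $A\in F$ is closed in $X$, $y\in\mathrm{cl}_Y(A)\cap X=A$, so $y\in\bigcap F$ and $F$ witnesses nothing pathological.

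\emph{($\Leftarrow$)} Fix a Hausdorff compactification $bX$ of $X$; this is available since a regular Hausdorff space carrying a complete sequence of covers is Tychonoff --- or one simply restricts attention to Tychonoff spaces, as is customary here, cf.\ \cite{engelking:general}. For $O$ open in $X$ put $\tilde O=bX\setminus\mathrm{cl}_{bX}(X\setminus O)$, the largest open subset of $bX$ with $\tilde O\cap X\subseteq O$; because $O$ is open in $X$ one checks $O\subseteq\tilde O$, so $G_n:=\bigcup_{O\in C_n}\tilde O$ is open in $bX$ and contains $X$. I claim $X=\bigcap_n G_n$, which exhibits $X$ as a $G_\gd$ in the compact Hausdorff space $bX$ and finishes the proof. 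Only $\bigcap_n G_n\subseteq X$ needs an argument. If $p\in\bigcap_n G_n$, choose for each $n$ an $O_n\in C_n$ with $p\in\tilde O_n$, and let $\mathcal G$ be the filter of closed subsets of $X$ generated by $\{\mathrm{cl}_X(U\cap X)\colon p\in U\text{ open in }bX\}$; density of $X$ in $bX$ makes $\mathcal G$ a proper filter, hence a family of closed sets with the finite intersection property. Since $\tilde O_n$ is a neighbourhood of $p$ and $\mathrm{cl}_X(\tilde O_n\cap X)\subseteq\mathrm{cl}_X(O_n)$ with $O_n\in C_n$, the family $\mathcal G$ meets exactly the hypothesis in the definition of a complete sequence of covers, so $\bigcap\mathcal G\ne\emptyset$; and any $x_0\in\bigcap\mathcal G$ lies in $\mathrm{cl}_{bX}(U)$ for every open $U\ni p$, whence $x_0=p$ by Hausdorffness of $bX$. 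Thus $p\in X$.

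The forward direction is routine once the closures in $Y$ and in $X$ are kept straight. The genuine content --- and the only place the hypothesis is used --- lies in the converse: "completeness of $\langle C_n\rangle$" is precisely what prevents a point of the remainder $bX\setminus X$ from being captured by some $\tilde O_n$ at every level $n$, and I expect the main obstacle to be isolating the right filter $\mathcal G$ on $X$ so that this translation becomes literal. The one background fact invoked without proof is that $X$ admits a Hausdorff compactification at all, i.e.\ its complete regularity, which is standard (and built into the usual statement in \cite{engelking:general}).
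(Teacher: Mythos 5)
The paper does not prove this Fact at all: it is quoted from Engelking (it is Frol\'ik's classical characterization), so there is no in-paper argument to compare yours with. Your proof is essentially the standard one, and both directions check out. In the forward direction, taking $C_n$ to consist of the traces on $X$ of the open sets $V\subseteq Y$ with $\mathrm{cl}_Y(V)\subseteq U_n$ works exactly as you say, and keeping the closures in $Y$ versus $X$ straight is indeed all that is needed. In the converse, the sets $\tilde O=bX\setminus\mathrm{cl}_{bX}(X\setminus O)$ together with the filter $\mathcal G$ generated by the closed traces $\mathrm{cl}_X(U\cap X)$ for open $U\ni p$ correctly reduce the claim $\bigcap_nG_n\subseteq X$ to the completeness hypothesis: $\mathrm{cl}_X(\tilde O_n\cap X)\subseteq\mathrm{cl}_X(O_n)$ puts $\mathcal G$ within the scope of the definition, and the identification $x_0=p$ from $x_0\in\mathrm{cl}_{bX}(U)$ for every open $U\ni p$ does follow from Hausdorffness of $bX$.

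The one genuine caveat is the one you flag yourself and then set aside: the converse needs a Hausdorff compactification, i.e.\ complete regularity of $X$. Engelking's Theorem 3.9.2 is stated for Tychonoff spaces, so your argument proves exactly the cited result; but the Fact as phrased in this paper speaks of regular Hausdorff spaces, and with the paper's definition of {\v C}ech completeness ($G_\delta$ in a compact Hausdorff space) the missing step ``regular Hausdorff with a complete sequence of covers implies Tychonoff'' is not a harmless aside --- it is essentially equivalent to the implication being proved, since {\v C}ech complete spaces are automatically Tychonoff, so asserting it without proof is circular. Either supply a separate proof of complete regularity or state explicitly that you read the Fact, as in the source, for Tychonoff spaces; as written, your proof covers the cited theorem but not the literal phrasing used here.
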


\noindent The category in which the interpretation functor is at its most natural is a slight extension of the category of {\v C}ech complete spaces:

\begin{definition}
A topological space $X$ is \emph{interpretable} if it is regular Hausdorff and an open continuous image of a {\v C}ech complete space.
\end{definition}

\noindent Thus, every {\v C}ech complete space is interpretable. The class of interpretable spaces was investigated under various names (sieve complete, $\lambda_b$) in early 70's \cite{chaber:monotonic, wicke:good, wicke:open} and a useful internal characterization was provided:

\begin{definition}
Let $X$ be a topological space. A \emph{sieve} is a tuple $\langle S, O(s)\colon s\in S\rangle$ where $S$ is a tree with a largest node $0$, $O(0)=X$, and for every $s\in S$, $O(s)\subset X$ is an open set and $O(s)=\bigcup\{O(t)\colon t\in S$ is an immediate successor of $s\}$. A sieve $S$ is \emph{complete} if for every infinite path $b\subset S$ and every collection $F$ of closed subsets of $X$ which has the finite intersection property and for every $n\in\gw$ contains a subset of ${\mathrm{cl}}(O(b\restriction n))$ for some $O_n\in C_n$, the intersection $\bigcap F$ is nonempty. The sieve is \emph{strong} if $\bar O(t)\subset O(s)$ whenever $t$ is an immediate successor of $s$. The sieve is \emph{finitely additive} if for every $s\in S$, the set $\{O(t)\colon t$ is an immediate successor of $s\}$ is closed under finite unions. 
\end{definition}

\begin{fact}
\label{sieveproposition}
Let $X$ be a regular Hausdorff space.

\begin{enumerate}
\item \textnormal{\cite{wicke:good}} $X$ is interpretable if and only if it has a complete sieve;
\item \textnormal{\cite[Lemma 2.3 and 2.4]{michael:triquotient}} $X$ is interpretable if and only if it has a sieve which is complete, strong, and finitely additive;
\item \textnormal{\cite[Lemma 2.5]{michael:triquotient}} A strong sieve $\langle S, O(s)\colon s\in S\rangle$ is complete if and only if whenever $b\subset S$ is a branch then the set $K=\bigcap_n O(b(n))$ is compact, and for every open set $O\subset X$, if $K\subset O$ then for some $n\in\gw$ $O(b(n))\subset O$ holds.
\end{enumerate}
\end{fact}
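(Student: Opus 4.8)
The plan is to prove the three items in the order (3), (1), (2): item~(3) is a self-contained compactness lemma that then powers both directions of item~(1), and item~(2) is a refinement of the sieve produced in item~(1). Throughout, {\v C}ech completeness is handled through complete sequences of covers via Fact~\ref{littlefact}. For item~(3), first note that for a strong sieve and a branch $b$ one has $\bigcap_n\mathrm{cl}(O(b(n)))=\bigcap_nO(b(n))=:K$, since $\mathrm{cl}(O(b(n+1)))\subseteq O(b(n))$. Assuming $K$ is always compact and absorbs its open supersets: given a branch $b$ and a family $F$ of closed sets with the finite intersection property in which each level $n$ is witnessed by some $A_n\in F$ with $A_n\subseteq\mathrm{cl}(O(b(n)))$, if $\{A\cap K\colon A\in F\}$ has the finite intersection property then compactness of $K$ and $\bigcap F\subseteq K$ give $\bigcap F\ne\emptyset$; otherwise some finite $F_0\subseteq F$ misses $K$, so $K\subseteq X\setminus\bigcap F_0=:U$, whence $O(b(n))\subseteq U$ for some $n$, and then by strongness $A_m\subseteq\mathrm{cl}(O(b(m)))\subseteq O(b(n))\subseteq U$ for $m\ge n+1$, contradicting the finite intersection property. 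The converse direction is immediate: a failure of compactness of $K$, or of the absorption property, exhibits a decreasing sequence of nonempty closed sets $D_n\subseteq\mathrm{cl}(O(b(n)))$ with empty intersection (closed subsets of $K$ in the first case, $\mathrm{cl}(O(b(n)))\setminus U$ in the second), i.e.\ a failure of completeness.

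For the ``sufficiency'' half of item~(1), suppose $X$ has a complete sieve. First replace it by a \emph{strong} complete sieve: at each node, shrink the open sets attached to the immediate successors to neighborhoods of their points whose closures lie in the parent's open set (using regularity of $X$), recording which original node each new node refines; a branch of the refined sieve then maps to a branch of the original one and the witnessing closed sets transfer, so completeness is preserved. So assume $\langle S,O(s)\colon s\in S\rangle$ is strong and complete and (pruning nodes with empty $O$-value) that every branch meets every level. Topologize the branch space $[S]$ by the clopen sets $[s]=\{b\colon s\in b\}$, let $L_n$ be the $n$-th level of $S$, and put $Z=\{(b,x)\in[S]\times X\colon\forall n\ x\in O(b\restriction n)\}$; by strongness this also equals $\{(b,x)\colon\forall n\ x\in\mathrm{cl}(O(b\restriction n))\}$, which is closed in $[S]\times X$, so $Z$ is regular Hausdorff. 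The clopen covers $C_n=\{([s]\times X)\cap Z\colon s\in L_n\}$ form a complete sequence of covers for $Z$: given a family $G$ of closed subsets of $Z$ with the finite intersection property whose level-$n$ member lies in $([s_n]\times X)\cap Z$, the nodes $s_n$ are pairwise comparable and determine a branch $b$ with $b\restriction n=s_n$; replacing $G$ by its closure under finite intersections and applying completeness of the sieve to the family $\{\mathrm{cl}_X(\proj(B))\colon B\in G\}$ in $X$ yields a point $x_0$ with $(b,x_0)\in Z$, and testing basic neighborhoods $([s_m]\times U)\cap Z$ against members of $G$ shows $(b,x_0)\in\mathrm{cl}_Z(B)=B$ for every $B\in G$. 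Hence $Z$ is {\v C}ech complete by Fact~\ref{littlefact}. Finally $\proj\colon Z\to X$ is a continuous surjection (extend any point of $X$ along the sieve to a branch) and open, since $\proj[([s]\times U)\cap Z]=O(s)\cap U$ --- the inclusion $\supseteq$ is a branch-extension argument and $\subseteq$ uses strongness. Thus $X$ is interpretable.

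The ``necessity'' half of item~(1) is the main obstacle. Let $f\colon Z\to X$ be a continuous open surjection with $Z$ {\v C}ech complete and fix a complete sequence of covers $\langle C_n\colon n\in\gw\rangle$ of $Z$. The plan is to build a sieve $\langle S,O(s)\rangle$ on $X$ \emph{together with} a parallel assignment $s\mapsto g(s)$ of open subsets of $Z$, maintained so that $g(0)=Z$, $f[g(s)]=O(s)$, $g(s)$ is the union of the $g$-values of the immediate successors of $s$, and for every immediate successor $t$ of a level-$n$ node $s$ one has $g(t)$ inside some element of $C_n$, $\mathrm{cl}_Z(g(t))\subseteq g(s)$, and $\mathrm{cl}_X(O(t))\subseteq O(s)$. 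The successors of a node $s$ are indexed by the points $z\in g(s)$: pick $V_z\in C_n$ with $z\in V_z$, an open $M_z\ni z$ with $\mathrm{cl}_Z(M_z)\subseteq g(s)$ (regularity of $Z$), and an open $N_z\ni f(z)$ with $\mathrm{cl}_X(N_z)\subseteq O(s)$ (regularity of $X$), and declare $g(t_z)=g(s)\cap V_z\cap M_z\cap f^{-1}[N_z]$ and $O(t_z)=f[g(t_z)]$; all the listed properties are then routine to check, and both $\langle S,O(s)\rangle$ and $\langle S,g(s)\rangle$ are strong. Now for a branch $b$: the complete-cover property of $Z$ applied to the decreasing sequence $\mathrm{cl}_Z(g(b\restriction n))$ (whose $n$-th term lies in an element of $C_{n-1}$) shows $L_b:=\bigcap_n\mathrm{cl}_Z(g(b\restriction n))$ is compact and that every closed-in-$Z$ subset of all the $\mathrm{cl}_Z(g(b\restriction n))$ meets $L_b$; using strongness one then gets $K_b:=\bigcap_nO(b\restriction n)=f[L_b]$, hence $K_b$ is compact; and if $K_b\subseteq U$ with $U$ open, then $L_b\cap f^{-1}[X\setminus U]=\emptyset$, so some $\mathrm{cl}_Z(g(b\restriction n))\subseteq f^{-1}[U]$, i.e.\ $O(b\restriction n)\subseteq U$. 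By item~(3), $\langle S,O(s)\rangle$ is complete, so $X$ has a complete sieve. The difficulty this construction circumvents is that $f$ is open but need not be closed, so $\mathrm{cl}_X(f[W])$ is not controlled by $f[\mathrm{cl}_Z(W)]$; carrying the lift $g$ and transferring everything to the compact sets $L_b$, on which continuous images are closed, is exactly what repairs this.

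Finally, item~(2). A complete, strong, finitely additive sieve is a complete sieve, so interpretability follows from item~(1). Conversely, the necessity half of item~(1) already equips an interpretable space with a strong complete sieve; the remaining point is to close the set of immediate successors of each node under finite unions while keeping the sieve complete. Finite additivity and strongness are then immediate (a finite union of sets whose closures lie in $O(s)$ has its closure in $O(s)$), and preservation of completeness is exactly \cite[Lemmas 2.3 and 2.4]{michael:triquotient}; this is the one place where a nontrivial external input is needed, and I would cite it rather than reprove it, observing only that by item~(3) one has to check, for each branch of the enlarged sieve, that the associated intersection is compact and absorbs open supersets.
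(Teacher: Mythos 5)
The paper gives no proof of this statement at all: it is a ``Fact'' imported from Wicke and from Michael's paper on triquotient maps, with citations standing in for an argument. So there is no internal proof to compare against, and your write-up should be judged on its own; on that basis it is essentially correct and actually supplies what the paper omits. Your treatment of (3) is the right compactness/absorption dichotomy (one nit: a failure of compactness of $K$ produces an arbitrary family of closed subsets of $K$ with the finite intersection property and empty intersection, not necessarily a decreasing sequence, but the definition of completeness accepts arbitrary families, so this is only wording). For the sufficiency half of (1), the strongification by regularity, the branch space $[S]$, the closed set $Z\subseteq[S]\times X$ of pairs consisting of a branch and a point trapped in its open sets, the level covers as a complete sequence of covers, and the identity $\mathrm{proj}[([s]\times U)\cap Z]=O(s)\cap U$ all check out (the inclusion $\subseteq$ needs only the definition of $Z$, not strongness; $\supseteq$ is the branch-extension step, and pruning empty nodes is what makes every branch meet every level). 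The necessity half, carrying the lifted open sets $g(s)\subseteq Z$ with $O(s)=f[g(s)]$ and transferring the complete-cover structure of $Z$ to the kernels $L_b$, with $K_b=f[L_b]$ obtained from the fiber family $f^{-1}(x)\cap\mathrm{cl}_Z(g(b\restriction n))$ and absorption from the sets $\mathrm{cl}_Z(g(b\restriction n))\setminus f^{-1}[U]$, is exactly the right repair for the fact that $f$ is open but not closed, and the maintained conditions make each step verifiable. The one place where you have not replaced citation by argument is the backward half of (2): closing immediate successors under finite unions and checking, via your item (3) and a K\"onig-type argument over the resulting finitely branching subtrees of the original sieve, that completeness survives --- you defer this to Michael's Lemmas 2.3 and 2.4, the same source the paper cites for all of (2). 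Your treatment is therefore no less complete than the paper's, but item (2) is not yet self-contained; if you want it to be, that finite-additivity closure is the remaining paragraph to write.
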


\noindent The extent of the class of interpretable spaces is best appreciated from the perspective of the following theorem, which with the exception of the hyperspace operation is contained in \cite{michael:triquotient}.

\begin{theorem}
\label{interpretabletheorem}
The class of interpretable spaces is closed under the following operations:

\begin{enumerate}
\item a closed subset;
\item a $G_\gd$ subset;
\item countable product;
\item hyperspace of compact sets with the Vietoris topology;
\item perfect and open continuous images as long as they are regular Hausdorff.
\end{enumerate}

\noindent A space is locally interpretable if and only if it is locally interpretable.
\end{theorem}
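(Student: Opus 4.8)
The plan for (1), (2), (3), (5) and the final sentence is to work through the external description of interpretable spaces as regular Hausdorff open continuous images of {\v C}ech complete spaces; all of this except the hyperspace clause is already contained in \cite{michael:triquotient}, so I only indicate the route. If $X=f[Z]$ with $Z$ {\v C}ech complete and $f$ open continuous onto, then for a closed or $G_\gd$ set $Y\subseteq X$ the preimage $f^{-1}(Y)$ is again closed or $G_\gd$ in $Z$, hence {\v C}ech complete, and the key observation is that $f\restriction f^{-1}(Y)$ is still open onto $Y$ because $f[U\cap f^{-1}(Y)]=f[U]\cap Y$ for open $U\subseteq Z$; this yields (1) and (2). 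For (3) one uses the product map $\prod_n f_n\colon\prod_n Z_n\to\prod_n X_n$, which is open continuous onto with {\v C}ech complete domain. For the open case of (5) one composes with $f$; the perfect case is the only one genuinely needing the sieve characterization, and it is in \cite{michael:triquotient}. For the final sentence, one direction is immediate, and for the other one covers $X$ by interpretable open sets, witnesses each by a {\v C}ech complete space with an open continuous surjection, and takes the disjoint sum, which is again {\v C}ech complete and maps onto $X$ by an open continuous map.

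The real work is clause (4). Write $K(X)$ for the hyperspace of nonempty compact subsets of a regular Hausdorff space $X$ with the Vietoris topology, and for open $U_0,\dots,U_k\subseteq X$ put $\langle U_0,\dots,U_k\rangle=\{L\in K(X)\colon L\subseteq\bigcup_i U_i$ and $L\cap U_i\neq\emptyset$ for all $i\le k\}$; these sets form a base. The plan is to manufacture a complete sieve on $K(X)$ out of one on $X$. By Fact~\ref{sieveproposition}(2) fix a complete, strong, finitely additive sieve $\langle S,O(s)\colon s\in S\rangle$ on $X$, and define a sieve on $K(X)$ \emph{on the very same tree $S$} by $\hat O(s)=\langle O(s)\rangle=\{L\in K(X)\colon L\subseteq O(s)\}$. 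That $\hat O(0)=K(X)$ and that each $\hat O(s)$ is open are clear; the covering law $\hat O(s)=\bigcup\{\hat O(t)\colon t$ an immediate successor of $s\}$ uses that every $L\in\hat O(s)$ is compact, hence covered by finitely many $O(t)$ with $t$ an immediate successor of $s$, whose union is again some $O(t)$ by finite additivity. The new sieve is strong because $\overline{\langle O(t)\rangle}\subseteq\langle\overline{O(t)}\rangle\subseteq\langle O(s)\rangle$ when $t$ is an immediate successor of $s$, using $\overline{O(t)}\subseteq O(s)$ and the elementary Vietoris inclusion $\overline{\{L\colon L\subseteq U\}}\subseteq\{L\colon L\subseteq\overline U\}$.

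It then remains to verify completeness, which I do through Fact~\ref{sieveproposition}(3). For a branch $b\subseteq S$ set $K_b=\bigcap_n O(b(n))$; by Fact~\ref{sieveproposition}(3) for the sieve on $X$, $K_b$ is compact and $\{O(b(n))\colon n\in\gw\}$ is a neighbourhood base of $K_b$. Then $\bigcap_n\hat O(b(n))=\{L\in K(X)\colon L\subseteq K_b\}=K(K_b)$ is compact, which is the first clause of Fact~\ref{sieveproposition}(3). For the second clause I must show that every open $\mathcal O\supseteq K(K_b)$ in $K(X)$ contains some $\hat O(b(n))$. The plan here is an ultrafilter argument: assuming failure, pick $L_n\in\langle O(b(n))\rangle\setminus\mathcal O$, fix a nonprincipal ultrafilter $\mathcal V$ on $\gw$, and set $L_{\mathcal V}=\{x\in X\colon\{n\colon V\cap L_n\neq\emptyset\}\in\mathcal V$ for every open $V\ni x\}$. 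Using that $K_b$ is compact and $\{O(b(n))\}$ is a neighbourhood base of it, one first shows that any sequence whose $n$-th term lies in $O(b(n))$ (on a $\mathcal V$-large set of $n$) has a $\mathcal V$-limit in $K_b$; applied to points $x_n\in L_n$ this gives $L_{\mathcal V}\neq\emptyset$, and a short computation gives $L_{\mathcal V}\subseteq K_b$, so $L_{\mathcal V}\in K(K_b)$. One then checks that $L_{\mathcal V}$ is the $\mathcal V$-limit of $\langle L_n\colon n\in\gw\rangle$ in $K(X)$: in a basic neighbourhood $\langle V_0,\dots,V_k\rangle$ of $L_{\mathcal V}$, the ``lower'' requirements $L_n\cap V_i\neq\emptyset$ hold on $\mathcal V$-large sets straight from the definition of $L_{\mathcal V}$, while the ``upper'' requirement $L_n\subseteq\bigcup_i V_i$ holds on a $\mathcal V$-large set by applying the point-limit statement to witnesses of its failure. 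Since $\mathcal O$ is an open neighbourhood of $L_{\mathcal V}$, this forces $L_n\in\mathcal O$ for $\mathcal V$-many $n$, a contradiction. As $K(X)$ is regular Hausdorff whenever $X$ is, Fact~\ref{sieveproposition}(1) then gives that $K(X)$ is interpretable.

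The step I expect to be the main obstacle is precisely this last clause: that the open neighbourhoods of $K(K_b)$ in $K(X)$ are cofinal among the sets $\langle U\rangle$ with $U\supseteq K_b$ open. This is false for an arbitrary compact subset of $X$, and it is the countable neighbourhood base $\{O(b(n))\}$ furnished by completeness of the ground sieve that rescues it. The reason an ultrafilter, rather than a convergent subsequence, is the right device is that $X$, hence $K(X)$, need not be metrizable, so $\langle L_n\rangle$ need not have any convergent subsequence; the ultrafilter extracts the upper and lower Vietoris limits simultaneously, which is what keeps the argument short.
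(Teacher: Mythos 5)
Your proposal is correct, and on the one clause that cannot be cited away, the hyperspace, you build exactly the sieve the paper builds -- $\hat O(s)=\{L\in K(X)\colon L\subseteq O(s)\}$ on the same tree, with the covering law from finite additivity plus compactness and completeness checked through Fact~\ref{sieveproposition}(3) -- so the skeleton is the same; the differences are in the supporting steps. For clause (3) you take the product of the witnessing open surjections from {\v C}ech complete spaces, which is a perfectly good and shorter proof of the closure statement, whereas the paper constructs a product sieve by hand (that construction is reused later for Theorem~\ref{countableproducttheorem}). For the perfect half of (5) you defer to \cite{michael:triquotient}; the paper instead derives it from clause (4), noting that $C=\{K\in K(X)\colon f\restriction K \text{ is constant}\}$ is closed in $K(X)$ and that $K\mapsto$ the unique point of $f''K$ is a continuous open map of $C$ onto the image -- worth knowing, since it keeps the theorem self-contained. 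In the completeness check for the hyperspace sieve the paper argues in two steps: a compactness lemma asserting that for compact $Y\subseteq X$ every Vietoris neighborhood of $K(Y)$ contains a set $\{K\colon K\subseteq O\}$ with $O\supseteq Y$ open, and then the neighborhood-base property of the sets $O(b(n))$ for $K_b$; you bypass that lemma with a direct $\mathcal{V}$-limit argument, which is correct (your limit computations, including the use of strongness to get $L_{\mathcal V}\subseteq K_b$, check out) but longer. Note, though, that the lemma you were wary of is in fact true for every compact subset of a regular Hausdorff space: cover the compact set $K(Y)$ by finitely many basic Vietoris sets inside the given neighborhood, for each $y\in Y$ let $N_y$ be the intersection of all sides containing $y$, put $O=\bigcup_{y\in Y}N_y$, and push a compact $L\subseteq O$ into one of the basic sets via the closure in $Y$ of chosen points $y(x)$, $x\in L$. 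So the paper's shortcut was available; what genuinely fails for an arbitrary compact set is only the existence of a countable decreasing neighborhood base, which is exactly what completeness of the ground sieve supplies for $K_b$, and your ultrafilter device is an alternative rather than a necessity.
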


\begin{proof}
For the first two items, let $X$ be a compact Hausdorff space, $Y\subset X$ its $G_\gd$ subset, and $f\colon Y\to Z$ be an open continuous surjection onto a regular Hausdorff space. If $C\subset Z$ is closed, then $D=f^{-1}C\subset X$ is a relatively closed set, $f\restriction D\colon D\to C$ is open, and $D$ is a $G_\gd$ subset of its closure in the space $X$;
thus, $C$ is interpretable. The case of $G_\gd$ set $C\subset Z$ is identical.

For (3), suppose that $X_n$ for $n\in\gw$ are interpretable spaces, with a complete strong sieve $\mathcal{S}_n=\langle S_n, O_n(s)\colon s\in S_n\rangle$ on each. Consider the product sieve $\mathcal{T}=\langle T, P(t)\colon t\in T\rangle$ on the space $Y=\prod_nX_n$
defined as follows. A node of the tree $T$ is a tuple $t=\langle s_n\colon n\in m\rangle$ for some $m\in\gw$ such that each $s_n$ is an element of $m$-th level of the tree $S_n$. The ordering on the tree $T$ is defined coordinatewise. 
$t=\langle s_n\colon n\in m\rangle\in T$ define $P(t)=\{y\in Y\colon\forall n\in m\ y(n)\in O_n(s_n)\}$. Clearly, $\mathcal{T}$ is a strong sieve on $Y$; I will use Fact~\ref{sieveproposition}(3) to verify that $\mathcal{T}$ is complete.

Suppose that $b\subset T$ is an infinite branch. Then, for each $n\in\gw$, the nodes of $S_n$ mentioned in the nodes of $b$ form an infinite branch $b_n\subset S_n$. Let $K_n=\bigcap_m O_n(b_n(m))$; $K_n\subset X_n$ is a compact set by the completeness of the sieve $\mathcal{S}_n$. It is immediate that $\bigcap_mP(b(m))=\prod_nK_n\subset Y$ is compact. Now suppose that $U\subset Y$ is an open set containing $\prod_nK_n$. By a compactness argument, there is a number $m\in\gw$ and open sets $U_n\subset X_n$
for $n\in m$ such that $\prod_{n\in m}U_n\times\prod_{n\geq m}X_n\subset U$, and $U_n$ contains $K_n$ for all $n\in m$.
The completeness of the sieves $\mathcal{S}_n$ for $n\in m$ shows that there is a number $k\in\gw$ such that $O_n(b_n(k))\subset O_n$ for all $n\in m$.
Then, $P(b(k))\subset U$ as desired in Fact~\ref{sieveproposition}(3).

For (4), if $X$ is a topological space, write $K(X)$ of the hyperspace of its nonempty compact subsets with 
Vietoris topology. Suppose that $X$ is interpretable, with a complete finitely additive strong sieve $\mathcal{S}=\langle S, O(s)\colon s\in S\rangle$ on it. Consider the strong sieve $\mathcal{T}=\langle S, P(s)\colon s\in S\rangle$ on $K(X)$, where $P(s)=\{K\in K(X)\colon K\subset O(s)\}$; this is indeed a sieve by the finite additivity of $\mathcal{S}$ and a  compactness argument. I will use Fact~\ref{sieveproposition}(3) to show that $\mathcal{T}$ is complete on $K(X)$. 

Suppose that $b\subset S$ is an infinite branch. Write $Y=\bigcap_nO(b\restriction n)\subset X$; by the completeness of the sieve $\mathcal{S}$, this is a compact subset of $X$.  It is immediate to see that $\bigcap_nP(b\restriction n)=K(Y)\subset K(X)$ and so this set is compact. Suppose that
$U\subset K(X)$ is an open set containing $K(Y)$. A compactness argument shows that there must be an open set $O\subset X$ such that $Y\subset O$ and $\{K\in K(X)\colon K\subset O\}\subset U$. The completeness of the sieve $\mathcal{S}$ shows that there is an $n\in\gw$ such that $O(b\restriction n)\subset O$. Then, $P(b\restriction n)\subset U$ and the completeness of the sieve $\mathcal{T}$ follows by Fact~\ref{sieveproposition}(3).

For (5), the case of open images follows straight from the definitions. For the perfect case, note that if $f\colon X\to Y$ is a perfect surjection then the set $C=\{K\in K(X)\colon f\restriction K$ is constant$\}$
is closed. Chase diagrams to show that the map $g\colon C\to Y$ defined by $g(K)=$the unique element of $f''K$ is continuous and open. Then, use (1, 3, 4) to conclude that $Y$ is an interpretable space.

 For the last sentence, suppose that a space $X$ is locally interpretable. For every point $x\in X$ find an open set $O_x\subset X$ and a continuous open surjection $f_x\colon Y_x\to O_x$ from a \v Cech complete space $Y_x$ onto $O_x$. Let $Y$ be the topological sum of the spaces $Y_x$ and $f\colon Y\to X$ be the sum of the mappings $f_x$ for $x\in X$. It is not difficult to check that $Y$ is {\v C}ech complete and the function $f$ is a continuous open surjection from $Y$ to $X$, confirming that $X$ is interpretable.
\end{proof}

Certain features of the category of interpretable spaces can be verified using the methods of this paper even though they do not refer to any models of set theory at all:

\begin{theorem}
\label{zztheorem}
\begin{enumerate}
\item Every interpretable space $X$ is a continuous open image of a $G_\gd$ subspace of a compact space $Z$ such that the weight of $Z$ is no larger than the weight of $X$.
\item Every second countable interpretable space is Polish.
\end{enumerate}
\end{theorem}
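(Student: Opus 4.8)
The plan for part~(1) is to produce, from an interpretable space $X$ of weight $\kappa$ (assume $\kappa$ infinite; otherwise $X$ is finite discrete and the claim is trivial), a {\v C}ech complete space $E$ with $w(E)\le\kappa$ carrying a continuous open surjection $\pi\colon E\to X$. This suffices: a {\v C}ech complete space is Tychonoff, so $E$ embeds into $[0,1]^\kappa$; the closure $Z$ of its image there is compact with $w(Z)\le\kappa$; and, since {\v C}ech complete spaces are $G_\gd$ in every compactification, $E$ is a $G_\gd$ subspace of $Z$, whose continuous open image is $X=\pi''E$. To obtain $E$, first use Fact~\ref{sieveproposition}(2) to fix a complete strong sieve $\langle S', O'(s)\colon s\in S'\rangle$ on $X$, and then thin it: the sieve equation $O'(s)=\bigcup\{O'(t)\colon t$ an immediate successor of $s\}$ is an open cover of the subspace $O'(s)$, which has weight at most $\kappa$, so at most $\kappa$ of these successors already cover $O'(s)$; keeping only such successors at every node gives a subtree $S\subseteq S'$ with at most $\kappa$ immediate successors per node, hence $|S|\le\kappa$. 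The restriction $\langle S, O'(s)\colon s\in S\rangle$ is still a sieve, still strong, and still complete, because every branch of $S$ is a branch of $S'$ and so the criterion of Fact~\ref{sieveproposition}(3) --- compactness of $\bigcap_n O'(b\restriction n)$ together with swallowing of its open neighborhoods --- transfers verbatim. Pruning the nodes whose open set is empty, I may also assume that every node of $S$ has an immediate successor, so that all branches of $S$ are infinite.

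Let $[S]$ be the space of branches of $S$ with the zero-dimensional topology generated by the clopen sets $N(s)=\{b\in[S]\colon s\in b\}$; sending a branch to the sequence of its nodes identifies $[S]$ with a closed subspace of the countable product of the discrete levels of $S$, so $[S]$ is {\v C}ech complete. Set $E=\{\langle b,x\rangle\colon b\in[S],\ x\in\bigcap_n O'(b\restriction n)\}$ with the subspace topology inherited from $[S]\times X$. As a subspace of a product of regular Hausdorff spaces, $E$ is regular Hausdorff, and $w(E)\le w([S])+w(X)\le\kappa$. The projection $\pi\colon E\to X$, $\langle b,x\rangle\mapsto x$, is continuous. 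It is onto, and it is open: using the sieve equation one extends any node $s$ with $x\in O'(s)$ to a branch $b\ni s$ with $x\in O'(b\restriction n)$ for every $n$, which shows that $\pi$ carries the basic open set $\{\langle b,x\rangle\in E\colon s\in b,\ x\in W\}$ exactly onto the open set $W\cap O'(s)$.

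The heart of the matter is that $E$ is {\v C}ech complete, and I would establish this by showing that the other projection $\rho\colon E\to[S]$ is a perfect map onto its image. Each fibre $\rho^{-1}(b)=\{b\}\times\bigcap_n O'(b\restriction n)$ is compact by Fact~\ref{sieveproposition}(3). For closedness of $\rho$: if an open $U\subseteq E$ contains the fibre over $b_0$, compactness of that fibre lets finitely many basic open subsets of $U$ cover it, so the union $O$ of their $X$-coordinates is an open set with $\bigcap_n O'(b_0\restriction n)\subseteq O$ and $\{\langle b,x\rangle\in E\colon b_0\restriction n\in b,\ x\in O\}\subseteq U$ for all large $n$; by the swallowing clause of Fact~\ref{sieveproposition}(3) there is $m\ge n$ with $O'(b_0\restriction m)\subseteq O$, and then $\rho^{-1}(N(b_0\restriction m))\subseteq U$, so the $\rho$-image of the closed set $E\setminus U$ avoids the neighborhood $N(b_0\restriction m)$ of $b_0$. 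Hence $\rho(E)$ is closed in $[S]$, thus {\v C}ech complete, and $\rho$ is a perfect surjection onto it; since perfect preimages of {\v C}ech complete spaces are {\v C}ech complete, $E$ is {\v C}ech complete, which finishes part~(1). I expect the closedness of $\rho$ to be the one genuinely delicate point: it is precisely where the completeness of the sieve, in its neighborhood-swallowing formulation, gets used. The initial thinning, by comparison, is essentially for free, since completeness is a condition on branches only.

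For part~(2), suppose $X$ is second countable and interpretable, so $w(X)=\aleph_0$. Then the construction above produces an $E$ that is second countable as well as {\v C}ech complete, hence a $G_\gd$ subset of a compact metrizable space, hence Polish, and $\pi\colon E\to X$ is a continuous open surjection. Since $X$ is regular Hausdorff and second countable it is metrizable (Urysohn), so $X$ is realized as a metrizable continuous open image of a Polish space; by the classical fact that an open continuous image of a completely metrizable space, once it is known to be metrizable, is completely metrizable, $X$ is Polish.
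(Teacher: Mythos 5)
Your proof is correct, but it takes a genuinely different route from the paper's. The paper proves (1) by a L\"owenheim--Skolem argument internal to its own interpretation machinery: fix any witnessing triple (compact $Z'$, $G_\gd$ subset $Y'$, open surjection $f'\colon Y'\to X$), take an elementary submodel of size $\kappa$ containing a basis and a complete sieve of $X$, collapse it, observe via Theorem~\ref{submodeltheorem} that the inverse collapse is an interpretation of the collapsed copy of $X$, and then use Corollary~\ref{compact1corollary}, Corollary~\ref{subspacecorollary} and Theorem~\ref{openmappingtheorem} to see that the interpretations of the collapsed $Z', Y', f'$ form the required small witnessing triple; (2) is then finished by Urysohn and Sierpi\'nski exactly as you do. You instead give a direct, purely topological construction: thin a complete strong sieve to one of size at most $\kappa$ (using that subspaces of weight $\kappa$ admit $\kappa$-sized subcovers), form the space $E\subseteq [S]\times X$ of pairs (branch, point of the intersection along the branch), check that the projection to $X$ is an open surjection and that the projection $\rho$ to the branch space $[S]$ is perfect --- your closedness argument is correct, and it is indeed exactly where the swallowing clause of Fact~\ref{sieveproposition}(3) is used --- and conclude that $E$ is {\v C}ech complete of weight at most $\kappa$, whence the compactification $Z$. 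Your route avoids models of set theory altogether, at the price of importing classical facts (inverse invariance of {\v C}ech completeness under perfect maps, $G_\gd$-ness in every compactification, embedding into a Tychonoff cube), while the paper's route is shorter given its machinery and deliberately showcases that machinery on a ZFC statement. One point to tighten: the inverse invariance of {\v C}ech completeness under perfect maps is standardly stated for Tychonoff domains, and a priori you only know $E$ is regular Hausdorff; this is repaired by noting that a perfect map onto a metrizable space makes $E$ paracompact, hence normal and Tychonoff, or simply by citing Frol\'{\i}k's characterization in the regular Hausdorff form quoted in Section~\ref{preservationsection}. Also, after your pruning every branch has nonempty intersection along it, so $\rho$ is in fact onto $[S]$, which makes the ``closed onto its image'' step even simpler.
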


\begin{proof}
Let $X$ be an interpretable space, and fix a compact space $Z'$, its $G_\gd$ subspace $Y'$, and a continuous open surjection $f'\colon Y'\to X$. Let $\kappa$ be the weight of $X$; without loss of generality $\kappa$ is infinite. Let $\lambda$ be a regular cardinal such that $H_\kappa$ contains all objects named so far. Let $M\prec H_\lambda$ be an elementary submodel of size $\kappa$ which contains all objects named so far, contains a basis of $X$ as a subset, and a complete sieve on $X$ as an element. Let $\bar M$ be the transitive collapse of the model $M$, and $\bar X, \bar Y', \bar Z', \bar f'$ the images of $X, Y', Z', f'$ under the transitive collapse map. Note that the inverse of the collapse map from $\bar X$ to $X$ is an interpretation of $\bar X$ by Theorem~\ref{submodeltheorem}. Now, let $Y, Z$, and $f\colon Y\to X$ be interpretations of $\bar Y', \bar Z'$ and $\bar f'$ respectively. Corollary~\ref{compact1corollary} shows that $Z$ is compact; its weight is at most $\kappa$ since the interpretations of open subsets of $\bar Z'$ in the model $\bar M$ generate its topology. $Y$ is a $G_\gd$ subset of $Z$ by Corollary~\ref{subspacecorollary} and the map $f$ is a continuous open surjection by Theorem~\ref{openmappingtheorem}. This completes the proof of (1).

For (2), let $X$ be a second countable interpretable space. Use (1) to find a second countable compact space $Z$, its $G_\gd$ subset $Y$, and a continuous open surjection $f\colon Y\to Z$. By the Urysohn metrization theorem, all three spaces are metrizable. In particular, $Z$ is compact Polish and $Y$ as a $G_\gd$ subset of a Polish space is Polish as well. By a theorem of Sierpinski, every metrizable continuous open image of a Polish space is Polish, so $X$ is Polish as desired.
\end{proof}

\noindent Note that there are many interesting spaces which are not interpretable.

\begin{example}
The following spaces are not interpretable:

\begin{enumerate}
\item any non-$G_\gd$-subset of a Polish space;
\item $C_p(\mathbb{R})$, the space of continuous functions from $\mathbb{R}$ to $\mathbb{R}$ with the pointwise convergence topology;
\item $\mathbb{R}^{\mathbb{R}}$ with the pointwise convergence topology;
\item the Sorgenfrey line.
\end{enumerate}
\end{example}

\begin{proof}
For (1), note that a subset of a Polish space is second countable, and so to be interpretable, by Theorem~\ref{zztheorem}(2) it must be Polish. It is well-known that a subset of a Polish space is Polish in the subspace topology if and only if it is $G_\gd$. For the other items, the proof of non-interpretability can be in fact derived from some interpretation pathologies that cannot occur in interpretable spaces. For (2), Example~\ref{cpexample} shows that whenever an unbounded real is added, the interpretation of $C_p(\mathbb{R})$ in the generic extension cannot be extended to the $\gs$-algebra of Borel sets; this does not happen for interpretable spaces by Theorem~\ref{boreltheorem}.  For (3), the space $\mathbb{R}^{\mathbb{R}}$ contains a closed copy of $\gw^{\gw_1}$ with the product topology. The space $\gw^{\gw_1}$ is not interpretable since there is a two-step forcing extension $V\subset V[G]\subset V[H]$ such that interpreting first in $V[G]$ and then in $V[H]$ does not yield the same result as interpreting $\gw^{\gw_1}$ in $V[H]$--Example~\ref{productexample}; such a thing cannot happen for interpretable spaces by Theorem~\ref{faithfulnesstheorem}. For (4), whenever a forcing extension adds a new real then the interpretation of the product of two Sorgenfrey lines is not the product of two copies of the interpretation of the Sorgenfrey line by Example~\ref{sorgenfreyexample}; again, this does not occur for interpretable spaces by Theorem~\ref{countableproducttheorem}.
\end{proof}

There are important distinctions in the above examples. (2) is naturally viewed as a Borel subspace of an interpretable space, and for such cases I will find a convenient way of interpreting it as a Borel topological structure. (3) and (4) exhibit important pathologies which most likely prevent any attempt at incorporating them in a general interpretation theory. The third case may behave well if a certain common class of models is considered; the fourth case is probably hopeless in all but the most trivial circumstances.

\section{The existence theorem}

The first order of business is to show that topological and Borel-topological interpretations exist for a large class of spaces.

\begin{theorem}
\label{existencetheorem}
Suppose that $M$ is a model of set theory and $M\models \langle X, \tau\rangle$ is a regular Hausdorff space. Then the interpretation of $\langle X, \tau\rangle$ exists, it is unique up to interpretation equivalence, and it is regular Hausdorff.
\end{theorem}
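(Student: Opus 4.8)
The plan is to build the interpretation by hand as a space of filters, much as one builds the {\v C}ech--Stone compactification out of ultrafilters. Working in $V$, let $\hat X$ be the set of all proper filters $F\subseteq\tau$ that are \emph{$M$-completely prime}, meaning that whenever $\langle O_i\colon i\in I\rangle\in M$ is a family of open sets with $\bigcup_{i\in I}O_i\in F$, then $O_i\in F$ for some $i\in I$. (Here ``filter'' includes upward closure and closure under finite intersections, and ``proper'' means $0\notin F$, which forces $X\in F$.) This $\hat X$ is a set, being a subfamily of $\power(\tau)$. Topologize $\hat X$ by taking the sets $\hat O:=\{F\in\hat X\colon O\in F\}$, for $O\in\tau$, as a basis; this is legitimate since $\hat{O_0}\cap\hat{O_1}=\widehat{O_0\cap O_1}$ and $\hat X=\widehat X$. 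Finally put $\pi(x)=F_x:=\{O\in\tau\colon x\in O\}$ for points $x\in X$, and $\pi(O)=\hat O$ for open sets $O\in\tau$; each $F_x$ is visibly a proper $M$-completely prime filter.

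First I would check that $\pi$ is a preinterpretation. Clause (1) is immediate from the definition of $F_x$; clause (3) holds because $\hat 0=\emptyset$ by properness and $\pi(X)=\widehat X=\hat X$; clause (4) holds by the choice of topology. The substantive point is clause (2): commutation with finite intersections is the filter axiom, while commutation with an arbitrary union $M\models O=\bigcup_i O_i$ is \emph{exactly} $M$-complete primeness, since $F\in\hat O$ iff $O\in F$ iff some $O_i\in F$ iff $F\in\bigcup_i\hat{O_i}$.

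Next I would show $\pi$ is $\leq$-largest. Given any preinterpretation $\pi_1\colon X\to\hat X_1$, define $g\colon\hat X_1\to\hat X$ by $g(\hat x_1)=\{O\in\tau\colon\hat x_1\in\pi_1(O)\}$. Using clauses (1)--(3) for $\pi_1$ one verifies that $g(\hat x_1)$ really is a proper $M$-completely prime filter (properness from $\pi_1(0)=0$; upward closure and finite-intersection closure from commutation with finite intersections; complete primeness from commutation with arbitrary unions), that $g\circ\pi_1=\pi$ by clause (1) for $\pi_1$, and that $g^{-1}(\hat O)=\pi_1(O)$ for every $O\in\tau$. The last identity simultaneously makes $g$ continuous and exhibits it as a reduction witnessing $\pi_1\leq\pi$. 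Hence $\pi$ is the interpretation of $\langle X,\tau\rangle$. Uniqueness up to interpretation equivalence is then formal: any two $\leq$-largest preinterpretations are mutually reducible, and mutual reducibility is equivalence, as observed in the discussion preceding Definition~\ref{i1definition}.

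It remains to see that $\hat X$ is regular Hausdorff, and this is the step that consumes the hypothesis on $X$. In $M$, regularity of $X$ gives for each $O\in\tau$ the decomposition $O=\bigcup\{V\in\tau\colon{\mathrm{cl}}_X(V)\subseteq O\}$, an $M$-indexed union. So if $O\in F\in\hat X$, complete primeness produces $V\in F$ with ${\mathrm{cl}}_X(V)\subseteq O$, hence $X=O\cup(X\setminus{\mathrm{cl}}_X V)$ in $M$. Now for any $G\in\hat X$ with $O\notin G$, complete primeness forces $X\setminus{\mathrm{cl}}_X V\in G$; since $(X\setminus{\mathrm{cl}}_X V)\cap V=\emptyset$, the basic neighborhood $\widehat{X\setminus{\mathrm{cl}}_X V}$ of $G$ is disjoint from the basic neighborhood $\hat V$ of $F$. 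Applied with $O$ separating two distinct filters, this yields the Hausdorff property; reading the same computation as ${\mathrm{cl}}_{\hat X}(\hat V)\subseteq\hat O$ yields regularity. I expect the main obstacle to lie not in any single calculation but in pinning down the correct universal object: recognizing that the ``points'' must be the $M$-completely prime filters (with decompositions restricted to those present in $M$), that this collection is at once a preinterpretation and a target of a reduction from every other preinterpretation, and that its regularity is precisely what requires the regularity assumption on $X$ — without which the theorem genuinely fails.
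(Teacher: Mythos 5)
Your proof is correct, and it reaches the theorem by a genuinely different construction than the paper's. The paper builds a Wallman-style space $Y$ of \emph{maximal ideals} of open sets, uses the regularity of $X$ in $M$ to show that every preinterpretation $\pi$ admits a canonical map $h_\pi$ into $Y$, and then takes the interpretation to be the subspace $\bigcup_\pi\rng(h_\pi)$ with the inherited topology --- the points are identified extrinsically, by quantifying over all preinterpretations, precisely because the full space $Y$ need not itself be a preinterpretation. Your $M$-completely prime filters characterize the correct points intrinsically: primeness with respect to unions indexed in $M$ is exactly commutation with $M$-unions, so your filter space is a preinterpretation outright, and every preinterpretation $\pi_1$ reduces to it via $g(\hat x_1)=\{O\in\tau\colon\hat x_1\in\pi_1(O)\}$, with no restriction step and no quantification over preinterpretations in the definition of the space. (Under complementation your filters correspond to those maximal ideals in the paper's $Y$ whose complements are $M$-completely prime, so the two spaces are equivalent, as uniqueness requires.) The hypothesis is also consumed in different places: the paper uses regular Hausdorffness both to prove that $h_\pi(x)$ is a maximal ideal and to observe that targets of preinterpretations are automatically regular Hausdorff, whereas you invoke it only to verify regular Hausdorffness of the filter space directly --- your existence and maximality argument never uses it, which is a small bonus in generality. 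Two cosmetic points: your parenthetical that properness forces $X\in F$ tacitly assumes the nullary-intersection convention, so it is cleaner simply to stipulate $X\in F$ (otherwise the empty filter would be a point and would violate $\pi(X)=\hat X$); and the closing claim that the theorem ``genuinely fails'' without regularity is an aside that nothing in your argument establishes, so it should be dropped or flagged as a conjecture.
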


\begin{proof}
First note that some preinterpretations indeed exist--the identity map on $X$ is one of them. Note also that the target spaces of topological preinterpretations of $X$ must be regular Hausdorff, since the property is expressible in terms of unions and intersections of open sets: for every $O\in\tau$, $O=\bigcup_{i\in I}O_i$ such that for every $i\in I$ ${\mathrm{cl}}(O_i)\subset O$ holds, and the last statement can be expressed as $\exists P_i\in\tau\ O_i\cap P_i=0$ and $O\cup P_i=X$.

Consider the set $Y$ of all sets $A\subset \tau$ such that $A$ is a maximal ideal of open sets: it contains no finite subcover and  is maximal with respect to inclusion. Let $\chi\colon X\to Y$ be the map defined by $\chi(x)=\{O\in\tau\colon x\notin O\}$. Equip the space $Y$ with the topology generated by sets $\chi(O)=\{A\in Y\colon O\notin A\}$ for $O\in\tau$.  The space $Y$ is close to the Wallman extension of $X$, and in fact equal to it in the trivial case when $M=V$. The map $\chi$ is most likely not a preinterpretation, but it is nevertheless universal in the following sense:

\begin{claim}
Suppose that $\pi\colon\langle X, \tau\rangle\to\langle \hat X, \hat\tau\rangle$ is a topological preinterpretation.
Then there is a unique map $h_\pi\colon\hat X\to Y$ such that $\chi= h\circ\pi$ and for every $O\in\tau$, $h^{-1}\chi(O)=\pi(O)$ holds.
\end{claim}

\begin{proof}
Define a function $h_\pi=h\colon\hat X\to Y$ by $h(x)=\{O\in\tau\colon x\notin\pi(O)\}$. Use the regular Hausdorff assumption to show that the value of function of $h$ are in fact elements of the space $Y$; once this is done, the verification of the requested properties of the map $h$ is trivial.

Since $\pi$ is a preinterpretation, the set $h(x)\subset\tau$ cannot contain any finite subcover of $X$: such a subcover would be an element of $M$ and its $\pi$-image would have to cover all of $\hat X$. To show that $h(x)$ is inclusion-maximal, for every $O\in\tau$ such that $x\in\pi(O)$ I must produce a set $P\in h(x)$ such that $O\cup P=X$.

By the regular Hausdorffness of the space $X$ in the model $M$, $M\models O=\bigcup_iO_i$ where the closure of each $O_i$ is a subset of $O$, or in other words, there is $P_i\in\tau$ such that $O_i\cap P_i=0$ and $O\cup P_i=X$. Since $\pi$ is a preinterpretation, there is an index $i$ such that $x\in\pi(O_i)$. Then $\pi(P_i)\cap \pi(O_i)=0$, in particular $P_i\in h(x)$
and $P_i\cup O=X$ as desired.
\end{proof}

Now, let $\hat X_0=\bigcup\{\rng(h_\pi)\colon \pi$ is a topological preinterpretation of $X\}\subset Y$ and equip the set $\hat X_0$ with the topology inherited from $Y$. Let $\pi_0\colon X\to\hat X_0$ be defined by $\pi_0(x)=\chi(x)$, and let $\pi_0\colon\tau\to\power(\hat X_0)$ be defined by $\pi_0(O)=\chi(O)\cap\hat X_0=\bigcup\{h_\pi''\pi(O)\colon \pi$ is a topological preinterpretation of $X\}$. It is easy to verify that $\pi_0$ is a topological preinterpretation of $X$ to which every preinterpretation
$\pi$ is reducible via the map $h_\pi$; i.e., $\pi_0$ is the topological interpretation of $X$.

For the uniqueness of the interpretation, suppose for example that $\pi\colon\langle X, \tau\rangle\to\langle\hat X, \hat\tau\rangle$ is another topological interpretation. There must be a reduction $h\colon \hat X_0\to\hat X$ of $\pi_0$ to $\pi$.
It is easy to observe that $h_\pi\circ h$ must be the identity and therefore the reduction $h$ in fact witnesses the equivalence of the two interpretations.
\end{proof}

\begin{example}
\label{isolatedexample}
Suppose that $X$ is a topological space and $P$ is the poset of its nonempty open sets ordered by inclusion. If $G\subset P$ is a generic filter, in the generic extension $V[G]$ the map $\pi\colon X\to X\cup\{G\}$ where points of $X$ are mapped to themselves and ground
model open sets $O\subset X$ are mapped to $O\cup\{G\}$ if $G\in O$ and to $O$ otherwise, is a preinterpretation by a genericity argument. Thus, for every regular Hausdorff space $X$ which contains no isolated points there is a generic extension in which the interpretation map of $X$ is not a surjection.
\end{example}

This means that for example the space $\mathbb{Q}$ will have have rather counterintuitive interpretations in generic extensions containing Cohen or even unbounded reals. Note that the space $\mathbb{Q}$ does not belong to the interpretable category.

\begin{example}
Suppose that $M\models\langle X, \tau\rangle$ is a space whose topology is generated by a metric $d$. If $\pi\colon X\to\hat X$ is an interpretation, one can define a metric $\hat d$ on $\hat X$ by setting $\hat d(x_0, x_1)\leq\eps$ if $x_0, x_1$ belong to $\pi(O_0)$, $\pi(O_1)$
respectively for some open sets $O_0, O_1\in\tau$ such that $y_0\in O_0$ and $y_1\in O_1$ imply $d(y_0, y_0)<\eps$. It is not difficult to see that $\hat d$ is a metric generating the topology of $\hat X$, $\pi''X\subset \hat X$ is dense, and $\hat d\circ \pi=d$.
This means that the interpretation of a metrizable space is again metrizable and for example the interpretation of $\mathbb{Q}$ can be viewed as a set of reals.
\end{example}

\begin{example}
Suppose that $M\models\langle X, \tau\rangle$ is a uniform space, with the topology generated by a uniform set $\Theta$ of covers. Let $\pi\colon \langle X, \tau\rangle\to\langle \hat X, \hat \tau\rangle$ be an interpretation. The space $X$ is then uniform,
and $\hat\Theta=\{\pi''C\colon C\in\Theta\}$ is a uniform set of covers generating its topology. To see this, note that if $C,D\in M$ are open covers such that $C$ is a star-refinement of $D$ (for every $O\in C$ there is $P\in D$ such that $\phi(O, P)=\bigcup\{Q\in C\colon Q\cap O\neq 0\}\subset P$ holds) then $\pi''C$ is a star-refinement of $\pi''C$ (since $\phi(O,P)$ is equivalent to $\phi(\pi(O), \pi(P))$). 
\end{example}

Intepretations of regular Hausdorff spaces commute with taking open or closed subspace. This is the contents of the following theorem. 

\begin{theorem}
\label{subspacetheorem}
Suppose that $M$ is a model of set theory and $M\models\langle X, \tau\rangle$ is a regular Hausdorff space and $O\in\tau$ is an open set. Let $\pi\colon X\to\hat X$ be an interpretation.

\begin{enumerate}
\item $\pi\restriction O$, the map from $X\restriction O$ to $X\restriction\pi(O)$, extends to an intepretation of the space $X\restriction O$.
\item $\pi\restriction X\setminus O$, the map from $X\setminus O$ to $\hat X\setminus\pi(O)$, extends to an intepretation of the space $X\setminus O$
\end{enumerate}
\end{theorem}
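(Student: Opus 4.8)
The plan is to handle part (1) in detail and obtain part (2) by a symmetric argument. For part (1), the open sets of $X\restriction O$ in $M$ are exactly the sets $P\in\tau$ with $P\subseteq O$; I define $\pi_O(P):=\pi(P)$ and take the subspace $\hat X\restriction\pi(O)$ as the target. Checking the four preinterpretation axioms for $\pi_O$ is routine: the incidence condition and commutation with unions and finite intersections in $M$ are inherited directly from $\pi$ (using $x\in O\iff\pi(x)\in\pi(O)$, which is axiom (1) for $\pi$), one has $\pi_O(0)=0$ and $\pi_O(O)=\pi(O)$, and since $\pi''\tau$ is a basis of $\hat\tau$ and $\pi$ commutes with finite intersections, the sets $\pi(P)\cap\pi(O)=\pi(P\cap O)=\pi_O(P\cap O)$ form a basis of the subspace topology on $\pi(O)$. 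So $\pi_O$ is a preinterpretation; the content of the theorem is that it is the $\leq$-largest one.

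To prove maximality, I would start from an arbitrary preinterpretation $\rho\colon X\restriction O\to\hat W$ and glue it to the identity preinterpretation of the closed complement $X\setminus O$: set $\hat W^+=\hat W\sqcup(X\setminus O)$, let $\rho^+\colon X\to\hat W^+$ send $x\in O$ to $\rho(x)$ and $x\notin O$ to itself, put $\rho^+(P)=\rho(P\cap O)\cup(P\setminus O)$ for $P\in\tau$, and topologize $\hat W^+$ by the sets $\rho^+(P)$. Using that $\hat W$ and $X\setminus O$ are disjoint one checks that $\rho^+$ commutes with finite intersections and arbitrary unions of open sets in $M$ (the mixed terms in the expansion vanish), that $\rho^+(0)=0$ and $\rho^+(X)=\hat W^+$, and that the subspace topology $\hat W^+$ induces on $\hat W$ is the original one; thus $\rho^+$ is a preinterpretation of $X$ extending $\rho$, with $\rho^+(O)=\hat W$ an open subspace of $\hat W^+$ carrying its original topology. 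Since $\pi$ is an interpretation of $X$, there is a reduction $h\colon\hat W^+\to\hat X$ with $\pi=h\circ\rho^+$ and $h^{-1}(\pi(P))=\rho^+(P)$ for every $P\in\tau$. As $h^{-1}(\pi(O))=\rho^+(O)=\hat W$, the map $h$ sends $\hat W$ into $\pi(O)$, and a one-line computation shows $h\restriction\hat W$ is a reduction of $\rho$ to $\pi_O$; hence $\rho\leq\pi_O$, and $\pi_O$ is the interpretation of $X\restriction O$.

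For part (2), the open sets of $X\setminus O$ are the sets $P\setminus O$ for $P\in\tau$, and I define $\pi_{X\setminus O}(P\setminus O):=\pi(P)\setminus\pi(O)$ with target $\hat X\restriction(\hat X\setminus\pi(O))$; well-definedness uses that $P\setminus O=P'\setminus O$ implies $P\cup O=P'\cup O$, whence $\pi(P)\cup\pi(O)=\pi(P')\cup\pi(O)$. The maximality argument mirrors the one above: given a preinterpretation $\sigma\colon X\setminus O\to\hat V$, glue it to the identity preinterpretation of the open subspace $O$ to form $\sigma^+\colon X\to O\sqcup\hat V$ with $\sigma^+(P)=(P\cap O)\cup\sigma(P\setminus O)$; then $\sigma^+(O)=O$ is open in the target of $\sigma^+$, $\hat V$ is the complementary closed set, a reduction $h$ of $\sigma^+$ to $\pi$ exists by the maximality of $\pi$, and $h\restriction\hat V$ is a reduction of $\sigma$ to $\pi_{X\setminus O}$.

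The main obstacle I anticipate is the bookkeeping in the gluing step: one must verify not merely the incidence condition for $\rho^+$ but that the topology generated by the sets $\rho^+(P)$ restricts correctly to each of the two pieces, so that $\hat W$ really sits inside $\hat W^+$ as an open subspace with its own topology unchanged. This is what guarantees that the reduction $h$ coming from the maximality of $\pi$ actually restricts to a reduction of the original $\rho$; if the gluing distorted $\hat W$, the argument would break. Everything else is a direct manipulation of the preinterpretation axioms set up in the existence theorem.
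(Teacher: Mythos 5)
Your proposal is correct and follows essentially the same route as the paper: your disjoint-union gluing $\hat W^+=\hat W\sqcup(X\setminus O)$ (respectively $O\sqcup\hat V$) is exactly the paper's adjunction space $X\cup\hat Y'$ modulo the attaching map, after which both arguments invoke the maximality of $\pi$ to obtain a reduction and restrict it to the subspace part. The only cosmetic difference is that you work out the open case in detail and sketch the closed case, while the paper does the reverse, with the same well-definedness computation for the closed case.
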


\begin{proof}
I will work on (2), (1) is similar. Write $Y=X\setminus O$ and $\hat Y=\hat X\setminus\pi(O)$, both equipped with the inherited topologies $\gs$ and $\hat\gs$ respectively. Define a map $\chi\colon Y\to \hat Y$ by $\chi(y)=\pi(y)$, and a map
$\chi\colon\gs\to\hat\gs$ by $\chi(P\cap Y)=\pi(P)\cap\hat Y$ for a set $P\in\tau$. Note that this depends only on $P\cap Y$ and not on $P$, since if $P_0, P_1\in\tau$ are sets with the same intersection with $Y$, then $P_0\cup O=P_1\cup O$,
so $\pi(P_0)\cup\pi(O)=\pi(P_0)\cup\pi(O)$ and so $\pi(P_0)$ and $\pi(P_1)$ have the same intersection with $\hat Y$. Since $\pi$ is an interpretation of $X$, $\chi$ is a preinterpretation of $Y$.

To show that in fact $\chi$ is an interpretation of $Y$, suppose that $\chi'\colon Y\to \hat Y'$ is another preinterpretation of $Y$; I must find a reduction of $\chi'$ to $\chi$. Consider the adjunction space $\hat X'$ which is the union of $X\cup\hat Y'$ modulo the equivalence induced by the attaching map $\chi'\colon Y\to\hat Y'$, with the resulting topology $\hat\tau'$. Consider the map $\pi'\colon X\to\hat X'$ given by $\pi'(x)=[x]$, and the map $\pi'\colon\tau\to\hat\tau'$ given by $\pi'(O)=[O]\cup\chi'(O\cap Y)$.
It is not difficult to check that $\pi'$ is a preinterpretation of the space $X$, and so is reducible to $\pi$ via some map $h\colon\hat X'\to\hat X$. Clearly, the map $h\restriction Y'$ reduces $\chi'$ to $\chi$ as desired.
\end{proof}

\begin{theorem}
\label{functiontheorem}
Suppose that $M$ is a model of set theory and $M\models \langle X, \tau\rangle, \langle Y, \gs\rangle$ are regular Hausdorff spaces and $f\colon X\to Y$ is a continuous function.
Suppose that $\pi\colon X\to\hat X$ and $\chi\colon Y\to\hat Y$ are interpretations.

\begin{enumerate}
\item There is a unique continuous function $\hat f\colon \hat X\to\hat Y$ which contains the set $\{\langle\pi(x), \chi(y)\rangle\colon x\in X, y\in Y, f(x)=y\}$ as a subfunction.
\item For the unique function $\hat f$, whenever $O\in\tau$ and $P\in\gs$ are open sets and $f^{-1}P=O$, then
$\hat f^{-1}\chi(P)=\pi(O)$.
\end{enumerate}
\end{theorem}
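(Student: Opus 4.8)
The plan is to construct $\hat f$ directly on the "universal" space $Y$ of maximal ideals built in the proof of Theorem~\ref{existencetheorem}, and then restrict. Recall that $\hat Y$ sits inside the space of maximal ideals of $\gs$ in the model $M$, with $\chi(y)=\{P\in\gs\colon y\notin P\}$. Given a point $u\in\hat X$, I would define
$\hat f(u)=\{P\in\gs\colon u\notin\pi(f^{-1}P)\}$.
The first task is to check that this is a maximal ideal of open sets of $Y$ lying in the range of some $h_{\chi'}$, i.e.\ that it is genuinely a point of $\hat Y$. That $\hat f(u)$ contains no finite subcover follows because $f^{-1}$ of a cover is a cover in $M$, hence its $\pi$-image covers $\hat X$; maximality follows from the regular Hausdorff splitting trick already used in Theorem~\ref{existencetheorem}, pulled back through $f$: if $u\in\pi(f^{-1}P)$, write $M\models P=\bigcup_iP_i$ with $\mathrm{cl}(P_i)\subset P$, pull back to get $f^{-1}P=\bigcup_i f^{-1}P_i$ with the corresponding complement witnesses $f^{-1}Q_i$, and use that $\pi$ is an interpretation to find $i$ with $u\in\pi(f^{-1}P_i)$, whence $f^{-1}Q_i\in\hat f(u)$ and $Q_i\cup P=Y$.

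Next I would verify the subfunction condition and continuity. If $f(x)=y$ then $u=\pi(x)$ gives $\hat f(u)=\{P\colon \pi(x)\notin\pi(f^{-1}P)\}=\{P\colon x\notin f^{-1}P\}=\{P\colon y\notin P\}=\chi(y)$, using property (1) of preinterpretations. Continuity is immediate from the definition of the topology on $\hat Y$: the preimage of a basic open set $\chi(P)=\{A\colon P\notin A\}$ is exactly $\{u\colon u\in\pi(f^{-1}P)\}=\pi(f^{-1}P)$, an open subset of $\hat X$. This same computation simultaneously establishes item (2): if $f^{-1}P=O$ then $\hat f^{-1}\chi(P)=\pi(O)$. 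For uniqueness, note that $\pi''X$ is dense in $\hat X$ (this is part of $\pi$ being an interpretation — $\pi''\tau$ is a basis and each $\pi(O)$ meets $\pi''X$), $\hat Y$ is Hausdorff by Theorem~\ref{existencetheorem}, and two continuous maps into a Hausdorff space agreeing on a dense set agree everywhere; so any continuous $\hat f$ extending the given subfunction coincides with ours.

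The main obstacle I anticipate is showing that $\hat f(u)$ actually lands in $\hat Y$ rather than merely in the ambient Wallman-type space — i.e.\ that it is in the range of some $h_{\chi'}$ for a bona fide preinterpretation $\chi'$ of $Y$. The clean way around this is to exhibit a preinterpretation of $Y$ witnessing it: consider the pushout/adjunction construction gluing $\hat X$ to $Y$ along $f$ composed with $\pi$, or more simply observe that the map $\chi'\colon Y\to\hat Y'$ obtained as the "image interpretation" $\hat Y'=\{\hat f(u)\colon u\in\hat X\}\cup\chi''Y$ with $\chi'(P)=\pi(f^{-1}P)$-images is itself a preinterpretation of $Y$ — checking conditions (1)--(4) reduces, via $f^{-1}$ commuting with unions and intersections and $f$ being continuous, to the fact that $\pi$ is a preinterpretation of $X$. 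Then $\hat f(u)\in\rng(h_{\chi'})\subseteq\hat Y$ by maximality of the interpretation $\chi$, which is exactly what is needed. Everything else is the kind of routine diagram-chase the paper has been doing, so I would state those verifications as "straightforward" and spend the written proof on the maximality argument and this last membership point.
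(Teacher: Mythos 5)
Your construction is correct, and it reaches the theorem by a route that is organized differently from the paper's. The paper argues pointwise and model-independently: for each $u\in\hat X$ it builds the one-point extension $Z=Y\cup\{p\}$ with $\xi(P)=P\cup\{p\}$ exactly when $u\in\pi(f^{-1}P)$, checks this is a preinterpretation of $Y$, and invokes maximality of the interpretation $\chi$ to get a reduction $h\colon Z\to\hat Y$, setting $\hat f(u)=h(p)$; uniqueness there goes through a separate argument that any continuous function satisfying (1) already satisfies (2), plus separation of distinct points of $\hat Y$ by disjoint $\chi(P_0),\chi(P_1)$. You instead fix the concrete maximal-ideal (Wallman-type) model of $\hat Y$ from Theorem~\ref{existencetheorem}, write the explicit formula $\hat f(u)=\{P\in\gs\colon u\notin\pi(f^{-1}P)\}$, and certify membership in $\hat Y$ by exhibiting one global auxiliary preinterpretation $\chi'$ of $Y$ whose associated $h_{\chi'}$ fixes all these ideals; continuity and item (2) then drop out of the single computation $\hat f^{-1}\chi(P)=\pi(f^{-1}P)$, and uniqueness is the standard dense-agreement-into-Hausdorff argument (using that $\pi''X$ is dense and $\hat Y$ is Hausdorff by Theorem~\ref{existencetheorem}). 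Both proofs hinge on the same two facts — that $\hat f(u)$ is forced to be determined by which sets $\pi(f^{-1}P)$ contain $u$, and that maximality of $\chi$ supplies the needed point of $\hat Y$ — so the difference is packaging: your version buys a cleaner simultaneous derivation of (2) and of uniqueness, at the price of working inside the particular construction of $\hat Y$ (harmless, since interpretations are unique up to equivalence and the paper explicitly identifies equivalent ones) and of having to verify that your $\chi'$ really is a preinterpretation, which indeed reduces, as you say, to $\pi$ being a preinterpretation together with $f^{-1}$ commuting with unions and finite intersections in $M$. Two small blemishes to fix in writing it up: in the maximality argument the witness you produce is $Q_i\in\hat f(u)$, not ``$f^{-1}Q_i\in\hat f(u)$'' (elements of $\hat f(u)$ are open subsets of $Y$), and the definition of $\chi'$ should be stated precisely, e.g.\ $\chi'(P)=\{A\in\hat Y'\colon P\notin A\}$, rather than gestured at; with those repairs the argument is complete.
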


\begin{proof}
First of all, every continuous function satisfying (1) has to satisfy (2) as well. Suppose that $P\in\gs$ and $O\in\tau$ are sets such that $f^{-1}P=O$. First, suppose for contradiction that $x\in \pi(O)$ and $\hat f(x)\notin\chi (P)$. Since $\pi$ is a preinterpretation, there must be sets $\bar P\in\gs$ and $\bar O\in\tau$ such that $f^{-1}P=O$, ${\mathrm{cl}}(\bar P)\subset P$
and $x\in\pi(\bar O)$. Then, the $\hat f$-preimage of $\chi(X\setminus{\mathrm{cl}}(\bar P))$ contains $x$ but no points in $\pi''\bar O$
which are dense around $x$, contradicting the continuity of the function $\hat f$ at $x$. The contradiction in the case that
$x\notin\pi(O)$ and $f(x)\in\chi(P)$ is obtained in a similar way.

The existence and uniqueness of the function $\hat f$ immediately follows from the following claim.

\begin{claim}
For every $x\in\hat X$ there is a unique $y\in\hat Y$ such that whenever $O\in\tau$ and $P\in\gs$ are open sets and $f^{-1}P=O$, then $x\in\pi(O)\liff y\in\chi(P)$.
\end{claim}

\begin{proof}
The uniqueness of the point $y$ is clear: whenever $y_0\neq y_1\in\hat Y$ are distinct points then there are disjoint sets $P_0, P_1\in\tau$ such that $x_0\in\chi(P_0)$ and $x_1\in\chi(P_1)$. Let $O_0=f^{-1}P_0$ and $O_1=f^{-1}P_1$.
These are disjoint open subset of $X$, and so $x$ can belong to at most one of $\pi(O_0)$ and $\pi(O_1)$.

For the existence of the point $y$, consider the space $Z=Y\cup\{p\}$ for some point $p$, and define a map $\xi\colon Y\to Z$ by $\xi(y)=y$. Define also the map $\xi\colon\gs\to\power(Z)$ by $\xi(P)=P\cup\{p\}$ if $x\in\pi(f^{-1}(P)$, and $\xi(P)=P$ otherwise. It is easy to use the fact that $\pi$ is a preinterpretation to show that the map $\xi$ commutes with finite intersections and unions in the model $M$. Thus, equipping the set $Z$ with the topology generated by the range of $\xi$, the map $\xi$ turns into a preinterpretation of the space $Y$. Since $\chi$ is an interpretation,
there is a reduction $h\colon Z\to\hat Y$ of $\xi$ to $\chi$. It is easy to check that the point $y=h(p)\in\hat Y$ works as desired.
\end{proof}

Define the function $\hat f\colon \hat X\to\hat Y$ by letting $\hat f(x)=y$ if $x,y$ satisfy the statement of the claim. It is clear that $\hat f$ satisfies (2). Since the range of $\chi$ generates the topology on the space $\hat Y$, it is clear that $\hat f$ is continuous. This completes the proof of the theorem.
\end{proof}

\begin{corollary}
Suppose that $M\models X, Y, Z$ are regular Hausdorff spaces and $f\colon X\to Y$ and $g\colon Y\to Z$ are continuous functions. Then the interpretation of $g\circ f$ equals the composition of interpretations of $f$ and $g$.
\end{corollary}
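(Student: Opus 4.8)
The plan is to reduce the statement entirely to the uniqueness clause of Theorem~\ref{functiontheorem}(1). Fix interpretations $\pi\colon X\to\hat X$, $\chi\colon Y\to\hat Y$, and $\rho\colon Z\to\hat Z$; these exist and are unique up to equivalence by Theorem~\ref{existencetheorem}. Since $f$, $g$, and hence $g\circ f$ are continuous maps between regular Hausdorff spaces of $M$, Theorem~\ref{functiontheorem} applies to all three and produces unique continuous interpretations $\hat f\colon\hat X\to\hat Y$, $\hat g\colon\hat Y\to\hat Z$, and $\widehat{g\circ f}\colon\hat X\to\hat Z$. What must be shown is that $\widehat{g\circ f}=\hat g\circ\hat f$.

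First observe that $\hat g\circ\hat f$ is a continuous function from $\hat X$ to $\hat Z$, being a composition of two continuous functions. By the uniqueness part of Theorem~\ref{functiontheorem}(1), it therefore suffices to check that $\hat g\circ\hat f$ contains the set $\{\langle\pi(x),\rho(z)\rangle\colon x\in X,\ z\in Z,\ (g\circ f)(x)=z\}$ as a subfunction; the map $\widehat{g\circ f}$ is then forced to be equal to $\hat g\circ\hat f$.

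So fix $x\in X$, put $y=f(x)$ and $z=g(y)$, so that $(g\circ f)(x)=z$. Since $\hat f$ is the interpretation of $f$, it contains the pair $\langle\pi(x),\chi(y)\rangle$, that is, $\hat f(\pi(x))=\chi(y)$. Since $\hat g$ is the interpretation of $g$ and $g(y)=z$, it contains the pair $\langle\chi(y),\rho(z)\rangle$, that is, $\hat g(\chi(y))=\rho(z)$. Composing the two identities gives $(\hat g\circ\hat f)(\pi(x))=\hat g(\chi(y))=\rho(z)$, which is exactly what was needed. Hence $\widehat{g\circ f}$ and $\hat g\circ\hat f$ are both continuous functions $\hat X\to\hat Z$ extending the same partial function, and by uniqueness they coincide.

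I expect no genuine obstacle here: the whole content of the statement is already packaged into the existence-and-uniqueness assertion of Theorem~\ref{functiontheorem}, and the argument above is a one-line diagram chase. The only point worth a second glance is that the uniqueness clause really does pin the map down from its values on $\pi''X$ plus continuity — but this is precisely how uniqueness was established in the proof of Theorem~\ref{functiontheorem} (the values on $\pi''X$ together with continuity and the behavior on preimages of open sets determine the function), so nothing further is required.
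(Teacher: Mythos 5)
Your argument is correct and is essentially the paper's own proof: both note that $\hat g\circ\hat f$ is continuous and contains the pointwise image of $g\circ f$ under the interpretation maps, and then invoke the uniqueness clause of Theorem~\ref{functiontheorem}. Your write-up simply spells out the one-line diagram chase in more detail.
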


\begin{proof}
The composition of the interpretations is continuous, and it contains the pointwise image of $g\circ f$ under the interpretation maps as a subset. By the uniqueness part of Theorem~\ref{functiontheorem}, it must be equal to the interpretation of $g\circ f$.
\end{proof}

\begin{corollary}
Suppose that $M\models X, Y$ are regular Hausdorff spaces and $f\colon X\to Y$ is a continuous function. Then $y\in Y$ is in the range of $f$ if and only if the interpretation of $y$ is in the range of the interpretation of $f$.
\end{corollary}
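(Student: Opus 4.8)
The plan is to prove the two implications separately; both turn out to be short once Theorem~\ref{functiontheorem} is in hand. Fix interpretations $\pi\colon X\to\hat X$ and $\chi\colon Y\to\hat Y$, and let $\hat f\colon\hat X\to\hat Y$ be the interpretation of $f$ furnished by Theorem~\ref{functiontheorem}. For the forward implication, suppose $y=f(x)$ for some $x\in X$. Then by the defining property of $\hat f$ (Definition~\ref{i2definition}) we have $\hat f(\pi(x))=\chi(y)$, so $\chi(y)$ lies in the range of $\hat f$. Nothing further is needed here.

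For the converse I would argue by contraposition: assuming $y\notin\rng(f)$, I will show $\chi(y)\notin\rng(\hat f)$. The key observation is that the \emph{whole} space $X$ is the $f$-preimage of the open set $P:=Y\setminus\{y\}$. Indeed, since $M\models\langle Y,\gs\rangle$ is regular Hausdorff, it is in particular $\mathrm{T}_1$, so $M\models P\in\gs$; and since $f,y\in M$ and ``$y\notin\rng(f)$'' is absolute, $M\models f^{-1}(P)=X$. Now apply clause (2) of Theorem~\ref{functiontheorem} to the pair $O:=X$ and $P$: it yields $\hat f^{-1}(\chi(P))=\pi(X)$, and $\pi(X)=\hat X$ by clause (3) of the definition of a preinterpretation. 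Hence every point of $\hat X$ is mapped by $\hat f$ into $\chi(P)$, i.e.\ $\rng(\hat f)\subseteq\chi(P)$.

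It then remains only to note that $\chi(y)\notin\chi(P)$, which is immediate from clause (1) of the definition of a preinterpretation applied to $\chi$: since $y\notin P$, we get $\chi(y)\notin\chi(P)$. Combining this with $\rng(\hat f)\subseteq\chi(P)$ gives $\chi(y)\notin\rng(\hat f)$, completing the contrapositive. The only point demanding any care is the bookkeeping of which objects and equalities are to be read inside $M$ (the set $P$, the identity $f^{-1}(P)=X$), and noticing that $\mathrm{T}_1$ (a consequence of the regular Hausdorff hypothesis in $M$) is exactly what makes $P=Y\setminus\{y\}$ open; beyond that there is essentially no obstacle, since once one sees that $Y\setminus\{y\}$ pulls back to all of $X$, the argument is a two-line application of Theorem~\ref{functiontheorem}(2) together with the basic preinterpretation axioms.
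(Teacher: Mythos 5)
Your proof is correct and is essentially the paper's argument: the paper writes $X=\bigcup\{f^{-1}P\colon P\in\gs,\ y\notin P\}$ and invokes commutation with unions, membership, and $f$-preimages, while you collapse that family to the single open set $P=Y\setminus\{y\}$ (legitimate since the regular Hausdorff hypothesis gives $\mathrm{T}_1$ in $M$, so $P\in\gs$ and $f^{-1}P=X$) and apply Theorem~\ref{functiontheorem}(2) once together with clauses (1) and (3) of the preinterpretation definition. The forward direction is identical in both treatments, so this is only a harmless streamlining rather than a different route.
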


\begin{proof}
Let $\pi\colon X\to\hat X$ and $\chi\colon Y\to\hat Y$ and $\hat f\colon \hat X\to\hat Y$ be interpretations. If $f(x)=y$ then $\hat f(\pi(x))=\chi(y)$ by the definition of $\hat f$, proving the left-to-right direction. For the right-to-left direction, if $y\in Y$
is a point which does not belong to $\rng(f)$ then $X=\bigcup \{f^{-1}P\colon P\subset Y$ is open and $y\notin P\}$. The interpretations commute with the union, $f$-preimage, and membership and so every element of $\hat X$ is mapped into an open set which does not
contain $\pi(y)$ as desired.
\end{proof}

\begin{corollary}
\label{homeocorollary}
Interpretation of a homeomorphism is a homeomorphism.
\end{corollary}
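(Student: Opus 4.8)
The plan is to leverage the functoriality that Theorem~\ref{functiontheorem} and its corollaries have already set up, so that the corollary becomes a purely formal consequence. Let $f\colon X\to Y$ be a homeomorphism in $M$ and let $g=f^{-1}\colon Y\to X$ be its continuous inverse; fix interpretations $\pi\colon X\to\hat X$ and $\chi\colon Y\to\hat Y$. Theorem~\ref{functiontheorem} produces the interpretation $\hat f\colon\hat X\to\hat Y$ of $f$ and the interpretation $\hat g\colon\hat Y\to\hat X$ of $g$, both continuous by construction. The whole task is to show that $\hat g$ is a two-sided inverse of $\hat f$; then $\hat f$ is a continuous bijection with continuous inverse, i.e., a homeomorphism.

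First I would note that the interpretation of the identity map $\id_X\colon X\to X$ is the identity map $\id_{\hat X}\colon\hat X\to\hat X$: the latter is a continuous function containing $\{\langle\pi(x),\pi(x)\rangle\colon x\in X\}$ as a subfunction, so by the uniqueness clause of Theorem~\ref{functiontheorem} it coincides with the interpretation of $\id_X$. The same remark applies to $\id_Y$ and $\hat Y$. This is the only place where a (trivial) argument is needed.

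Then I would invoke the composition corollary following Theorem~\ref{functiontheorem}: the interpretation of $g\circ f=\id_X$ equals $\hat g\circ\hat f$, hence $\hat g\circ\hat f=\id_{\hat X}$; symmetrically, from $f\circ g=\id_Y$ we get $\hat f\circ\hat g=\id_{\hat Y}$. Therefore $\hat f$ is a homeomorphism, as desired. I do not expect any genuine obstacle here — all the substantive work was already absorbed into the existence and uniqueness of interpretations of continuous functions and their compatibility with composition, and the corollary is simply the observation that a functor carries isomorphisms to isomorphisms.
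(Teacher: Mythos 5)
Your argument is correct and is exactly the intended one: the paper states the corollary without proof, as an immediate consequence of the uniqueness clause of Theorem~\ref{functiontheorem} (which gives that the identity is interpreted as the identity) and the composition corollary, which is precisely the functoriality argument you spell out.
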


\begin{example}
An interpretation of a covering map between two regular Hausdorff spaces is a covering map between the interpreted spaces.
Suppose that $M$ is a model of set theory and $M\models X,Y$ are regular Hausdorff spaces and $f\colon X\to Y$ is a covering map. By the definition of covering, in the model $M$ the set $C=\{O\subset Y\colon O$ is open and $f^{-1}O=\bigcup D_O$ for some nonempty family $D_O$ consisting of pairwise disjoint open subsets of $X$ on which $f$ is a homeomorphism with $O\}$ is an open cover of $Y$. Let $\pi\colon X\to\hat X$ and $\chi\colon Y\to\hat Y$ be Borel-topological interpretations. $\chi''C$ is still an open cover of the space $\hat Y$. Moreover, for each open set $O\in C$,
$\hat f^{-1}O=\pi(\bigcup D_O)=\bigcup\pi''D_O$. The family $\pi''D_O$ still consists of pairwise disjoint open subsets of $\hat X$. Moreover, for every set $P\in D_O$, the function $f\restriction P$ is interpreted as a homeomorphism between $\pi(P)$ and $\chi(O)$. This completes the verification that $\hat f\colon\hat X\to\hat Y$ is a covering map.
\end{example}

\begin{example}
An interpretation of an injection need not be an injection. Let $X, Y$ be two dense disjoint sets of $\mathbb{Q}$ with the inherited order topology and let $X\cup Y$ be their topological sum. Let $f\colon X\cup Y\to\mathbb{Q}$ be the identity map. Pass to a generic extension $V[G]$ in which there is a Cohen real $r\in\mathbb{R}$. The interpretation of $X\cup Y$ can be viewed as a topological sum of two sets of reals, both of which contain $r$. The two copies of $r$ will be mapped by $\hat f$ to the same value, namely $r$ itself.
\end{example}

\noindent This pathology will not occur in the class of interpretable spaces by Corollary~\ref{injectioncorollary}. 

\begin{example}
The interpretation of a surjection need not be a surjection. Let $f$ be the surjective identity map from $X=$the reals with discrete topology to $Y=$the reals with the usual Euclidean topology. Pass to any generic extension $V[G]$ which contains new reals.
 Easy computations exhibited elsewhere in the paper show that the interpretation of $X$ is just the space $\hat X$ of ground model reals with the discrete topology, $\hat Y$ is just the space of all reals with the Euclidean topology, and $\hat f$ is the identity map, which now is not surjective anymore.
\end{example}

\noindent This feature can hardly be called a pathology as both spaces involved are interpretable and very natural.
Surjectivity of the interpreted surjections, if it occurs at all, is an important issue. It is normally guaranteed by stronger properties of the maps, such as openness or perfectness as in Theorems~\ref{perfectmappingtheorem} and ~\ref{openmappingtheorem}.

\section{Interpretations of complete spaces}

It turns out that interpretations of spaces in natural completeness categories have strong uniqueness features which make it much easier to evaluate them.

\begin{definition}
Let $\langle X, \tau\rangle$ be a topological space. A triple $\langle U, \leq, f\rangle$ is \emph{a completeness system} if $U$ is a set, $\leq$ is a partial ordering on it, and $f\colon A\to\tau$ is a function so that

\begin{enumerate}
\item for every $u\in U$, $f(u)\subseteq\bigcup\{f(v)\colon v<u\}$ holds;
\item for every strictly descending sequence $\langle u_n\colon n\in\gw\rangle$ in the ordering $\leq$ and every collection $F$ of closed subsets of $X$ with the finite intersection property such that ${\mathrm{cl}}(f(u_n))$ contains an element of $F$ for every $n\in\gw$,
then $\bigcap F\neq 0$.
\end{enumerate}
\end{definition}

\begin{theorem}
\label{sievetheorem}
Suppose that $M$ is a transitive model of set theory and $M\models X$ is a regular Hausdorff space and $\langle U, \leq, f\rangle$ is a completeness system on $X$. 
There is a unique preintepretation $\pi\colon X\to\hat X$ for which $\langle U, \leq, \hat f\rangle$ is a completeness system on $\hat X$
and it is the interpretation.
\end{theorem}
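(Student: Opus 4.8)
The plan is to produce the required preinterpretation $\pi\colon X\to\hat X$ by a canonical Wallman-type construction tailored to the completeness system, and then argue uniqueness by a reduction argument against the interpretation $\pi_0\colon X\to\hat X_0$ produced in Theorem~\ref{existencetheorem}. First I would recall from the proof of the existence theorem that every preinterpretation $\rho\colon X\to\hat X_\rho$ factors canonically through the ``big'' Wallman-style space $Y$ via the map $h_\rho$, and that the interpretation $\pi_0$ is exactly the restriction of $\chi\colon X\to Y$ to $\hat X_0=\bigcup\{\rng(h_\rho)\}$. So to prove the theorem it suffices to (a) isolate the subspace $\hat X\subseteq Y$ consisting of those maximal ideals $A\in Y$ for which the completeness condition on $A$ is satisfied, (b) show that $\pi:=\chi\restriction X$ followed by the observation $\chi''X\subseteq\hat X$ yields a preinterpretation onto $\hat X$ for which $\langle U,\leq,\hat f\rangle$ is a completeness system, and (c) show that \emph{every} preinterpretation $\rho$ for which $\langle U,\leq,\hat f\rangle$ remains a completeness system has its image $h_\rho''\hat X_\rho$ contained in $\hat X$, while conversely $\hat X\subseteq\hat X_\rho$ when $\rho$ is the interpretation, forcing equality.

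The key computation is the translation of completeness into the language of maximal ideals. For $A\in Y$ and $u\in U$ say $A$ \emph{avoids} $u$ if $f(u)\notin A$; since $A$ is an ideal, clause (1) of the completeness system gives that if $A$ avoids $u$ then $A$ avoids some $v<u$ (otherwise all $f(v)$ for $v<u$ lie in $A$, but a maximal ideal is closed under the unions of open sets that $M$ certifies, and $f(u)\subseteq\bigcup_{v<u}f(v)$ would force $f(u)\in A$). Iterating, from any $u_0\in U$ avoided by $A$ we build a strictly descending sequence $\langle u_n\rangle$ all avoided by $A$. The collection $F_A=\{{\mathrm{cl}}(f(u))\colon u\in U,\ f(u)\notin A\}$ together with the closed sets ${\mathrm{cl}}(O)$ for $O\notin A$ — more precisely, the family of closed sets whose complement's interior lies in $A$ — has the finite intersection property exactly because $A$ contains no finite subcover. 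Clause (2) then yields $\bigcap F_A\neq 0$, and a point in this intersection is a point $x\in X$ with $\chi(x)=A$. Thus the subspace of $Y$ on which the completeness system ``survives'' is precisely $\chi''X$, and the preinterpretation $\pi\colon X\to\chi''X$ is forced; checking that $\pi$ is a genuine preinterpretation (clauses (1)–(4) of the preinterpretation definition) and that $\langle U,\leq,\hat f\rangle$ with $\hat f(u)=\pi(f(u))$ is a completeness system on $\hat X$ is then routine, using that $F\mapsto\{{\mathrm{cl}}(\hat f(u_n))\supseteq$ element of $F\}$ pulls back along $\pi$ to the analogous configuration on $X$ via the preinterpretation clauses.

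For uniqueness, suppose $\rho\colon X\to\hat X_\rho$ is any preinterpretation for which $\langle U,\leq,\hat f_\rho\rangle$ is a completeness system. The universality claim in the existence theorem gives $h_\rho\colon\hat X_\rho\to Y$ with $\chi=h_\rho\circ\rho$ and $h_\rho^{-1}\chi(O)=\rho(O)$. I claim $\rng(h_\rho)\subseteq\chi''X$: given $x\in\hat X_\rho$, the set $h_\rho(x)\in Y$ fails to contain $f(u)$ iff $x\in\rho(f(u))$, and the completeness system on $\hat X_\rho$ applied to a descending sequence $\langle u_n\rangle$ with $x\in\rho(f(u_n))$ produces, via the argument above transported to $\hat X_\rho$, a point of $\hat X_\rho$ in the relevant intersection; but we only need that $h_\rho(x)\in Y$ satisfies the completeness clause, which by the previous paragraph means $h_\rho(x)\in\chi''X=\hat X$. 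Hence $h_\rho$ maps $\hat X_\rho$ into $\hat X$, so $\rho$ reduces to $\pi$. In particular, the interpretation $\pi_0$ reduces to $\pi$; but $\pi$ is itself a preinterpretation, so $\pi$ reduces to $\pi_0$, and these mutual reductions give $\hat X_0''$-style equivalence — combined with the observation that $h_{\pi_0}\circ(\text{reduction})$ is the identity exactly as in the existence theorem, we conclude $\pi$ is equivalent to $\pi_0$ restricted appropriately, and any two preinterpretations with the completeness-system property are mutually reducible, hence equivalent, and equal to $\pi$.

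The main obstacle I expect is step (b)/(c) bookkeeping around \emph{which} closed families $F$ to feed into clause (2): the definition quantifies over \emph{all} finite-intersection-property families $F$ of closed sets each member of which is contained in some ${\mathrm{cl}}(f(u_n))$, so one must be careful to build, from a maximal ideal $A$ avoiding a descending sequence $\langle u_n\rangle$, a single family $F$ that simultaneously witnesses $\bigcap F\neq\emptyset$ \emph{and} pins down a point representing $A$ — this requires adding to $F$ enough closed sets ${\mathrm{cl}}(O)$ (for suitable $O\notin A$, chosen via regular Hausdorffness so that ${\mathrm{cl}}(O)$ still avoids being swallowed) to force the intersection point to lie outside every open set in $A$, while not destroying the finite intersection property. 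The regular Hausdorff hypothesis is exactly what makes this shrinking possible, so the argument will mirror the regularity manipulations already used in the proof of Theorem~\ref{existencetheorem}.
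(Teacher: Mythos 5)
There is a genuine gap, and it sits at the heart of your construction. Your step (b) claims that if a maximal ideal $A\in Y$ satisfies the completeness condition, then clause (2) applied to the family $F_A=\{{\mathrm{cl}}(f(u))\colon f(u)\notin A\}$ (augmented by closed sets attached to $A$) produces a point $x\in X$ with $\chi(x)=A$, so that ``the subspace of $Y$ on which the completeness system survives is precisely $\chi''X$.'' This is not justified and is in fact false. Clause (2) is a statement holding \emph{in $M$}, about collections $F$ of closed sets that are elements of $M$; the family $F_A$ is built from the $V$-object $A$ and in general does not belong to $M$, so the clause cannot be applied to it. Concretely, take $X=\gw^\gw$ in $M$ with its usual metric completeness system and let $V$ add a new real $y$: the maximal ideal $A_y$ of $M$-open sets avoided by $y$ carries a descending sequence of balls witnessing the completeness condition, yet $\bigcap F_{A_y}=\emptyset$ in $X$ and $A_y\notin\chi''X$. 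If your identification $\hat X=\chi''X$ were correct, the interpretation map would always be surjective, contradicting Corollary~\ref{metrizablecorollary} (the interpretation of a completely metrizable space is the \emph{completion} of $\pi''X$) and the intended behavior of the whole theory. Since your uniqueness step (c) also routes through this identification, it inherits the same defect.

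What is missing is precisely the transfer mechanism that the paper uses. In the paper's existence argument, $\hat X$ is taken to be the set of maximal ideals $A$ admitting a strictly descending sequence $\langle u_n\rangle$ with $X\setminus\bigcap_{m\in n}{\mathrm{cl}}(f(u_m))\in A$ for all $n$ (no surjectivity onto $\chi''X$ is claimed), and the delicate point --- that $\pi$ commutes with arbitrary unions $O=\bigcup_iO_i$ taken in $M$ --- is handled by a wellfoundedness argument: the tree of finite attempts to build a descending sequence none of whose closed sets is covered by finitely many $O_i$ together with a complementary $P\in A$ is wellfounded in $M$ (by completeness in $M$), hence wellfounded in $V$ because $M$ is transitive, and this yields the finite subcover that forces some $O_i\notin A$. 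Your proposal never invokes the transitivity of $M$, which is essential: the paper's example with an illfounded ordinal $\ga$ and the space $\ga^\gw$ shows the theorem fails for illfounded models. Your uniqueness sketch (applying the completeness system on $\hat X$, in $V$, to the filter $\{{\mathrm{cl}}(\pi(O))\colon y\in\chi(O)\}$) is essentially the paper's uniqueness argument and is salvageable, but the existence half needs to be redone along the lines above rather than by identifying $\hat X$ with $\chi''X$.
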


\noindent Here, the symbol $\hat f$ denotes the function defined by $\hat f(u)=\pi(f(u))$. Note that since the interpretation is defined without regard to the completeness system, it follows that every completeness system on $X$ in the model $M$
is interpreted as a completeness system on $\hat X$.

\begin{proof}
The uniqueness part is easy.  Suppose that $\pi\colon X\to\hat X$ is a preinterpretation such that $\langle U, \leq, \hat f\rangle$ is a completeness system; it will be enough to show that $\pi$ is an interpretation. To this end, suppose that $\chi\colon X\to Y$ is another preinterpretation.
For every point $y\in Y$, use (1) to find a descending sequence $\langle u_n\colon n\in\gw\rangle$ in $U$ such that $y\in \bigcap_n\chi(f(u_n))$. Consider the collection $F_y=\{{\mathrm{cl}}(\pi(O))\colon O\in\tau, y\in\chi(O)\}$. This is a filter of closed subsets of $\hat X$
such that for every $n\in\gw$ ${\mathrm{cl}}(f(u_n))\in F$. By the completeness assumption on the space $\hat X$, $\bigcap F_y\neq 0$. It is not difficult to use Hausdorffness of $\hat X$ to show that the intersection can contain at most one point. Let $h\colon Y\to\hat  X$ be the map such that $h(y)\in\bigcap F_y$ for all $y\in Y$. It is immediate that $h$ is a reduction of $\chi$ to $\pi$.

The existence of the required preinterpretation is a little harder. Let $\hat X$ be the set of all collections $A\subset\tau$ which do not contain a finite subcover of $X$, are maximal with respect to that condition, and such that there is an infinite descending sequence $\langle u_n\colon n\in\gw\rangle$ such that for every $n\in\gw$, $X\setminus\bigcap_{m\in n}{\mathrm{cl}}(f(u_m))\in A$.
Equip $\hat X$ with the topology generated by the sets $\pi(O)=\{A\in\hat X\colon O\notin A\}$ for $O\in\tau$.
Let $\pi\colon X\to\hat X$ be the map defined by $\pi(x)=\{O\in\tau\colon x\notin O\}$. I will show that $\pi$ is a preinterpretation
and $\langle U, \leq, \hat f\rangle$ is a completeness system on $\hat X$.

To show that $\pi$ is a preinterpretation, suppose that $O=\bigcup_{i\in I}O_i$ is a union of open sets in the model $M$; I must show that $\pi(O)=\bigcup_i\pi(O_i)$. The right-to-left inclusion is clear from the definitions. For the left-to-right inclusion, suppose that
$A\in\hat X$ is a point and $A\in\pi(O)$, meaning that $O\notin A$ holds; I must find $i\in I$ such that $O_i\notin A$. Find a set $P\in\tau$ such that $P\in A$ and $O\cup P=X$. In the model $M$, consider the tree $T$ of all finite attempts to build a
descending sequence $\langle u_n\colon n\in\gw\rangle$ in $U$ such that for every $n\in\gw$, for no finite set $J\subset I$ it is the case that $\bigcap_{m\in n}{\mathrm{cl}}(f(u_m))\subset\bigcup_{i\in J}O_i\cup P$.  The tree $T$ is well-founded in the model $M$ since any descending sequence of this form would yield a set $F=\{{\mathrm{cl}}(f(n))\colon n\in\gw, X\setminus O_i\colon i\in I, X\setminus P\}$ with finite intersection property. The intersection $\bigcap F$ would be nonempty by the completeness of the system on $X$, and any point in that intersection would have to belong to $O\setminus\bigcup_iO_i$; a contradiction. Since the model $M$ is transitive, the tree $T$ is well-founded even in the model $V$. Use the definition of the set $\hat X$
to find a descending sequence $\langle u_n\colon n\in\gw\rangle$ such that $X\setminus\bigcap_{m\in n}{\mathrm{cl}}(f(u_m))\in A$ for every $n\in\gw$. The sequence does not form an infinite path through the tree $T$ and so there must be $n\in\gw$ and a finite set $J\subset I$ such that $\bigcup_{i\in J}O_i\cup P\cup\bigcup_{m\in n}(X\setminus{\mathrm{cl}}(f(m)))=X$. This means that one of the sets $O_i$ for $i\in J$ must fail to belong to $A$, in other words $A\in \pi(O_i)$ as desired.
\end{proof}

\begin{corollary}
\label{compact1corollary}
Suppose $M\models X$ is a compact Hausdorff space. Then $X$ has a unique compact topological preinterpretation which is also its topological interpretation.
\end{corollary}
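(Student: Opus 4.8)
The plan is to feed Theorem~\ref{sievetheorem} a completely trivial completeness system. First I would note that any compact Hausdorff space $X$ in $M$ carries the completeness system $\langle\gw, \geq, f\rangle$, where $\geq$ is the reverse of the usual ordering of $\gw$ and $f(n)=X$ for every $n\in\gw$. Clause (1) of the definition holds because $f(n)=X=\bigcup\{f(m)\colon m>n\}$, and clause (2) is implied outright by the compactness of $X$: any family of closed sets with the finite intersection property already has nonempty intersection, irrespective of the descending sequence chosen. Theorem~\ref{sievetheorem} then produces a unique preinterpretation $\pi\colon X\to\hat X$ on which $\langle\gw, \geq, \hat f\rangle$ is again a completeness system, and asserts that this $\pi$ is the interpretation; here $\hat f(n)=\pi(f(n))=\pi(X)=\hat X$.

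The compactness of $\hat X$ can then be read straight off clause (2) of the completeness system $\langle\gw, \geq, \hat f\rangle$. Given any family $F$ of closed subsets of $\hat X$ with the finite intersection property, each member of $F$ is trivially a subset of ${\mathrm{cl}}(\hat f(n))=\hat X$ for every $n$, so clause (2), applied to any strictly descending sequence in $\langle\gw, \geq\rangle$, yields $\bigcap F\neq 0$. Since $\hat X$ is also regular Hausdorff by Theorem~\ref{existencetheorem}, it is compact Hausdorff.

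It remains to show that $\hat X$ is the only compact preinterpretation of $X$ up to equivalence. Let $\pi'\colon X\to\hat X'$ be any preinterpretation with $\hat X'$ compact (it is automatically Hausdorff, being the target of a preinterpretation). Since $\pi$ is the interpretation, there is a reduction $h\colon\hat X'\to\hat X$ with $\pi=h\circ\pi'$ and $h^{-1}\pi(O)=\pi'(O)$ for all $O\in\tau$. The map $h$ is continuous, because the sets $\pi(O)$ form a basis of $\hat X$ and their $h$-preimages $\pi'(O)$ are open; its range contains the dense set $\pi''X$ and is compact, hence closed, so $h$ is onto. For injectivity, suppose $h(a)=h(b)$ with $a\neq b$; using that $\hat X'$ is Hausdorff and that the $\pi'$-images of open sets form a basis of $\hat X'$, pick $O_0, O_1\in\tau$ with $a\in\pi'(O_0)$, $b\in\pi'(O_1)$ and $\pi'(O_0)\cap\pi'(O_1)=0$. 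Since $\pi'$ commutes with finite intersections and $\pi'(P)=0$ forces $P=0$ by clause (1) of the definition of preinterpretation, we get $O_0\cap O_1=0$, so $\pi(O_0)\cap\pi(O_1)=0$, contradicting $h(a)=h(b)\in\pi(O_0)\cap\pi(O_1)$. Thus $h$ is a continuous bijection between compact Hausdorff spaces, hence a homeomorphism, and $\pi'$ is equivalent to $\pi$. I expect this last paragraph --- specifically the verification that the reduction $h$ is injective, where the commutation of a preinterpretation with finite intersections does the real work --- to require the most care; everything else is a direct transcription of Theorem~\ref{sievetheorem}.
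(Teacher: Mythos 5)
Your proof is correct and takes essentially the same route as the paper: the paper's entire proof consists of the observation that compactness of $X$ is equivalent to $\langle\gw,\geq,f\rangle$ with $f(n)=X$ being a completeness system, after which Theorem~\ref{sievetheorem} yields existence, compactness, uniqueness, and identification with the interpretation all at once. Your final paragraph reproves by hand what the uniqueness clause of Theorem~\ref{sievetheorem} already supplies (any compact preinterpretation trivially carries the interpreted trivial completeness system, so it must be the interpretation), but the manual reduction argument is itself correct.
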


\begin{proof}
Note that a space $X$ is compact if and only if $\gw$ with reverse ordering and the function $f$ defined by $f(n)=X$ for every $n\in\gw$ together form a completeness system.
\end{proof}

\begin{corollary}
\label{compactcorollary}
Interpretations of compact subsets of regular Hausdorff spaces are compact.
\end{corollary}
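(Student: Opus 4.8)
The plan is to deduce this from Corollary~\ref{compact1corollary} together with the subspace theorem. Let $M\models\langle X,\tau\rangle$ be regular Hausdorff, let $K\subseteq X$ be compact in $M$, and let $\pi\colon X\to\hat X$ be an interpretation. Since compact subsets of Hausdorff spaces are closed, $K$ is closed, so $K=X\setminus O$ for the open set $O=X\setminus K$. By Theorem~\ref{subspacetheorem}(2) the map $\pi\restriction K$, viewed as a map from $K$ to the subspace $\hat K:=\hat X\setminus\pi(O)$ of $\hat X$, extends to an interpretation of the topological space $K$. Hence ``the interpretation of the compact subset $K$'' is precisely $\hat K$ with the topology inherited from $\hat X$, and it suffices to show that $\hat K$ is compact.

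First I would note that $K$, being a compact subspace of a regular Hausdorff space, is compact Hausdorff in $M$. By Corollary~\ref{compact1corollary}, $K$ then has a compact preinterpretation which is in fact its interpretation. Since any two interpretations of the same space are equivalent, and in particular homeomorphic, by the uniqueness clause of Theorem~\ref{existencetheorem}, the space $\hat K$ is homeomorphic to the compact space produced by Corollary~\ref{compact1corollary}; therefore $\hat K$ is compact, as required.

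One could also bypass the subspace theorem for the compactness assertion itself by invoking Theorem~\ref{sievetheorem} directly: the trivial completeness system $\langle\gw,\geq,f\rangle$ on $K$ with $f(n)=K$ for all $n\in\gw$ witnesses compactness of $K$ in $M$ (its second clause says exactly that every family of closed subsets of $K$ with the finite intersection property has nonempty intersection), and Theorem~\ref{sievetheorem} returns the same completeness system on the interpretation of $K$. The only point requiring attention in either approach is the identification of the interpretation of the subset $K$ with the interpretation of the topological space $\langle K,\tau\restriction K\rangle$, which is exactly the content of Theorem~\ref{subspacetheorem}(2); with that in hand the corollary is essentially bookkeeping, and I do not expect a genuine obstacle.
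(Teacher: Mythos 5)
Your proposal is correct and follows essentially the same route as the paper: apply Theorem~\ref{subspacetheorem} to the closed set $K$ (compact in a Hausdorff space, hence closed) to see that $\pi\restriction K$ extends to an interpretation of $K$, then invoke Corollary~\ref{compact1corollary} together with uniqueness of interpretations to conclude that $\pi(K)$ is compact. The extra remark about using Theorem~\ref{sievetheorem} directly is just the mechanism already underlying Corollary~\ref{compact1corollary}, so nothing genuinely different is needed.
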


\begin{proof}
Suppose that $M$ is a transitive model of set theory and $M\models X$ is a regular Hausdorff space and $K\subset X$ is compact.
Let $\pi\colon X\to\hat X$ be an interpretation of $X$. Theorem~\ref{subspacetheorem} shows that $\pi\restriction K\colon K\to\pi(K)$
is an interpretation. Corollary~\ref{compact1corollary} then implies that $\pi(K)$ is compact.
\end{proof}

\begin{corollary}
\label{metrizablecorollary}
Every completely metrizable space $X$ with a metric $d$ in a transitive model $M$ is interpreted as a completely metrizable space. In fact, the interpretation is just the completion of $X$ with respect to $d$ with the natural interpretation map.
\end{corollary}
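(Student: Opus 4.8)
The plan is to exhibit an explicit completeness system on $X$ witnessing its completeness, invoke Theorem~\ref{sievetheorem} to pin down the interpretation, and then identify the interpreted space with the completion of $X$ computed in $V$ by transporting the metric. First I would build the \emph{ball completeness system}: let $U$ be the set of pairs $u=\langle x,n\rangle$ with $x\in X$ and $n\in\gw$, put $f(u)=\{z\in X\colon d(x,z)<2^{-n}\}$, and declare $\langle x',n'\rangle<\langle x,n\rangle$ iff $n'>n$ and ${\mathrm{cl}}(f(\langle x',n'\rangle))\subseteq f(\langle x,n\rangle)$; the strict inequality of indices makes $<$ a genuine strict partial order. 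Condition (1) of a completeness system holds because every point $z$ of a ball $f(\langle x,n\rangle)$ is an interior point, hence lies in some ball $f(\langle z,n'\rangle)$ with $n'>n$ and ${\mathrm{cl}}(f(\langle z,n'\rangle))\subseteq f(\langle x,n\rangle)$. Condition (2) is exactly where completeness of $\langle X,d\rangle$ \emph{in $M$} is used: along a strictly descending sequence the indices strictly increase, so the closed balls ${\mathrm{cl}}(f(u_k))$ are nested with $d$-diameters tending to $0$; given $F$ with the finite intersection property and $F_k\in F$ with $F_k\subseteq{\mathrm{cl}}(f(u_k))$, choosing points of $F'\cap F_0\cap\dots\cap F_k$ for an arbitrary $F'\in F$ produces a Cauchy sequence whose limit, which exists in $X$ by completeness in $M$, lies in every such $F'$, so $\bigcap F\neq 0$.

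By Theorem~\ref{sievetheorem}, the interpretation $\pi\colon X\to\hat X$ is then the unique preinterpretation for which $\langle U,\leq,\hat f\rangle$, where $\hat f(\langle x,n\rangle)=\pi(f(\langle x,n\rangle))$, is a completeness system on $\hat X$ --- a statement now evaluated in $V$, so quantifying over strictly descending sequences that exist in $V$. By the earlier discussion of metrizable spaces, $\hat X$ also carries a metric $\hat d$ generating its topology with $\hat d(\pi(a),\pi(b))=d(a,b)$ for $a,b\in X$ and with $\pi''X$ dense in $\hat X$; unwinding the definition of $\hat d$ shows that $\hat f(\langle x,n\rangle)$ is caught between the open and closed $\hat d$-balls of radius $2^{-n}$ about $\pi(x)$, so ${\mathrm{cl}}(\hat f(u))$ has $\hat d$-diameter at most $2^{-n+1}$. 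Moreover the closure containments defining $<$ transfer through $\pi$ by the standard regular-Hausdorff trick (separate $f(u_{k+1})$ from the complement of $f(u_k)$ by an open set $P$ with $f(u_{k+1})\cap P=0$ and $f(u_k)\cup P=X$, then apply $\pi$), so the sets ${\mathrm{cl}}(\hat f(u_k))$ along a strictly descending sequence remain nested.

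With this in hand I would show $\langle\hat X,\hat d\rangle$ is complete in $V$. Given a $\hat d$-Cauchy sequence $\langle y_k\colon k\in\gw\rangle$, use density of $\pi''X$ to pick $x_k\in X$ with $\hat d(\pi(x_k),y_k)\to 0$, so $\langle x_k\rangle$ is $d$-Cauchy; thin it out and choose rapidly increasing indices $n_k$ to obtain a strictly descending chain $u_k=\langle x_{m_k},n_k\rangle$ in $U$ with $\pi(x_{m_k})\in\hat f(u_k)$. Applying condition (2) to $F=\{{\mathrm{cl}}(\hat f(u_k))\colon k\in\gw\}$, a decreasing chain of nonempty closed sets and hence a family with the finite intersection property, yields a point $y^\ast\in\bigcap_k{\mathrm{cl}}(\hat f(u_k))$; since the $\hat d$-diameters tend to $0$ this point is unique, $\pi(x_{m_k})\to y^\ast$, and therefore $y_k\to y^\ast$. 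Finally, a complete metric space containing $\langle X,d\rangle$ as a dense isometric subspace is canonically its completion, so $\hat X$ together with $\pi$ is precisely the completion of $X$ with respect to $d$ with the natural interpretation map.

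I expect the main obstacle to be the bookkeeping in the third step: converting the metric Cauchy sequence into an honest strictly descending chain $\langle u_k\rangle$ in $U$ with $\pi(x_{m_k})\in\hat f(u_k)$ and with the diameters of ${\mathrm{cl}}(\hat f(u_k))$ shrinking to $0$, together with the verification that the closure containments defining $<$ survive the passage through $\pi$. Checking condition (2) of the ball system inside $M$, and the sandwiching of $\hat f(u)$ between $\hat d$-balls, are routine once the definitions are unwound.
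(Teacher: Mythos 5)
Your proposal is correct and follows essentially the same route as the paper: build a completeness system out of metric balls in $M$, invoke Theorem~\ref{sievetheorem} to get the interpreted completeness system on $\hat X$, transfer the metric so that $\pi$ is an isometry with dense range, and deduce completeness of $\hat d$ from the interpreted system, identifying $\hat X$ with the completion. The only differences are cosmetic — the paper orders balls by halving radii rather than by index-plus-closure-containment, and leaves the final Cauchy-sequence-to-descending-chain bookkeeping implicit where you spell it out.
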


\begin{proof}
Work in the model $M$. Let $U$ be the set of all open balls of finite $d$-radius and let $v<u$ if radius of $v$ is smaller than half of the radius of $u$. The function $f$ on $U$ is defined by $f(u)=u$. This is a completeness system on $X$. Let $\pi\colon X\to\hat X$ be an interpretation. By Theorem~\ref{sievetheorem}, $\langle U, <,f\rangle$ is interpreted as a completeness system on the interpretation $\hat X$. Define a metric $\hat d$ on $\hat X$ by setting $\hat d(x,y)<\eps+2^{-n}$ just in case $x\in \pi(O)$ and $y\in\pi(P)$ for some open balls $O,P\subset X$ whose centers have distance $<\eps$
and whose radii are smaller than $2^{-n-1}$. The metric $\hat d$ generates the topology of the space $\hat X$, $\rng(\pi)\subset\hat X$ is dense and $d=\hat d\circ\pi$.
The completeness of the interpreted system implies that $\hat d$ is complete. It follows that $\hat X$ is (isomorphic to) the completion of $\langle X,d\rangle$.
\end{proof}

\begin{example}
The Baire space or the Hilbert cube in a transitive model $M$ are interpreted as the Baire space or the Hilbert cube.
\end{example}

\begin{example}
The transitivity of the model $M$ is necessary in the assumptions of Corollary~\ref{metrizablecorollary}. Let $M$ be a model of set theory containing an $M$-ordinal $\ga$ which is illfounded. In the model $M$, consider the space $\ga^\gw$ with the usual minimum difference metric. The completion of $(\ga^\gw)^M$
is $\ga^\gw$, which contains an infinite decreasing sequence $x$. Now, by a wellfoundedness argument inside $M$, $M\models\ga^\gw=\bigcup\{O_t\colon t$ is a nondecreasing finite partial map from $\gw$ to $\ga\}$ where $O_t=\{y\in\ga^\gw\colon t\subset y\}$.
It is clear that $x$ does not belong to the natural interpretation of any of the sets $O_t$ in the union, and therefore $\ga^\gw$ is not the interpretation of $(\ga^\gw)^M$.
\end{example}

\begin{corollary}
\label{uniformcorollary}
Every complete uniform space with a countable complete uniform sequence of covers in a transitive model $M$ is interpreted as a complete uniform space.
\end{corollary}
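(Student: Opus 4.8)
The plan is to manufacture a completeness system on $X$ out of the given covers, feed it into Theorem~\ref{sievetheorem}, and then read off completeness of the interpreted uniform structure from the interpreted completeness system. Work in $M$. Since a $\mathrm{T}_0$ uniform space is regular Hausdorff, the interpretation theory applies. First I would replace the given countable family of uniform covers by a star\hbox{-}refining chain: there is a base $\langle C_n\colon n\in\gw\rangle$ of the uniformity consisting of open covers with $C_{n+1}$ a star\hbox{-}refinement of $C_n$ for every $n$, and such a chain is still a complete sequence of covers (this is part of the hypothesis; alternatively, a star\hbox{-}refining base of a complete uniform space is automatically complete, since a collection of closed sets witnessing the failure would generate a Cauchy filter). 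Call a nonempty open set \emph{$C_n$-small} if it is contained in a member of $C_n$. Let $U$ be the set of pairs $(S,n)$ where $S$ is $C_n$-small, declare $(T,m)<(S,n)$ iff $m>n$ and $T\subseteq S$, and set $f(S,n)=S$.

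Next I would check that $\langle U,\leq,f\rangle$ is a completeness system. Condition (1) holds because every $C_n$-small $S$ is covered by the sets $P\cap S$ with $P\in C_{n+1}$ and $P\cap S\neq\emptyset$, each of which is $C_{n+1}$-small and contained in $S$, hence below $(S,n)$. Condition (2) holds because along a strictly descending sequence $(S_k,n_k)$ the sets $S_k$ are nested with $n_k\to\infty$, so for each fixed $m$ all $S_k$ with $n_k\geq m$ are $C_m$-small (the covers refine); thus any family $F$ of closed sets with the finite intersection property which meets every $\mathrm{cl}(S_k)$ meets $\mathrm{cl}(O)$ for some $O\in C_m$ for every $m$, and completeness of $\langle C_n\rangle$ gives $\bigcap F\neq\emptyset$. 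By Theorem~\ref{sievetheorem} the interpretation $\pi\colon X\to\hat X$ exists and $\langle U,\leq,\hat f\rangle$, where $\hat f(S,n)=\pi(S)$, is a completeness system on $\hat X$. As observed earlier for uniform spaces, $\hat X$ carries a uniformity generated by the covers $\hat C_n=\pi''C_n$, and since $\pi$ preserves refinement and star\hbox{-}refinement, $\langle\hat C_n\rangle$ is again a star\hbox{-}refining base; it remains to see this uniform space is complete.

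For that, let $\mathcal G$ be a Cauchy filter on $\hat X$. For each $n$ pick $G_n\in\mathcal G$ with $G_n\subseteq\pi(O_n)$ for some $O_n\in C_n$; then every $O_n$ is nonempty and, $\mathcal G$ being a filter and $\pi$ being injective on nonempty open sets, the $O_n$ pairwise intersect in $M$. Using the elementary fact that $A\cup B$ is $C_n$-small whenever $A\cap B\neq\emptyset$ and $A,B$ are both $C_{n+1}$-small (apply star\hbox{-}refinement), an iteration shows that $S_k:=\bigcup_{j\geq k}O_j$ is $C_m$-small for some $m=m(k)$ with $m(k)\to\infty$; these sets are nested and contain $O_k$, so $\langle(S_k,m(k))\rangle$, after thinning, is strictly $<$-descending in $U$ with $\hat f(S_k,m(k))=\pi(S_k)\supseteq\pi(O_k)\supseteq G_k$. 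Applying condition (2) of the completeness system on $\hat X$ to this sequence together with $F=\{\mathrm{cl}_{\hat X}(H)\colon H\in\mathcal G\}$ — which has the finite intersection property and contains $\mathrm{cl}_{\hat X}(G_k)\subseteq\mathrm{cl}_{\hat X}(\hat f(S_k,m(k)))$ for each $k$ — yields a point in $\bigcap F$, that is, a cluster point of $\mathcal G$. A Cauchy filter with a cluster point converges to it, so $\mathcal G$ converges; hence $\hat X$ is a complete uniform space, and plainly its uniformity still consists of countably many covers.

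The main obstacle I anticipate is bookkeeping the $C_n$-smallness through the star\hbox{-}refinements so that both the verification of the completeness system on $X$ and, more delicately, the recovery of a $<$-descending sequence in $U$ from an \emph{arbitrary} Cauchy filter on $\hat X$ go through; members of such a filter need not be images of ground model sets, which is exactly why one passes to the tails $\bigcup_{j\geq k}O_j$, which are images of ground model sets and remain small by the union lemma. An alternative, shorter route is to invoke the Weil metrization theorem — a complete uniform space whose uniformity has a countable base is completely metrizable — and combine Corollary~\ref{metrizablecorollary} with the observation above on uniform spaces, at the cost of having to identify the metric uniformity of the interpreted metric with the interpretation of the original uniformity; the argument sketched here avoids that identification and stays within the methods of this section.
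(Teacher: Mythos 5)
Your overall strategy is the intended one (build a completeness system from the countable uniform sequence, feed it to Theorem~\ref{sievetheorem}, read off the uniformity on $\hat X$ from the earlier example on uniform spaces, and then verify completeness of $\hat X$ against a Cauchy filter), but the last step has a genuine gap. The elements of your completeness system $U$ are pairs $(S,n)$ with $S$ an open set \emph{of the model $M$}; the interpreted system $\langle U,\leq,\hat f\rangle$ on $\hat X$ is literally the same $M$-set $U$, so any descending sequence you apply condition (2) to must consist of ground-model small open sets. Your sets $S_k=\bigcup_{j\geq k}O_j$ are not of this kind: the selection $j\mapsto O_j\in C_j$ is made in $V$ using the Cauchy filter $\mathcal G$ (a $V$-object), so the family $\{O_j\colon j\geq k\}$ and hence its union need not belong to $M$ at all. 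Your parenthetical claim that the tails ``are images of ground model sets'' is exactly the false step: each $O_j$ is a ground-model set, but a union over a $V$-chosen index set is in general not $\pi$ of anything in $M$, so $(S_k,m(k))\notin U$ and the interpreted completeness system cannot be invoked for that sequence.

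The gap is repairable while keeping your structure. Either replace the tails by stars with respect to the ground-model covers, $T_k=\mathrm{st}(O_k,C_k)=\bigcup\{Q\in C_k\colon Q\cap O_k\neq 0\}$: each $T_k$ is definable in $M$ from $O_k\in M$ and $C_k\in M$, hence lies in $M$, it is $C_{k-1}$-small by star-refinement, and since consecutive $O_k$'s meet one checks $T_{k+1}\subseteq T_k$, so $(T_k,k-1)$ is a legitimate descending sequence in $U$ with $\mathrm{cl}(\pi(T_k))\supseteq\mathrm{cl}(G_k)$. Or, more simply, choose the completeness system so that no nesting is needed in the first place: let $U=\{(O,n)\colon O\in C_n\}$ with $f(O,n)=O$ and $(P,m)<(O,n)$ iff $m>n$ and $P\cap O\neq 0$; condition (1) is immediate, condition (2) follows from completeness of the sequence of covers (using refinement, each $O$ of high index sits inside a member of every earlier cover), and in $\hat X$ the raw sequence $(O_n,n)$ coming from the Cauchy filter is already descending because the $O_n$ pairwise meet. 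With either repair the rest of your argument (cluster point of $\mathcal G$ from $\bigcap F\neq 0$, hence convergence) is correct.
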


\begin{example}
The countability cannot be removed from the assumptions of Corollary~\ref{uniformcorollary}. In a transitive model $M$, consider the space $X=\gw^{\gw_1}$ with the (uncountable) complete uniform collection of covers $\{C_a\colon a\subset\gw_1$ is finite$\}$,
where $C_a=\{O_t\colon t\in \gw^a\}$ and $O_t=\{x\in X\colon t\subset x\}$. If the interpretation preserved the completeness of this system of covers, it would have to be (equivalent to) the identity map from $X=(\gw^{\gw_1})^M$ to
$(\gw^{\gw_1})^V$. However, Example~\ref{productexample} describes a situation where the interpretation of $X$ is different.
\end{example}

\begin{corollary}
\label{cechcorollary}
Every {\v C}ech complete space in a transitive model $M$ is interpreted as a {\v C}ech complete space.
\end{corollary}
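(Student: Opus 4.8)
The plan is to reduce the statement to Theorem~\ref{sievetheorem} by means of the internal characterization of {\v C}ech completeness in Fact~\ref{littlefact}. Working inside $M$, since $X$ is regular Hausdorff and {\v C}ech complete it carries a complete sequence of covers $\langle C_n\colon n\in\gw\rangle$. I will first replace it by a ``decreasing'' sequence $\langle D_n\colon n\in\gw\rangle$, where $D_n$ is the common refinement $\{O_0\cap\dots\cap O_n\colon O_i\in C_i\}$ of $C_0,\dots,C_n$. An easy check in $M$ shows that each $D_n$ is still an open cover of $X$, that $D_{n+1}$ refines $D_n$ and $D_n$ refines $C_n$, and hence — since whenever $\langle C_n\rangle$ is a complete sequence of covers and each $D_n$ is an open cover refining $C_n$ the sequence $\langle D_n\rangle$ is complete as well — that $\langle D_n\rangle$ is a complete sequence of covers.

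Next, again in $M$, I turn $\langle D_n\rangle$ into a completeness system $\langle U,<,f\rangle$ on $X$ by setting $U=\{\langle n,O\rangle\colon n\in\gw,\ O\in D_n\}$, declaring $\langle m,P\rangle<\langle n,O\rangle$ iff $m>n$, and letting $f(\langle n,O\rangle)=O$. Requirement (1) of a completeness system holds because $O\subseteq X=\bigcup D_{n+1}$ while every $\langle n+1,O'\rangle$ with $O'\in D_{n+1}$ lies below $\langle n,O\rangle$. For requirement (2), a strictly descending sequence in $\langle U,<\rangle$ yields strictly increasing indices $n_0<n_1<\cdots$ together with sets $O_k\in D_{n_k}$; given a family $F$ of closed sets with the finite intersection property having, for each $k$, a member which is a subset of ${\mathrm{cl}}(O_k)$, one argues that for every $m$ there is a member of $F$ which is a subset of ${\mathrm{cl}}(O')$ for some $O'\in D_m$ — pick $k$ with $n_k\geq m$ and use that $D_{n_k}$ refines $D_m$ — so that $\bigcap F\neq 0$ by completeness of $\langle D_n\rangle$. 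This is the one place where the passage from $\langle C_n\rangle$ to a refining sequence is essential: a descending sequence in $U$ controls only cofinally many levels, and coordinatewise refinement is exactly what converts cofinal control into control at every level. I expect this to be the only point requiring care; everything else is transcription between the ``sequence of covers'' and ``completeness system'' languages.

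Now let $\pi\colon X\to\hat X$ be the interpretation of $X$, which exists and is regular Hausdorff by Theorem~\ref{existencetheorem}. By Theorem~\ref{sievetheorem} and the remark following it, the interpreted triple $\langle U,<,\hat f\rangle$, with $\hat f(\langle n,O\rangle)=\pi(O)$, is a completeness system on $\hat X$. Set $\hat D_n=\{\pi(O)\colon O\in D_n\}$; since $X=\bigcup D_n$ in $M$ and $\pi$ commutes with arbitrary unions, $\hat X=\bigcup\hat D_n$, so each $\hat D_n$ is an open cover of $\hat X$. To see that $\langle\hat D_n\rangle$ is complete, take a family $F$ of closed subsets of $\hat X$ with the finite intersection property such that for every $n$ some member of $F$ is a subset of ${\mathrm{cl}}(\pi(O_n))$ for some $O_n\in D_n$, and put $u_n=\langle n,O_n\rangle$. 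Then $\langle u_n\colon n\in\gw\rangle$ is strictly descending in $\langle U,<\rangle$, and ${\mathrm{cl}}(\hat f(u_n))={\mathrm{cl}}(\pi(O_n))$ contains a member of $F$ for each $n$, so requirement (2) of the completeness system on $\hat X$ forces $\bigcap F\neq 0$. Thus $\langle\hat D_n\rangle$ is a complete sequence of covers on $\hat X$, and Fact~\ref{littlefact} applied in $V$ shows that $\hat X$ is {\v C}ech complete, as desired.
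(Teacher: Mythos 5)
Your proof is correct and is exactly the argument the paper leaves implicit for this corollary: use Fact~\ref{littlefact} to get a complete sequence of covers in $M$, package it as a completeness system, invoke Theorem~\ref{sievetheorem} (and Theorem~\ref{existencetheorem} for regular Hausdorffness of $\hat X$), and read off a complete sequence of covers on $\hat X$. Your passage to the refining sequence $\langle D_n\rangle$ is indeed the one point needing care, since a strictly descending sequence in your ordering only hits cofinally many indices, and you handle it correctly.
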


\begin{corollary}
\label{interpretablecorollary}
Every interpretable space in a transitive model $M$ is interpreted as an interpretable space.
\end{corollary}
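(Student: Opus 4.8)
The plan is to route everything through Theorem~\ref{sievetheorem}, using the fact that for regular Hausdorff spaces interpretability is exactly the existence of a complete sieve (Fact~\ref{sieveproposition}(1)) and that a complete sieve is a completeness system of a special shape.

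Since $M\models X$ is interpretable and Fact~\ref{sieveproposition}(1) is a theorem of $\mathrm{ZFC}$, the model $M$ contains a complete sieve $\langle S, O(s)\colon s\in S\rangle$ on $X$. First I would check that $\langle S,\leq, f\rangle$, where $\leq$ is the tree order of $S$ and $f(s)=O(s)$, is a completeness system on $X$ in $M$. Clause (1) is immediate: each immediate successor $t$ of $s$ has $t<s$, so $O(s)=\bigcup\{O(t)\colon t$ an immediate successor of $s\}\subseteq\bigcup\{f(v)\colon v<s\}$. For clause (2) I would first record that $O$ is nonincreasing along every branch of $S$, since $O(t)\subseteq O(s)$ whenever $t$ is an immediate successor of $s$. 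Consequently a strictly descending sequence $\langle u_n\colon n\in\gw\rangle$ in $\leq$ generates an infinite path $b\subset S$ in which the $u_n$ are cofinal; if $F$ is a family of closed sets with the finite intersection property such that some member of $F$ is contained in ${\mathrm{cl}}(O(u_n))$ for every $n$, then for each level $m$ one picks $u_n\leq b\restriction m$ and gets a member of $F$ inside ${\mathrm{cl}}(O(u_n))\subseteq{\mathrm{cl}}(O(b\restriction m))$, so $F$ witnesses the completeness hypothesis for the path $b$ and $\bigcap F\neq 0$.

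Now apply Theorem~\ref{sievetheorem}: the interpretation $\pi\colon X\to\hat X$ has the property that $\langle S,\leq,\hat f\rangle$, with $\hat f(s)=\pi(O(s))$, is a completeness system on $\hat X$. It remains to convert this back into a complete sieve. Put $\hat O(s)=\pi(O(s))$. Then $\langle S,\hat O(s)\colon s\in S\rangle$ is a sieve on $\hat X$: indeed $\hat O(0)=\pi(X)=\hat X$ by the definition of a preinterpretation, and $\pi$ commutes with the union $O(s)=\bigcup\{O(t)\colon t$ an immediate successor of $s\}$, so $\hat O(s)=\bigcup\{\hat O(t)\colon t$ an immediate successor of $s\}$. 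Completeness of this sieve is just clause (2) of the completeness system $\langle S,\leq,\hat f\rangle$ read off for the cofinal strictly descending subsequence of any given infinite path. Since $\hat X$ is regular Hausdorff by Theorem~\ref{existencetheorem}, Fact~\ref{sieveproposition}(1) now gives that $\hat X$ is interpretable, as desired.

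The only mildly delicate point, and the one I would expect to take the most care in writing up, is the two-way dictionary between the language of completeness systems (strictly descending sequences in an abstract order, possibly skipping levels of the tree) and the language of sieves (infinite paths and their truncations $b\restriction n$). Monotonicity of $O$, and of its interpretation $\hat O$, along branches is exactly what makes a cofinal subsequence interchangeable with the full path; this is the same bookkeeping already carried out in the proofs of Corollaries~\ref{metrizablecorollary}, \ref{uniformcorollary}, and~\ref{cechcorollary}, and once it is in place the argument is routine.
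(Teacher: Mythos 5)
Your proof is correct and follows essentially the route the paper intends: it is the standard application of Theorem~\ref{sievetheorem} (a complete sieve, viewed as a completeness system via the tree order, is carried by the interpretation to a complete sieve on $\hat X$), combined with Fact~\ref{sieveproposition}(1) and the regular Hausdorffness of $\hat X$ from Theorem~\ref{existencetheorem}. The bookkeeping between strictly descending sequences and branch truncations is handled exactly as the paper does implicitly in Corollaries~\ref{metrizablecorollary}--\ref{cechcorollary} and later uses of the sieve-image argument.
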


To conclude this section, I provide a testable criterion for a map $\pi$ to be an interpretation; this will be used in several situations later.
Suppose that $M$ is a model of set theory and $M\models\langle X, \tau\rangle$ is a regular Hausdorff space. Suppose that $\pi\colon X\to\hat X, \tau\to\hat\tau$ is a topological preinterpretation. Then for every open set $O\in\tau$, $\pi(O)=\hat X\setminus {\mathrm{cl}} (\pi''(X\setminus O))$. The left-to-right inclusion is clear since $\pi(O)\subset\hat X$ is an open set disjoint from $\pi''(X\setminus O)$. For the right-to left inclusion use the fact that the range $\pi''\tau$ generates the topology $\hat\tau$: whenever $x\in \hat X\setminus {\mathrm{cl}} (\pi''(X\setminus O))$ is a point, it has to have an open neighborhood disjoint from $\pi''(X\setminus O)$, this neighborhood can be taken of the form $\pi(P)$ for some $P\in\tau$, and
since $\pi(P)$ contains no points in $\pi''(X\setminus O)$ it must be the case that $P\subset O$ and so $x\in \pi(P)\subset\pi(O)$
as desired.

The previous paragraph shows that a given map $\pi\colon X\to \hat X$ can be completed into a topological preinterpretation in at most one way. This justifies the following definition:

\begin{definition}
\label{extensiondefinition}
Suppose that $M$ is a model of set theory, $M\models \langle X, \tau\rangle$ is a regular Hausdorff space, $\langle\hat  X, \hat\tau\rangle$ is a topological space, and $\pi\colon X\to\hat X$ is a function. The \emph{canonical extension} of $\pi$ is the map $\bar \pi\colon\tau\to\hat\tau$ defined by $\bar \pi(O)=\hat X\setminus{\mathrm{cl}}(\pi''(X\setminus O))$.
\end{definition}

If the original map $\pi$ from $X$ to $\hat X$ is arbitrary, then the canonical extension $\bar\pi$ is easily seen to preserve inclusion and finite intersections, but on other accounts it may be very poorly behaved. Still, it is the only candidate for extending $\pi$ into a topological preinterpretation. 

\begin{proposition}
\label{testproposition}
Suppose that $M$ is a transitive model of set theory, $M\models \langle X, \tau\rangle$ is an interpretable space,
and $\pi\colon X\to\hat X$ is a map to a regular Hausdorff space $\langle \hat X, \hat \tau\rangle$.
Let $\bar \pi\colon \tau\to\hat\tau$ be the canonical extension of $\pi$.
Suppose that there is in the model $M$ a basis $\gs\subset\tau$ closed under intersections, and a complete sieve $\langle S, O(s)\colon s\in S\rangle$ such that

\begin{enumerate}
\item for every $x\in O\in\gs$ it is the case that $\pi(x)\in\bar\pi(O)$;
\item whenever $Q\in\gs$ and $\{P_j\colon j\in J\}$ is a finite subset of $\gs$ and $Q\subset \bigcup_jP_j$ then $\bar\pi(Q)\subset\bigcup_j\bar\pi(P_j)$;
\item $\bar\pi''\gs$ generates the topology $\tau$;
\item the sets on the sieve are in the basis $\tau$ and moreover $\langle S, \bar\pi(O(s))\colon s\in S\rangle$ is a complete sieve for $\hat X$.
\end{enumerate}

\noindent Then the map $\bar\pi\colon\tau\to\hat \tau$ is a topological interpretation.
\end{proposition}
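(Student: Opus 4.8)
The plan is to check that the canonical extension $\bar\pi\colon\tau\to\hat\tau$ is a topological preinterpretation of $X$, and then to feed it into Theorem~\ref{sievetheorem}: hypothesis (4) will guarantee that the interpreted sieve is a completeness system on $\hat X$, and that theorem then identifies $\bar\pi$ with the interpretation.

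First I dispose of the ``pointwise'' clauses in the definition of preinterpretation. The canonical extension always preserves inclusion and finite intersections and satisfies $\bar\pi(X)=\hat X$, and $\bar\pi(Q_0)\cap\bar\pi(Q_1)=\bar\pi(Q_0\cap Q_1)$; so, $\gs$ being closed under intersections, $\bar\pi''\gs$ is closed under finite intersections, whence hypothesis (3) (that $\bar\pi''\gs$ generates $\hat\tau$) makes $\bar\pi''\gs$ a basis of $\hat\tau$, and hence the larger family $\bar\pi''\tau$ is a basis too. For the membership clause, if $x\notin O$ then $\pi(x)\in\pi''(X\setminus O)\subseteq\mathrm{cl}(\pi''(X\setminus O))$, so $\pi(x)\notin\bar\pi(O)$; and if $x\in O$, pick a basic $Q\in\gs$ with $x\in Q\subseteq O$, so that hypothesis (1) together with monotonicity gives $\pi(x)\in\bar\pi(O)$. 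Consequently $\pi^{-1}\bar\pi(O)=O$ for all $O\in\tau$, so $\pi$ is continuous. Finally $\bar\pi(\emptyset)=\emptyset$ is the empty-union case of the commuting property proved below (or is immediate from hypothesis (2) applied with the empty subfamily, since $\emptyset\in\gs$ unless $X$ has at most one point).

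The substantive point is that $\bar\pi$ commutes with an arbitrary union $O=\bigcup_{i\in I}O_i$ taken in $M$; only the inclusion $\bar\pi(O)\subseteq\bigcup_i\bar\pi(O_i)$ needs proof. Fix $z\in\bar\pi(O)$. Replacing $\{O_i\colon i\in I\}$ by the family, computed in $M$, of all basic $Q\in\gs$ with $Q\subseteq O_i$ for some $i$ leaves the union unchanged and it suffices to treat this family, so I assume each $O_i$ is a basic set $Q_i\in\gs$. Using regularity of $\hat X$ and the fact that $\bar\pi''\gs$ is a basis, I choose $R\in\gs$ with $z\in\bar\pi(R)$ and $\mathrm{cl}(\bar\pi(R))\subseteq\bar\pi(O)$; by continuity of $\pi$ and the membership clause, $\pi''\mathrm{cl}(R)\subseteq\mathrm{cl}(\bar\pi(R))\subseteq\bar\pi(O)$ forces $\mathrm{cl}(R)\subseteq O$. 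Thus I have reduced matters to: $R,Q_i\in\gs$ and $I$ lie in $M$, $\mathrm{cl}(R)\subseteq O=\bigcup_{i\in I}Q_i$, and $z\in\bar\pi(R)$; I must find $i$ with $z\in\bar\pi(Q_i)$.

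Here I reuse the well-foundedness argument from the proof of Theorem~\ref{sievetheorem}. In $M$, let $T$ be the tree of finite sequences $\langle s_0,\dots,s_n\rangle$ with $s_0=0$ and each $s_{k+1}$ an immediate successor of $s_k$ in the sieve tree $S$, such that for every $k\leq n$ and every finite $J\subseteq I$ one has $\mathrm{cl}(O(s_k))\cap\mathrm{cl}(R)\not\subseteq\bigcup_{i\in J}Q_i$. An infinite branch of $T$ would give an infinite branch $b$ of $S$ for which $\{\mathrm{cl}(O(b\restriction n))\colon n\in\gw\}\cup\{\mathrm{cl}(R)\}\cup\{X\setminus Q_i\colon i\in I\}$ has the finite intersection property and contains a subset of each $\mathrm{cl}(O(b\restriction n))$; completeness of the sieve in $M$ would then produce a point of $\mathrm{cl}(R)\subseteq O$ lying outside every $Q_i$, a contradiction. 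Hence $T$ is well-founded in $M$, and, $M$ being transitive, $T$ is well-founded in $V$. Now in $V$: since $\langle S,\bar\pi(O(s))\rangle$ is a sieve on $\hat X$ and $\bar\pi(\emptyset)=\emptyset$, starting at the root and repeatedly choosing an immediate successor $t$ of $b\restriction n$ with $z\in\bar\pi(O(t))$ produces an infinite branch $b$ of $S$ with $z\in\bar\pi(R)\cap\bar\pi(O(b\restriction n))$ for all $n$. By well-foundedness of $T$ in $V$ this sequence must leave $T$, so for some $k$ and some finite $J\subseteq I$ we have $\mathrm{cl}(O(b\restriction k))\cap\mathrm{cl}(R)\subseteq\bigcup_{i\in J}Q_i$; then $O(b\restriction k)\cap R\in\gs$ is covered by the finite subfamily $\{Q_i\colon i\in J\}$, so hypothesis (2) gives $\bar\pi(O(b\restriction k)\cap R)\subseteq\bigcup_{i\in J}\bar\pi(Q_i)$, and since $z\in\bar\pi(O(b\restriction k))\cap\bar\pi(R)=\bar\pi(O(b\restriction k)\cap R)$ we get $z\in\bar\pi(Q_i)$ for some $i\in J$. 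This shows $\bar\pi$ is a preinterpretation. Since a complete sieve is a completeness system (for the tree ordering $\leq_S$, with the function $s\mapsto O(s)$), hypothesis (4) makes $\langle S,\leq_S,\bar\pi\circ O\rangle$ a completeness system on $\hat X$, so Theorem~\ref{sievetheorem} identifies $\bar\pi$ as the topological interpretation of $X$. The main obstacle is the combinatorial core of the last paragraph: the tree $T$ must be set up so that its well-foundedness passes from $M$ to $V$ and so that \emph{leaving} $T$ necessarily yields a finite subcover of a basic set, which is the only situation in which hypothesis (2) can be applied.
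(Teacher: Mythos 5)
Your proposal is correct and follows essentially the same route as the paper: reduce via Theorem~\ref{sievetheorem} and hypothesis (4) to checking that $\bar\pi$ is a preinterpretation, get membership and finite intersections directly from the canonical extension and hypothesis (1), and settle commutation with unions taken in $M$ by shrinking to a basic $R$ with ${\mathrm{cl}}(R)\subseteq O$ (regularity of $\hat X$) and running a wellfoundedness-absoluteness argument that plays the complete sieve in $M$ against its $\bar\pi$-image, with hypothesis (2) transferring finite basic covers. The only difference is bookkeeping: the paper argues contrapositively, covering $X\setminus{\mathrm{cl}}(Q)$ by basic sets and pulling an illfounded branch back into $M$, while you first refine the cover to basic sets, define the tree of non-covered sieve sequences inside $M$, and let the branch through $z$ exit it in $V$ -- an interchangeable (and if anything slightly more careful) arrangement of the same argument.
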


\noindent Note that for a compact space $X$ the last item reduces to the demand that $\hat X$ is compact.

\begin{proof}
By Theorem~\ref{sievetheorem} and (4) it is only necessary to verify that $\bar\pi$ is a topological preinterpretation. It is clear from the definitions that the canonical extension $\bar\pi$ on $\tau$ preserves inclusion and commutes with finite intersections. The first item implies then that for every $x\in X$ and $O\in\tau$, it is the case that $x\in O\liff \pi(x)\in\bar\pi(O)$. The only thing left to verify is that the map $\bar\pi$ commutes with arbitrary unions of open sets in the model $M$.

Suppose then that $M\models O=\bigcup_{i\in I}O_i$ is a union of open sets and argue that $\bar\pi(O)=\bigcup_i\bar\pi(O_i)$ must hold.
The right-to-left inclusion follows from the fact that $\bar\pi$ preserves inclusion. For the left-to-right inclusion, assume for contradiction that the set $\bar\pi(O)\setminus\bigcup_i\bar\pi(O_i)$ is nonempty, containing some point $x\in\hat X$. 
By the third item, there is an open set $Q\in\gs$ such that $x\in\bar\pi(Q)$ and the closure of $\bar\pi(Q)$
is a subset of $\bar\pi(O)$. It is easy to argue from (1) and (3) that the closure of $Q$ must be a subset of $O$ in the space $X$. In the model $M$, let $\{P_j\colon j\in J\}\subset\gs$ be a collection such that $X\setminus{\mathrm{cl}}(Q)=\bigcup_jP_j$.

Let $b$ be any infinite branch of the tree $S$ such that $x\in \bigcap_n\bar\pi(O(b(n)))$. Then, no set $\bar\pi(O(b(n))$ is covered by finitely many sets in the collection $\{\bar\pi(O_i)\colon i\in I, \pi(P_j)\colon j\in J\}$. By (2) and a wellfoundedness argument
with the model $M$, in the model $M$ there must be an infinite branch $c$ such that no set $O(c(n))$  for $n\in\gw$ is covered by finitely many sets in the collection $\{O_i\colon i\in I, P_j\colon j\in J\}$. By the completeness of the sieve in the model $M$,
the set $X\setminus (\bigcup_iO_i\cup\bigcup_jP_j)$ must be nonempty. However, any element of this set belongs to $O$ and not to any $O_i$ for $i\in I$, contradicting the initial assumption.
\end{proof}

\section{Interpretations of Borel sets}

The purpose of this section is to show that for interpretable spaces,  it is possible to extend interpretations to the $\gs$-algebra of Borel sets so that the interpretation commutes with the algebraic operations.

\begin{theorem}
\label{boreltheorem}
Suppose that $M$ is a transitive model of set theory and $M\models\langle X, \tau\rangle$ is an interpretable space with its Borel $\gs$-algebra $\mathcal{B}$. Suppose moreover that $\pi\colon \langle X, \tau\rangle\to\langle \hat X, \hat \tau\rangle$ is an interpretation
and write $\hat{\mathcal{B}}$ for the Borel $\gs$-algebra of the space $\hat X$. Then there is a unique map $\pi\colon \mathcal{B}\to\hat{\mathcal{B}}$ such that it agrees with the action of $\pi$ on $\tau$, and commutes with complements and countable unions and intersections in $M$.
\end{theorem}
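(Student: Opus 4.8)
The plan is to build the extension $\pi\colon\mathcal B\to\hat{\mathcal B}$ by transfinite recursion on the Borel hierarchy of $M$, while establishing enough structural facts at each step to guarantee that the recursion is well-defined and that the target sets are genuinely Borel in $\hat X$. The uniqueness clause is the easy half: any two maps agreeing on $\tau$ and commuting with complements and countable unions must agree on every Borel set of $M$, by a straightforward induction on Borel rank using the fact (from $M$'s absoluteness of the Borel codes, which makes sense because $M$ is transitive) that every $B\in\mathcal B$ has, in $M$, a Borel code built from open sets. So the bulk of the work is existence.

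First I would fix, in $M$, a Borel code for each $B\in\mathcal B$ and define $\pi(B)$ by recursion: $\pi$ of an open set is already given; $\pi$ commutes with complement and with countable union by fiat. The first thing to check is \emph{consistency}: the value must not depend on the choice of code. Here I would lean on analytic absoluteness between $M$ and $V$ in the spirit of Theorem~\ref{absolutenesstheorem}, together with the following key lemma, to be proved first: for interpretable $X$ and its interpretation $\hat X$, and for a countable family $\langle O_n\colon n\in\gw\rangle\in M$ of open sets, one has $\pi''\bigl(\bigcap_n O_n\bigr)$ dense in $\bigcap_n\pi(O_n)$, and more generally $\pi$ restricted to any $G_\gd$ subspace $Y=\bigcap_n O_n$ of $X$ is (by Theorem~\ref{subspacetheorem} iterated, or rather by the $G_\gd$ analogue one proves from Theorem~\ref{sievetheorem}) an interpretation of $Y$. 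This lets me transfer identities among Borel codes valid in $M$ (e.g. two codes describing the same set) to identities among their $\pi$-images in $V$, because such an identity is a statement about countably many open sets and membership, hence expressible over the interpretable structure and preserved.

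The second, and I expect principal, obstacle is showing that each $\pi(B)$ actually \emph{lies in} $\hat{\mathcal B}$ — i.e.\ that the recursion does not escape the Borel $\gs$-algebra of $\hat X$. The danger is that $\pi$ of a countable union of interpreted open sets, then its complement, etc., need not be describable by a single Borel code in $V$ unless one controls it levelwise. The remedy is to prove by induction on $\alpha<\gw_1^M$ the statement: \emph{if $B$ is $\bgS^0_\alpha$ in $M$, then $\pi(B)$ is $\bgS^0_\alpha$ in $\hat X$}; the successor step is immediate from the definition (a $\bgS^0_{\alpha+1}$ set is a countable union of complements of $\bgS^0_{\alpha_n}$ sets, and $\pi$ commutes with those operations by construction), and there are no limit steps to worry about beyond reindexing. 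The only subtlety is at the base: a $\bgS^0_2$ set, i.e.\ an $F_\gs$, is a countable union of closed sets, and I must check that $\pi$ of a closed set $C=X\setminus O$ is exactly $\hat X\setminus\pi(O)$ — but this is precisely the canonical-extension identity $\pi(O)=\hat X\setminus\mathrm{cl}(\pi''(X\setminus O))$ combined with the fact, which must be checked, that $\pi''C$ is dense in $\hat X\setminus\pi(O)$ so that $\mathrm{cl}(\pi''C)=\hat X\setminus\pi(O)$; this density is where interpretability (a complete sieve) is used, via Fact~\ref{sieveproposition}. Once the levelwise statement is in hand, membership in $\hat{\mathcal B}$ is automatic, commutation with the Boolean operations holds by construction, and consistency of the definition is exactly the transfer lemma above; this completes the proof.
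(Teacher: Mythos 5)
Your plan identifies uniqueness correctly and the definition-by-codes is the right starting point, but the heart of the theorem — the consistency of the definition, i.e.\ that two codes $c,d\in M$ with $B_c=B_d$ yield $\hat B_c=\hat B_d$ — is not actually established. Reducing it to ``identities among Borel codes are statements about countably many open sets and membership, hence expressible over the interpretable structure and preserved'' begs the question: the needed transfer is precisely that $B_c=\emptyset$ in $M$ implies $\hat B_c=\emptyset$ in $V$ (applied to a code for $B_c\setminus B_d$), and this is a genuine absoluteness statement about arbitrary Borel ranks, not a $\gS_1$ fact about the structure. The appeal to analytic absoluteness (Theorem~\ref{absolutenesstheorem}) is moreover circular: that theorem is about structures with Borel relations and its proof uses the interpretation of Borel sets, i.e.\ the very theorem being proved. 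Likewise the $G_\gd$ lemma (restriction of $\pi$ to a $G_\gd$ is an interpretation) cannot carry the induction past the second level of the hierarchy, and as stated in the paper (Corollary~\ref{subspacecorollary}) it is itself derived from Theorem~\ref{boreltheorem}. The paper's proof supplies exactly the missing engine: to each code $c$ it attaches, in $M$, an ordering $T_c$ whose elements carry descending chains of nodes of a complete sieve together with shrinking nonempty closed sets, so that an infinite descending sequence through $T_c$ produces a point of $B_c$ (completeness of the sieve in $M$), while any point of $\hat B_c$ produces an infinite descending sequence through $T_c$ in $V$ (completeness of the interpreted sieve, via Theorem~\ref{sievetheorem}); absoluteness of wellfoundedness for the transitive model $M$ then gives $B_c=\emptyset\liff\hat B_c=\emptyset$, and with negation and intersection codes, $B_c\subseteq B_d\liff\hat B_c\subseteq\hat B_d$, which is what makes $\pi(B)=\hat B_c$ well defined. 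Some argument of this kind, using both transitivity of $M$ and interpretability of $X$, is indispensable and is absent from your proposal.

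By contrast, the step you single out as the principal obstacle — that $\pi(B)$ lands in $\hat{\mathcal B}$ — is not an obstacle at all: since $M$ is transitive, any set countable in $M$ is genuinely countable, so $\hat B_c$ is built from open sets $\pi(O)$ and closed sets $\hat X\setminus\pi(O)$ by honestly countable unions and intersections and is Borel outright; no levelwise $\bgS^0_\ga$ induction is needed. Also, the identity $\pi(X\setminus O)={\mathrm{cl}}(\pi''(X\setminus O))=\hat X\setminus\pi(O)$ holds for any preinterpretation of a regular Hausdorff space (it is the canonical-extension computation preceding Definition~\ref{extensiondefinition}); the complete sieve is not what is needed there — it is needed for the consistency argument described above.
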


\begin{proof}
The uniqueness of the extension is clear as Borel sets are obtained from closed and open sets by complements, countable unions and intersections. The existence of the map is a much more complicated matter. The problem is that a Borel subset of $X$ in the model $M$ can be obtained from open and closed sets by countable unions and intersections in two different ways, and then the two different ways may yield different results when reinterpreted over the space $\hat X$. This does not happen due to the wellfoundedness
of the model $M$; this is the contents of the present proof.

Work in $M$. Let $S$ be a complete sieve for the space $X$. Select symbols $\cup$ and $\cap$ for union and intersection. Define a \emph{Borel code} by $\in$ recursion in the following way. If $A\subset X$ is a closed or open set, then $\{A\}$ is a Borel code and if $B$ is a countable set of Borel codes, then $\{\cup, B\}$, $\{\cap, B\}$ are Borel codes. By induction on the rank of the code $c$ define a set $B_c\subset X$ in the following way. If $c=\{A\}$ for an open or closed set $A\subset X$, then $B_c=A$; and if $c=\{\cup, D\}$, resp.\ $c=\{\cap, D\}$ then $B_c=\bigcup_{d\in D} B_d$, resp. $B_c=\bigcap_{d\in D} B_d$.

Still working in $M$, for every Borel code $c$ associate a certain ordering $T_c$. Let $\mathcal{P}$ be the set of all sequences of the form $s=\langle s_i, E_i\colon i\in n\rangle$ where $s_i$' s form a strictly descending sequence of
nodes in $S$, $E_i$'s form a strictly descending sequence of closed nonempty sets, and $E_i\subset O(s_i)$; write $E(s)=E_{n-1}$.
The set $\mathcal{P}$ is naturally ordered by extension. By $\in$-recursion on the Borel code $c$, define
orderings $T_c$. Elements of $T_c$ will always be finite tuples whose first coordinate is a play in $\mathcal{P}$, and for each such tuple $p=\langle s, y_0, y_1\dots\rangle$ I will write $E(p)=E(s)$.

\begin{itemize}
\item if $c=\{A\}$ for an open or closed set $A\subset X$, then $T_c$ consists of all pairs $\langle s, 0\rangle$ where $s\in\mathcal{P}$ and $E(s)\subset A$. The ordering is that of strict extension in the first coordinate;
\item if $c=\{\cup, D\}$ for some countable set $D$ of codes, then $T_c$ consists of all pairs $\langle s, d, u\rangle\rangle$ such that $s\in \mathcal{P}$, $d\in D$, $u\in T_d$ and $E(s)\subset E(u)$. The ordering is defined by $\langle t, d, u\rangle>\langle s, e, v\rangle$ if $t$ properly extends $s$, $e=d$, and $u>v$ in $T_d$;
\item  if $c=\{\cap, D\}$ for some countable set $D=\{d_i\colon i\in\gw\}$ of codes with a fixed enumeration, then $T_c$ consists of all tuples $\langle s, u_i\colon i\in n\rangle$ where $s\in\mathcal{P}$ has length $n$, for every $i\in n$, $u_i\in T_{d_i}$, and $E(s)\subset E(u_i)$.
The ordering is defined by $\langle s, u_i\colon i\in n\rangle> \langle t, v_i\colon i\in m\rangle$ if $t$ properly extends $s$ and for each $i\in n$, $u_i>v_i$ in the ordering $T_{d_i}$.
\end{itemize}

\begin{claim}
\label{borelclaim1}
In the model $M$: If $p$ is an infinite descending sequence in $T_c$ then $\bigcap_nE(p(n))$ is a nonempty set which is a subset of $B_c$.
\end{claim}

\begin{proof}
This is proved by an elementary $\in$-induction argument on the code $c$.
\end{proof}

Now move out of the model $M$. Let $\pi\colon X\to\hat X$ be an interpretation. For every closed set $C\subset X$ in the model $M$, write $\pi(C)={\mathrm{cl}}(\pi''C)$ which is equal to $\hat X\setminus\pi(X\setminus C)$. For every Borel code $c\in M$ define a set $\hat B_c\subset\hat X$ in the following way.  If $c=\{A\}$ for a closed or open set $A\subset X$, then $\hat B_c=\pi(A)$;  and if $c=\{\cup, D\}$, resp.$c=\{\cap, D\}$ then $\hat B_c=\bigcup_{d\in D}\hat B_d$, resp. $\hat B_c=\bigcap_{d\in D}\hat B_d$.

\begin{claim}
\label{borelclaim2}
For every Borel code $c\in M$ and every element $x\in\hat B_c$ there is an infinite descending sequence $p$ in $T_c$ such that $x\in\bigcap_n\pi(E(p(n)))$. 
\end{claim}

\begin{proof}
By induction on the rank of the code $c$. The only interesting case is that of countable intersection, so $c=\langle \bigcap, D\rangle$
for some countable set $D=\{d_i\colon i\in\gw\}$ of codes with a fixed enumeration in the model $M$.

Suppose that $c=\{\cap, D\}$ and $x\in \hat B_c$. By the induction hypothesis, for every $i\in \gw$ there is an infinite descending sequence $p_i$ in $T_{d_i}$ such that $x\in\bigcap_n\pi(E(p_i(n)))$. By induction on $n\in\gw$ build sequences $t_n\in \mathcal{P}$ of length $n$ such that $t_0\subset p_1\subset\dots$, $x\in E({t_n})$, and $\langle t_n, p_i(n)\colon i\in n\rangle\in T_c$. Once this is done, the sequence $p$ given by $p_n=\langle t_n, p_i(n)\colon i\in n\rangle$ is as required.

To perform the induction step, suppose that the sequence $t_n$ has been found, with last pair $s\in S$ and $E\subset X$ closed. In the model $M$, $A=\{X\setminus E\}\cup \{O(t)\colon t\in S$ is a one step extension of $s$ in $S\}$ is an open cover of the space $X$. 
As $\pi$ is a preinterpretation,
$\pi''A$ is an open cover of $X$ and so there must be a one step extension $s'\in S$ of $s$ such that $x\in\pi(O)$. Let $t_{n+1}=t_n^\smallfrown s', \bigcap_{i\in n+1}E(p_i(n+1))\cap{\mathrm{cl}}(O)$. The induction step has been performed.
\end{proof}

\begin{claim}
\label{borelclaim3}
Suppose that $c\in M$ is a Borel code. Then $B_c=0$ if and only if $\hat B_c=0$.
\end{claim}

\begin{proof}
The right-to-left implication is easier. By elementary $\in$-induction on the code $c$ show that for every $x\in X$, $x\in B_c\liff x\in\hat B_c$.
Thus, if $B_c\neq 0$ and $x\in B_c$ then $\hat B_c\neq 0$ as well, since $\pi(x)\in\hat B_c$.

The left-to-right implication is harder, and it uses the wellfoundedness of the model $M$. If $\hat B_c\neq 0$ then by Claim~\ref{borelclaim2}
there is an infinite descending sequence in the ordering $T_c$. Since the model $M$ is wellfounded, there must be such an
infinite descending sequence in the model $M$ as well. By Claim~\ref{borelclaim1}, $B_c\neq 0$ as desired.
\end{proof}

\begin{claim}
\label{borelclaim4}
Suppose that $c, d\in M$ are Borel codes. Then $B_c\subseteq B_d$ if and only if $\hat B_c\subseteq \hat B_d$.
\end{claim}

\begin{proof}
First, work in the model $M$. By $\in$-recursion on the code $c$ define a code $\lnot c$.
If $c=\{A\}$ for some open or closed set $A\subset X$ then let $\lnot c=\{ X\setminus A\}$. If $c=\{\cup, D\}$ then
$\lnot c=\{\cap, \{\lnot d:d\in D\}\}$ and  if $c=\{\cup, D\}$ then
$\lnot c=\{\cup, \{\lnot d:d\in D\}\}$. It is can be proved by an immediate $\in$-induction on the code $c$ that
$B_c=X\setminus B_{\lnot c}$ and $\hat B_c=X\setminus\hat B_{\lnot c}$.

Now suppose that $c,d\in M$ are Borel codes. Consider the code $e=\{\cap, \{c, \lnot d\}$. Then $B_c\subseteq B_d=0$ if and only if $B_e=0$ if and only if (Claim~\ref{borelclaim3}) $\hat B_e=0$ if and only if $\hat B_c\subset\hat B_d$ as desired.
\end{proof}

Finally, we are ready to extend the preinterpretation $\pi$ to Borel sets. Let $B\in M$ is a Borel set; define $\pi(B)=\hat B_c$
for any Borel code $c\in M$ such that $B=B_c$. Claim~\ref{borelclaim4} shows that this definition does not depend on the choice of the code $c$.
Now, suppose that $M\models B=\bigcup_nB_n$ is a countable union of Borel sets. There must be a sequence
$\langle c_n:n\in\gw\rangle\in M$ of Borel codes such that $B_n=B_{c_n}$ for every $n\in\gw$. Consider the Borel code
$d=\{\cup, \{c_n:n\in\gw\}\}$. Then clearly $B=B_d$, and $\pi(B)=\hat B_d=\bigcup_n\hat B_{c_n}=\bigcup_n\pi(B_n)$ as desired.
The countable intersection is handled in the same way.
\end{proof}

As a result, whenever I encounter an interpretation $\pi\colon X\to\hat X$ of an interpretable space and a Borel set $B\subset X$, I will freely use the symbol $\pi(B)$ to refer to the Borel subset of $\hat X$ which is the image of $B$ under the
unique extension of $\pi$ to the $\gs$-algebra of Borel sets.

\begin{example}
The conclusion of the theorem may fail for the most common non-interpretable spaces. Consider the space $X=\mathbb{Q}$ with the usual topology and a generic extension
$V[G]$ containing a Cohen real $r\in\mathbb{R}$. Let $\pi\colon X\to\hat X$ be an interpretation; it cannot be extended to an interpretation of Borel sets.
To see this, note that the extension would have to assign $\pi\{x\}=\{\pi(x)\}$ for every singleton $x\in X$, and since $X$ is a countable union of singletons, $\pi(X)$ would have to be equal to $\pi''X$.
This contradicts the conclusion of Example~\ref{isolatedexample}.
\end{example}

\begin{example}
The conclusion holds in general for some non-interpretable spaces. If $\langle X, \tau\rangle$ is an interpretable space and $\gs$ is an alternative topology on it which extends $\tau$, consists of only
$\tau$-Borel sets and is Lindel{\" o}f, then the interpretation of $\langle X, \tau\rangle$ naturally extends to an interpretation of $\langle X, \gs\rangle$. Since every $\gs$-Borel set is also $\tau$-Borel, the theorem
provides for an extension of the interpretation to the algebra of $\gs$-Borel algebra. At the same time, the space $\langle X, \gs\rangle$ may not be interpretable. A good example of this behavior is the Sorgenfrey line as an extension
of the usual Euclidean topology on the real line. Sorgenfrey line is not interpretable by Example~\ref{sorgenfreyexample}. 
\end{example}

\begin{example}
The conclusion of the theorem may fail for illfounded models $M$, even if their $\gw_1^M$ is wellfounded.
Suppose that $M$ is a model of set theory and $\ga$ its ordinal which is illfounded in $V$ in it. In $M$, consider the space $\ga+1$ with order topology  and the space $X=(\ga+1)^\gw$ in the model $M$. By Theorem~\ref{compactproducttheorem} (which does not need the assumption of wellfoundedness of the model $M$), the interpretation of $X$ is the natural map to the space $\hat X=(\hat\ga+1)^\gw$, where $\hat\ga$ is the completion of the linear ordering on $\ga$ in $V$. For every $n\in\gw$, let $O_n=\{x\in X\colon x(n)>x(n+1)\}$; this is an open set in the model $M$. Since $M\models\ga$ is wellfounded, it is the case that $M\models\bigcap_nO_n=0$. On the other hand, any infinite descending sequence in $\ga$ is an element of $\bigcap_n\pi(O_n)$.
\end{example}

\begin{corollary}
Suppose that $M$ is a transitive model of set theory, $M\models \langle X, \tau\rangle$ is an interpretable space and $\langle C_n\colon n\in\gw\rangle$ is a development on $X$. Let $\pi\colon X\to \hat X$ be an interpretation. Then
$\langle \pi''C_n\colon n\in\gw\rangle$ is a development on $\hat X$. In particular, interpretable Moore spaces are interpreted as Moore spaces.
\end{corollary}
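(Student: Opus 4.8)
The plan is to encode developability of $\langle C_n\colon n\in\gw\rangle$ as a countable Boolean combination of conditions on open sets and then push it through $\pi$ by means of Theorem~\ref{boreltheorem}. For an open cover $D$ of a space $Y$ and $y\in Y$ write $\mathrm{St}(y,D)=\bigcup\{U\in D\colon y\in U\}$, and record the following elementary reformulation relative to a fixed basis $\basis$ of $Y$: given a sequence $\langle D_n\colon n\in\gw\rangle$ of open covers, for $B\in\basis$ put $V_{n,B}=\bigcup\{U\in D_n\colon U\not\subseteq B\}$; then for $y\in B$ one has $\mathrm{St}(y,D_n)\subseteq B$ exactly when $y\notin V_{n,B}$, so $\langle D_n\rangle$ is a development of $Y$ if and only if $B\cap\bigcap_nV_{n,B}=\emptyset$ for every $B\in\basis$ --- the reduction to basic neighbourhoods being trivial in one direction and obtained by refining an arbitrary open neighbourhood in the other.

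First I would apply this reformulation inside $M$ with $Y=X$, $D_n=C_n$, and $\basis=\tau$: for every $O\in\tau$ the sequence $\langle V_{n,O}\colon n\in\gw\rangle$, with $V_{n,O}=\bigcup\{U\in C_n\colon U\not\subseteq O\}$, is definable in $M$ from $\langle C_n\rangle$ and $O$, hence lies in $M$, and $M\models O\cap\bigcap_nV_{n,O}=\emptyset$. Since $O\cap\bigcap_nV_{n,O}$ carries a Borel code in $M$, Theorem~\ref{boreltheorem} gives $\pi(O)\cap\bigcap_n\pi(V_{n,O})=\emptyset$ in $\hat X$. Next I would identify $\pi(V_{n,O})$ with the ``bad star set'' $\hat V_{n,\pi(O)}=\bigcup\{W\in\pi''C_n\colon W\not\subseteq\pi(O)\}$ computed in $\hat X$ relative to the basis $\pi''\tau$. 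Because $\pi$ is a preinterpretation it is injective on $\tau$ (by clause (1) of the definition, since $\pi(O_1)=\pi(O_2)$ and $x\in O_1$ force $\pi(x)\in\pi(O_2)$, hence $x\in O_2$) and both preserves and reflects inclusion of open sets of $M$ (from commutation with finite unions together with that injectivity), so $U\not\subseteq O\iff\pi(U)\not\subseteq\pi(O)$ for $U\in C_n$; commuting the corresponding union, indexed by the set $\{U\in C_n\colon U\not\subseteq O\}\in M$, through $\pi$ yields $\hat V_{n,\pi(O)}=\pi(V_{n,O})$.

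Thus $\pi(O)\cap\bigcap_n\hat V_{n,\pi(O)}=\emptyset$ for every $O\in\tau$, and as $\pi''\tau$ is a basis of $\hat X$ the reformulation of the first paragraph shows $\langle\pi''C_n\rangle$ is a development of $\hat X$. Finally, $\langle\pi''C_n\rangle$ is a sequence of open covers of $\hat X$ because $\pi$ commutes with the union $X=\bigcup C_n$ and $\pi(X)=\hat X$, and $\hat X$ is regular Hausdorff by Theorem~\ref{existencetheorem}; hence an interpretable Moore space is interpreted as a Moore space. I do not anticipate a serious obstacle: the content is simply the observation that developability is a countable Boolean combination of conditions on open sets and is therefore visible to the Borel interpretation functor. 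The only two points that need care are the reduction of the development condition on $\hat X$ to the basic neighbourhoods $\pi(O)$ and the identity $\hat V_{n,\pi(O)}=\pi(V_{n,O})$, which rests on $\pi$ being injective and inclusion-reflecting on open sets; once these are in place, Theorem~\ref{boreltheorem} does all the work.
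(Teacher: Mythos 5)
Your argument is correct and is essentially the paper's own proof: you form the same ``bad star'' sets $P_n(O)=\bigcup\{Q\in C_n\colon Q\not\subseteq O\}$, note $O\cap\bigcap_nP_n(O)=\emptyset$ in $M$, push this through Theorem~\ref{boreltheorem}, and conclude via the fact that $\pi''\tau$ is a basis of $\hat X$. Your extra care in identifying $\pi(V_{n,O})$ with the corresponding set computed in $\hat X$ (via injectivity and inclusion-reflection of $\pi$ on $\tau$) is a correct elaboration of a step the paper leaves implicit.
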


\begin{proof}
In the model $M$, consider an open set $O\in\tau$. Let $P_n(O)=\bigcup\{Q\in C_n\colon Q\not\subset O\}$. Since $\langle C_n\colon n\in\gw\rangle$ is a development, $O\cap\bigcap_nP_n=0$. 
Now, step out of the model $M$. By Theorem~\ref{boreltheorem}, $\pi(O)\cap\bigcap_n\pi(P_n(O))=0$ holds for every open set $O\in\tau$, which is to say that for every point $x\in\pi(O)$
there is a number $n\in\gw$ such that $x$ belongs to no set in the cover $\pi''C_n$ which has nonempty intersection with the complement of $O$. Since every open set in the space $\hat X$ is a union of open sets in the range of the interpretation $\pi$,
it follows that $\langle\pi''C_n\colon n\in\gw\rangle$ is a development on $\hat X$ as desired.
\end{proof}

\begin{corollary}
\label{subspacecorollary}
Suppose that $M$ is a transitive model of set theory and $M\models\langle X, \tau\rangle$ is an interpretable space and $Y\subset X$ its $G_\gd$ subset. Suppose that $\pi\colon X\to\hat X$ is an interpretation. Then the function $\chi=\pi\restriction Y$, $\chi\colon Y\to\pi(Y)$
extends to an interpretation of the space $Y$.
\end{corollary}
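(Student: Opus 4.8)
The plan is to verify that the restriction $\chi=\pi\restriction Y$, regarded as a map from $Y$ to the subspace $\pi(Y)\subseteq\hat X$, satisfies the hypotheses of the testable criterion Proposition~\ref{testproposition}. Fix in $M$ a decreasing sequence $\langle O_n\colon n\in\gw\rangle$ of open subsets of $X$ with $Y=\bigcap_n O_n$ and $O_0=X$; by Theorem~\ref{boreltheorem} the Borel extension of $\pi$ commutes with countable intersections, so $\pi(Y)=\bigcap_n\pi(O_n)$ is a $G_\gd$ subset of $\hat X$, hence regular Hausdorff, and $y\in Y\liff\pi(y)\in\pi(Y)$ for $y\in X$. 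Since $Y$ is interpretable by Theorem~\ref{interpretabletheorem}(2), fix also a complete sieve $\langle S,O(s)\colon s\in S\rangle$ on $Y$ in $M$; I take the auxiliary basis $\gs$ of Proposition~\ref{testproposition} to be the entire subspace topology of $Y$, so that the clause ``the sieve sets lie in $\gs$'' is automatic. The key preliminary identity is that the canonical extension $\bar\chi$ satisfies $\bar\chi(P\cap Y)=\pi(P)\cap\pi(Y)$ for every $P\in\tau$: one inclusion is purely formal, because $\pi(P)\cap\pi(Y)$ is an open subset of $\pi(Y)$ disjoint from $\pi''(X\setminus P)$ and hence from $\mathrm{cl}_{\pi(Y)}(\pi''(Y\setminus P))$; the reverse inclusion uses the monotonicity of the Borel extension of Theorem~\ref{boreltheorem}, via implications of the form ``$Q\cap Y\subseteq P$ in $M$ implies $\pi(Q)\cap\pi(Y)\subseteq\pi(P)$''.

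Granting this identity, conditions (1)--(3) of Proposition~\ref{testproposition} are routine bookkeeping with the Borel extension. Condition (1) holds because $y\in P\cap Y$ iff $\pi(y)\in\pi(P)$ and $\pi(y)\in\pi(Y)$, by the basic preinterpretation property of $\pi$ and by Theorem~\ref{boreltheorem} respectively. Condition (2) holds because, for $Q,P_j\in\tau$ and $J$ finite, an inclusion $Q\cap Y\subseteq\bigcup_{j\in J}(P_j\cap Y)$ in $M$ is an inclusion of Borel sets, to which the monotone Borel extension assigns $\pi(Q)\cap\pi(Y)\subseteq\bigcup_j(\pi(P_j)\cap\pi(Y))$. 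Condition (3) holds because $\pi''\tau$ generates $\hat\tau$, so $\{\pi(P)\cap\pi(Y)\colon P\in\tau\}$ generates the subspace topology of $\pi(Y)$. Finally, applying the Borel extension to each sieve equation $O(s)=\bigcup\{O(t)\colon t$ an immediate successor of $s\}$ shows that $\langle S,\bar\chi(O(s))\colon s\in S\rangle$ is again a sieve on $\pi(Y)$, which settles the first half of condition (4).

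The remaining, and principal, task is the completeness half of (4): that the sieve $\langle S,\bar\chi(O(s))\colon s\in S\rangle$ on $\pi(Y)$ is complete. This is where the wellfoundedness of $M$ enters, exactly as in the proofs of Theorem~\ref{sievetheorem} and Theorem~\ref{boreltheorem}. Given an infinite branch $b\subseteq S$ and a family $\hat F$ of closed subsets of $\pi(Y)$ with the finite intersection property, each $\mathrm{cl}(\bar\chi(O(b\restriction n)))$ containing a member of $\hat F$, I would build in $M$ a tree of finite approximations to a descent through the completeness data of the sieve on $Y$ along $b$ whose infinite branches would contradict completeness of that sieve in $M$ (the $O_n$'s and finitely many open sets separating a prospective limit point from the members of $\hat F$ being folded in); this tree is wellfounded in $M$, hence wellfounded in $V$, and the absence of an infinite branch produces a point of $\bigcap\hat F$. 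Once (1)--(4) are checked, Proposition~\ref{testproposition} yields that $\bar\chi$ is the interpretation of $Y$, and since $\bar\chi$ extends $\chi=\pi\restriction Y$ this is precisely the assertion. The main obstacle is this last transfer of completeness, and the delicate point inside it is the bookkeeping relating closures and covers computed in $Y$, in $X$, and in $\pi(Y)$ — all of which must be routed through the Borel extension of Theorem~\ref{boreltheorem} and the wellfoundedness of $M$.
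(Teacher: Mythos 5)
Your setup is fine and matches the paper's first step: defining $\chi(O\cap Y)=\pi(O)\cap\pi(Y)$, noting via Theorem~\ref{boreltheorem} that this is well defined, monotone, and makes $\chi$ a preinterpretation of $Y$ into the regular Hausdorff space $\pi(Y)=\bigcap_n\pi(O_n)$; your verification of the identity $\bar\chi(P\cap Y)=\pi(P)\cap\pi(Y)$ and of conditions (1)--(3) of Proposition~\ref{testproposition} is correct. The problem is the step you yourself flag as the principal one, and there the proposal has a genuine gap. You fix an \emph{arbitrary} complete sieve on $Y$ in $M$ (via Theorem~\ref{interpretabletheorem}) and propose to prove completeness of its $\bar\chi$-image on $\pi(Y)$ by a wellfoundedness transfer from $M$. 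That transfer cannot work as described: the family $\hat F$ of closed subsets of $\pi(Y)$ against which completeness must be tested lives in $V$ and has no trace in $M$, so it cannot be folded into an $M$-tree; wellfoundedness arguments in this paper (Theorem~\ref{sievetheorem}, Theorem~\ref{boreltheorem}, Proposition~\ref{testproposition}) always test against data from $M$ and use a $V$-side completeness fact to produce the limit point. Your sketch never invokes the only available such fact, namely that $\langle S,\pi(O(s))\rangle$ is a complete sieve on $\hat X$ because $\pi$ is an interpretation of the ambient interpretable space $X$ (Theorem~\ref{sievetheorem}). Indeed, by the uniqueness part of Theorem~\ref{sievetheorem}, the claim ``the $\bar\chi$-image of a complete sieve on $Y$ is complete'' is equivalent to the corollary itself, so it cannot follow from soft preinterpretation properties of $\bar\chi$ plus wellfoundedness applied to an arbitrarily chosen sieve; the sieve must be constructed so that its image's completeness can be read off from $\pi$.

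This is exactly what the paper does differently. Working in $M$, it takes a strong complete sieve on $X$ adjusted to the $G_\delta$ presentation $Y=\bigcap_n Q_n$ so that level-$n$ sets have closures inside $Q_{n-1}$ (property (*)), and then uses the trace sieve $\langle T, Y\cap O(t)\rangle$ on $Y$. In $V$, completeness of $\langle S,\pi(O(s))\rangle$ on $\hat X$ (from Theorem~\ref{sievetheorem}) together with Fact~\ref{sieveproposition}(3) shows that for each branch of $T$ the kernel $\bigcap_n\mathrm{cl}(\pi(O(b\restriction n)))$ is compact \emph{and}, by the transported property (*), contained in $\pi(Y)$; this is what guarantees that finite-intersection families of sets closed in the mere $G_\delta$ subspace $\pi(Y)$ actually accumulate inside $\pi(Y)$ rather than at a point of $\hat X\setminus\pi(Y)$. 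With that, the image trace sieve is complete on $\pi(Y)$ and Theorem~\ref{sievetheorem} finishes. To repair your argument you would have to replace the arbitrary sieve on $Y$ by such a trace of an adapted $X$-sieve (or otherwise tie your sieve to $\pi$ and to the sets $O_n$); as written, the completeness half of condition (4) is asserted but not proved, and the proposed mechanism for proving it would fail.
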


\begin{proof}
For every open set $O\in\tau$, let $\chi(O\cap Y)=\pi(O)\cap\pi(Y)$. This depends only on $O\cap Y$ and not on all of $O$ by Theorem~\ref{boreltheorem}. The fact that $\pi$ is an interpretation immediately implies that $\chi$ is a preinterpretation.
To show that $\chi$ is in fact an interpretation, I will produce a complete sieve on $Y$ whose $\chi$-image remains complete on $\pi(Y)$ and then use Theorem~\ref{sievetheorem}.

Work in the model $M$. Let $Y=\bigcap_nQ_n$ be a countable intersection of an inclusion-decreasing sequence of open sets. It is easy to adjust an arbitrary strong complete sieve $\langle S, O(s)\colon s\in S\rangle$ on the space $X$ to one with the following property:
(*) if $s\in S$ has length $n$ then $O(s)\cap Q_n=\bigcup\{O(t)\colon t$ is an immediate successor of $s$ and ${\mathrm{cl}}(O(t))\subset Q_n\}$. Now let $T=\{s\in S$: for every $n\leq\dom(s)$ greater than $0$, $O(t\restriction n)\subset Q_{n-1}\}$.
It is clear that $T$ is a tree. Let $\langle T, P(t)\colon t\in T\rangle$ be defined by $P(t)=Y\cap O(t)$. It is immediate from (*) that $\langle T, P(t)\colon t\in T\rangle$ is a sieve on the space $Y$. It is also complete by a repeated use of Fact~\ref{sieveproposition}: for every finitely branching tree
$U\subset T$, the set $\bigcap_n\bigcup\{{\mathrm{cl}}(O(t))\colon t\in U, |t|=n\}\subset X$ is compact, by (*) it is a subset of $Y$, and so it is equal to  $\bigcap_n\bigcup\{{\mathrm{cl}}_Y(P(t))\colon t\in U, |t|=n\}\subset Y$ which must then be compact.

Now, move out of the model $M$. The reasoning of the whole previous paragraph is transported by $\pi$ without damage. The only notable point is that the sieve $\langle S, \pi(O(s))\colon s\in S\rangle$ is complete on the space $\hat X$
by Theorem~\ref{sievetheorem}. It follows that the sieve $\langle T, \chi(P(t))\colon t\in T\rangle$ is complete on the space $\pi(Y)$. By Theorem~\ref{sievetheorem} again, the map $\chi$ is an interpretation as desired.
\end{proof}

\section{Products}

In order to speak about relations and fnctions on topological spaces, it is necessary to evaluate interpretations of products. The two theorems presented in this section both rely on a basic feature of finite products:

\begin{proposition}
\label{productproposition}
Suppose that $\{Y_j\colon j\in J\}$ is a finite collection of topological spaces, and $\prod_jY_j$ is covered by a finite collection
of rectangular boxes $\{B_k\colon k\in K\}$. Then there are finite open covers $\{C_j\colon j\in J\}$ of the respective spaces $Y_j$
such that whenever $O_j\in C_j$ for $j\in J$ is a selection, then there is $k\in K$ such that $\prod_jO_j\subset B_k$.
\end{proposition}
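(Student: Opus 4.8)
The plan is to reduce everything to a finite bookkeeping argument about the \emph{traces} of the given boxes on the individual coordinate spaces. Write $K$ for the finite index set of boxes, and for each $k\in K$ and $j\in J$ let $B_{k,j}\subseteq Y_j$ be the open set with $B_k=\prod_j B_{k,j}$. The crucial observation is that although each $Y_j$ may be infinite, only finitely many ``types'' of points occur: for $y\in Y_j$ put $t_j(y)=\{k\in K\colon y\in B_{k,j}\}$, a subset of the finite set $K$, so $t_j$ assumes at most $2^{|K|}$ values.

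First I would define, for each $j\in J$ and each $y\in Y_j$, the open set $G_j(y)=\bigcap\{B_{k,j}\colon k\in t_j(y)\}$, with the convention that an empty intersection equals $Y_j$. Each $G_j(y)$ is open as a finite intersection of open sets, it contains $y$, and it depends only on the value $t_j(y)$; hence $C_j:=\{G_j(y)\colon y\in Y_j\}$ is a \emph{finite} open cover of $Y_j$. These covers $C_j$ are the ones the proposition asks for.

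It then remains to verify the selection property. Suppose $O_j\in C_j$ is chosen for every $j\in J$, and pick $z_j\in Y_j$ with $O_j=G_j(z_j)$, so $O_j=\bigcap_{k\in t_j(z_j)}B_{k,j}$. Apply the covering hypothesis to the point $z=\langle z_j\colon j\in J\rangle\in\prod_j Y_j$: there is $k^*\in K$ with $z\in B_{k^*}$, i.e.\ $z_j\in B_{k^*,j}$ for all $j$, i.e.\ $k^*\in t_j(z_j)$ for all $j$. Therefore $O_j=G_j(z_j)\subseteq B_{k^*,j}$ for every $j$, and so $\prod_j O_j\subseteq\prod_j B_{k^*,j}=B_{k^*}$, which is exactly what is required.

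The argument is essentially purely combinatorial, and I do not expect a genuine obstacle; the only point needing care is the degenerate possibility that some $y\in Y_j$ lies in no $B_{k,j}$ (so $t_j(y)=\emptyset$), which is handled by the empty-intersection convention $G_j(y)=Y_j$ — and is in fact excluded by the covering hypothesis, since such a $y$ could be extended to a point of $\prod_j Y_j$ lying in no box. Finiteness of $J$ plays no real role beyond making the statement natural; what is actually used is the finiteness of $K$, which is what keeps each $C_j$ finite.
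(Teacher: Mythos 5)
Your proof is correct and is essentially the paper's own argument: the paper likewise sets, for each point $x\in Y_j$, the open set $O_x$ equal to the intersection of all box-sides containing $x$ (your $G_j(y)$) and takes $C_j=\{O_x\colon x\in Y_j\}$, leaving the verification ``these covers work'' to the reader. You have simply spelled out that verification (finiteness via the trace map into $\mathcal{P}(K)$ and the selection property via a point $z$ witnessing the cover), which is exactly what the paper intends.
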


\begin{proof}
For every $j\in J$ and every $x\in X_j$ let $O_x$ be the intersection of all open subsets of $X_j$ which serve as a side of some of the boxes $B_k$ and contain the point $x$. Let $C_j=\{O_x\colon x\in X_j\}$. It is easy to verify that these covers work.
\end{proof}

\begin{theorem}
\label{compactproducttheorem}
Suppose that $M$ is a model of set theory and $M\models X_i$ are compact Hausdorff spaces for $i\in I$ and write $X=\prod_iX_i$. Suppose that for every $i\in I$, $\pi_i\colon X _i\to\hat X_i$ is an interpretation and write $\hat X=\prod_i\hat X_i$. Then 

\begin{enumerate}
\item the product map $\pi=\prod_i\pi_i\colon X\to\hat X$ extends to an interpretation of the product;
\item the coordinate projection functions are interpreted as the coordinate projection functions.
\end{enumerate}
\end{theorem}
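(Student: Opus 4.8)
The plan is to reduce everything to the compact case and then invoke the testable criterion of Proposition~\ref{testproposition}. By Tychonoff's theorem both $X=\prod_iX_i$ and $\hat X=\prod_i\hat X_i$ are compact Hausdorff, so by Corollary~\ref{compact1corollary} it suffices to produce \emph{one} topological preinterpretation of $X$ with target $\hat X$ whose action on points is the product map $\pi=\prod_i\pi_i$; it will then automatically be the interpretation. I will read off this preinterpretation from Proposition~\ref{testproposition}, applied in $M$ with $\gs$ the collection of finite open rectangular boxes $\prod_iO_i$ (with $O_i\in\tau_i$ and $O_i=X_i$ for all but finitely many $i$). This $\gs$ is a basis of $X$ closed under finite intersections, it contains $X$ itself, and $X$ carries a complete sieve all of whose open sets are $X$; for that sieve, condition (4) of Proposition~\ref{testproposition} asks, as the remark after that proposition notes, only that $\hat X$ be compact, which we already have.

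The next step is to compute the canonical extension $\bar\pi$ of $\pi$ on the basis $\gs$. A routine calculation shows $\bar\pi\bigl(\prod_iO_i\bigr)=\prod_i\pi_i(O_i)$: the complement $X\setminus\prod_iO_i$ is a finite union of sets $\proj_i^{-1}(X_i\setminus O_i)$, each $X_i\setminus O_i$ is closed hence compact, so by Corollary~\ref{compactcorollary} its interpretation is the closed set $\hat X_i\setminus\pi_i(O_i)$; combining this with the density of $\pi''X$ in $\hat X$ (and of each $\pi_i''X_i$ in $\hat X_i$) yields the displayed formula. In other words $\bar\pi\restriction\gs$ is exactly the product of the maps $\pi_i$ on boxes. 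Conditions (1) and (3) of Proposition~\ref{testproposition} are then immediate: (1) holds because each $\pi_i$ is a preinterpretation, so $x\in\prod_iO_i$ exactly when $\pi_i(x(i))\in\pi_i(O_i)$ for all $i$; and (3) holds because the families $\pi_i''\tau_i$ generate the topologies of the $\hat X_i$, so their boxes generate the product topology of $\hat X$.

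The heart of the matter is condition (2): if a box $Q=\prod_iO_i$ is covered in $M$ by finitely many boxes $P_k=\prod_iO_i^k$ for $k\in K$, I must show $\bar\pi(Q)\subseteq\bigcup_k\bar\pi(P_k)$. This is precisely where Proposition~\ref{productproposition} does the work. Let $J_0\subseteq I$ be the finite set of coordinates at which $Q$ or some $P_k$ fails to be the whole factor; one checks that $\prod_{i\in J_0}O_i$, as a product of the subspaces $O_i$, is covered by the boxes $\prod_{i\in J_0}(O_i^k\cap O_i)$. Applying Proposition~\ref{productproposition} inside $M$ to the finite family $\{O_i:i\in J_0\}$ with this covering gives finite open covers $C_i$ of the $O_i$ such that any selection $\langle U_i\in C_i:i\in J_0\rangle$ satisfies $\prod_{i\in J_0}U_i\subseteq P_k$ for some $k\in K$, hence $U_i\subseteq O_i^k$ for all $i\in J_0$. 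Now take $\hat z\in\bar\pi(Q)=\prod_i\pi_i(O_i)$. Since each $\pi_i$ commutes with the finite union $O_i=\bigcup C_i$, for every $i\in J_0$ there is $U_i\in C_i$ with $\hat z(i)\in\pi_i(U_i)$; picking $k$ as above and using monotonicity of $\pi_i$ on open sets gives $\hat z(i)\in\pi_i(U_i)\subseteq\pi_i(O_i^k)$ for $i\in J_0$, while $\hat z(i)\in\hat X_i=\pi_i(O_i^k)$ for $i\notin J_0$. Thus $\hat z\in\prod_i\pi_i(O_i^k)=\bar\pi(P_k)$, which proves (2). By Proposition~\ref{testproposition}, $\bar\pi$ is the interpretation of $X$; as its action on points is $\pi=\prod_i\pi_i$, this is clause (1) of the theorem.

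For clause (2), each coordinate projection $\proj_i\colon X\to X_i$ is continuous in $M$, so by Theorem~\ref{functiontheorem} it has a unique interpretation $\widehat{\proj}_i\colon\hat X\to\hat X_i$. The genuine coordinate projection $q_i\colon\prod_j\hat X_j\to\hat X_i$ is continuous and sends $\pi(x)=\langle\pi_j(x(j)):j\in I\rangle$ to $\pi_i(x(i))=\pi_i(\proj_i(x))$ for every $x\in X$, so it contains $\{\langle\pi(x),\pi_i(\proj_i(x))\rangle:x\in X\}$ as a subfunction; by the uniqueness clause of Theorem~\ref{functiontheorem}, $q_i=\widehat{\proj}_i$. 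I expect the main obstacle to be condition (2) of Proposition~\ref{testproposition}, the commutation with finite covers by boxes: this is the genuine combinatorial content of finite products and is exactly what Proposition~\ref{productproposition} was set up to supply; the box computation for $\bar\pi$, though routine, depends essentially on the compactness of the factors to keep the closures of the interpreted complementary sets under control.
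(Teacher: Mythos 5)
Your proof is correct and follows essentially the same route as the paper's: identify the basis of finite open boxes, compute the canonical extension of $\pi=\prod_i\pi_i$ on boxes as $\prod_i\pi_i(O_i)$, verify the finite-cover condition via Proposition~\ref{productproposition} applied in $M$, and conclude with Proposition~\ref{testproposition} together with compactness of $\hat X$ (with clause (2) via the uniqueness part of Theorem~\ref{functiontheorem}). One cosmetic point: the identity $\mathrm{cl}(\pi_i''(X_i\setminus O_i))=\hat X_i\setminus\pi_i(O_i)$ you use in the box computation is the general fact about preinterpretations established just before Definition~\ref{extensiondefinition} (no compactness needed), rather than a consequence of Corollary~\ref{compactcorollary}.
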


\begin{proof}
Define a \emph{basic function} on $X$ to be a function $g$ on a finite set $J\subset I$ such that for every $j\in J$, $g(j)\in\tau_j$. If $g$ is a basic function then let $O(g)=\{x\in X\colon \forall j\in J x(j)\in g(j)\}$. Let $\gs=\{O(g)\colon g$ is a basic function on $X\}$; this is a basis for the space $X$. Also, define $O(\pi g)=\{x\in\hat X\colon \forall j\in J\ x(j)\in \pi(g(j))\}\subset\hat X$. The set $\hat \gs=\{O(\pi g)\colon g$ is a basic function on $X\}$ is a basis for $\hat X$. The following claim records the relationship between $\gs$ and $\hat\gs$.

\begin{claim}
Let $g$ be a basic function on $X$.

\begin{enumerate}
\item For every $x\in X$, $x\in O(g)\liff\pi(x)\in O(\pi g)$;
\item if $h_k\colon k\in K$ is a finite set of basic functions and $O(g)\subset\bigcup_kO(h_k)$
then $O(\pi g)\subset \bigcup_kO(\pi h_k)$.
\end{enumerate}
\end{claim}

The first item is proved by unraveling the definitions, and the second follows from Proposition~\ref{productproposition} applied in the model $M$. Now, let $\hat\pi$ be the canonical extension of the product map $\pi$ to the topology of $X$ as in Definition~\ref{extensiondefinition}. Note that $\hat(O(g))=O(\pi(g))$ holds for every basic function $g$ on $X$: for the right-to-left inclusion, observe that $O(\pi g)$ is an open set disjoint from $\pi''(X\setminus O(g))$ by (1) of the claim. For the left-to-right inclusion, if $h$ is a basic function such that $O(\pi h)$ is disjoint from $\pi''(X\setminus O(g))$ then $O(h)\subset O(g)$
by (1) of the claim, and $O(\pi h)\subset O(\pi g)$ by (2) of the claim. 

Finally, apply Proposition~\ref{testproposition} to see that $\hat\pi$ is an interpretation, using the fact that the space $\hat X$ is compact. (2) of the theorem is then immediate.
\end{proof}

\begin{theorem}
\label{countableproducttheorem}
Let $M$ be a transitive model of set theory and $M\models\langle X_i, \tau_i\rangle$ for $i\in\gw$ are interpretable spaces. Let $\pi_i\colon X _i\to\hat X_i$ be interpretations for every $i\in\gw$.

\begin{enumerate}
\item  The product map $\pi=\prod_i\pi_i\colon X=\prod_iX_i\to\hat X =\prod_i\hat X_i$ can be extended to an interpretation of the product space;
\item the coordinate projection functions are interpreted as the coordinate projection functions.
\end{enumerate}
\end{theorem}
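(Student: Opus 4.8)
The plan is to follow the template of the proof of Theorem~\ref{compactproducttheorem}, but to use the full strength of the test criterion Proposition~\ref{testproposition}: where the compact case only had to check ``$\hat X$ is compact'' for clause (4), here I must exhibit an honest complete sieve on $\hat X=\prod_i\hat X_i$ whose $\bar\pi$-image is still complete. Working in $M$, use Fact~\ref{sieveproposition}(2) to fix on each $X_i$ a complete strong sieve $\mathcal{S}_i=\langle S_i,O_i(s)\colon s\in S_i\rangle$, and form the product sieve $\mathcal{T}=\langle T,P(t)\colon t\in T\rangle$ on $X=\prod_iX_i$ exactly as in the proof of Theorem~\ref{interpretabletheorem}(3); it is again a complete strong sieve. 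Let $\gs$ be the collection of all rectangular boxes $O(g)=\{x\in X\colon\forall j\in\dom(g)\ x(j)\in g(j)\}$, where $g$ ranges over finite partial functions assigning to $j$ an open subset $g(j)\subseteq X_j$. Then $\gs$ is a basis for $X$ closed under finite intersections, and every sieve set $P(t)$ lies in $\gs$.

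Now step out of $M$ and set $\pi=\prod_i\pi_i$. By Theorem~\ref{sievetheorem} each interpreted sieve $\langle S_i,\pi_i(O_i(s))\colon s\in S_i\rangle$ is complete on $\hat X_i$, and it is strong because the strength condition $\mathrm{cl}(O_i(t))\subseteq O_i(s)$ is expressible by unions and intersections of open sets (there is an open $P$ with $O_i(t)\cap P=0$ and $O_i(s)\cup P=X_i$) and hence survives the interpretation. Consequently the product sieve $\langle T,\hat P(t)\colon t\in T\rangle$ on $\hat X=\prod_i\hat X_i$ built from these, where $\hat P(t)$ is the box with sides $\pi_n(O_n(s_n))$, is complete on $\hat X$ by the argument of Theorem~\ref{interpretabletheorem}(3). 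Exactly as in the proof of Theorem~\ref{compactproducttheorem} one proves the two--part claim: for every box $O(g)\in\gs$ and every $x\in X$, $x\in O(g)\liff\pi(x)\in O(\pi g)$, where $O(\pi g)$ is the box in $\hat X$ with sides $\pi_j(g(j))$; and if $\{O(h_k)\colon k\in K\}$ is a finite subfamily of $\gs$ with $O(g)\subseteq\bigcup_kO(h_k)$, then $O(\pi g)\subseteq\bigcup_kO(\pi h_k)$. The first part is immediate from the fact that each $\pi_j$ preserves and reflects membership in open sets; the second is Proposition~\ref{productproposition} applied in $M$ together with the fact that each $\pi_j$ commutes with unions and preserves inclusions of open sets. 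As in the compact proof it follows that the canonical extension $\bar\pi$ of $\pi$ satisfies $\bar\pi(O(g))=O(\pi g)$ for every box $g$; in particular $\bar\pi(P(t))=\hat P(t)$.

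With these facts in hand, the four hypotheses of Proposition~\ref{testproposition} are verified against the basis $\gs$ and the sieve $\mathcal{T}$: clause (1) is the first half of the claim; clause (2) is the second half of the claim combined with $\bar\pi(O(g))=O(\pi g)$; clause (3) holds because $\bar\pi''\gs=\{O(\pi g)\colon g\text{ a finite partial function}\}$ and the boxes with sides in the generating families $\{\pi_j(O)\colon O\in\tau_j\}$ form a basis of the product $\hat X$; and clause (4) holds because $P(t)\in\gs$ and $\langle T,\bar\pi(P(t))\colon t\in T\rangle=\langle T,\hat P(t)\colon t\in T\rangle$ is the complete sieve on $\hat X$ produced above. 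Thus $\bar\pi$ is a topological interpretation, which is (1). For (2), each coordinate projection $p_i\colon X\to X_i$ is continuous, and the genuine coordinate projection $q_i\colon\hat X\to\hat X_i$ is continuous and satisfies $q_i(\pi(x))=\pi_i(x(i))$ for every $x\in X$; by the uniqueness clause of Theorem~\ref{functiontheorem}, $q_i$ is the interpretation of $p_i$.

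The main obstacle is clause (2) of Proposition~\ref{testproposition}: one has to be careful about how the finite covers $C_j$ of the factor sets furnished by Proposition~\ref{productproposition} behave under the interpretation. The point is that $C_j$ covers the relevant factor set in $M$, so its $\pi_j$-image covers the corresponding set in $\hat X_j$ (because $\pi_j$ commutes with arbitrary unions), and the inclusion of a selection box inside some $O(h_k)$ is preserved because interpretations preserve inclusions of open sets; once this is spelled out the argument is a routine transcription of the proof of Theorem~\ref{compactproducttheorem}, with the role of ``$\hat X$ compact'' taken over by the completeness of the product sieve on $\hat X$, which is itself just the completeness argument already carried out in Theorem~\ref{interpretabletheorem}(3).
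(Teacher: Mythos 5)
Your proposal is correct and follows essentially the same route as the paper: replicate the proof of Theorem~\ref{compactproducttheorem} for the box basis, and replace the compactness check in Proposition~\ref{testproposition}(4) by interpreting each factor sieve via Theorem~\ref{sievetheorem} and forming the product sieve as in Theorem~\ref{interpretabletheorem}(3). Your explicit verification that strength of the interpreted sieves is preserved is a detail the paper leaves implicit, but it does not change the argument.
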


\begin{proof}
The proof follows the argument for Theorem~\ref{compactproducttheorem} letter by letter except for the last paragraph.
To replace the compactness argument in the last paragraph, work in $M$. For every $i\in\gw$ find complete sieves
$\langle S_i, O_i(s)\colon s\in S_i\rangle$ on each space $X_i$ in the model $M$. Let $\langle T, P(t)\colon t\in T\rangle$ be the product sieve on the space $X$ as desribed in the proof of Theorem~\ref{interpretabletheorem}; it is complete and its open sets
are rectangular boxes with finite support.
Step out of the model $M$.
The sieves $\langle S_i, \pi_i(O_i(s))\colon s\in S_i\rangle$ are complete sieves for each space $\hat X_i$ by Theorem~\ref{sievetheorem}. Their product sieve is again complete on the space $\hat X$. Thus, the canonical extension $\bar\pi$ of $\pi$ maps a complete sieve to a complete sieve, and Proposition~\ref{testproposition} is applicable again to show that $\bar\pi$ is an interpretation.
\end{proof}

\begin{corollary}
Let $M$ be a transitive model of set theory and $M\models\langle X, \tau\rangle, \langle Y, \gs\rangle$
are interpretable spaces and $f\colon X_0\to X_1$ is a continuous function. 
Then $\hat f$ is just the interpretation of $f$ viewed as a closed subset of $X\times Y$.
\end{corollary}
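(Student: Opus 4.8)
Here $f$ is of course a continuous function $X\to Y$; the subscripts in the statement are a slip. Write $\pi\colon X\to\hat X$ and $\chi\colon Y\to\hat Y$ for the interpretations of $X$ and $Y$, and let $\hat f\colon\hat X\to\hat Y$ be the interpretation of $f$ produced by Theorem~\ref{functiontheorem}. The space $X\times Y$ is interpretable (Theorem~\ref{interpretabletheorem}(3)), and by Theorem~\ref{countableproducttheorem} its interpretation is $\hat X\times\hat Y$ with interpretation map $\pi\times\chi$. Thus ``$f$ viewed as a closed subset of $X\times Y$'' means the set $G=\{\langle x,y\rangle\in X\times Y\colon f(x)=y\}$ --- closed because $M\models Y$ is Hausdorff --- together with the interpretation of the closed subspace $G$ supplied by Theorem~\ref{subspacetheorem}(2) (equivalently, the Borel image $(\pi\times\chi)(G)$ of Theorem~\ref{boreltheorem}). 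Put $\hat G=\{\langle z,w\rangle\in\hat X\times\hat Y\colon\hat f(z)=w\}$; this is closed in $\hat X\times\hat Y$ since $\hat Y$ is Hausdorff by Theorem~\ref{existencetheorem}. The plan is to show that the interpretation of the subspace $G$ is $\hat G$, with interpretation map $(\pi\times\chi)\restriction G$.

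First I would note that $(\pi\times\chi)\restriction G$ indeed has range contained in $\hat G$: for $x\in X$, $(\pi\times\chi)(\langle x, f(x)\rangle)=\langle\pi(x),\chi(f(x))\rangle=\langle\pi(x),\hat f(\pi(x))\rangle$ by the defining property of $\hat f$ from Definition~\ref{i2definition}, so $(\pi\times\chi)''G\subseteq\hat G$. Next, by Theorem~\ref{subspacetheorem}(2) applied to the interpretation $\pi\times\chi$ and the open set $O=(X\times Y)\setminus G$, the map $(\pi\times\chi)\restriction G$ extends to an interpretation of the subspace $G$ whose target set is $(\hat X\times\hat Y)\setminus(\pi\times\chi)(O)$. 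By the formula for a preinterpretation recorded just before Definition~\ref{extensiondefinition}, $(\pi\times\chi)(O)=(\hat X\times\hat Y)\setminus{\mathrm{cl}}\big((\pi\times\chi)''G\big)$, so that target is exactly ${\mathrm{cl}}\big((\pi\times\chi)''G\big)$, the closure of the pointwise image. (This is also what Theorem~\ref{boreltheorem} assigns to the closed set $G$.) Everything therefore reduces to the identity ${\mathrm{cl}}\big((\pi\times\chi)''G\big)=\hat G$.

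The inclusion ${\mathrm{cl}}\big((\pi\times\chi)''G\big)\subseteq\hat G$ is immediate from $(\pi\times\chi)''G\subseteq\hat G$ and the closedness of $\hat G$. For the reverse inclusion I would use that $g\colon\hat X\to\hat X\times\hat Y$ defined by $g(z)=\langle z,\hat f(z)\rangle$ is continuous (as $\hat f$ is) and is a homeomorphism of $\hat X$ onto $\hat G$, the inverse being the first-coordinate projection. The set $\pi''X$ is dense in $\hat X$: a nonempty basic open set has the form $\pi(P)$ for some nonempty $P\in\tau$, and then $\pi(x)\in\pi(P)$ for any $x\in P$ by clause (1) of the definition of preinterpretation. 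Since $g''(\pi''X)=\{\langle\pi(x),\hat f(\pi(x))\rangle\colon x\in X\}=(\pi\times\chi)''G$ and $g$ is a homeomorphism onto $\hat G$, the set $(\pi\times\chi)''G$ is dense in $\hat G$; hence $\hat G={\mathrm{cl}}\big((\pi\times\chi)''G\big)$. This identifies the interpretation of the subspace $G\subseteq X\times Y$ with $\hat G=\{\langle z,w\rangle\colon\hat f(z)=w\}$ and interpretation map $(\pi\times\chi)\restriction G$ --- that is, with the graph of $\hat f$ --- as claimed.

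I do not anticipate a genuine obstacle; the corollary is essentially a bookkeeping consequence of Theorems~\ref{countableproducttheorem}, \ref{subspacetheorem}, \ref{boreltheorem}, together with the uniqueness of interpretations and the density of $\pi''X$ in $\hat X$. The only point that deserves a little attention is confirming that the target of the subspace interpretation from Theorem~\ref{subspacetheorem}(2) is literally the closure of the pointwise image, so that the two natural readings of ``$f$ viewed as a closed subset'' --- as a closed subspace and as a Borel subset --- coincide, which they do because both are computed as ${\mathrm{cl}}\big((\pi\times\chi)''G\big)$.
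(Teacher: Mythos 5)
Your proof is correct, and it agrees with the paper up to the last step: both arguments identify ``$f$ viewed as a closed subset'' with ${\mathrm{cl}}\bigl((\pi\times\chi)''G\bigr)$ (via Theorem~\ref{countableproducttheorem} and the formula preceding Definition~\ref{extensiondefinition}, equivalently Theorem~\ref{boreltheorem}), observe that this closure sits inside the graph of $\hat f$, and then must show the two coincide. Where you diverge is in how equality is obtained. The paper reduces it to showing that every vertical section of $(\pi\times\chi)(G)$ is nonempty and gets that by interpreting, over $M$, the projection $G\to X$ restricted to the graph: it is a homeomorphism in $M$, hence by Corollary~\ref{homeocorollary} its interpretation is a homeomorphism of $(\pi\times\chi)(G)$ onto $\hat X$, in particular surjective. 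You instead argue entirely in $V$: $\pi''X$ is dense in $\hat X$ (nonempty basic open sets are of the form $\pi(P)$ with $P\neq 0$, and clause (1) of the preinterpretation definition puts $\pi(x)$ inside for $x\in P$), and the map $z\mapsto\langle z,\hat f(z)\rangle$ is a homeomorphism of $\hat X$ onto the closed graph $\hat G$, so $(\pi\times\chi)''G$ is dense in $\hat G$ and the closure is all of $\hat G$. Your route is more elementary and self-contained, needing only Theorems~\ref{functiontheorem}, \ref{countableproducttheorem}, \ref{subspacetheorem}/\ref{boreltheorem} and density of the interpretation image; the paper's route is a one-line appeal to the already developed functorial machinery (interpretations of homeomorphisms) and makes the ``nonempty vertical sections'' picture explicit. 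You also take care of a point the paper passes over silently, namely that the subspace reading (Theorem~\ref{subspacetheorem}(2)) and the Borel reading (Theorem~\ref{boreltheorem}) of the interpreted closed set coincide, both being ${\mathrm{cl}}\bigl((\pi\times\chi)''G\bigr)$.
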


\begin{proof}
Let $\pi\colon X\to\hat X$ and $\chi\colon Y\to\hat Y$ be interpretations. Then $\hat f\subset\hat X\times \hat Y$ is a closed set containing the set $g=\{\langle \pi(x), \chi(f(x)\rangle\colon x\in X\}$.
The set $(\pi\times\chi)(f)\subset X\times Y$ is exactly the closure of the set $g$. Thus, it will be enough to show that every vertical section of the set $(\pi\times\chi)(f)$ is nonempty. One illuminating way to see this
is to note that the projection function from $X\times Y$ to $X$, when restricted to the graph of $f$, is a homeomorphism. Thus, it must be interpreted as a homeomorphism of $(\pi\times\chi)(f)$ and $X$ by Corollary~\ref{homeocorollary}.
\end{proof}

\begin{corollary}
\label{disjointrangecorollary}
Suppose that $M$ is a transitive model of set theory and $M\models X, Y, Z$ are interpretable spaces and $f\colon X\to Z$ and $g\colon Y\to Z$ are continuous functions.
Let $\pi\colon X\to\hat X$, $\chi\colon Y\to\hat Y$, $\xi\colon Z\to\hat Z$, and $\hat f\colon \hat X\to \hat Z$ and $\hat g\colon\hat Y\to\hat Z$ be interpretations.
If $\rng(f)\cap\rng(g)=0$ then $\rng(\hat f)\cap\rng(\hat g)=0$.
\end{corollary}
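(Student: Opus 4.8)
The plan is to deduce the statement from two facts that are already at our disposal: the interpretation functor commutes with finite products (Theorem~\ref{countableproducttheorem}) and with preimages of continuous functions (Theorem~\ref{functiontheorem}(2)). No new argument about ranges is then needed beyond a Hausdorff separation trick carried out in $M$. First I would pass to the product $W=X\times Y$. Applied to the finite product $X\times Y$, Theorem~\ref{countableproducttheorem} gives that $W$ is interpretable, that its interpretation is the product map $\rho=\pi\times\chi\colon W\to\hat X\times\hat Y$, and that the coordinate projections $\proj_X\colon W\to X$ and $\proj_Y\colon W\to Y$ are interpreted as the coordinate projections of $\hat X\times\hat Y$. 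By the composition corollary following Theorem~\ref{functiontheorem}, the continuous maps $f\circ\proj_X,g\circ\proj_Y\colon W\to Z$ then have interpretations $\hat f\circ\widehat{\proj_X}$ and $\hat g\circ\widehat{\proj_Y}$ respectively.

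Next, working inside $M$, I would use the Hausdorffness of $Z$ and the hypothesis $\rng(f)\cap\rng(g)=0$ to write $W=\bigcup\{f^{-1}P\times g^{-1}Q\colon P,Q\in\tau_Z,\ P\cap Q=0\}$. Indeed, every $\langle x,y\rangle\in W$ satisfies $f(x)\neq g(y)$, so Hausdorffness supplies disjoint open sets $P\ni f(x)$ and $Q\ni g(y)$, placing $\langle x,y\rangle$ in the corresponding box; and each such box equals $(f\circ\proj_X)^{-1}(P)\cap(g\circ\proj_Y)^{-1}(Q)$, an intersection of two open subsets of $W$.

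Finally I would push this equation forward by $\rho$. Since a preinterpretation commutes with arbitrary unions and finite intersections of open sets of $M$, and commutes (Theorem~\ref{functiontheorem}(2)) with preimages under $f\circ\proj_X$ and $g\circ\proj_Y$, the equation becomes
\[
\hat X\times\hat Y=\bigcup\bigl\{(\hat f\circ\widehat{\proj_X})^{-1}(\xi(P))\cap(\hat g\circ\widehat{\proj_Y})^{-1}(\xi(Q))\colon P,Q\in\tau_Z,\ M\models P\cap Q=0\bigr\}.
\]
Given any $\langle u,v\rangle\in\hat X\times\hat Y$, it lies in one of these boxes, so there are open $P,Q$ of $Z$ with $M\models P\cap Q=0$, $\hat f(u)\in\xi(P)$ and $\hat g(v)\in\xi(Q)$; since $\xi$ is a preinterpretation, $\xi(P)\cap\xi(Q)=\xi(P\cap Q)=\xi(0)=0$, hence $\hat f(u)\neq\hat g(v)$. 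As $u\in\hat X$ and $v\in\hat Y$ were arbitrary, $\rng(\hat f)\cap\rng(\hat g)=0$. The one point needing attention is the bookkeeping in this last step — chaining Theorem~\ref{functiontheorem}(2) with Theorem~\ref{countableproducttheorem}(2) so that $f^{-1}P\times g^{-1}Q$ is seen to interpret exactly as $\{\langle u,v\rangle\colon\hat f(u)\in\xi(P)$ and $\hat g(v)\in\xi(Q)\}$ — but this is routine, and I expect no genuine obstacle in the proof.
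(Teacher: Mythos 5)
Your proof is correct and takes essentially the same route as the paper: the paper covers the product $X\times Y\times Z$ in $M$ by open boxes witnessing the disjointness of ranges and pushes the cover through the interpretation of the product via Theorem~\ref{functiontheorem}(2), while you do the same with the two-fold product $X\times Y$ and boxes $f^{-1}P\times g^{-1}Q$ for disjoint open $P,Q\subset Z$, which is only a cosmetic simplification. The bookkeeping you flag at the end (chaining Theorem~\ref{functiontheorem}(2) with Theorem~\ref{countableproducttheorem}(2) and the composition corollary) is indeed routine, so there is no gap.
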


\begin{proof}
In the model $X$, consider the product $X\times Y\times Z$ and use the fact that $\rng(f)\cap\rng(g)=0$ to see that it is the union of sets $O\times P\times Q$ such that either $f^{-1}Q\cap O=0$ or $g^{-1}Q\cap P=0$. By the theorem, the product
of interpretations is again an interpretation and thus the union of the corresponding open sets covers the whole product. This means that $\rng(\hat f)\cap\rng(\hat g)=0$ by Theorem~\ref{functiontheorem}(2). 
\end{proof}

\begin{corollary}
\label{injectioncorollary}
If $M\models f\colon X\to Y$ is a continuous injection between two interpretable spaces, then $\hat f$ is injection again.
\end{corollary}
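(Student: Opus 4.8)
The plan is to deduce this from Corollary~\ref{disjointrangecorollary} by a localization argument. Fix interpretations $\pi\colon X\to\hat X$ and $\chi\colon Y\to\hat Y$, and let $\hat f\colon\hat X\to\hat Y$ be the interpretation of $f$ furnished by Theorem~\ref{functiontheorem}. Suppose toward a contradiction that there are distinct points $\hat x_0,\hat x_1\in\hat X$ with $\hat f(\hat x_0)=\hat f(\hat x_1)$. Since $X$ is interpretable, $\hat X$ is regular Hausdorff by Theorem~\ref{existencetheorem}, so $\hat x_0$ and $\hat x_1$ have disjoint open neighborhoods; because $\pi''\tau$ is a basis of $\hat\tau$, I can shrink these to basic open sets $\pi(A_0)\ni\hat x_0$ and $\pi(A_1)\ni\hat x_1$ with $A_0,A_1\in\tau$ and $\pi(A_0)\cap\pi(A_1)=0$. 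Since $\pi$ commutes with finite intersections and carries nonempty open sets of $M$ to nonempty sets, it follows that $M\models A_0\cap A_1=0$.

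The key observation is then that $M\models\rng(f\restriction A_0)\cap\rng(f\restriction A_1)=0$: if $f(x)=f(x')$ with $x\in A_0$ and $x'\in A_1$, then injectivity of $f$ forces $x=x'\in A_0\cap A_1=0$, which is absurd. Now $A_0$ and $A_1$ are open, hence $G_\gd$, subsets of the interpretable space $X$, so by Theorem~\ref{interpretabletheorem}(2) they are interpretable, and by Corollary~\ref{subspacecorollary} the restricted maps $\pi\restriction A_i\colon A_i\to\pi(A_i)$ are interpretations of the subspaces $A_i$. By the uniqueness clause of Theorem~\ref{functiontheorem}, the interpretation of $f\restriction A_i\colon A_i\to Y$ is precisely $\hat f\restriction\pi(A_i)\colon\pi(A_i)\to\hat Y$, since this restriction is continuous and contains $\{\langle\pi(x),\chi(f(x))\rangle\colon x\in A_i\}$ as a subfunction.

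I then apply Corollary~\ref{disjointrangecorollary} to the continuous functions $f\restriction A_0\colon A_0\to Y$ and $f\restriction A_1\colon A_1\to Y$ between interpretable spaces with disjoint ranges, obtaining $\rng(\hat f\restriction\pi(A_0))\cap\rng(\hat f\restriction\pi(A_1))=0$, that is, $\hat f''\pi(A_0)\cap\hat f''\pi(A_1)=0$. But $\hat x_0\in\pi(A_0)$ and $\hat x_1\in\pi(A_1)$, so the common value $\hat f(\hat x_0)=\hat f(\hat x_1)$ lies in $\hat f''\pi(A_0)\cap\hat f''\pi(A_1)$, a contradiction. Hence $\hat f$ is injective.

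The only step requiring any thought is the localization: Corollary~\ref{injectioncorollary} is a diagonal (pointwise) assertion about $f$, whereas Corollary~\ref{disjointrangecorollary} speaks about ranges, and the bridge is that any two distinct points of $\hat X$ that $\hat f$ could merge are always separated by the $\pi$-images of genuinely disjoint open sets of $M$, on which the corresponding restrictions of $f$ do have disjoint ranges. The remaining steps are routine applications of the already-established commutation of interpretations with $G_\gd$ subspaces (Corollary~\ref{subspacecorollary}) and with restrictions of continuous maps (Theorem~\ref{functiontheorem}), and I expect no genuine obstacle there.
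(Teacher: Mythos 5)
Your proof is correct and follows exactly the route the paper intends: the paper's proof of Corollary~\ref{injectioncorollary} is simply ``Immediate from Corollary~\ref{disjointrangecorollary},'' and your localization via disjoint basic neighborhoods $\pi(A_0),\pi(A_1)$ with $A_0\cap A_1=0$ in $M$, followed by applying the disjoint-ranges corollary to $f\restriction A_0$ and $f\restriction A_1$, is just the spelled-out version of that deduction.
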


\begin{proof}
Immediate from Corollary~\ref{disjointrangecorollary}.
\end{proof}

I conclude this section with instructive examples of pathological behavior in products for spaces that do not fall into the interpretable category.

\begin{example}
The product of interpretations the space of rational numbers in a Cohen forcing extension does not extend to a preinterpretation of the product. To see this,
Let $X, Y\subset\mathbb{Q}$ be two disjoint dense sets of rationals with the inherited topology. Thus, both are homeomorphic to the rationals. The collection $C=\{\langle I\times J\rangle\colon I, J$ are disjoint open intervals of rational numbers$\}$ is a cover of the space $X\times Y$.

Now, let $r\in\mathbb{R}$ be a Cohen real and work in $V[r]$. Suppose that $\pi\colon X\to\hat X$ and $\chi\colon Y\to\hat Y$ are interpretations. The computation of the interpretation of the space of rational numbers
shows that there are elements $x\in \hat X$ and $y\in\hat Y$ such that for every open interval of rational numbers with rational endpoints, $x\in \pi(I)$ if and only if $r$ is between the endpoints of $I$, and similarly for $y$. Clearly, the point $\langle x, y\rangle\in\hat X\times\hat Y$
does not belong to the union of interpretations of the rectangles in the cover $C$.
\end{example}

\noindent The problem in the previous example is apparently caused by the fact that the interpretation of $\mathbb{Q}$ does not respect the Borel structure on the space. The difficulty disappears if one considers only interpretations of topological spaces with Borel structure
as in Section~\ref{borelsection}.
The next less trivial counterexample will work even then:

\begin{example}
Let $X$ be the space of all wellfounded trees on $\gw$, with the topology inherited from $\power(\gw\times\gw)$. In every generic extension collapsing $\mathfrak{c}$ to $\aleph_0$, the product of interpretations of $X$ and $\baire$ may not extend to an interpretation of $X\times\baire$.
 The set of all wellfounded trees is viewed as a subspace of the space $\power(\gwtree)$ with its usual Polish topology.
Let $V[G]$ be some generic extension collapsing $\mathfrak{c}$
and work in the model $V[G]$. It will be enough to find a Borel topological preinterpretation $\pi\colon X \to\hat X$ such that $\hat X$ contains an illfounded tree $T\subset\gwtree$. Consider the open sets $O_t=\{\langle S, y\rangle\in X\times\baire\colon t\notin S\land t\subset\baire\}\subset X\times\baire$ for $t\in\gwtree$. It is immediate that the sets $O_t\subset X\times\baire$ are open rectangles and $\bigcup_tO_t=X\times\baire$ holds in $V$. However, if $y\in\baire$ is a branch through the illfounded tree $T$, the pair $\langle T, y\rangle$ is not covered by any of the interpreted rectangles. 

To find the space $\hat X$, consider the union $B$ of all interpretations of ground model Borel subsets of $\power(\gwtree)$ containing only illfounded trees. By the Shoenfield absoluteness, the interpretations also contain only illfounded trees, and as there are only countably many ground model Borel sets, the set $B$ is Borel and contains only illfounded trees. The
set of wellfounded trees is not Borel, so there must be an illfounded tree $T\notin B$. Let $\hat X=X\cup\{T\}$, let $\pi\colon X\to\hat X$ be the identity map,
and for every Borel set $B\subset\power(\gwtree)$ in the ground model, let $\pi(C\cap X)$ be the intersection of the interpretation of $C$ with $\hat X$. It is important to observe that this definition depends only on $C\cap X$ by the choice of the tree $T$. In the ground model, if $C, D\subset\power(\bintree)$ are two Borel sets in the ground model such that $C\cap X=D\cap X$, then $C\Delta D$ is a Borel set consisting of illfounded trees only. By the choice of the tree $T$, $T$ belongs to the interpretation
of $C$ if and only if it belongs to the interpretation of $D$. It is immediate now to check that $\pi$ is a preinterpretation of the space $X$.
\end{example}

\begin{example}
\label{sorgenfreyexample}
Let $X$ be the Sorgenfrey line. In every generic extension adding a new real, the product of topological interpretations
of $X$ cannot be extended to a topological preinterpretation of $X\times X$. Write $\tau$ for the Sorgenfrey topology and $\gs$ for the Euclidean topology on $X$ and move to a generic extension $V[G]$ containing a new real $r\in \mathbb{R}$.

First, it is easy to evaluate the interpretation of $\langle X, \tau\rangle$. Note that every Sorgenfrey open set is an open set of reals together with countably many points, and moreover a union of a collection of Sorgenfrey open sets is equal to a union of a countable subcollection. Moreover, every open set of reals is Sorgenfrey open.
This means that every preinterpretation of $\gs$ can be uniquely extended to an preinterpretation of $\tau$,
and every preinterpretation of $\tau$ can be restricted to a preinterpretation of $\gs$. It is immediate to conclude
that the interpretation of $\tau$ is just the space $\langle\hat X, \hat\tau\rangle$ where
$\hat X$ is the set of all reals and $\hat\tau$ is the topology on $\hat X$ generated by closed-open intervals $[r,s)$ where $r, s$ are ground model reals, together with the obvious map $\pi$.

I will now show that the product map $\pi\times\pi$ cannot be even defined as a preinterpretation of the ground model Sorgenfrey plane.
In the model $M$, consider the collection $A$ of all open sets of the form $[r_0, s_0)\times [r_1, s_1)$ such that
either $-s_1<s_0$ or else $r_0=-r_1$. The union of the rectangles of the first kind covers the part of the plane below the negative diagonal; the union of the rectangles of the second kind covers the diagonal an the part of the plane above it.
Thus, $M\models\bigcup A=X\times X$. On the other hand, the union of the products of interpretations of the intervals
does not cover the plane $\mathbb{R}\times\mathbb{R}$: if $r\in\mathbb{R}$ is a real which is not in the ground model, then the point $\langle r, -r\rangle$ does not belong to the union.
\end{example}

\section{Interpretable structures}

Most topological spaces in practice come equipped with useful structures. These structures can be interpreted along with the spaces in question. It is natural to hope that the properties of the interpreted structures will not be far from the properties of the original structures. This section contains what I know about this problem at this point.

\begin{definition}
A \emph{interpretable structure} is a tuple $\mathfrak{X}=\langle X_i\colon i\in I, R_j\colon j\in J, f_k\colon k\in K\rangle$ where $X_i$ are interpretable spaces--the constituent spaces of $\mathfrak{X}$, $R_j$ are Borel finitary relations between the various spaces and $f_k$ are finitary partial continuous functions with closed or $G_\gd$ domain and range in one of the spaces.
\end{definition}

There are many interpretable structures commonly considered in mathematics, including topological groups, group actions, normed vector spaces with their duals and the application functions and so on. They can be interpreted in an obvious way:

\begin{definition}
Suppose that $M$ is a transitive model of set theory and let $M\models\mathfrak{X}=\langle X_i\colon i\in I, R_j\colon j\in J, f_k\colon k\in K\rangle$ is an interpretable structure on a space $X$. An \emph{interpretation} of $\mathfrak{X}$, written for short as $\pi\colon \mathfrak{X}\to\mathfrak{\hat X}$ is a structure $\hat{\mathfrak{X}}=\langle\hat X_i\colon i\in I,\hat R_j\colon j\in J,\hat f_k\colon k\in K\rangle$ with the same signature as $\mathfrak{X}$, and a tuple of constituent interpretations $\pi_i\colon X_i\to\hat X_i$ such that for each $j\in J$, the relation $\hat R_j$ is interpreted as the image of $R_j$ under the appropriate product of the maps $\pi_i$, and  for each $k\in K$, the function $\hat f_k$ is interpreted as the image of $f_k$ under the appropriate product of the maps $\pi_i$.
\end{definition}

The general expectation is that the interpreted structure will be at least in some ways similar to the original structure in the model $M$.
The following is the best general theorem in this direction.

\begin{theorem}
\label{absolutenesstheorem}
\textnormal{(Analytic absoluteness)} Suppose that $M$ is a transitive model of set theory and let $M\models\mathfrak{X}$ is a {\v C}ech complete structure on a space $X$. Suppose that $\pi\colon X\to\hat X$ is an interpretation of the space $X$. The map $\pi$ is a $\gS_1$-elementary embedding of the structure $\mathfrak{X}$ to $\hat{\mathfrak{X}}$.
\end{theorem}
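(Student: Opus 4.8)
The plan is to reduce $\gS_1$-elementarity to a single statement about nonemptiness of a Borel set, and then to borrow verbatim the wellfoundedness reflection already proved for Theorem~\ref{boreltheorem}. Throughout, the constituent spaces are {\v C}ech complete, hence interpretable, so the Borel machinery of Theorem~\ref{boreltheorem} applies to each of them and to their finite products (which remain interpretable). I would begin by settling atomic and quantifier-free formulas. Each point map $\pi_i$ is injective, since distinct points of the Hausdorff space $\hat X_i$ lie in disjoint basic open sets, which are $\pi_i$-images of disjoint open sets; hence equality is preserved in both directions. For a Borel relation $R_j$, the interpretation of the structure is defined so that $\hat R_j=\pi(R_j)$, computed through the product interpretation (Theorems~\ref{compactproducttheorem} and~\ref{countableproducttheorem}) and the Borel extension of Theorem~\ref{boreltheorem}; that extension agrees with the point maps on membership by the $\in$-induction in the proof of Claim~\ref{borelclaim3}, so $\bar a\in R_j\liff\pi(\bar a)\in\hat R_j$. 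For a partial continuous function $f_k$ with closed or $G_\gd$ domain $D_k$, Corollary~\ref{subspacecorollary} (via Theorem~\ref{subspacetheorem}) shows $\pi\restriction D_k$ is an interpretation of $D_k$, and Theorem~\ref{functiontheorem}, together with the corollary following Theorem~\ref{countableproducttheorem}, identifies $\hat f_k$ as the interpretation of $f_k$, so that $\hat f_k(\pi(\bar a))=\pi(f_k(\bar a))$ whenever $\bar a\in D_k$; combined with injectivity of the point maps and the fact that domain membership is a Borel condition, this preserves every atomic formula involving $f_k$. Closure under Boolean connectives then yields preservation of all quantifier-free formulas in both directions.

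The upward half of $\gS_1$-elementarity is now immediate: a witness $\bar b$ to $\mathfrak{X}\models\exists\bar y\,\psi(\bar a,\bar y)$ whose coordinates are points in the spaces of $M$ maps to a witness $\pi(\bar b)$ for $\hat{\mathfrak{X}}$. The downward half is the heart of the argument, and it is where the wellfoundedness of $M$ is used. Fix a $\gS_1$ formula $\exists\bar y\,\psi(\bar x,\bar y)$ with $\psi$ quantifier-free, where $\bar y$ ranges over a finite product $Z$ of constituent spaces, and fix a tuple $\bar a$ from $M$. Working in $M$: each term $t(\bar a,\bar y)$ built from $\bar a$ and the $f_k$'s defines a continuous partial map of $\bar y$ whose domain is closed-or-$G_\gd$ in $Z$ (a composition of such maps, by iterated use of Theorem~\ref{subspacetheorem}); hence each atomic subformula of $\psi(\bar a,\bar y)$ pulls such a map back against a Borel relation and so cuts out a Borel subset of $Z$. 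Therefore $P=\{\bar y\in Z\colon\mathfrak{X}\models\psi(\bar a,\bar y)\}$ is Borel with a Borel code $c\in M$.

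I would then verify that the interpreted Borel set $\hat B_c\subseteq\hat Z$ equals $\{\bar y\in\hat Z\colon\hat{\mathfrak{X}}\models\psi(\pi(\bar a),\bar y)\}$. This rests on three points: $\hat R_j=\pi(R_j)$; the interpretation of each continuous partial term-map $\bar y\mapsto t(\bar a,\bar y)$ is $\bar y\mapsto t(\pi(\bar a),\bar y)$, by the uniqueness clause of Theorem~\ref{functiontheorem} together with $\hat f_k\circ\pi=\pi\circ f_k$; and interpretation commutes with Borel preimages under continuous maps, which follows from Theorem~\ref{functiontheorem}(2) and the uniqueness clause of Theorem~\ref{boreltheorem}, both sides being $\gs$-homomorphisms on the Borel algebra of $M$ that agree on open sets. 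With this identification in hand, $\hat{\mathfrak{X}}\models\exists\bar y\,\psi(\pi(\bar a),\bar y)$ says precisely $\hat B_c\neq\emptyset$, and Claim~\ref{borelclaim3} — the step in the proof of Theorem~\ref{boreltheorem} that reflects an infinite descending branch of $T_c$ back into the wellfounded model $M$ — forces $B_c=P\neq\emptyset$ in $M$, i.e.\ $\mathfrak{X}\models\exists\bar y\,\psi(\bar a,\bar y)$, as desired.

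The main obstacle is the bookkeeping in the last two paragraphs: one must make sure that the truth set of the matrix $\psi$ over the existentially quantified variables really is captured by a Borel code lying \emph{in $M$}, and that this code's interpretation computes the truth set of $\psi$ over $\hat{\mathfrak{X}}$; this is also where the precise conventions for partial function symbols and definedness in the underlying logic must be pinned down. Once that is arranged, the genuinely non-trivial step — reflecting nonemptiness of the interpreted set back to $M$ — is handed to us intact by the wellfoundedness argument underlying Theorem~\ref{boreltheorem}.
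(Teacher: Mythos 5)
Your proposal is correct and follows essentially the same route as the paper: an induction on quantifier-free formulas showing their truth sets are Borel in $M$ and are interpreted to the corresponding truth sets over $\hat{\mathfrak{X}}$, the upward direction by mapping witnesses, and the downward direction by reflecting nonemptiness of the interpreted Borel truth set back into $M$ via the wellfoundedness argument of Theorem~\ref{boreltheorem} (the paper phrases this as ``the empty set is interpreted as the empty set,'' you invoke Claim~\ref{borelclaim3} directly --- the same content). The only difference is that you spell out the quantifier-free bookkeeping (injectivity of the point maps, partial term-maps via Theorems~\ref{subspacetheorem} and~\ref{functiontheorem}, commutation with Borel preimages) which the paper compresses into ``an elementary induction.''
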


\begin{proof}
Assume for simplicity that $\mathfrak{X}$ has a single constituent space $X$.

First, by an elementary induction on complexity of a quantifier free formula $\psi(\vec x)$, show that, writing $n$ for arity of $\psi$,
the set $B_\psi=\{\vec x\in X^n\colon\mathfrak{X}\models\psi(\vec x)\}\subset X^n$ is a Borel set in the model $M$,
and the interpretation $(\psi^n)(B_\psi)$ is exactly the set $\{\vec x\in\hat X^n\colon\hat{\mathfrak{X}}\models\psi(\vec x)\}$.

Now, let $\phi$ be a $\gS_1$ formula, $\phi(\vec x)=\exists\vec y\ \psi(\vec x, \vec y)$ where $\psi$ is quantifier free.
Suppose $\vec x$ is a finite string of elements of $X$. If $\mathfrak{X}\models\phi(\vec x)$ then there is $\vec y$
such that $\mathfrak{X}\models\psi(\vec x, \vec y)$ holds. Then, by the first paragraph, $\hat{\mathfrak{X}}\models\psi(\pi(\vec x), \pi(\vec y))$ and so $\hat{\mathfrak{X}}\models\phi(\pi(\vec x))$ as desired. If, on the other hand, $\mathfrak{X}\models\lnot\phi(\vec x)$, then the Borel set $B=\{\vec y\colon\mathfrak{X}\models\psi(\vec x, \vec y)\}$ is empty,
by the first paragraph it is interpreted as  $\{\vec y\colon\hat{\mathfrak{X}}\models\psi(\pi(\vec x), \vec y)\}$, and at the same time it is interpreted as the empty set. Thus, $\mathfrak{X}\models\lnot\phi(\pi(\vec x)$ as desired and the proof is complete.
\end{proof}

\begin{example}
The interpretation of the ordered field $\mathbb{R}$ of a transitive model $M$ is the ordered field $\mathbb{R}$.
The axioms of ordered fields are $\Pi_1$. The real ordering is complete without endpoints, and the field is Archimedean--these two
features characterize the real numbers. They also survive the interpretation process, the former by Corollary~\ref{compact1corollary} and the latter by the fact that an interpretation commutes with unions of open sets, so $\mathbb{R}=\bigcup_n(-n, n)$ is preserved.
\end{example}

Theorem~\ref{absolutenesstheorem} shows that if a structure $\mathfrak{X}$ in the model $M$ belongs to a class which is axiomatizable with $\gS_1$ and $\Pi_1$ formulas, then its interpretation belongs to the same class. As an example, the interpretation of a topological group is a topological group, the interpretation of a continuous group action is a continuous group action, the interpretation of a compatible metric is a compatible metric, similarly for Banach spaces or Hilbert spaces etc.

A persistent problem in the interpretation theory is the following. Suppose that a $\Pi_1$ ($\gS_1$ etc.) formula defines a topologically simple set (closed, open, Borel etc.) One would like to conclude that the same formula defines the interpretation of the set in the interpreted structure. This is by no means an automatic matter. The following theorem offers an affirmative answer to the absoluteness question which is easy to apply in numerous cases.

\begin{definition}
Let $\mathfrak{X}$ be an interpretable structure with a constituent space $X$.
A $\Pi_1$ formula $\phi$ \emph{absolutely defines a closed set} if for every poset $P$, $P\Vdash$ whenever $\pi\colon\mathfrak{X}\to\hat{\mathfrak{X}}$ is an interpretation then the formula $\phi$ in the structure $\hat{\mathfrak{X}}$ defines a closed subset of $X$.
\end{definition}

\noindent While the definition may look awkward, in practice it is normally the case that there is a ZFC proof that the formula $\phi$ defines a closed set in all structures similar to $\mathfrak{X}$, and then $\phi$ absolutely defines a closed set in $\mathfrak{X}$.

\begin{theorem}
\label{shoenfieldabsoluteness}
\textnormal{(Shoenfield absoluteness)}
Suppose that $M$ is a transitive model of set theory containing all ordinals. Suppose $M\models\mathfrak{X}$ is an interpretable structure with a constituent space $X$ and $\phi$ is a $\Pi_1$ formula that absolutely defines a closed set. Let $\pi\colon \mathfrak{X}\to\hat{\mathfrak{X}}$ be an interpretation. Then 
$$\pi(\{x\in X\colon\mathfrak{X}\models\phi(x)\})=\{x\in\hat X\colon\hat{\mathfrak{X}}\models\phi(x)\}.$$
\end{theorem}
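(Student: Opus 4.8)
The plan is to prove the two inclusions $\pi(C)\subseteq\hat C$ and $\hat C\subseteq\pi(C)$ separately, where I abbreviate $C=\{x\in X\colon\mathfrak{X}\models\phi(x)\}$ and $\hat C=\{x\in\hat X\colon\hat{\mathfrak{X}}\models\phi(x)\}$; note that $\hat C$ is closed, because $\phi$ absolutely defines a closed set (apply that definition with the trivial poset). Write $\phi$ as $\forall\vec y\,\psi'(\vec x,\vec y)$ with $\psi'$ quantifier free, so that $\lnot\phi$ is the $\gS_1$ formula $\exists\vec y\,\psi(\vec x,\vec y)$ with $\psi=\lnot\psi'$. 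The inclusion $\pi(C)\subseteq\hat C$ is the easy half: analytic absoluteness (Theorem~\ref{absolutenesstheorem}) applied to the $\gS_1$ formula $\lnot\phi$ gives $\mathfrak{X}\models\lnot\phi(x)$ iff $\hat{\mathfrak{X}}\models\lnot\phi(\pi(x))$ for every $x\in X$, so $\pi''C=\pi''X\cap\hat C$; since $\pi(C)={\mathrm{cl}}(\pi''C)$ by the treatment of closed sets in Theorem~\ref{boreltheorem} and $\hat C$ is closed, $\pi(C)\subseteq\hat C$ follows.

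For the reverse inclusion $\hat C\subseteq\pi(C)$, equivalently $\pi(X\setminus C)\subseteq\hat X\setminus\hat C$, I would set $U=X\setminus C$ and $B=\{(\vec x,\vec y)\colon x\in U\wedge\mathfrak{X}\models\psi(\vec x,\vec y)\}$; by the first paragraph of the proof of Theorem~\ref{absolutenesstheorem} this is a Borel subset of $U\times\prod_k X_{i_k}$ in the model $M$ (the product over the constituent spaces that the $\vec y$ range in), and since $U\cap C=0$ in $M$ its first coordinate projection equals $U$ in $M$. Using Theorem~\ref{boreltheorem} and the product interpretation Theorem~\ref{countableproducttheorem} one computes $\pi(B)=\{(\hat x,\vec{\hat y})\colon\hat x\in\pi(U)\wedge\hat{\mathfrak{X}}\models\psi(\hat x,\vec{\hat y})\}$, whence $\proj_1\pi(B)=\pi(U)\setminus\hat C$. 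So the entire reverse inclusion reduces to the single equality $\proj_1\pi(B)=\pi(U)$, that is, to transporting the assertion ``$\proj_1 B=U$'' from $M$ to $V$.

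To transport it I would first reduce to a Polish situation. Pass to a forcing extension $V[G]$ in which the weight of $X$ as computed in $M$ becomes $\aleph_0$. Then $M\subseteq V\subseteq V[G]$, the model $M$ still contains all ordinals, the space $X$ is second countable and still interpretable in $V[G]$, hence Polish by Theorem~\ref{zztheorem}, and the interpretation of $\mathfrak{X}$ over $M$ computed in $V[G]$ lands in a Polish space by Corollary~\ref{interpretablecorollary} and the weight bound in the proof of Theorem~\ref{zztheorem}. By the faithfulness Theorem~\ref{faithfulnesstheorem} this $V[G]$-interpretation factors as $h\circ\pi$ for an interpretation $h$ of $\hat{\mathfrak{X}}$ over $V$, which by Theorem~\ref{absolutenesstheorem} is a $\gS_1$-elementary embedding; and $\phi$ still defines a closed set in the $V[G]$-structure, once more because $\phi$ absolutely defines a closed set. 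A diagram chase --- $h$ carries a separating open neighbourhood $\pi(O)$ of a hypothetical point of $\hat C\setminus\pi(C)$, where $O\cap C=0$ in $M$, to a separating open neighbourhood in the $V[G]$-space --- then shows that the theorem in $V[G]$ for $h\circ\pi$ implies the theorem in $V$ for $\pi$. Hence it is enough to prove $\proj_1\pi(B)=\pi(U)$ when $X$ and $\hat X$ are Polish.

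In the Polish case the assertion ``$\proj_1 B=U$'', namely $B\subseteq U\times\prod_k X_{i_k}$ together with $\forall\vec x\,(x\in U\rightarrow\exists\vec y\,(\vec x,\vec y)\in B)$, is a $\Pi^1_2$ sentence whose parameters are a Borel code for $B$ and a code for the open set $U$, both in $M$. It holds in $M$, so by classical Shoenfield absoluteness it holds in $V$ (for this, $\aone\subseteq M$ already suffices), where by Theorem~\ref{boreltheorem} and Corollary~\ref{metrizablecorollary} the Borel code now names $\pi(B)$ and the code for $U$ names $\pi(U)$. This yields $\proj_1\pi(B)=\pi(U)$, hence $\pi(U)\cap\hat C=0$, i.e.\ $\hat C\subseteq\pi(C)$, completing the proof. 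I expect the hard part to be not the Shoenfield step, which is routine, but the reduction to the Polish case: one must verify carefully that collapsing does not spoil interpretability of $X$ or the weight bound, and --- most delicately --- that ``$\phi$ absolutely defines a closed set'' is exactly the hypothesis needed to keep $\hat C$ closed after the collapse, so that the comparison of the $V$- and $V[G]$-interpretations of $\mathfrak{X}$ and of $C$ is legitimate.
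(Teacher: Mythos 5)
Your closing step --- factor the $V[G]$-interpretation of $\mathfrak{X}$ over $M$ as $h\circ\pi$ via faithfulness, use analytic absoluteness for $h$, and pull the conclusion back to $V$ --- does match the way the paper finishes. But there is a genuine gap at the transfer step, and it is exactly where the paper does its real work. Your plan is to treat ``$\proj_1 B=U$'' as a $\Pi^1_2$ sentence ``whose parameters are a Borel code for $B$ and a code for $U$, both in $M$'' and apply classical Shoenfield between $M$ and $V$ (or $V[G]$). That requires $B$ and $U$ to be coded by reals of $M$, i.e.\ it requires the constituent spaces of $\mathfrak{X}$ to be second countable \emph{in $M$}. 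Your reduction does not produce this situation: collapsing the weight of $X$ over $V$ makes the interpretation of $X$ over $M$ Polish \emph{in $V[G]$}, but $X$ itself still has uncountable weight in $M$, so its Borel sets have no real codes there and classical Shoenfield simply does not apply to the statement you want to transfer. The paper bridges this by interposing the model $M[G]$, where $P\in M$ is the L\'evy collapse of the bases and $G$ is generic over $V$: in $M[G]$ the interpretations $\chi_0$ over $M$ are Polish, and the key Claim $M[G]\models\chi_0(O)=p(\chi_0(B))$ is proved using the homogeneity of $P$ together with the hypothesis that $\phi$ absolutely defines a closed set (so that the projection of $\dot\chi(B)$ is forced open, hence a union of $\dot\chi$-images of ground-model open sets, which analytic absoluteness identifies with $O$). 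Only after that Claim is classical Shoenfield invoked, between $M[G]$ and $V[G]$, and then faithfulness and analytic absoluteness bring the equality back to $V$. Your proposal contains no analogue of this Claim, and without it the passage from ``$\proj_1 B=U$ holds in $M$'' to any model in which the spaces are Polish is unsupported.

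Relatedly, you use the hypothesis ``$\phi$ absolutely defines a closed set'' only in forms that presuppose $V$, respectively $V[G]$, is a forcing extension of $M$: applying the definition ``with the trivial poset'' to conclude $\hat C$ is closed in $V$, and asserting that ``$\phi$ still defines a closed set in the $V[G]$-structure.'' The hypothesis is a statement of $M$ about posets of $M$, so it controls interpretations computed in forcing extensions of $M$ (such as $M[G]$), but $V$ is an arbitrary transitive outer model of $M$ and $V[G]$ is a forcing extension of $V$, not of $M$; neither invocation is justified. (The first is harmless: the inclusion $\pi(C)\subseteq\hat C$ follows without any closedness of $\hat C$, since $B$ equals the full $\psi$-witness set in $M$, so $\{\vec z\colon\hat{\mathfrak{X}}\models\psi(\vec z)\}=\pi(B)\subseteq\pi(U)\times\cdots$, giving $\hat X\setminus\hat C\subseteq\pi(U)$. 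The second cannot be repaired this way.) It is telling that your fourth paragraph never uses the hypothesis at all: that is because the argument there only covers the case where the spaces are second countable in $M$, where the theorem does follow from classical Shoenfield alone; the hypothesis, and the paper's homogeneity argument inside $M[G]$, exist precisely to handle the large-weight case that your reduction never reaches.
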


\begin{proof}
It is easy to see that the statement can be reduced to the following. Suppose that $M\models X,Y$ are interpretable spaces and $B\subset X\times Y$ is a Borel set such that the interpretation of $B$ in all forcing extensions projects into an open subset of the interpretation of $X$. Write $O=p(B)\subset X$, where $p$ is the projection from $X\times Y$ to $X$; thus $O\subset X$ is an open set. Let $\pi_0\colon X, Y\to\hat X, \hat Y$ be an interpretation. I need to conclude that $\pi_0(O)=p(\pi(B))$.

To prove this statement, in the model $M$ consider the L\' evy collapse poset $P$ collapsing the size of bases of $X$ and $Y$ to $\aleph_0$. Let $G\subset P$ be a generic filter over $V$. In the model $M[G]$, let $\chi_0\colon X, Y\to \hat X_0, \hat Y_0$ be interpretations over the model $M$.

\begin{claim}
$M[G]\models\chi_0(O)=p(\chi_0(B))$.
\end{claim}

\begin{proof}
Let $\dot \chi$ be a $P$-name for interpretations of $X, Y$ respectively. By the homogeneity of the poset $P$, the set $A=\{Q\in\tau\colon \exists p\in P\ p\Vdash\dot \chi(Q)\subset p(\dot \chi(B))\}$ is equal to $\{Q\in\tau\colon P\Vdash\dot \chi(Q)\subset p(\dot \chi(B))\}$ and therefore is in the model $M$. By the assumption on the Borel set $B$, the projection of $\dot \chi(B)$ is forced to be open and therefore equal to $\bigcup\dot\chi''A$. By the analytic absoluteness~\ref{absolutenesstheorem}, $O=\bigcup A$. Since $\dot \chi$ is forced to be an interpretation, $\chi_0(O)=\chi_0(\bigcup A)=\bigcup\chi_0''A=p(\chi_0(B))$ as desired.
\end{proof}

In the model $M[G]$, the underlying spaces $\hat X_0, \hat Y_0$ are second countable, interpretable, and therefore Polish. In the model $V[G]$, let $\chi_1\colon X_0, Y_0\to X_1, Y_1$ be an interpretation over the model $M[G]$. By a standard
Shoenfield absoluteness argument \cite[Theorem 25.20]{jech:set} between the models $M[G]$ and $V[G]$ and the claim, $\chi_1(\chi_0(O))=p(\chi_1(\chi_0(B)))$. Now, by faithfulness~\ref{faithfulnesstheorem} applied to the chain $M\subset M[G]\subset V[G]$ of models, $\chi_1\circ\chi_0\colon X, Y\to
X_1, Y_1$ is an interpretation over the model $M$. By faithfulness~\ref{faithfulnesstheorem} applied to the chain $M\subset V\subset V[G]$ of models, there must be an interpretation $\pi_1\colon \hat X, \hat Y\to X_1, Y_1$ over $V$ such that $\pi_1\circ pi_0=\chi_1\circ\chi_0$.
Let $x\in\hat X$ be an arbitrary point. By the analytic absoluteness~\ref{absolutenesstheorem} applied to the interpretation $\pi_1$, if $x\in\pi_0(O)$ then $\pi_0(B)_x\neq 0$ and if $x\notin\pi_0(O)$ then $\pi_0(B)_x=0$. Thus, $\pi_0(O)=p(\pi_0(B))$ as desired.
\end{proof}

\noindent Theorem~\ref{shoenfieldabsoluteness} makes a short work out of many fairly involved manual checks. However, it has the disadvantage of needing the assumption that the model $M$ contains all ordinals, which is typically not necessary for the conclusion.

\begin{example}
Suppose that $M$ is a transitive model of set theory containing all ordinals, $M\models f\colon G\times X\to X$ is a continuous minimal flow of an interpretable group on a compact space. Let $\pi\colon G, X\to \hat G, \hat X$ be an interpretation. Then the flow $f$ is again interpreted
as a flow by analytic absoluteness~\ref{absolutenesstheorem}. In fact, the interpreted flow $\hat f$ will be minimal again: the set $C=\{K\in K(X)\colon K$ is $f$-invariant$\}$ is defined by a $\Pi_1$ formula, the formula will always define a closed subset of the hyperspace no matter which extension
of the model $M$ one considers, and therefore $\pi(C)=\{K\in K(\hat X)\colon
K$ is $\hat f$-invariant$\}$ by Shoenfield absoluteness~\ref{shoenfieldabsoluteness}. However, in $M$, the set $C$ contains just one element, namely $X$. Thus, $\pi(C)$ contains also only one element $\hat X$ and $\hat f$ is a minimal flow. It is possible to perform the whole computation by hand and thereby show that the conclusion holds even for transitive models $M$ which do not contain all ordinals.
\end{example}

\begin{example}
Suppose that $M$ is a transitive model of set theory containing all ordinals and $M\models K$ is a compact convex set. Suppose that $\pi\colon K\to\hat K$ is an interpretation; by analytic absoluteness, the convexity structure on $K$ gives rise to a convexity structure on $\hat K$. The space $C(K,\mathbb{R})$ is interpreted as $C(\hat K, \mathbb{R})$ by Theorem~\ref{cktheorem}. Now, the closed set of convex functions in $C(K, \mathbb{R})$ is defined by a $\Pi_1$ formula which absolutely defines a closed set. The conclusion of Shoenfield absoluteness~\ref{shoenfieldabsoluteness} is that its interpretation is the set of convex functions in $C(\hat K, \mathbb{R})$. A manual computation can remove the assumption that $M$ contains all ordinals.
\end{example}

\begin{example}
Suppose that $M$ is a transitive model of set theory, $M\models X$ is a locally convex topological vector space with interpretable topology, $K\subset X$ a compact convex set such that the set $L\subset K$ of all extreme points of $K$ is compact.
Suppose that $\pi\colon X\to\hat X$ is an interpretation.
Note that the set of extreme points of $K$ is defined by a $\Pi_1$ formula: $L=\{x\in K\colon\forall y_0, y_1\in K\ \forall r\in [0,1]\ x=ry_0+(1-r)y_1\to x=y_0\lor x=y_1\}$ in the structure including $X$, $[0,1]$, multiplication, addition, and the predicate for $K$.
Note also that it is not possible to apply Shoenfield absoluteness~\ref{shoenfieldabsoluteness} to argue that $\pi(L)$ is the set of all extreme points of $\pi(K)$ as it is not clear whether the set of all extreme points of $\pi(K)$ must be compact. Instead, it is necessary to resort to an interesting manual computation:

Work in the model $M$ for a moment. By the Krein--Milman theorem \cite[Theorem 3.23]{rudin:functional}, $K$ is the topological closure of the algebraic convex closure of $L$.
That is, $K={\mathrm{cl}}(\bigcup_n K_n)$ where for each number $n\in\gw$ write $I_n\subset [0,1]^n$ for the compact set of all $n$-tuples whose sum is $1$, and each $K_n$ is the image of $L^n\times I_n$ under the map $f(\vec x, \vec r)=\sum_i r_ix_i$. Note that each set $K_n\subset X$ is compact.

Step out of the model $M$. Each $\pi(K_n)$ is the image of $\pi(L^n)\times\pi (I_n)$ under the map $\pi(f)$. In other words,
$\bigcup_n\pi_n(K_n)$ is the algebraic convex closure of $\pi(L)$. But then, $\pi(K)={\mathrm{cl}}(\bigcup_n\pi(K_n))$ is the topological closure of the algebraic convex closure of $\pi(L)$. The set $\pi(L)\subset\hat X$ is compact by Corollary~\ref{compactcorollary}. By Milman's theorem \cite[Theorem 3.25]{rudin:functional}, all extreme points of $\pi(K)$ belong to the set $\pi(L)$. Also, the set $\pi(L)$ consists only of extreme points of $\pi(K)$ by the analytic absoluteness. In conclusion, $\pi(L)$ is exactly the set of all extremities of $\pi(K)$ as desired.
\end{example}

\begin{example}
The demand that $M$ contain all ordinals cannot be removed from the assumptions of Theorem~\ref{shoenfieldabsoluteness}. To show this, use the fact that the statement ``$x\in\baire$ is constructible'' is $\gS^1_2(x)$ \cite[Theorem 25.26]{jech:set} to find an effectively closed set $C\subset (\baire)^3$  such that for every $x\in\baire$, $x$ is constructible if and only if the projection of $C_x$ into the second coordinate is not all of $\baire$. Now, suppose that $V=L$ and $M$ is a countable transitive model which contains a point $x\in\baire$ such that $M\models x$ is not constructible. Then $M\models$the formula $\forall z\ \langle x, y, z\rangle\notin C$ with a free variable $y$ absolutely defines a closed subset of $\baire$, namely the empty set. However, it defines a nonempty set in $V$.
\end{example}

\section{Quotients}

In this section, I will show that certain commonly encountered types of quotient spaces are interpreted in the expected way.
This is connected to the evaluation of interpretations of certain types of surjective maps.
As the first case, recall that a function $f\colon X\to Y$ is \emph{open} if images of open sets are open.

\begin{theorem}
\label{openmappingtheorem}
\textnormal{(Open mapping theorem)}
Let $M$ be a transitive model of set theory and $M\models\langle X, \tau\rangle, \langle Y, \gs\rangle$
is an interpretable space and $f\colon X\to Y$ is an open continuous function. Let $\pi\colon X\to\hat X$
and $\chi\colon Y\to\hat Y$ be interpretations. Then 

\begin{enumerate}
\item $\hat f$ is an open continuous function from $\hat X$ to $\hat Y$;
\item whenever $O\in\tau$ and $P\in\gs$ are open sets such that $f''O=P$, then $\hat f''\pi(O)=\chi(P)$.
\end{enumerate}
\end{theorem}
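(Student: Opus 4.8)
The plan is to peel the statement down to a single surjectivity assertion and prove that by a Wallman--style construction modelled on the proof of Theorem~\ref{existencetheorem}.

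\emph{Reductions.} It suffices to prove $\hat f''\pi(O)=\chi(f''O)$ for every $O\in\tau$. Indeed, given this, (1) follows because the topology of $\hat X$ is generated by the sets $\pi(O)$ and $\hat f$ (continuous by Theorem~\ref{functiontheorem}) commutes with unions, so $\hat f''$ carries a basic open set $\pi(O)$ to the open set $\chi(f''O)$; and (2) is the case $P=f''O$. The inclusion $\hat f''\pi(O)\subseteq\chi(f''O)$ is immediate: $O\subseteq f^{-1}(f''O)$, the interpretation is monotone on open sets (it commutes with arbitrary unions in $M$), and $\pi(f^{-1}(f''O))=\hat f^{-1}(\chi(f''O))$ by Theorem~\ref{functiontheorem}(2), whence $\pi(O)\subseteq\hat f^{-1}(\chi(f''O))$. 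For the reverse inclusion observe that $O$ and $P:=f''O$ are open, hence $G_\gd$, subspaces of $X$ and $Y$; by Theorem~\ref{interpretabletheorem}(2) they are interpretable, by Corollary~\ref{subspacecorollary} their interpretations are $\pi(O)$ and $\chi(P)$, the restriction $g:=f\restriction O\colon X\restriction O\to Y\restriction P$ is an open continuous surjection, and $\hat f\restriction\pi(O)$ is its interpretation (by uniqueness in Theorem~\ref{functiontheorem}, using the inclusion just proved). So it is enough to prove: \emph{the interpretation of an open continuous surjection between interpretable spaces is onto.}

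\emph{The surjectivity statement.} Let $M\models g\colon A\to B$ be an open continuous surjection between interpretable spaces, let $\pi_A\colon A\to\hat A$ and $\chi_B\colon B\to\hat B$ be interpretations, and fix $b\in\hat B$; I must find $a\in\hat A$ with $\hat g(a)=b$. Using the description of interpretations from Theorem~\ref{existencetheorem}, points of $\hat B$ are certain maximal proper ideals of $\gs_B$; let $F_b=\{Q\in\gs_B\colon b\in\chi_B(Q)\}$, a filter of nonempty open sets, each of which is an element of $M$. Since $g$ is onto, $g^{-1}Q_0\subseteq g^{-1}Q_1$ implies $Q_0\subseteq Q_1$; from this one checks that the ideal $\mathfrak a_0$ of open sets of $A$ generated by $\{g^{-1}Q\colon Q\notin F_b\}$ is proper and disjoint from the filter generated by $\{g^{-1}Q\colon Q\in F_b\}$. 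By Theorem~\ref{functiontheorem}(2) ($\hat g^{-1}\chi_B(Q)=\pi_A(g^{-1}Q)$), the point $a$ we want is precisely a point of $\hat A$ whose maximal ideal contains $\mathfrak a_0$ and avoids every $g^{-1}Q$ with $Q\in F_b$.

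\emph{Construction.} Fix in $M$ a complete, strong, finitely additive sieve $\langle S,O(s)\colon s\in S\rangle$ on $A$ (Fact~\ref{sieveproposition}(2)). In $V$, build a branch $s_0=0\subset s_1\subset\cdots$ through $S$ with $b\in\chi_B(g''O(s_n))$ for all $n$: at the inductive step, openness of $g$ gives $g''O(s_n)=\bigcup\{g''O(t)\colon t$ an immediate successor of $s_n\}$ \emph{in $M$}, so $\chi_B$, commuting with this union, puts $b$ in $\chi_B(g''O(t))$ for some successor $t$. One checks that the ideal generated by $\mathfrak a_0$ together with $\{A\setminus{\mathrm{cl}}(O(s_n))\colon n\in\gw\}$ is still proper and still misses $\{g^{-1}Q\colon Q\in F_b\}$, using the branch condition and the identity $g''(O(s_n)\cap g^{-1}Q)=g''O(s_n)\cap Q$. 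Now set $\hat A^+=\hat A\cup\hat P$, where $\hat P$ is the set of all maximal proper ideals of $\tau_A$ that are threaded along a branch of $S$ (in the way by which $\hat A$ itself is described in the proof of Theorem~\ref{sievetheorem}), contain $\mathfrak a_0$, and avoid $\{g^{-1}Q\colon Q\in F_b\}$; topologize $\hat A^+$ by $\pi^+(W)=\pi_A(W)\cup\{\mathfrak a\in\hat P\colon W\notin\mathfrak a\}$. The branch just built, extended by Zorn's lemma to a maximal proper ideal of the required kind, shows $\hat P\neq 0$.

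\emph{Main obstacle.} The crux is that $\pi^+$ is an honest preinterpretation; all clauses are routine except commutation with unions of open sets computed in $M$, which is proved by exactly the wellfoundedness argument of Theorem~\ref{existencetheorem}: given $M\models W=\bigcup_iW_i$ and $\mathfrak a\in\hat P$ with $W\notin\mathfrak a$, pick $P\in\mathfrak a$ with $W\cup P=A$ (maximality), form in $M$ the tree of finite attempts to thread $S$ while avoiding coverage by finitely many $W_i$'s together with $P$, show it is wellfounded in $M$ using completeness of the sieve, transfer wellfoundedness to $V$, and conclude that some $W_i\notin\mathfrak a$. Once $\pi^+$ is a preinterpretation it reduces via some $h\colon\hat A^+\to\hat A$ to the interpretation $\pi_A$, so $h$ restricted to $\hat P$ lands in $\hat A$, and any $a\in h''\hat P$ satisfies $\hat g(a)=b$. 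Openness of $g$ is essential throughout—it is what keeps the branch alive and it cannot be dropped, since the interpretation of a continuous surjection need not be onto (the identity map from the discrete reals onto the Euclidean reals).
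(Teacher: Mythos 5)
Your reductions are fine and agree with the paper's (the paper also reduces both clauses to surjectivity of the interpretation of an open continuous surjection, and your branch construction with $b\in\chi_B(g''O(s_n))$, exploiting openness so that $g''O(s_n)=\bigcup\{g''O(t)\colon t\text{ an immediate successor}\}$ is a union computed in $M$, is exactly the paper's). The genuine gap is the sentence ``the branch just built, extended by Zorn's lemma to a maximal proper ideal of the required kind, shows $\hat P\neq 0$.'' You need an ideal that is simultaneously (a) a \emph{maximal} proper ideal of $\tau_A$, (b) threaded along the branch, (c) above $\mathfrak a_0$, and (d) disjoint from $\{g^{-1}Q\colon Q\in F_b\}$, and Zorn does not deliver this as stated. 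If you maximize among all proper ideals containing your generators, nothing prevents the maximal extension from swallowing some $g^{-1}Q$ with $Q\in F_b$ (properness alone does not forbid it), so (d) can be lost. If instead you maximize only among proper ideals satisfying (d), you get an ideal that is maximal merely relative to that class; it need not be a maximal proper ideal, and then the key property you invoke when you say the union clause is ``exactly the wellfoundedness argument of Theorem~\ref{existencetheorem}'' -- namely that for every $W\notin\mathfrak a$ there is $P\in\mathfrak a$ with $W\cup P=A$ -- is no longer available. The obvious repair, replacing that $P$ by the relative-maximality alternative $g^{-1}Q_0\subseteq W\cup P$ with $Q_0\in F_b$, does not feed into the wellfoundedness argument: the tree one would build in $M$ only produces, via completeness of the sieve, a point of $\mathrm{cl}(g^{-1}Q_0)$ rather than of $g^{-1}Q_0$, so no contradiction with $g^{-1}Q_0\subseteq\bigcup_iW_i\cup P$ arises; and one cannot quantify over $F_b$ inside $M$ since $F_b\notin M$. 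So as written the construction of $\hat P$ and the verification that $\pi^+$ is a preinterpretation are both open.

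The condition (d) is in fact a \emph{positive} requirement on the dual closed-set side: for a genuinely maximal proper ideal, $g^{-1}Q\notin\mathfrak a$ is equivalent to the associated closed ultrafilter containing a closed subset of $g^{-1}Q$, and this has to be built into the generating family rather than hoped for after maximalization. Concretely, for each $Q\in F_b$ use regularity of $B$ in $M$ to pick $Q''\in F_b$ with $\mathrm{cl}(Q'')\subseteq Q$ and put $\mathrm{cl}(g^{-1}Q'')\subseteq g^{-1}Q$ into the family, alongside $\mathrm{cl}(O(s_n))$ and $A\setminus g^{-1}Q'$ for $Q'\notin F_b$; the same computation you performed for the branch shows this family has the finite intersection property, and any closed ultrafilter extending it yields a point with properties (a)--(d). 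But once you are working with that closed family, the whole Wallman detour (the auxiliary space $\hat A^+$, the reduction $h$, and a fresh wellfoundedness argument) is unnecessary: the paper simply observes that the closed sets $\mathrm{cl}(\hat g^{-1}\chi_B(Q))$ for $Q\in F_b$ together with the sets $\mathrm{cl}(\pi_A(O(s_n)))$ form a family with the finite intersection property \emph{inside $\hat A$}, and invokes Theorem~\ref{sievetheorem} (the $\pi_A$-image of the complete sieve is complete on $\hat A$) to get a point of $\hat A$ in the intersection, which maps to $b$ by Theorem~\ref{functiontheorem}. That is both shorter and avoids the maximal-ideal existence problem, since the wellfoundedness work is already encapsulated in Theorem~\ref{sievetheorem}. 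Your closing remark about openness being essential is correct and matches the paper's discussion.
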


\noindent In particular, a continuous open surjection is interpreted as continuous open surjection.

\begin{proof}
It is enough to verify that if the function $f$ is surjective then its interpretation $\hat f$ is surjective, as both (1) and (2) then follow by applying this result to the restricted functions $f\restriction O$. Suppose that $y\in\hat Y$ is a point and work to find $x\in\hat X$ such that $\hat f(x)=y$. Define a collection $F$ of closed subsets of $\hat X$ by $F=\{{\mathrm{cl}}(\hat f^{-1}\pi(P))\colon P\in\gs$ and $y\in\chi(P)\}$. It will be enough to show
that $\bigcap F\neq 0$, since every point in the intersection must be mapped to $y$ by Theorem~\ref{functiontheorem}. 

To this end, in the model $M$ find a complete sieve
$\langle S, O(s)\colon s\in S\rangle$ on $X$. By induction on $n\in\gw$ build an inclusion-descending sequence $\langle s_n\colon n\in\gw\rangle$ of elements of the tree $S$ such that $y\in\chi(f''O(t_n))$.
This is easy to do. Start with $s_0=0$ and once $s_n$ is found, let $D_n=\{O(t)\colon t$ is a one-step extension of $s_n\}$, note that $\bigcup D_n=O_n$ and use the fact that $\bigcup\pi''D_n=\chi(f''O(s_n))$ to find a one-step extension
$s_{n+1}$ of $s_n$ such that $y\in\chi(f''O(s_n))$. This concludes the induction step.

Now, let $E=F\cup\{{\mathrm{cl}}(O(s_n))\colon n\in\gw\}$. Observe that the collection $E$ has finite intersection property. To see this, suppose that $n\in\gw$ is a number and $P\in\gs$ is an open set such that $y\in \chi(P)$. The set $\chi(f''O(s_n))\cap\chi(P)$ is an open subset of the space $\hat Y$
containing $y$, therefore nonempty, and as $\chi$ is an interpretation, the open set $f''O(s_n)\cap P\subset Y$ must be nonempty. Thus, there must be a point $x\in O_n$ such that $f(x)\in P$, and then $\pi(x)\in \pi(O(s_n))\cap \hat f^{-1}\chi(P_n)$ and the latter set is nonempty as desired.

The sieve $\langle S, \pi(O(s))\colon s\in S\rangle$ on the space $\hat X$ is complete by Theorem~\ref{sievetheorem}, and so $\bigcap E\neq 0$, showing that $\bigcap F\neq 0$ as desired.
\end{proof}

\begin{corollary}
\label{openquotientcorollary}
Suppose that $M$ is a transitive model of set theory and $M\models\langle X, \tau\rangle$ is an interpretable space and $E$ is a closed equivalence relation on it such that saturations of open sets are open,
and such that the quotient $X/E$ is a regular Hausdorff space.
Let $\pi\colon X\to\hat X$ be an interpretation.

\begin{enumerate}
\item The map $[x]_E\mapsto [\pi(x)]_{\pi(E)}$ extends to an interpretation of $X/E$ in the space $\hat X/\pi(E)$;
\item The interpretation of the quotient map $f\colon X\to X/E$ is the quotient map $\hat f\colon \hat X\to\hat X/\pi(E)$.
\end{enumerate}
\end{corollary}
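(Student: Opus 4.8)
The plan is to realize $\hat X/\pi(E)$ as the interpretation of $X/E$ by factoring the interpreted quotient map. Write $Y=X/E$ and let $f\colon X\to Y$ be the quotient map. The hypothesis that saturations of open sets are open says exactly that $f$ is an open map; it is a continuous surjection, and since $Y$ is regular Hausdorff while $X$ is interpretable, $Y$ is interpretable by Theorem~\ref{interpretabletheorem}(5). Fix an interpretation $\chi\colon Y\to\hat Y$. By Theorem~\ref{functiontheorem} the function $f$ has a unique interpretation $\hat f\colon\hat X\to\hat Y$, and by the open mapping theorem~\ref{openmappingtheorem} $\hat f$ is a continuous open surjection. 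I will show that $\pi(E)$ is precisely the kernel $\{(a,b)\in\hat X\times\hat X\colon\hat f(a)=\hat f(b)\}$ of $\hat f$, and then that the canonical factorization of $\hat f$ through this kernel is a homeomorphism; (1) and (2) will follow at once.

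For the kernel computation, observe that $E=(f\times f)^{-1}(\Delta_Y)$, where $\Delta_Y\subseteq Y\times Y$ is the diagonal; this set is closed because $Y$ is Hausdorff, in agreement with the hypothesis that $E$ is closed. By Theorem~\ref{countableproducttheorem} the maps $\pi\times\pi$ and $\chi\times\chi$ are interpretations of $X\times X$ and $Y\times Y$, and since $\hat f\times\hat f$ is continuous and agrees with $f\times f$ along these interpretations, it is the interpretation of $f\times f$ by the uniqueness part of Theorem~\ref{functiontheorem}. In $M$, the open set $Y\times Y\setminus\Delta_Y$ is the union of all rectangles $P_0\times P_1$ with $P_0,P_1$ disjoint open subsets of $Y$; interpretations of open sets commute with arbitrary unions and with such rectangles, so $(\chi\times\chi)(Y\times Y\setminus\Delta_Y)=\bigcup\{\chi(P_0)\times\chi(P_1)\colon P_0\cap P_1=0\}$. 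Since $\hat Y$ is Hausdorff (it is the interpretation of a regular Hausdorff space, Theorem~\ref{existencetheorem}) and $\chi$ commutes with intersections, this union equals $\hat Y\times\hat Y\setminus\Delta_{\hat Y}$; taking complements gives $(\chi\times\chi)(\Delta_Y)=\Delta_{\hat Y}$. Applying Theorem~\ref{functiontheorem}(2) to $f\times f$, the open set $Y\times Y\setminus\Delta_Y$, and its preimage $X\times X\setminus E$, and complementing once more, I obtain
\[
\pi(E)=(\hat f\times\hat f)^{-1}(\Delta_{\hat Y})=\{(a,b)\in\hat X\times\hat X\colon\hat f(a)=\hat f(b)\},
\]
so $\pi(E)$ is a closed equivalence relation on $\hat X$.

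It remains to assemble the pieces. Since $\pi(E)=\ker(\hat f)$, the map $\hat f$ factors as $\hat f=g\circ q$, where $q\colon\hat X\to\hat X/\pi(E)$ is the quotient map and $g\colon\hat X/\pi(E)\to\hat Y$ is the induced map. Then $g$ is a continuous bijection (bijective because $\pi(E)$ is exactly the kernel and $\hat f$ is onto, continuous by definition of the quotient topology), and it is open: for open $U\subseteq\hat X$, the set $q^{-1}(q''U)$ is the $\pi(E)$-saturation of $U$, which equals $\hat f^{-1}(\hat f''U)$ and is open since $\hat f$ is continuous and open; hence $q$ is open and $g(q''U)=\hat f''U$ is open. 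Thus $g$ is a homeomorphism that identifies $\hat X/\pi(E)$ with $\hat Y=\widehat{X/E}$. Under this identification the interpretation map of $X/E$ is $\chi$, and for every $x\in X$ one has $g([\pi(x)]_{\pi(E)})=\hat f(\pi(x))=\chi(f(x))=\chi([x]_E)$; therefore the map $[x]_E\mapsto[\pi(x)]_{\pi(E)}$ equals $g^{-1}\circ\chi$, which is an interpretation of $X/E$ (an interpretation followed by a homeomorphism; cf.\ Corollary~\ref{homeocorollary}). This gives (1), and since the interpretation $\hat f$ of $f$ corresponds under $g$ to $x\mapsto[x]_{\pi(E)}$, i.e.\ to the quotient map $\hat X\to\hat X/\pi(E)$, this gives (2).

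The step I expect to carry the weight is the identity $(\chi\times\chi)(\Delta_Y)=\Delta_{\hat Y}$. A priori the interpretation of the diagonal is only the closure of $(\chi\times\chi)''\Delta_Y$, which need not contain $(a,a)$ for points $a$ outside the range of $\chi$; it is the Hausdorffness of $X/E$ that excludes this, via the covering of $Y\times Y\setminus\Delta_Y$ by rectangles of disjoint open sets. The hypothesis on open saturations re-enters at the very end: it is what makes $\hat f$ open, and hence what upgrades the canonical factorization $g$ from a continuous bijection to a homeomorphism. One minor caution: the union expressing $Y\times Y\setminus\Delta_Y$ may be uncountable, so the relevant commutation must be taken with the topological part of the interpretation (which commutes with arbitrary unions of open sets), not via the Borel extension of Theorem~\ref{boreltheorem}.
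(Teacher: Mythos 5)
Your proposal is correct, and its skeleton is the same as the paper's: interpret the open quotient map $f$ via Theorem~\ref{openmappingtheorem}, use that a continuous open surjection is a quotient map, and reduce everything to showing that the kernel of $\hat f$ equals $\pi(E)$. Where you differ is in how that kernel identification is carried out. The paper argues by two inclusions: $\pi(E)\subseteq\ker\hat f$ because $\ker\hat f$ is closed and contains $\pi''E$, which is dense in $\pi(E)$; and $\ker\hat f\subseteq\pi(E)$ by covering $(X\times X)\setminus E$ with rectangles $O\times P$ satisfying $f''O\cap f''P=0$ and quoting Corollary~\ref{disjointrangecorollary} to get $\hat f''\pi(O)\cap\hat f''\pi(P)=0$. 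You instead compute $\pi(E)$ in one stroke as $(\hat f\times\hat f)^{-1}(\Delta_{\hat Y})$, using the product interpretation of Theorem~\ref{countableproducttheorem}, the uniqueness part of Theorem~\ref{functiontheorem} to recognize $\hat f\times\hat f$ as the interpretation of $f\times f$, Theorem~\ref{functiontheorem}(2) applied to the complement of the diagonal, and the identity $(\chi\times\chi)(\Delta_Y)=\Delta_{\hat Y}$ -- whose proof via disjoint open rectangles is the same Hausdorffness argument the paper runs with rectangles disjoint from $E$. You also make explicit two things the paper leaves implicit: that $X/E$ is itself interpretable (Theorem~\ref{interpretabletheorem}(5)), which is needed to invoke the product and open-mapping machinery, and the factorization $\hat f=g\circ q$ with $g$ a homeomorphism, which is where the openness of $\hat f$ is consumed. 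Both routes are sound; yours is a bit longer but more self-contained in the quotient bookkeeping, the paper's is shorter by leaning directly on Corollary~\ref{disjointrangecorollary}. One small citation quibble: the final step, that an interpretation composed with a homeomorphism of the target is again an interpretation, is not literally Corollary~\ref{homeocorollary} (which interprets homeomorphisms living in $M$); it follows directly from the definition of interpretation up to equivalence, since the homeomorphism is a two-way reduction.
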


\begin{proof}
The quotient map $f$ is open in the model $M$ by the assumptions on the equivalence relation $E$. By Theorem~\ref{openmappingtheorem}, it is interpreted as an open map $\hat f\colon \hat X\to Y$ where $Y$ is the interpretation of the
space $X/E$. Every open map is a quotient map, and so it is enough to show that the equivalence relation $F$ on $\hat X$ given by $x_0\mathrel Fx_1$ is equal to $\pi(E)$.

Note that $F$ is closed and it contains $\pi''E$ which is dense in the closed equivalence relation $\pi(E)$; so it is enough to show that if $O, P\in\tau$ are open sets such that $(O\times P)\cap E=0$ then $(\pi(O)\times\pi(P))\cap F=0$.
To see this, note that $(O\times P)\cap E=0$ is equivalent to $f''O\cap f''P=0$, which implies $\hat f''\pi(O)\cap\hat f''\pi(P)=0$ by Corollary~\ref{disjointrangecorollary}, which by the definition of $F$ indeed means that $(\pi(O)\times\pi(P))\cap F=0$ as desired.
 \end{proof}

\begin{example}
The corollary shows that the coset spaces for closed subgroups are interpreted in the expected way.
Suppose that $M$ is a transitive model of set theory and $M\models \langle G, \tau, \cdot\rangle$ is an interpretable topological group and $H\subset G$ is a closed subgroup. Let $E$ be the closed equivalence relation on $X$ defined by $x_0\mathrel Ex_1$ if $x_0\cdot x_1^{-1}\in H$. This is a closed equivalence relation such that
saturations of open sets are open. Now let $\pi\colon G\to\hat G$ be an interpretation. It is not difficult to verify that $\pi(\cdot)$ is a group operation, $\pi(H)$ is a closed subgroup, $\pi(E)$ is an equivalence relation connecting $x_0, x_1$ just in case $x_0\cdot x_1^{-1}\in\pi(H)$.
Corollary~\ref{openquotientcorollary} then shows that the interpretable coset space $G/H$ is interpreted as $\hat G/\pi(H)$.
\end{example}

Now, recall that a function $f\colon X\to Y$ is \emph{perfect} if it is continuous, surjective, images of closed sets are closed and preimages of singletons are compact.

\begin{theorem}
\label{perfectmappingtheorem}
\textnormal{(Perfect mapping theorem)}
Suppose that $M$ is a transitive model of set theory and $M\models \langle X, \tau\rangle$ is an interpretable space, $\langle Y, \gs\rangle$ is a regular Hausdorff space and $f\colon X\to Y$
is a perfect mapping. Let $\pi\colon X\to \hat X$ and $\chi\colon Y\to\hat Y$ be interpretations, and $\hat f\colon\hat X\to\hat Y$ the interpreted map. Then 

\begin{enumerate}
\item $\hat f$ is a perfect mapping;
\item whenever $O\in\tau$, $P\in\gs$ are open sets such that $f''X\setminus O=Y\setminus P$ then $\hat f(\hat X\setminus\pi(O))=\hat Y\setminus\chi(P)$.
\end{enumerate}
\end{theorem}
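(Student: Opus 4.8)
The plan is to imitate the proof of the open mapping theorem (Theorem~\ref{openmappingtheorem}), replacing the image operation by the co-image operation, which is the one adapted to a closed map. For $U\in\tau$ write $f_*(U)=Y\setminus f''(X\setminus U)=\{y\in Y\colon f^{-1}(y)\subseteq U\}$; since $f$ is a closed map in $M$, the set $f_*(U)$ is open in $M$, i.e.\ $f_*(U)\in\gs$. The first step is the special case that the interpretation of a perfect \emph{surjection} is again a surjection. Fix $y\in\hat Y$. Work in $M$, pick a complete, strong, finitely additive sieve $\langle S,O(s)\colon s\in S\rangle$ on $X$ (Fact~\ref{sieveproposition}(2)), and set $P(s)=f_*(O(s))$. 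Using compactness of the fibers of $f$ together with finite additivity of the sieve, one checks in $M$ that $P(0)=Y$ and $P(s)=\bigcup\{P(t)\colon t$ is an immediate successor of $s\}$. Since an interpretation commutes with unions of open sets, this lets one build by recursion on $n$ an inclusion-descending branch $\langle s_n\colon n\in\gw\rangle$ of $S$ such that $y\in\chi(P(s_n))$ for every $n\in\gw$.

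Now put $F=\{{\mathrm{cl}}(\hat f^{-1}\chi(P))\colon P\in\gs$ and $y\in\chi(P)\}$ and $E=F\cup\{{\mathrm{cl}}(\pi(O(s_n)))\colon n\in\gw\}$; these are families of closed subsets of $\hat X$. As in the open mapping argument, $E$ has the finite intersection property: given $P\in\gs$ with $y\in\chi(P)$ and $n\in\gw$, the open set $\chi(P)\cap\chi(P(s_n))=\chi(P\cap P(s_n))$ contains $y$, hence is nonempty, so $P\cap f_*(O(s_n))\neq 0$ in $M$; picking $y_0$ in this intersection and $x_0\in f^{-1}(y_0)\subseteq O(s_n)$ gives $\pi(x_0)\in\pi(O(s_n))\cap\hat f^{-1}\chi(P)$. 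The sieve $\langle S,\pi(O(s))\colon s\in S\rangle$ on $\hat X$ is complete by Theorem~\ref{sievetheorem}, so $\bigcap E\neq 0$, whence $\bigcap F\neq 0$. By the Hausdorffness argument used to construct $\hat f$ in Theorem~\ref{functiontheorem}, every element of $\bigcap F$ is sent to $y$ by $\hat f$; in fact $\hat f^{-1}(y)=\bigcap F$. Thus $\hat f$ is surjective.

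Item (2) follows from the surjective case by restriction. Given $O\in\tau$ and $P\in\gs$ with $f''(X\setminus O)=Y\setminus P$, the map $g=f\restriction(X\setminus O)\colon X\setminus O\to Y\setminus P$ is a perfect surjection from the interpretable space $X\setminus O$ (Theorem~\ref{interpretabletheorem}(1)) onto a regular Hausdorff space. By Theorem~\ref{subspacetheorem}(2), $\pi\restriction(X\setminus O)$ and $\chi\restriction(Y\setminus P)$ extend to interpretations of these closed subspaces, whose underlying spaces are $\hat X\setminus\pi(O)$ and $\hat Y\setminus\chi(P)$. Since $f''(X\setminus O)=Y\setminus P$ forces $f^{-1}(P)\subseteq O$ and hence $\hat f^{-1}\chi(P)=\pi(f^{-1}P)\subseteq\pi(O)$, the map $\hat f\restriction(\hat X\setminus\pi(O))$ takes its values in $\hat Y\setminus\chi(P)$, and by the uniqueness clause of Theorem~\ref{functiontheorem} it is the interpretation of $g$. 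The surjective case applied to $g$ yields $\hat f''(\hat X\setminus\pi(O))=\hat Y\setminus\chi(P)$, which is (2); the choice $O=P=0$ recovers surjectivity of $\hat f$.

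It remains to prove (1): $\hat f$ is continuous by Theorem~\ref{functiontheorem} and surjective by the above, so it is left to check that its point preimages are compact and that it is a closed map. For a fiber, fix $y\in\hat Y$ and a branch $\langle s_n\colon n\in\gw\rangle$ as in the first step; since $f$ is closed, $f''(X\setminus O(s_n))=Y\setminus P(s_n)$, so by (2) $\hat f''(\hat X\setminus\pi(O(s_n)))=\hat Y\setminus\chi(P(s_n))$, and as $y\in\chi(P(s_n))$ this forces $\hat f^{-1}(y)\subseteq\pi(O(s_n))$ for every $n$. Hence $\hat f^{-1}(y)\subseteq\bigcap_n\pi(O(s_n))$, a compact set by Fact~\ref{sieveproposition}(3), so the closed set $\hat f^{-1}(y)$ is compact. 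For closedness, write an arbitrary closed set $A\subseteq\hat X$ as the downward directed intersection of the sets $\hat X\setminus\pi(O)$ with $O\in\tau$ and $\pi(O)\cap A=0$ (possible since $\pi''\tau$ is a basis of $\hat\tau$ and $\pi$ commutes with finite unions); each $\hat f''(\hat X\setminus\pi(O))$ is closed by (2), applied with $P=Y\setminus f''(X\setminus O)$. Since $\hat f$ has compact fibers, $\hat f''$ commutes with such downward directed intersections, so $\hat f''A$ is an intersection of closed sets, hence closed. I expect the main obstacle to be the first step: substituting the co-image $f_*$ for the image operation of the open mapping proof and verifying, inside $M$, that $f_*$ carries the finitely additive complete sieve on $X$ to a system of open sets whose successor sets still cover $P(s)$ --- this is where both finite additivity of the sieve and compactness of the fibers of $f$ are used; after that the finite intersection property of $E$ copies the open mapping argument, and the only further subtlety is that (2) controls only interpretations of closed sets coming from $M$, which is why an arbitrary closed subset of $\hat X$ must be rewritten as a directed intersection of such sets.
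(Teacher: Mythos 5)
Your proposal is correct and follows essentially the same route as the paper's proof: the co-image operation $f_*(U)=Y\setminus f''(X\setminus U)$ applied along a complete, strong, finitely additive sieve, the recursively built branch with $y\in\chi(f_*(O(s_n)))$ using the compact-fiber/finite-additivity claim, completeness of the interpreted sieve via Theorem~\ref{sievetheorem}, and item (2) obtained by restricting to the perfect map $f\restriction(X\setminus O)\colon X\setminus O\to Y\setminus P$. The only differences are organizational and harmless --- the paper extracts nonemptiness and compactness of $\hat f^{-1}\{y\}$ in one stroke as $K\setminus\bigcup E$ and proves closedness by separating a point $y\notin\hat f''C$ with an open set $\chi(P)$ coming from the compact fiber, whereas you first get surjectivity, deduce fiber compactness from (2), and prove closedness by writing a closed set as a downward directed intersection of sets $\hat X\setminus\pi(O)$ and using that maps with compact fibers commute with such intersections.
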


\begin{proof}
I will start with a small claim.

\begin{claim}
In the model $M$: if $O=\bigcup_{i\in I}O_i$ is a union of open sets, then $Y\setminus f''(X\setminus O)=\bigcup\{D\in\gs\colon f^{-1}D$ is covered by finitely many sets $O_i\}$.
\end{claim}

\begin{proof}
The right-to-left inclusion follows from the definitions. For the left-to-right inclusion, suppose that $y\in Y\setminus f''(X\setminus O)$ is a point. The set $f^{-1}\{y\}$ is a compact subset of $O$ and so there is a finite set $J\subset I$ such that $f^{-1}\{y\}\subset\bigcup_{i\in J}O_i$. Let $D=Y\setminus f''(X\setminus\bigcup_{i\in J}O_i)$, note that $y\in D$ and $D$ belongs to the union on the right hand side.
\end{proof}

Suppose that $y\in\hat Y$ is an arbitrary point; I will argue that $\hat f^{-1}\{y\}\subset\hat X$ is nonempty and compact.

To this end, in the model $M$ let $\langle S, O(s)\colon s\in S\rangle$ be a complete, finitely additive strong sieve on the space $X$. By induction on $n\in\gw$ build a descending sequence $\langle s_n\colon n\in\gw\rangle$ of nodes in $S$ such that
$y\in\chi(Y\setminus f''(X\setminus O(s_n))$. To do this, start with $s_0=0$ and in the induction step use the claim, the finite additivity of the sieve and the fact that $\chi$ is an interpretation. After the induction has been performed, note that
the $\pi$-image of the sieve $\langle S, O(s)\colon s\in S\rangle$ is a complete sieve on the space $\hat X$ by Theorem~\ref{sievetheorem}. Thus, the set $K=\bigcap_n{\mathrm{cl}}(\pi(O(s_n))\subset\hat X$
is compact. No finite subcollection of $E=\{\pi(f^{-1}(P))\colon P\in\gs$ and $y\notin\chi(P)\}$ can cover the set $K$, and therefore $K\setminus\bigcup E$ is a nonempty compact subset of $K$. The definitions and Theorem~\ref{functiontheorem} imply that $\hat f^{-1}\{y\}=K\setminus\bigcup E$.

Thus, we conclude that $\hat f$ is a surjective function such that preimages of singletons are compact. (2) now immediately follows by the application of this fact to the perfect mapping $f\restriction (X\setminus O)\colon X\setminus O\to Y\setminus P$. To conclude the proof, it remains to show that $\hat f$ is a closed mapping. Suppose that $C\subset X$ is a closed set
and $y\in Y$ is a point not in $\hat f''C$; I must find an open set $P\in\gs$ such that $y\in\chi(P)$ and $\hat f''C\cap\chi(P)=0$.
To find the set $P$, use the compactness of $K=\hat f^{-1}\{y\}$ to find an open set $O\in\tau$ such that $K\subset\pi(O)$ and $\pi(O)\cap C=0$. Let $P\in\gs$ be the open set equal to $Y\setminus f''(X\setminus O)$. By (2), $\hat f''C\subset\hat f''(X\setminus O)=\hat Y\setminus \chi(P)$. Thus, the set $P\in\gs$ works as required.
\end{proof}

\begin{corollary}
\label{perfectquotientcorollary}
Suppose that $M$ is a transitive model of set theory and $M\models\langle X, \tau\rangle$ is an interpretable space and $E$ is a closed equivalence relation on it such that saturations of closed sets are closed, equivalence classes are compact, and
 such that the quotient $X/E$ is a regular Hausdorff space.
Let $\pi\colon X\to\hat X$ be an interpretation.

\begin{enumerate}
\item The map $[x]_E\mapsto [\pi(x)]_{\pi(E)}$ extends to an interpretation of $X/E$ in the space $\hat X/\pi(E)$;
\item The interpretation of the quotient map $f\colon X\to X/E$ is the quotient map $\hat f\colon \hat X\to\hat X/\pi(E)$.
\end{enumerate}
\end{corollary}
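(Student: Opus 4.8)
The plan is to follow the proof of Corollary~\ref{openquotientcorollary} almost verbatim, substituting the perfect mapping theorem (Theorem~\ref{perfectmappingtheorem}) for the open mapping theorem. First I would check that in $M$ the quotient map $f\colon X\to X/E$ is perfect: it is a continuous surjection, the preimage of a point is the corresponding $E$-class and hence compact by hypothesis, and for closed $C\subseteq X$ the set $f^{-1}(f''C)$ is the $E$-saturation of $C$, which is closed by hypothesis, so $f''C$ is closed; thus $f$ is closed. Since $X/E$ is regular Hausdorff and $X$ is interpretable, Theorem~\ref{perfectmappingtheorem} applies, so $\hat f\colon\hat X\to\hat Y$ is a perfect mapping, where $\chi\colon X/E\to\hat Y$ is the interpretation of $X/E$. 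A perfect map is a closed continuous surjection, hence a quotient map, so $\hat f$ exhibits $\hat Y$ as $\hat X/F$, where $F=\{(x_0,x_1)\colon\hat f(x_0)=\hat f(x_1)\}$. Just as in the open case, $\pi(E)$ is a closed equivalence relation on $\hat X$: it is the interpretation of the closed set $E\subseteq X\times X$, the interpretation of $X\times X$ is $\hat X\times\hat X$ by Theorem~\ref{countableproducttheorem}, and reflexivity, symmetry, and transitivity are $\Pi_1$ statements that transfer by analytic absoluteness, Theorem~\ref{absolutenesstheorem}. Hence it suffices to prove $F=\pi(E)$; statements (1) and (2) then follow by unwinding the identification of $\hat Y$ with $\hat X/\pi(E)$ through $\hat f$, exactly as in Corollary~\ref{openquotientcorollary}.

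The verification that $F=\pi(E)$ is identical to the one in Corollary~\ref{openquotientcorollary}, since that argument never used openness of $f$. The relation $F$ is closed because $\hat f$ is continuous and $\hat Y$ is Hausdorff, and it contains $(\pi\times\pi)''E$, which is dense in $\pi(E)$; hence $\pi(E)\subseteq F$. For the reverse inclusion it is enough to show that whenever $O,P\in\tau$ satisfy $(O\times P)\cap E=0$ in $M$ then $(\pi(O)\times\pi(P))\cap F=0$: indeed, if there were a point in $F\setminus\pi(E)$, then, $\pi(E)$ being closed and the rectangles $\pi(O)\times\pi(P)$ forming a basis of $\hat X\times\hat X$, some such rectangle around it would be disjoint from $\pi(E)$, hence from $(\pi\times\pi)''E$, which (using $x\in O\liff\pi(x)\in\pi(O)$) forces $(O\times P)\cap E=0$ in $M$ and then contradicts the displayed implication. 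To prove the implication, note that in $M$ the equality $(O\times P)\cap E=0$ is equivalent to $f''O\cap f''P=0$ (a point of $O$ is $E$-related to a point of $P$ iff $f$ identifies them), and then Corollary~\ref{disjointrangecorollary}, applied to the continuous maps $f\restriction O\colon O\to X/E$ and $f\restriction P\colon P\to X/E$ whose domains are interpretable open subspaces and whose common target $X/E$ is interpretable by Theorem~\ref{interpretabletheorem}(5), gives $\hat f''\pi(O)\cap\hat f''\pi(P)=0$, which by the definition of $F$ is exactly $(\pi(O)\times\pi(P))\cap F=0$.

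I do not expect a serious obstacle: the only genuinely new ingredient relative to Corollary~\ref{openquotientcorollary} is the check that $f$ is perfect together with the appeal to Theorem~\ref{perfectmappingtheorem} in place of Theorem~\ref{openmappingtheorem}, and the heart of the matter --- that the interpretation of the kernel of a continuous map is the kernel of the interpreted map --- is insensitive to openness versus perfectness. The points that need a moment's care are that the interpretation of $f\restriction O$ coincides with $\hat f\restriction\pi(O)$ (immediate from the uniqueness clause of Theorem~\ref{functiontheorem}, since $\hat f\restriction\pi(O)$ is continuous and extends $\{(\pi(x),\chi(f(x)))\colon x\in O\}$, and the interpretation of the open subspace $O$ is $\pi(O)$ by Theorem~\ref{subspacetheorem}), and that under the quotient-map identification of $\hat Y$ with $\hat X/\pi(E)$ the map $\chi$ corresponds to $[x]_E\mapsto[\pi(x)]_{\pi(E)}$, since $\hat f(\pi(x))=\chi(f(x))$.
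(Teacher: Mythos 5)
Your proposal is correct and takes essentially the same route as the paper: check that the quotient map is perfect in $M$, invoke the perfect mapping theorem (Theorem~\ref{perfectmappingtheorem}, which is clearly the intended citation even though the paper's text says Theorem~\ref{openmappingtheorem}), observe that a perfect map is a quotient map, and then show the kernel $F$ of $\hat f$ equals $\pi(E)$ exactly as in the second paragraph of Corollary~\ref{openquotientcorollary} via Corollary~\ref{disjointrangecorollary}. Your additional details (closedness of $\pi(E)$, the rectangle/basis argument for $F\subseteq\pi(E)$, and the restrictions $f\restriction O$, $f\restriction P$) only make explicit what the paper leaves implicit.
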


\begin{proof}
The quotient map $f$ is perfect in the model $M$ by the assumptions on the equivalence relation $E$. By Theorem~\ref{openmappingtheorem}, it is interpreted as a perfect map $\hat f\colon \hat X\to Y$ where $Y$ is the interpretation of the
space $X/E$. Every perfect map is a quotient map, and so it is enough to show that the equivalence relation $F$ on $\hat X$ given by $x_0\mathrel Fx_1$ is equal to $\pi(E)$. This follows letter by letter the second paragraph of the
proof of Corollary~\ref{openquotientcorollary}.
 \end{proof}

\begin{example}
The corollary shows that gluing in interpretable spaces is interpreted in the expected way.
Suppose that $M$ is a transitive model of set theory and $X$ is the closed unit square in it, and $E$ is the equivalence relation on $X$ connecting $\langle 0, x\rangle$ with $\langle 1, 1-x\rangle$ and leaves all other points equivalent only to themselves.
The quotient $X/E$ is the M\" obius strip. The equivalence relation $E$ satisfies the assumptions of Corollary~\ref{perfectquotientcorollary} and so the interpretation of $X/E$ is the M\" obius strip again.
\end{example}

\begin{example}
Suppose that $M$ is a transitive model of set theory and $M\models H$ is a Hilbert space with a norm $\phi$. Let $S\subset H$ be the unit sphere, and $E$ the equivalence of linear dependence on $S$. The relation $E$
satisfies the assumptions of Corollary~\ref{perfectquotientcorollary}. The quotient $S/E$ is the projective Hilbert space of $H$. Thus, the interpretation of the projective Hilbert space in $M$ is a projective Hilbert space.
\end{example}

\section{Hyperspaces}

Recall that if $X$ is a topological space then $K(X)$ denotes the space of its nonempty compact subsets, equipped with Vietoris topology, generated by sets $\{K\in K(X)\colon K\subseteq O\}$ and $\{K\in K(X)\colon K\cap O=0\}$ as $O$ varies over all open subsets of $X$.

\begin{theorem}
\label{hyperspacetheorem}
Suppose that $M$ is a model of set theory and $M\models\langle X, \tau\rangle$ is an interpretable space. Suppose that $\pi\colon X\to\hat X$ is an interpretation. Let $K(X)$ be the hyperspace of $X$ as evaluated in $M$, and let $K(\hat X)$ be the hyperspace of $\hat X$.

\begin{enumerate}
\item The map $\pi\colon K(X)\to K(\hat X)$ can be extended to an interpretation of the hyperspace $K(X)$;
\item Whenever $O\in\tau$ is an open set, $\pi(\{K\in K(X)\colon K\subset O\})=\{K\in K(\hat X)\colon K\subset\pi(O)\}$
and $\pi(\{K\in K(X)\colon K\cap O\neq 0\})=\{K\in K(\hat X)\colon K\cap\pi(O)\neq 0\}$.
\end{enumerate}
\end{theorem}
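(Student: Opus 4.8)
The plan is to realize the natural map as a topological interpretation by verifying the hypotheses of Proposition~\ref{testproposition}. The natural point map is $\rho\colon K(X)\to K(\hat X)$ sending a compact $K\subseteq X$ to its interpretation $\pi(K)=\hat X\setminus\pi(X\setminus K)$ as a closed (hence Borel) subset of $\hat X$ --- note that this in general properly contains the pointwise image $\pi''K$, and it is a nonempty compact set by Corollary~\ref{compactcorollary}. As the basis $\gs$ of $K(X)$ I take the Vietoris basis generated from all of $\tau$, i.e.\ the sets $\langle O_0;O_1,\dots,O_n\rangle=\{K\in K(X)\colon K\subseteq O_0,\ K\cap O_i\neq 0\text{ for }1\le i\le n\}$ with $O_i\in\tau$ and $O_i\subseteq O_0$; this family is readily checked to be closed under finite intersections. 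The first computation is that the canonical extension satisfies $\bar\rho(\langle O_0;\vec O\rangle)=\langle\pi(O_0);\pi(O_1),\dots,\pi(O_n)\rangle$. Here $\supseteq$ is clear because the right-hand set is open in $K(\hat X)$ and disjoint from $\rho''(K(X)\setminus\langle O_0;\vec O\rangle)$ (if $K\not\subseteq O_0$ pick $x\in K\setminus O_0$, so $\pi(x)\in\pi(K)\setminus\pi(O_0)$; if $K\cap O_i=0$ then $\pi''K\subseteq\hat X\setminus\pi(O_i)$, a closed set, whence $\pi(K)\cap\pi(O_i)=0$). For $\subseteq$ one uses that $\pi''X$ is dense in $\hat X$ together with the identity $\hat X\setminus\pi(O_i)=\mathrm{cl}(\pi''(X\setminus O_i))$ to approximate any $L\notin\langle\pi(O_0);\pi(\vec O)\rangle$ by finite sets $\rho(\{x_1,\dots,x_m\})$ lying in $K(X)\setminus\langle O_0;\vec O\rangle$. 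Once $\rho$ is known to be an interpretation this yields item (2) of the theorem, since the interpretation agrees with $\bar\rho$ on open sets (the remark before Definition~\ref{extensiondefinition}): $\bar\rho(\{K\colon K\subseteq O\})=\langle\pi(O);\rangle=\{L\colon L\subseteq\pi(O)\}$ and $\bar\rho(\{K\colon K\cap O\neq 0\})=\langle\pi(X);\pi(O)\rangle=\{L\colon L\cap\pi(O)\neq 0\}$.

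For the sieve demanded by Proposition~\ref{testproposition}, work in $M$, fix a complete, strong, finitely additive sieve $\langle S,O(s)\colon s\in S\rangle$ on $X$ (Fact~\ref{sieveproposition}(2)), and put $P(s)=\{K\in K(X)\colon K\subseteq O(s)\}=\langle O(s);\rangle\in\gs$; by the proof of Theorem~\ref{interpretabletheorem}(4), $\langle S,P(s)\colon s\in S\rangle$ is a complete strong sieve on $K(X)$. Conditions (1), (3), (4) are then routine: for (1), if $K\in\langle O_0;\vec O\rangle$ then $K\subseteq O_0$ gives $\pi(K)\subseteq\pi(O_0)$ by monotonicity of the Borel extension (Theorem~\ref{boreltheorem}), and a point $x\in K\cap O_i$ gives $\pi(x)\in\pi(K)\cap\pi(O_i)$, so $\rho(K)\in\bar\rho(\langle O_0;\vec O\rangle)$; for (3), the sets $\pi(O)$, $O\in\tau$, form a basis of $\hat X$ closed under finite unions, so the sets $\langle\pi(O_0);\pi(\vec O)\rangle$ generate the Vietoris topology of $K(\hat X)$; for (4), the sieve $\langle S,\pi(O(s))\colon s\in S\rangle$ is complete on $\hat X$ by Theorem~\ref{sievetheorem}, and it is again strong and finitely additive since strongness ($O(s)\cup\bigcup\{P\colon P\cap O(t)=0\}=X$) and finite additivity are expressible through unions of open sets and $\pi$ commutes with these, so by the proof of Theorem~\ref{interpretabletheorem}(4) once more $\langle S,\bar\rho(P(s))\colon s\in S\rangle=\langle S,\{L\colon L\subseteq\pi(O(s))\}\colon s\in S\rangle$ is a complete sieve on $K(\hat X)$.

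The substantial step --- and where I expect the real work --- is condition (2): if $Q=\langle O_0;O_1,\dots,O_n\rangle$ is covered in $M$ by finitely many basic sets $P_j=\langle O_0^j;O_1^j,\dots,O_{m_j}^j\rangle$, then $\bar\rho(Q)\subseteq\bigcup_j\bar\rho(P_j)$. This will be proved by contradiction rather than by a product-style argument as in Proposition~\ref{productproposition} (the Vietoris boxes are not products). Suppose $L\in\langle\pi(O_0);\pi(\vec O)\rangle$ while $L\notin\langle\pi(O_0^j);\pi(\vec O^j)\rangle$ for every $j$, and choose $z_i\in L\cap\pi(O_i)$ for $1\le i\le n$. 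For each $j$ the failure is of one of two kinds: (type I) some point of $L$ avoids $\pi(O_0^j)$, or (type II) $L\cap\pi(O_{l_j}^j)=0$ for some index $l_j$. Set $\mathcal C=\{O_{l_j}^j\colon j\text{ type II}\}$ and $\mathcal D=\{O_0^j\colon j\text{ type I}\}$. The key point is that, because $\pi$ commutes with finite unions, $M\models O_0\subseteq D\cup\bigcup\mathcal C$ would force $\pi(O_0)\subseteq\pi(D)\cup\bigcup_{C\in\mathcal C}\pi(C)$, contradicting the presence inside $L\subseteq\pi(O_0)$ of a point missing $\pi(D)$ and all $\pi(C)$ (every point of $L$ misses the latter); similarly $M\models O_i\subseteq\bigcup\mathcal C$ is impossible by $z_i\in\pi(O_i)\setminus\bigcup_{C}\pi(C)$. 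Hence in $M$ one may pick one point of $O_i\setminus\bigcup\mathcal C$ for each $i$ and one point of $O_0\setminus(D\cup\bigcup\mathcal C)$ for each $D\in\mathcal D$, assembling a nonempty finite set $K\in M$ with $K\subseteq O_0$, $K\cap O_i\neq 0$, $K\cap C=0$ for all $C\in\mathcal C$, and $K\not\subseteq D$ for all $D\in\mathcal D$; then $K\in Q$ but $K\notin P_j$ for any $j$, all these being absolute statements about objects of $M$, contradicting $M\models Q\subseteq\bigcup_j P_j$. With (1)--(4) verified, Proposition~\ref{testproposition} yields that $\bar\rho$ is an interpretation of $K(X)$, which is item (1); item (2) follows as noted above.
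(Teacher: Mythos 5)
Your proposal is correct and follows essentially the same route as the paper's proof: the same map $K\mapsto\pi(K)$, the Vietoris basic sets as the basis $\gs$, the computation of the canonical extension on basic sets, the same type-I/type-II contradiction argument producing a finite counterexample set inside $M$ for the finite-cover condition of Proposition~\ref{testproposition}, and the transfer of the complete (strong, finitely additive) sieve via Theorem~\ref{sievetheorem} and the hyperspace sieve construction of Theorem~\ref{interpretabletheorem}(4). The only differences are cosmetic (a normalized single-ambient-set Vietoris basis, and a direct density approximation for one inclusion where the paper reuses its covering claim), and you are if anything slightly more careful than the paper in checking that strongness and finite additivity survive the interpretation.
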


\noindent The interpretation $\pi$ maps compact subsets of $X$ in $M$ to compact subsets of $\hat X$ by Corollary~\ref{compactcorollary},
so the map $\pi\colon K(X)\to K(\hat X)$ is well-defined.

\begin{proof}
Whenever $A,B\subset\tau$ are finite sets, let $U(A,B)\subset K(X)$ be the set of all $K\in K(X)$ such that for every $O\in A$ $K\subset O$ holds, and
for every $O\in B$ $K\cap O\neq 0$ holds. A similar definition will be used to generate open sets of the space $K(\hat X)$.
Note that $\gs=\{U(A,B)\colon A, B\subset\tau$ finite$\}$ is a basis of the space $K(X)$ in the model $M$ and $\hat\gs=\{U(\pi''A,\pi''B)\colon A, B\subset\tau$ finite$\}$ is a basis of the space $K(\hat X)$. The following claim
captures the conversation between the bases $\gs$ and $\hat\gs$.

\begin{claim}
Let $A,B\subset\tau$ be finite sets.

\begin{enumerate}
\item for every $K\in K(X)$, $K\in U(A,B)\liff \pi(K)\in U(\pi''A, \pi''B)$;
\item if $A_i, B_i\subset\tau$ for $i\in I$ are finite sets and $I$ is finite and $U(A,B)\subset\bigcup_iU(A_i, B_i)$, then $U(\pi''A, \pi''B)\subset \bigcup_iU(\pi''A_i, \pi''B_i)$.
\end{enumerate}
\end{claim}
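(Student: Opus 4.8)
The plan is to dispatch item (1) by a direct appeal to Theorem~\ref{boreltheorem} and to reserve the real work for item (2). For (1), recall that for a compact $K\subset X$ in $M$ the interpretation $\pi(K)$ coincides with the image of the closed set $K$ under the $\gs$-algebra extension of $\pi$, so $\pi(K)=\mathrm{cl}(\pi''K)$ and in particular $\pi''K\subseteq\pi(K)$. Fix $O\in A\cup B$. Since the Borel extension of $\pi$ commutes with finite intersections and complements and sends $0$ to $0$, one has $\pi(K\cap(X\setminus O))=\pi(K)\setminus\pi(O)$ and $\pi(K\cap O)=\pi(K)\cap\pi(O)$. Combining this with $x\in O\liff\pi(x)\in\pi(O)$ and $\pi''K\subseteq\pi(K)$ in one direction, and $\pi(0)=0$ in the other, gives $K\subseteq O\liff\pi(K)\subseteq\pi(O)$ and $K\cap O\neq 0\liff\pi(K)\cap\pi(O)\neq 0$; conjoining over the finitely many $O\in A$ and $O\in B$ yields item (1).

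For item (2) the first step is a harmless reduction: replacing $A$ by the single open set $W=\bigcap A$ and each $A_i$ by $W_i=\bigcap A_i$ changes nothing, because $K\subseteq\bigcap A\liff\forall O\in A\ K\subseteq O$, and $\bigcap_{O\in A}\pi(O)=\pi(\bigcap A)$ by Theorem~\ref{boreltheorem}. So I may assume $A=\{W\}$, $A_i=\{W_i\}$, and write $B=\{O_1,\dots,O_m\}$. Now suppose $\hat K\in U(\{\pi W\},\pi''B)$ fails to lie in any $U(\{\pi W_i\},\pi''B_i)$, i.e.\ for every $i$ either $\hat K\not\subseteq\pi(W_i)$ or $\hat K\cap\pi(O')=0$ for some $O'\in B_i$. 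The crucial observation is that this failure already occurs on a \emph{finite} subset of $\hat K$: choose $\hat x_j\in\hat K\cap\pi(O_j)$ for each $j\le m$, choose $\hat z_i\in\hat K\setminus\pi(W_i)$ for each $i$ with $\hat K\not\subseteq\pi(W_i)$, throw in one more point of the (nonempty) set $\hat K$, and let $\hat F\subseteq\hat K$ be the finite set of all these points. Then $\hat F\subseteq\pi(W)$, $\hat F$ meets every $\pi(O_j)$, and for every $i$ one still has $\hat F\notin U(\{\pi W_i\},\pi''B_i)$: either $\hat z_i\in\hat F$ shows $\hat F\not\subseteq\pi(W_i)$, or $\hat F\subseteq\hat K$ inherits from $\hat K$ the property of missing some $\pi(O')$ with $O'\in B_i$.

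Now I would transfer through the product. Enumerating $\hat F$ with repetitions as a tuple $\vec{\hat y}\in\hat X^N$, where $N=m+|I|+1$, I use that by Theorem~\ref{countableproducttheorem} the space $\hat X^N$ is the interpretation of $(X^N)^M$ and the coordinate projections are interpreted as the coordinate projections. In $M$, let $D\subseteq X^N$ be the Borel set (a finite Boolean combination of preimages of the given open sets under the projections) defined by
\[ \bigwedge_{l\le N}y_l\in W\ \wedge\ \bigwedge_{j\le m}\bigvee_{l}y_l\in O_j\ \wedge\ \bigwedge_{i\in I}\Bigl(\bigvee_l y_l\in X\setminus W_i\ \vee\ \bigvee_{O'\in B_i}\bigwedge_l y_l\in X\setminus O'\Bigr). \]
Applying the hypothesis $U(\{W\},B)\subseteq\bigcup_i U(\{W_i\},B_i)$ in $M$ to the nonempty finite (hence compact) set $\{y_l:l\le N\}$ shows $M\models D=0$. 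On the other hand $\pi$ commutes with complements and finite unions and intersections of Borel sets and with the coordinate projections, so $\pi(D)$ is the analogous subset of $\hat X^N$ with $W,O_j,W_i,O'$ replaced by their interpretations; by the previous paragraph $\vec{\hat y}\in\pi(D)$. But $\pi(D)=\pi(0)=0$, a contradiction, which proves item (2) and hence the claim (after which the proof of Theorem~\ref{hyperspacetheorem} is finished via Proposition~\ref{testproposition}, the sieve $\langle S,P(s)\colon s\in S\rangle$ transferring to a complete sieve on $K(\hat X)$ as in Theorem~\ref{interpretabletheorem}(4)).

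The step I expect to be the main obstacle is precisely the reduction of ``$\hat K$ is not covered'' to a statement about a finite subset $\hat F$. The conditions ``$\hat K\subseteq\pi(W_i)$'' and ``$\hat K$ meets $\pi(O')$'' are genuinely infinitary, and a direct transfer would seem to require already knowing that $K(\hat X)$ is the interpretation of $K(X)$ --- the very conclusion being proved, so one must avoid circularity. The asymmetry that makes it work is that the relevant \emph{negations}, ``$\hat K\not\subseteq\pi(W_i)$'' and ``$\hat K$ misses $\pi(O')$'', are witnessed respectively by a single point and by a downward hereditary property, so they persist on $\hat F$; from there everything is routine transfer through the already-established product and Borel machinery, in the same spirit in which Proposition~\ref{productproposition} is used inside the proof of Theorem~\ref{compactproducttheorem}.
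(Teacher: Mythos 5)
Your proof is correct, but for item (2) it takes a genuinely different route from the paper. The paper argues entirely at the level of preinterpretations: given a counterexample $L\in U(\pi''A,\pi''B)\setminus\bigcup_iU(\pi''A_i,\pi''B_i)$, it chooses for each $i$ a witnessing set $O_i\in A_i$ (with $L\not\subset\pi(O_i)$) or $O_i\in B_i$ (with $L\cap\pi(O_i)=0$), observes that this makes finitely many sets of the form ``$\bigcap\pi''A$ minus a finite union of interpreted open sets, intersected with or subtracted from another interpreted open set'' nonempty, and transfers each such nonemptiness down to $M$ using only that an inclusion of an open set in a finite union of open sets is preserved by $\pi$; the finitely many points so obtained in $M$ are then assembled into a finite compact counterexample $K\in U(A,B)\setminus\bigcup_iU(A_i,B_i)$. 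You instead extract the finite witness set $\hat F$ \emph{upstairs} and transfer the whole configuration at once, by encoding it as the emptiness of a finite Boolean combination $D$ of open rectangles in $X^N$ and invoking Theorem~\ref{countableproducttheorem} (plus Theorem~\ref{functiontheorem}(2) for preimages under projections) and Theorem~\ref{boreltheorem}. This is not circular --- those results precede the hyperspace theorem --- and your key finitization step (negations of the $U$-conditions are witnessed by single points or are downward hereditary) is sound and is exactly the combinatorial heart of the paper's argument as well. What the paper's version buys is economy: it needs no product spaces, no Borel $\gs$-algebra extension, and in particular no wellfoundedness of $M$ for this claim (your appeal to Theorem~\ref{boreltheorem} imports the transitivity hypothesis, and Theorem~\ref{countableproducttheorem} is stated for $\gw$-indexed products, so the finite power $X^N$ needs a word of adaptation or padding); note also that the emptiness transfer for your specific $D$, a finite Boolean combination of open sets, already follows from the preinterpretation identities alone, so the heavy machinery is avoidable. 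Item (1) is fine either way; the paper disposes of it by unraveling definitions, while your derivation through the Borel extension is a slightly stronger tool than needed.
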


\begin{proof}
For (1), just unravel the definition of the set $U(A,B)$ and use the fact that $\pi\colon X\to\hat X$ is an interpretation.
For (2), suppose that the conclusion fails, as witnessed by some set $L\in K(\hat X)$, $L\in U(\pi''A, \pi''B)\setminus\bigcup_iU(\pi''A_i, \pi''B_i)$. Then, there is a partition $I=I_0\cup I_1$ and sets $O_i\in A_i$ for $i\in I_0$ and $O_j\in B_j$ for $j\in I_1$
such that if $i\in I_0$ then $L\not\subset\pi(O_i)$ and if $j\in I_1$ then $L\cap\pi(O_j)=0$. This means that for every $i\in I_0$, the set $(\bigcap \pi''A\setminus\bigcup_{j\in I_1}\pi(O_j))\setminus O_i$ is nonempty, and for every $P\in B$, the set $\pi(P)\cap\bigcap\pi''A\setminus\bigcup_{j\in I_1}\pi(O_j)$ is nonempty. As $\pi\colon X\to\hat X$ is an interpretation, this in turn means that the sets $(\bigcap A\setminus\bigcup_{j\in I_1}O_j)\setminus O_i$ is nonempty for every $i\in I_0$,
and for every $P\in B$, the set $P\cap\bigcap A\setminus\bigcup_{j\in I_1}O_j$ is nonempty. The finitely many points from these nonempty open sets can be collected to form a finite set $K\in U(A,B)\setminus\bigcup_iU(A_i, B_i)$ in the model $M$. (2) follows.
\end{proof}

Now, I will argue that Proposition~\ref{testproposition} can be applied with the basis $\gs$ to show that
the canonical extension $\bar \pi$ of the map $\pi\colon K\mapsto\pi(K)$ from $K(X)$ to $K(\hat X)$ of Definition~\ref{extensiondefinition} is in fact an interpretation.

To this end, first observe that $\bar\pi(U(A,B))=U(\pi''A, \pi''B)$. For the right-to-left inclusion, note that $U(\pi''A,\pi''B)\subset K(\hat X)$ is an open set disjoint from $\pi''(K(X)\setminus U(A, B))$ by (1) of the claim. For the left-to-right inclusion, note that if $U(\pi''C, \pi''D)\subset K(\hat X)$ is an open set disjoint from $\pi''(K(X)\setminus U(A, B))$,
then $U(C,D)\subset U(A,B)$ and by (2) of the claim, $U(\pi''C, \pi''D)\subset U(\pi''A, \pi''D)$.

Work in the model $M$ and find a complete finitely additive sieve $\langle S, O(s)\colon s\in S\rangle$ on the space $X$. As in the proof of Theorem~\ref{interpretabletheorem} observe that $\langle S, U(O(s), 0)\colon s\in S\rangle$ is a complete sieve on the space $K(X)$.
Step out of the model $M$, and use Theorem~\ref{sievetheorem} to argue that $\langle S, \pi(O(s))\colon s\in S\rangle$
is a complete sieve on the space $X$ and Theorem~\ref{interpretabletheorem} to argue that $\langle S, U(\pi(O(s)), 0)\colon s\in S\rangle$ is a complete sieve on the space $K(\hat X)$. Thus, the canonical extension $\hat\pi$ moves a complete sieve on $K(X)$ to a complete sieve on $K(\hat X)$. Proposition~\ref{testproposition} now completes the proof of the theorem.
\end{proof}

\section{The compact-open topology}

\noindent The most usual spaces of continuous functions are interpreted in the expected way:

\begin{theorem}
\label{cktheorem}
Let $M$ be a transitive model of set theory and $M\models\langle X_0, \tau_0\rangle$, $\langle X_1, \tau_1\rangle$
are a compact Hausdorff space and a completely metrizable metric space respectively. Let $C(X_0, X_1)$ the the space of continuous functions from $X_0$ to $X_1$ with the compact-open topology as evaluated in $M$. Let $\pi_0\colon X_0\to\hat X_0$
and $\pi_1\colon X_1\to\hat X_1$ be interpretations. Let $C(\hat X_0, \hat X_1)$ be the space of continuous functions from $\hat X_0$ to $\hat X_1$ with the compact-open topology.  Then 

\begin{enumerate}
\item the map $\pi\colon f\mapsto (\pi_0\times\pi_1)(f)$ extends to
an interpretation of $C(X_0, X_1)$ to $C(\hat X_0, \hat X_1)$;
\item the evaluation function $(x, f)\mapsto f(x)$ from $X_0\times C(X_0, X_1)$ to $X_1$ is interpreted as
the evaluation function from $\hat X_0\times C(\hat X_0, \hat X_1)$ to $\hat X_1$.
\end{enumerate}
\end{theorem}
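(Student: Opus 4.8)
The plan is to reduce part (2) to part (1) and to establish part (1) via the metric picture together with Proposition~\ref{testproposition}. Since $X_0$ is compact, the compact--open topology on $C(X_0,X_1)$ coincides with the topology of uniform convergence for a bounded complete metric $\rho$ built from $d$; hence $C(X_0,X_1)$ is completely metrizable, thus interpretable, and by Corollary~\ref{metrizablecorollary} its interpretation is the metric completion of $\langle C(X_0,X_1),\rho\rangle$. On the other side $\hat X_0$ is compact (Corollary~\ref{compact1corollary}) and $\hat X_1$ is completely metrizable (Corollary~\ref{metrizablecorollary}), so $C(\hat X_0,\hat X_1)$ with the uniform metric is again completely metrizable, and by Theorem~\ref{functiontheorem} the assignment $\pi\colon f\mapsto(\pi_0\times\pi_1)(f)=\hat f$ (with $\hat f(\pi_0(x))=\pi_1(f(x))$) maps $C(X_0,X_1)^M$ into $C(\hat X_0,\hat X_1)$, so it is at least a candidate. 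Granting (1), part (2) is immediate: Theorem~\ref{countableproducttheorem} identifies the interpretation of $X_0\times C(X_0,X_1)$ with $\hat X_0\times C(\hat X_0,\hat X_1)$, the interpretation of the evaluation map is then the unique continuous function on that product agreeing with the genuine evaluation $(x,g)\mapsto g(x)$ on the dense set $\{(\pi_0(x),\hat f)\}$, and since the genuine evaluation is continuous (compact domain), the uniqueness clause of Theorem~\ref{functiontheorem} forces the two to coincide.

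For part (1), I would apply Proposition~\ref{testproposition} to $\pi$, working in $M$ with the basis $\gs$ consisting of finite intersections of compact--open subbasic sets $[C,V]=\{f:f(C)\subseteq V\}$ with $C\subseteq X_0$ closed and $V\in\tau_1$. All such $C,V$ lie in $M$ since $\tau_0,\tau_1\in M$, and $\gs$ is a basis because compact--open equals uniform convergence: uniform continuity of a given $f$ yields, for any prescribed neighbourhood, a finite closed cover $\{C_i\}$ of $X_0$ on whose pieces $f$ has small oscillation, giving a refining neighbourhood $\bigcap_i[C_i,V_i]$. Writing $\bar\pi$ for the canonical extension, condition (1) of Proposition~\ref{testproposition} is checked by shrinking $V$ through regularity of $X_1$ and noting that $\hat f$ then lies in an open set of the form $\{g:g(\pi_0(C))\subseteq\pi_1(V')\}$ disjoint from $\pi''(C(X_0,X_1)\setminus[C,V])$; condition (2) follows from a finite compactness/combinatorial argument in the spirit of Proposition~\ref{productproposition} carried out in $M$ after passing to a common refinement of the covers implicit in a finite subcover relation among the $[C,V]$; and condition (3), that $\bar\pi''\gs$ generates the compact--open topology of $C(\hat X_0,\hat X_1)$, uses that interpretations of closed subsets of $X_0$ supply a cofinal family of compact subsets of $\hat X_0$ (complements of a basis), while interpretations of open subsets of $X_1$ form a basis of $\hat X_1$.

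The heart of the matter is condition (4): a complete sieve on $C(X_0,X_1)$ in $M$ whose $\bar\pi$-image is a complete sieve on $C(\hat X_0,\hat X_1)$. I would build it in $M$ from a strong complete sieve $\langle S_0,O_0(s)\rangle$ on the compact space $X_0$ and a strong complete sieve $\langle S_1,O_1(s)\rangle$ on the {\v C}ech complete space $X_1$ whose pieces at level $k$ have diameter $<2^{-k}$: a level-$k$ node of the combined sieve records a level-$k$ node of $S_0$ (pinning $X_0$ down to finitely many closed pieces $\mathrm{cl}(O_0(t))$) together with a choice, for each such piece, of a level-$k$ node of $S_1$, the associated open set being $\{f:f\text{ maps each piece into the chosen }O_1(u)\}$, which is a finite intersection of subbasic sets and so lies in $\gs$. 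Along a branch the $X_0$-pieces contract (completeness of $S_0$) to compact ``point regions'', the images of which under any admissible $f$ are forced into the compact sets $\bigcap_kO_1(u_k)$, while the refining $X_0$-covers together with the shrinking diameters on the $X_1$-side give equicontinuity; hence the branch intersection is an equicontinuous, pointwise relatively compact, closed family of functions, so compact by the Arzel\`a--Ascoli theorem, and the shrinking clause of Fact~\ref{sieveproposition}(3) is checked directly against the uniform metric. Stepping out of $M$, Theorem~\ref{sievetheorem} makes the interpreted sieves complete on $\hat X_0$ and $\hat X_1$, $\hat X_0$ is still compact and $\hat X_1$ still completely metrizable, and the same Arzel\`a--Ascoli computation in $V$ shows the $\bar\pi$-image is a complete sieve on $C(\hat X_0,\hat X_1)$. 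Proposition~\ref{testproposition} then yields that $\bar\pi$ is an interpretation landing in $C(\hat X_0,\hat X_1)$, which is part (1), and part (2) follows as above.

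The step I expect to be the main obstacle is exactly this construction and verification in condition (4): arranging the bookkeeping between the $X_0$-sieve and the $X_1$-sieve so that branch intersections are genuinely equicontinuous (this is what the diameter control on $\langle S_1,O_1(s)\rangle$ buys) and so that the Fact~\ref{sieveproposition}(3) shrinking property holds both in $M$ and after interpretation. It is worth emphasising that this is the route that avoids having to directly approximate an arbitrary $g\in C(\hat X_0,\hat X_1)$ by interpretations of ground-model functions; such an approximation would in general require $X_1$ to carry an $X_1$-valued interpolation scheme for nearby finite configurations, which a merely completely metrizable space need not have, whereas the completeness of the interpreted sieve (together with Corollary~\ref{compactcorollary} and Arzel\`a--Ascoli) delivers the needed surjectivity automatically.
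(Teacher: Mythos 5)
There is a genuine gap, and it sits exactly where you declare the difficulty to be avoidable. Your plan runs through Proposition~\ref{testproposition}, and its hypotheses (2) and (3) — together with the identification of the canonical extension $\bar\pi$ on a basic set $[C,V]$ with the ``expected'' set $\{g\in C(\hat X_0,\hat X_1)\colon g[\pi_0(C)]\subseteq\pi_1(V)\}$, which you use implicitly throughout — all require a reflection step of the following shape: from a continuous $g\colon\hat X_0\to\hat X_1$ realizing a finite configuration of constraints on interpreted closed/open sets, produce a continuous $f\colon X_0\to X_1$ \emph{in the model $M$} realizing the corresponding configuration (equivalently, show that $\pi''C(X_0,X_1)$ is dense enough that $\bar\pi$ of a basic set is not pathologically large). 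Your appeal to ``a finite compactness/combinatorial argument in the spirit of Proposition~\ref{productproposition}'' does not supply this: in the product, hyperspace and weak${}^*$ cases the analogous witnesses are points, finite sets of points, or Hahn--Banach extensions, which can be selected outright; here the witness is a ground-model \emph{continuous function}, and its existence in $M$ is precisely the nontrivial content of the theorem. Your Arzel\`a--Ascoli sieve takes care of completeness/compactness along branches, but it never produces a single ground-model function from data living on $\hat X_0,\hat X_1$, so conditions (2)--(4) cannot be verified as sketched.

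Your stated reason for bypassing this step — that approximating an arbitrary $g\in C(\hat X_0,\hat X_1)$ by interpretations of ground-model functions would need an $X_1$-valued interpolation scheme — is a misreading of what such an approximation requires. The paper does exactly this approximation without any interpolation: since $g$, as a subset of $\hat X_0\times\hat X_1$, is compact, one covers it by a shrinking sequence of finite systems of open rectangles $O_0(n,i)\times O_1(n,i)$ with interpreted sides and vertical sections of diameter at most $2^{-n}\eps$; the tree of finite attempts to build such a sequence (with a fixed first level) is then illfounded in $V$, hence, by transitivity of $M$, illfounded in $M$, and the intersection $\bigcap_n\bigcup_i\mathrm{cl}(O_0(n,i))\times\mathrm{cl}(O_1(n,i))$ computed from an $M$-sequence is the graph of a ground-model continuous function within $\eps$ of $g$. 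That wellfoundedness argument is the missing engine; once you have it, the paper's own route is in fact shorter than yours: equip $C(X_0,X_1)$ with the complete sup metric $e$, check that $\pi$ is an isometric embedding into $\langle C(\hat X_0,\hat X_1),\hat e\rangle$, use the above to see its range is dense, and conclude by Corollary~\ref{metrizablecorollary} (the interpretation of a completely metrizable space is the completion of its image), with no need for Proposition~\ref{testproposition} or a bespoke sieve on the function space. Your reduction of (2) to (1) via Theorem~\ref{countableproducttheorem} and the uniqueness clause of Theorem~\ref{functiontheorem} is fine, as the paper leaves that part to the reader.
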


\begin{proof}
Let $d\in M$ be any complete metric on the space $X_1$; this turns $C(X_0, X_1)$ into a complete metric space with the metric $e(f, g)=\sup\{d(f(x), g(x))\colon x\in X_0\}$. The metric $d$ is interpreted by $\pi_1$ as a complete metric on the space $\hat X_1$ by Theorem~\ref{metrizablecorollary}. Let $\hat e$ be the complete metric on $C(\hat X_0, \hat X_1)$ defined by $\hat e(f, g)=\sup\{\pi_1(d)(f(x), g(x))\colon x\in\hat X_0\}$.

The first observation is that the map $\pi$ is an isometric embedding from $\langle C(X_0, X_1), e\rangle$ to $\langle C(\hat X_0, \hat X_1), \hat e\rangle$. To see this, suppose that $f, g\in C(X_0, X_1)$ are functions. The set $\{d(f(x), g(x))\colon x\in X_0\}$
is dense in $\{\pi_1(d)(\pi(f), \pi(g))\colon x\in \hat X_0\}$, so these sets have the same supremum and $e(f,g)=\hat e(\pi(f), \pi(g))$ must hold.

The second point is that the range of $\pi$ is dense in the space $C(\hat X_0, \hat X_1)$. For this, suppose that $f\in C(\hat X_0, \hat X_1)$ is a function and $\eps>0$ be a positive rational number. 

\begin{claim}
There is a sequence $y=\langle O_0(n, i)\in\tau_0, O_1(n, i)\in\tau_1\colon i\in I_n, n\in\gw\rangle$ such that for each number $n\in\gw$,

\begin{enumerate}
\item $I_n$ is a finite set, $\bigcup_iO_0(n, i)=X_0$,  and all the vertical sections of the set $\bigcup_i{\mathrm{cl}}(O_0(n, i)) \times {\mathrm{cl}}(O_1(n, i))$
have $d$-diameter at most $2^{-n}\eps$;
\item $\bigcup_i{\mathrm{cl}}(O_0(n+1,i))\times{\mathrm{cl}}(O_1(n+1,i))\subset\bigcup_iO_0(n,i)\times O_1(n,i)$;
\item $f\subset\bigcup_i\pi_0(O_0(n,i))\times\pi(O_1(n,i))$.
\end{enumerate}

\end{claim}

\begin{proof}
This is a straightforward compactness argument with the space $\hat X_0$. Note that $f\subset \hat X_0\times\hat X_1$, as a continuous image of the compact space $\hat X_0$, is compact.
\end{proof}

By a wellfoundedness argument with the transitive model $M$, there is an infinite sequence $z=\langle O_0(n, i)\in\tau_0, O_1(n, i)\in\tau_1\colon i\in I_n, n\in\gw\rangle$  in the model $M$ satisfying the first two items from the claim, and such that $z$ starts with the same tuple $\langle O_0(0, i), O_1(0, i)\colon i\in I_0\rangle$ as the sequence $y$
obtained in $V$ by an application of the claim. Let $g\subset X_0\times X_1$ be defined as
$\bigcap_n \bigcup_{i\in I_n} \langle {\mathrm{cl}}(O_0(n, i)\times{\mathrm{cl}}(O_1(n, i))\rangle$. It is not difficult to verify that $g\in C(X_0, X_1)$ is a continuous function and $\hat e(\pi(g), \pi(f))<\eps$. This proves that the range of $\pi$ is dense in the metric space $\langle C(\hat X_0, \hat X_1), \hat e\rangle$.

Now, by Theorem~\ref{metrizablecorollary}, the map $\pi$ extends to an interpretation of $C(X_0, X_1)$ in $C(\hat X_0, \hat X_1)$
so that $\pi(e)=\hat e$. The proof of item (2) is left to the reader.
\end{proof}

The interpretation of spaces of continuous functions opens the door to the interpretation of regular Borel measures.

\begin{theorem}
Suppose that $M$ is a transitive model of set theory and $M\models \langle X,\tau, \mathcal{B}\rangle$ is a locally compact Hausdorff space with Borel structure and $\mu$ is a regular Borel measure on $X$. Suppose that $\pi\colon X\to\hat X$ is an interpretation. Then there is a unique regular Borel measure $\hat\mu$ on $\hat X$ such that for every set $B\in\mathcal{B}$, $\mu(B)=\hat\mu(\pi(B))$.
\end{theorem}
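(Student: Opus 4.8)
The plan is to transfer $\mu$ through the Riesz representation theorem, using the one-point compactification and the interpretation of compact-open function spaces, Theorem~\ref{cktheorem}. First I would record the setup. A locally compact Hausdorff space is {\v C}ech complete, hence interpretable, and $\hat X$ is again locally compact Hausdorff: every point of $\hat X$ lies in some $\pi(O)$ with ${\mathrm{cl}}_M(O)$ compact in $M$, and then $\pi({\mathrm{cl}}_M(O))={\mathrm{cl}}(\pi''O)$ is a compact neighborhood by Corollary~\ref{compactcorollary}. Thus Theorem~\ref{boreltheorem} applies and $\pi$ acts on $\mathcal{B}$; by Claims~\ref{borelclaim3} and~\ref{borelclaim4} this action is injective, order-preserving and order-reflecting, so $\pi$ carries $\mathcal{B}^M$ isomorphically onto a Boolean subalgebra of $\hat{\mathcal{B}}$ and the prescription $\hat\mu(\pi(B))=\mu(B)$ is at least unambiguous. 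I will also use that $\pi''X$ is dense in $\hat X$ (hence $\pi''O$ is dense in $\pi(O)$ and ${\mathrm{cl}}(\pi(O))=\pi({\mathrm{cl}}_M(O))$ for open $O$) and that $\pi''\tau$ is a basis of $\hat X$.

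For uniqueness, suppose $\hat\mu_0,\hat\mu_1$ are regular Borel measures on $\hat X$ with $\hat\mu_i(\pi(B))=\mu(B)$ for all Borel $B$. Since $\pi''\tau$ is a basis and $\pi$ commutes with finite unions, every compact $L\subseteq\hat X$ is contained in $\pi(P_1\cup\dots\cup P_n)$ for finitely many $P_j\in\tau$. Writing an open set $U\subseteq\hat X$ with compact closure as $\bigcup_{j\in J}\pi(P_j)$ and using inner regularity on one side and monotonicity together with $\hat\mu_i(\pi(P_{j_0}\cup\dots\cup P_{j_k}))=\mu(P_{j_0}\cup\dots\cup P_{j_k})$ on the other, one gets $\hat\mu_i(U)=\sup\{\mu(P_{j_0}\cup\dots\cup P_{j_k})\colon\{j_0,\dots,j_k\}\subseteq J\text{ finite}\}$, which does not depend on $i$. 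Since $\hat X$ is locally compact Hausdorff, inner and outer regularity then force $\hat\mu_0=\hat\mu_1$ on all Borel sets.

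For existence, first assume $\mu$ is finite. In $M$, the one-point compactification $X^+=X\cup\{\infty\}$ is compact Hausdorff, and $\mu$ (with $\mu(\{\infty\})=0$) induces the positive bounded functional $\Lambda(f)=\int f\,d\mu$ on $C(X^+,\mathbb{R})$. The interpretation of $X^+$ is compact (Corollary~\ref{compact1corollary}), and since $X$ is open in $X^+$ with singleton complement, Theorem~\ref{subspacetheorem} identifies it with $(\hat X)^+$, the added point being $\pi(\{\infty\})$. By Theorem~\ref{cktheorem}, $\Lambda$ is interpreted as a continuous $\hat\Lambda\colon C((\hat X)^+,\mathbb{R})\to\mathbb{R}$; linearity of $\hat\Lambda$ and the norm identity $\|\hat\Lambda\|=\hat\Lambda(1)=\mu(X^+)$ --- hence positivity --- follow from analytic absoluteness (Theorem~\ref{absolutenesstheorem}) applied to the relevant $\Pi_1$ facts about the Banach-space structure $\langle C(X^+,\mathbb{R}),\mathbb{R},+,\cdot,\|\cdot\|,\Lambda\rangle$. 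The Riesz representation theorem in $V$ then yields a unique finite regular Borel measure $\hat\nu$ on $(\hat X)^+$ with $\hat\Lambda(g)=\int g\,d\hat\nu$, and it remains to see that $\hat\nu(\pi(B))=\mu(B)$ for every $B$ Borel in $X^+$. Both $B\mapsto\mu(B)$ and $B\mapsto\hat\nu(\pi(B))$ are finite measures on $\mathcal{B}^M$, additive with respect to countable disjoint families lying in $M$ (because $\pi$ commutes with countable unions in $M$ by Theorem~\ref{boreltheorem} and preserves disjointness) and equal on $X^+$, so by induction on the rank of the Borel code it suffices to prove equality on each open $U\in\tau$. There I would use the standard Riesz estimate: $\mu(U)$ and $\hat\nu(\pi(U))$ are the suprema of $\Lambda(f)$, resp.\ $\hat\Lambda(g)$, over $0\le f\le1$ with $\supp(f)\subseteq U$, resp.\ $0\le g\le1$ with $\supp(g)\subseteq\pi(U)$; the inequality $\ge$ comes from the fact that $\pi(f)$ is an admissible test function for $\pi(U)$, since $\{\pi(f)\ne0\}=\pi(\{f\ne0\})$ by Theorem~\ref{functiontheorem}(2) so $\supp(\pi(f))=\pi({\mathrm{cl}}_M\{f\ne0\})\subseteq\pi(U)$, and $\hat\Lambda(\pi(f))=\Lambda(f)$; the inequality $\le$ comes from covering the compact set $\supp(g)$ by finitely many basic sets $\pi(P_j)$ with ${\mathrm{cl}}_M(\bigcup_j P_j)\subseteq U$ and then, for a Urysohn function $h\in C(X^+,\mathbb{R})$ in $M$ with $0\le h\le1$, $h\equiv1$ on ${\mathrm{cl}}_M(\bigcup_j P_j)$ and $\supp(h)\subseteq U$, observing $g\le\pi(h)$ (as $\pi(h)\equiv1$ on $\pi({\mathrm{cl}}_M\bigcup_j P_j)\supseteq\supp(g)$, where $\{\pi(h)=1\}=\pi(\{h=1\})$ via complementation and Theorem~\ref{functiontheorem}(2)), whence $\hat\Lambda(g)\le\hat\Lambda(\pi(h))=\Lambda(h)\le\mu(U)$. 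Finally $\hat\nu(\pi(\{\infty\}))=\mu(\{\infty\})=0$, so $\hat\mu:=\hat\nu\restriction\hat X$ is the desired finite regular Borel measure on $\hat X$. The case of a $\gs$-finite (in particular, arbitrary Radon) $\mu$ then follows by exhausting $X$ by an increasing sequence of open sets of finite measure, applying the finite case to each open subspace (Theorem~\ref{subspacetheorem}), and assembling the consistent family of measures so obtained.

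The direction $\hat\nu(\pi(U))\le\mu(U)$ of the open-set identity is, I expect, the main obstacle. It relies on $\pi''\tau$ being a basis of $\hat X$, so that the compact support of an arbitrary $g\in C((\hat X)^+,\mathbb{R})$ can be trapped between $\pi(P)$ and $\pi({\mathrm{cl}}_M(P))$ with ${\mathrm{cl}}_M(P)\subseteq U$, and on the correct transfer through $\pi$ of the Urysohn function and of the predicates ``$0\le h\le1$'' and ``$h\equiv1$ on a closed set'' --- each of which reduces, via complementation, to Theorem~\ref{functiontheorem}(2) and analytic absoluteness, but the reduction must be carried out with some care. The bookkeeping around the identification $\widehat{X^+}=(\hat X)^+$ and around the passage from the finite to the $\gs$-finite case are the other places demanding attention.
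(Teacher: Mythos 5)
Your finite-measure argument is sound and is essentially the paper's own proof seen through a one-point-compactification detour: the paper works directly with the compact case, interprets the integration functional $F$ on $C(X,\mathbb{R})$ via Theorem~\ref{cktheorem}, uses analytic absoluteness to see $\pi(F)$ is a positive continuous linear functional, applies Riesz in $V$, and then matches $\mu$ and $\hat\mu$ on open sets by exactly the two test-function inequalities you describe (the paper packages the covering step as Claim~\ref{veryeasyclaim}: for compact $K\subseteq O$ in $\hat X$ there is $P\in\tau$ with $K\subseteq\pi({\mathrm{cl}}(P))\subseteq O$). Two details in your version need the care you anticipated: the sets $P_j$ trapping $\supp(g)$ must be chosen with ${\mathrm{cl}}_M(P_j)$ compact (available by local compactness and regularity inside $M$), since otherwise ${\mathrm{cl}}_M(\bigcup_jP_j)$ need not be compact, a locally compact Hausdorff $X$ need not be normal, and if you instead take closures in $X^+$ the point $\infty$ can enter the closure and destroy the requirement $\supp(h)\subseteq U$; and the identification $\widehat{X^+}=(\hat X)^+$ should be justified by Theorem~\ref{boreltheorem} (complementation) together with Theorem~\ref{subspacetheorem}, as you indicate. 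These are fixable bookkeeping points, not errors of substance.

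The genuine gap is the last step. A regular Borel (Radon) measure on a locally compact Hausdorff space need not be $\gs$-finite --- counting measure on an uncountable discrete space, or Haar measure on a non-$\gs$-compact group (precisely the example the paper treats right after this theorem) --- so your parenthetical ``$\gs$-finite (in particular, arbitrary Radon)'' is false, and the exhaustion of $X$ by a countable increasing sequence of open sets of finite measure is simply unavailable in general. As stated, your reduction proves the theorem only for $\gs$-finite $\mu$, not for the arbitrary regular Borel measures the statement covers. The paper's remedy is to replace the countable exhaustion by the directed family of all relatively compact open sets: for each $O\in\tau$ with ${\mathrm{cl}}(O)$ compact one applies the compact case to $\mu\restriction{\mathrm{cl}}(O)$, obtaining $\hat\mu_O$ on ${\mathrm{cl}}(\pi(O))$, and then sets $\hat\mu(B)=\sup\{\hat\mu_O(B\cap{\mathrm{cl}}(\pi(O)))\colon O\in\tau,\ {\mathrm{cl}}(O)\text{ compact}\}$; consistency of the family and the regularity of the resulting $\hat\mu$ are then routine. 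Your argument can be repaired along the same lines (assemble over the directed family rather than a countable chain), but as written the passage from the finite case to the general case does not go through.
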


\begin{proof}
I will need an easy general claim.

\begin{claim}
\label{veryeasyclaim}
Whenever $K\subset O$ are a compact and open subset of $X$ respectively then there is an open set $P\in\tau$ such that
$K\subset\pi({\mathrm{cl}}(P))\subset O$.
\end{claim}

\begin{proof}
Since $\pi\colon X\to\hat X$ is an interpretation, for every point $x\in K$ there are sets $O_x, P_x\in\tau$ such that
$x\in O_x$ and $\pi(O_x)\subset O$ and ${\mathrm{cl}}(P_x)\subset O_x$. By a compactness argument, find an open set $L\subset K$ such that $K\subset\bigcup_{x\in L}\pi(P_x)$ and let $P=\bigcup_{x\in L}P_x$. The set $P\in\tau$ works.
\end{proof}

First, handle the case of a compact space $X$. In this case, the measure $\mu$ must be finite by regularity.
In the model $M$, let $F\colon C(X, \mathbb{R})\to\mathbb{R}$ be the continuous linear operator of Lebesgue integration: $F(f)=\int f\ d\mu$.
By Theorem~\ref{cktheorem}, the space $C(X, \mathbb{R})$ is interpreted as $C(\hat X, \mathbb{R})$ via a a map which I will call $\pi$ again. By analytic absoluteness~\ref{absolutenesstheorem}, $\pi(F)$ is a continuous linear operator on $C(\hat X, \mathbb{R})$. By the Riesz representation theorem, there is a regular Borel measure $\hat\mu$ on $\hat X$ such that $\pi(F)$ is the integration with respect to $\hat \mu$. I claim that $\hat\mu$ works.

First, prove that for every open set $O\in\tau$, $\mu(O)=\hat\mu(\pi(O))$. For the $\leq$ inequivalence, if $q<\mu(O)$ is a rational number, then by the regularity of $\mu$ there is a compact set $K\in M$ such that $K\subset O$ and $\mu(K)>q$. By the Hausdorffness of the space $X$ in the model $M$, there is a continuous function $f\in C(X, [0,1])$ such that $f\restriction K=1$ and $f\restriction (X\setminus O)=0$. Then by analytic absoluteness $\supp(\pi(f))\subset\pi(O)$ and so $\hat \mu(\pi(O))>\pi(F)(\pi(f)=F(f)>q$ as required.
For the $\geq$ inequivalence, if $q<\hat\mu(\pi(O))$ is a rational number, then by the regularity of $\hat\mu$ there is a compact set
$K\subset \pi(O)$ such that $K\subset\pi(O)$ and $\hat\mu(K)>q$. By Claim~\ref{veryeasyclaim}, there is an open set $P\in\tau$ such that ${\mathrm{cl}}(P)\subset O$ and $K\subset\pi(P)$.
There is a continuous function $f\in C(X, [0,1])$ such that $f\restriction{\mathrm{cl}}(P)=1$ and $f\restriction (X\setminus O)=0$.
Then $\mu(O)\geq F(f)=\pi(F)(\pi(f))\geq\hat\mu({\mathrm{cl}}(\pi(P)))\geq\hat\mu(K)>q$ as desired.

Now, it follows that for every compact set $K\subset X$ in the model $M$, $\mu(K)=\hat\mu(\pi(K))$ since $\mu(K)=\mu(X)-\mu(X\setminus K)=\hat \mu(\hat X)-\hat\mu(\pi(X\setminus K))=\hat\mu(\pi(K))$ by the work on open sets.
By regularity of the measure $\mu$, it also follows that for every set $B\in\mathcal{B}$, $\mu(B)=\hat\mu(\pi(B))$ as desired.

For the uniqueness of the regular Borel measure $\hat\mu$,
by the inner regularity of $\hat\mu$ and Claim~\ref{veryeasyclaim}, for every open set $O\subset X$, $\hat\mu(O)=\sup\{\hat\mu(\pi(P)\colon P\in\tau, \pi(P)\subset O\}$ and so the values of $\hat\mu$ on open sets are uniquely determined by the demand on agreement with $\mu$. The values of $\hat\mu$ on other Borel sets are uniquely determined by the outer regularity demand on $\hat\mu$.

For the general case of a locally compact space $X$ and a (possibly infinite) regular Borel measure $\mu$ on $X$,
for every open set $O\in\tau$ such that ${\mathrm{cl}}(O)$ is compact write $\mu_O$ for the restriction of the measure $\mu$ to ${\mathrm{cl}}(O)$,
$\hat\mu_O$ for the unique measure on the compact set ${\mathrm{cl}}(\pi(O))\subset \hat X$ obtained from $\mu_O$ by the work on the compact case, and let $\hat\mu$ be the measure on $\hat X$ defined by $\hat\mu(B)=\sup\{\hat\mu_O(B\cap{\mathrm{cl}}(\pi(O)))\colon O\in\tau$ and ${\mathrm{cl}}(O)\subset X$ is compact$\}$. The verification of the required properties of the measure $\hat\mu$ is routine and left to the reader.
\end{proof}

\begin{example}
Suppose that $M$ is a transitive model of set theory and $M\models G$ is a topological group with locally compact Hausdorff topology. Let $\mu$ be the left invariant Haar measure on $G$ in the model $M$. Let $\pi\colon G\to\hat G$ be an interpretation. Then $\hat\mu$ is a left invariant Haar measure on the group $\hat G$. For the sake of brevity, I deal with the case of a compact group $G$; the slightly more involved general case
is left to the interested reader. Note that the $\mu$-integration linear operator $F$ on $C(G,\mathbb{R})$ in the model $M$ is invariant under left shifts by elements of $G$. By analytic absoluteness~\ref{absolutenesstheorem}, this is also true of the interpretation $\hat F$ of $F$,
and so the measure $\hat\mu$ obtained from $\hat F$ must be left-invariant as well.
\end{example}

\section{Banach spaces}
\label{functionalsection}

In this section, I provide several theorems on the interpretation of the usual operations and concepts surrounding Banach spaces.

\begin{theorem}
\label{easybanachtheorem}
Topological vector space over $\mathbb{R}$ (if interpretable) is interpreted as a topological vector space over $\mathbb{R}$.
A normed Banach space is interpreted as a normed Banach space. The following properties of Banach spaces are preserved by the interpretation functor:

\begin{enumerate}
\item local convexity;
\item uniform convexity.
\end{enumerate}
\end{theorem}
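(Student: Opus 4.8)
The plan is to package each kind of object as an interpretable structure and read off the conclusion from analytic absoluteness (Theorem~\ref{absolutenesstheorem}), the description of interpretations of completely metrizable spaces (Corollary~\ref{metrizablecorollary}), and the product theorems. I view a topological vector space over $\mathbb{R}$ as the interpretable structure $\mathfrak{X}=\langle X,\mathbb{R},0,-,+,\cdot,\dots\rangle$ whose constituent spaces are $X$ (interpretable by hypothesis) and $\mathbb{R}$ (Polish, hence interpretable), with $0\in X$ a distinguished constant and $-$ a unary operation, so that all the vector-space axioms become $\Pi_1$; for a normed Banach space I also add the norm $\|\cdot\|\colon X\to\mathbb{R}$, whose axioms are $\Pi_1$ as well. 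First I would note that the interpreted operations $\hat+,\hat\cdot$ (and $\widehat{\|\cdot\|}$) are continuous by Theorem~\ref{functiontheorem}, that analytic absoluteness transfers all the $\Pi_1$ axioms to $\hat{\mathfrak{X}}$, and that the interpretation of the ordered field $\mathbb{R}$ is again $\mathbb{R}$ (field axioms $\Pi_1$, completeness from Corollary~\ref{compact1corollary}, the Archimedean property from $\mathbb{R}=\bigcup_n(-n,n)$). This already gives that a topological vector space is interpreted as a topological vector space over $\mathbb{R}$. For the Banach case the only extra point is completeness: the norm metric $d(x,y)=\|x-y\|$ is complete, so by Corollary~\ref{metrizablecorollary} the space $\hat X$ is the metric completion of $\langle X,d\rangle$, $\pi''X$ is dense, and $\hat d=\pi(d)$ is a complete metric generating $\hat\tau$; transferring the $\Pi_1$ identity $d(x,y)=\|x-y\|$ and using density and continuity yields $\hat d(u,v)=\widehat{\|\cdot\|}(u-v)$ for all $u,v\in\hat X$, so $\widehat{\|\cdot\|}$ is a norm whose induced metric is complete and generates the topology, i.e.\ $\hat X$ is a normed Banach space.

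For local convexity I would argue through a basis rather than a first-order formula, since local convexity is not itself $\Sigma_1$ or $\Pi_1$. In $M$, local convexity yields a basis $\gs\subseteq\tau$ consisting of convex open sets (translate a convex neighborhood basis of $0$). As $\pi$ commutes with arbitrary unions and $\pi''\tau$ is a basis of $\hat\tau$, the family $\pi''\gs$ is a basis of $\hat\tau$, so it suffices to check that each $\pi(U)$, $U\in\gs$, is convex in $\hat X$. Convexity of an open set $U$ is precisely the inclusion $U\times U\times[0,1]\subseteq g^{-1}(U)$ of open subsets of $X\times X\times[0,1]$, where $g(x,y,\lambda)=\lambda\cdot x+(1-\lambda)\cdot y$ is the continuous affine-combination map. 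By Theorems~\ref{compactproducttheorem} and~\ref{countableproducttheorem} the product map is an interpretation of $X\times X\times[0,1]$; it therefore commutes with unions and inclusions of open sets and sends the basic box $U\times U\times[0,1]$ to $\pi(U)\times\pi(U)\times[0,1]$, while by Theorem~\ref{functiontheorem}(2) it sends the open set $W=g^{-1}(U)$ to $\hat g^{-1}(\pi(U))$, where $\hat g$ is the affine-combination map of $\hat X$ (a composition of the interpreted operations, by analytic absoluteness). Hence $\pi(U)\times\pi(U)\times[0,1]\subseteq\hat g^{-1}(\pi(U))$, i.e.\ $\pi(U)$ is convex, and $\hat X$ has a basis of convex open sets, so it is locally convex.

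For uniform convexity, for positive rationals $\eps,\gd$ let $\psi(\eps,\gd)$ be the $\Pi_1$ sentence $\forall x,y\,(\,\|x\|=\|y\|=1\wedge\|x-y\|\geq\eps\rightarrow\|\tfrac12(x+y)\|\leq 1-\gd\,)$, whose parameters $\eps,\gd$ are rationals and hence fixed by the interpretation of $\mathbb{R}$. Since $M$ is transitive, its rationals are the genuine rationals, and uniform convexity of $X$ in $M$ is equivalent to the assertion that for every rational $\eps>0$ there is a rational $\gd>0$ with $M\models\psi(\eps,\gd)$ (round the modulus down to a rational). Fixing such a rational witness $\gd_\eps$ in $M$ for each $\eps$ and transferring the $\Pi_1$ sentence $\psi(\eps,\gd_\eps)$ via analytic absoluteness shows that $\hat X$ admits the same rational moduli of convexity, hence is uniformly convex. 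The main obstacle, I expect, is the local-convexity clause: because it is not a first-order property of the structure, one must pass to a basis of convex open sets and then carefully combine the product theorems, the commutation of interpretations with preimages of open sets, and the identification of $\hat g$ via analytic absoluteness to see that convexity of a single open set survives; the other clauses are direct once the objects are packaged as interpretable structures.
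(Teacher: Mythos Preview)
Your proof is correct and follows essentially the same approach as the paper: package the objects as interpretable structures, invoke analytic absoluteness for the $\Pi_1$ axioms, use Corollary~\ref{metrizablecorollary} for completeness of the norm metric, and handle local convexity by showing that each convex basic open set is interpreted as a convex open set (the paper simply says ``convexity is a $\Pi_1$ property of the set,'' which is exactly your open-set inclusion $U\times U\times[0,1]\subseteq g^{-1}(U)$ transferred via the product and function theorems). Your treatment of uniform convexity is in fact more careful than the paper's: the paper asserts uniform convexity is a $\Pi_1$ statement, whereas you correctly note that its $\forall\eps\,\exists\gd\,\forall x,y$ form is not literally $\Pi_1$ and instead fix rational witnesses $\gd_\eps$ in $M$ to reduce it to a family of $\Pi_1$ sentences with rational parameters---this is the right way to make the paper's claim precise.
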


\begin{proof}
These are all elementary consequences of analytic absoluteness~\ref{absolutenesstheorem}. Suppose that $M$ is a transitive model of set theory and $M\models X$ is a topological vector space. First of all, the axioms of topological vector space are all $\Pi_1$, and therefore they survive the interpretation process. Now suppose that $\phi$ is a complete norm
on $X$; then $d(x, y)=\phi(x-y)$ is a complete metric on $X$. Let $\pi\colon X\to\hat X$ be an interpretation. The interpretation $\pi(\phi)$ satisies the triangle inequality by analytic absoluteness, since the triangle inequality is a $\Pi_1$ statement. By Theorem~\ref{metrizablecorollary}, $\hat X$ is just the completion of $\pi''X$ under the metric $\pi(d)$. In particular, $\pi(d)$ is a complete metric on $\hat X$, by analytic absoluteness $(\pi (d))(x, y)=\pi(\phi)(x-y)$ and so $\pi(\phi)$ is a complete norm on $\hat X$. Therefore, a Banach space is interpreted as a Banach space, and its complete norms are interpreted as complete norms.

The local convexity is the statement that every (basic) open neighborhood in $X$ is a union of convex open sets. Now, if an
open set $O\subset X$ in $\tau$ is convex, then $\pi(O)$ is convex again, since convexity is a $\Pi_1$ property of the set.
Also, interpretations commute with arbitrary unions and so if $O=\bigcup_iO_i$ is a union of convex open sets in the model $M$, then $\pi(O)=\bigcup_i\pi(O_i)$ is a union of open convex sets as desired.

The uniform convexity is a $\Pi_1$ statement and therefore survives the interpretation process again.
\end{proof}

Now, I am ready to show that various operations on Banach spaces commute with the interpretation functor. 
The most popular operation on Banach spaces is certainly taking a dual, either with the weak${}^*$ topology, or with the topology obtained from the dual norm.  For a normed Banach space $X$ with norm $\phi$, write $X^*$ for its normed dual with the dual norm $\phi^*$. The bracket $\langle\rangle_X$ denotes the evaluation map from $X\times X^*$
to $\mathbb{R}$: $\langle x, x^*\rangle=x^*(x)$.

\begin{theorem}
\label{hardbanachtheorem}
Suppose that $M$ is a transitive model of set theory and $X\models M$ is a Banach space with norm $\phi$, with dual $X^*$ and dual norm $\phi^*$.
Let $\pi\colon X\to\hat X$ be an interpretation. $\pi(\phi)$ is a complete norm on $\hat X$; write $(\hat X)^*$
for the dual of $\hat X$ with the dual norm $\pi(\phi)^*$. Then 

\begin{enumerate}
\item $X^*$ is interpreted as a closed subset of $(\hat X)^*$;
\item the norm $\phi^*$  is interpreted as the restriction of the norm $\pi(\phi)^*$;
\item the bracket $\langle\rangle_X$ is interpreted as the restriction of the bracket $\langle\rangle_{\hat X}$;
\item if $M\models X$ is uniformly convex then the normed dual of $X$ is interpreted as all of $\hat X^*$.
\end{enumerate}
\end{theorem}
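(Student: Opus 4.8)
The plan is to construct the canonical linear isometry $\Phi$ identifying the interpretation $\widehat{X^*}$ of $X^*$ with a closed subspace of $(\hat X)^*$, read off (1)--(3) from its construction, and obtain (4) by a Hahn--Banach argument. First I would assemble the interpretable structure $\mathfrak{X}$ with constituent spaces $X$, $X^*$ and $\mathbb{R}$ (the first two completely metrizable, hence {\v C}ech complete), equipped with the vector-space operations of $X$ and $X^*$, the field operations and order of $\mathbb{R}$, the norms $\phi,\phi^*$, and the bracket $\langle\cdot,\cdot\rangle_X\colon X\times X^*\to\mathbb{R}$; the bracket is jointly continuous for the product of the norm topologies because $|\langle x,x^*\rangle-\langle y,y^*\rangle|\le\phi(x-y)\,\phi^*(x^*)+\phi(y)\,\phi^*(x^*-y^*)$. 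Interpreting this structure (Theorem~\ref{existencetheorem} and analytic absoluteness~\ref{absolutenesstheorem}, together with Theorem~\ref{easybanachtheorem}) yields the constituent interpretations $\pi\colon X\to\hat X$, $\rho\colon X^*\to\widehat{X^*}$, the identity on $\mathbb{R}$, the interpreted bracket $\hat B\colon\hat X\times\widehat{X^*}\to\mathbb{R}$, and interpreted operations making $\pi(\phi),\rho(\phi^*)$ the complete norms of $\hat X,\widehat{X^*}$; moreover $\pi$ (as a tuple) is $\Sigma_1$- and hence $\Pi_1$-elementary, so every universally quantified algebraic identity valid in $\mathfrak{X}$ transfers to $\hat{\mathfrak{X}}$ with parameters drawn from the ranges of the interpretation maps.

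Next I would define, on the dense linear subspace $\rho''X^*\subseteq\widehat{X^*}$, the map $\Phi_0(\rho(x^*))=\hat B(\,\cdot\,,\rho(x^*))$. For each $x^*\in X^*$ the transferred $\Pi_1$ assertions that $\langle\,\cdot\,,x^*\rangle$ is linear and that $|\langle x,x^*\rangle|\le\phi^*(x^*)\phi(x)$ show $\hat B(\,\cdot\,,\rho(x^*))$ is a bounded linear functional on $\hat X$, and $\Phi_0$ is linear since $\rho(x^*)+\rho(y^*)=\rho(x^*+y^*)$ (operations are interpreted) and linearity of the bracket in the second variable transfers. Since $\pi''X$ is dense in $\hat X$ (Corollary~\ref{metrizablecorollary}), the set $\{\pi(x):\phi(x)<1\}$ is dense in the closed unit ball of $\hat X$; using continuity of $\hat B(\,\cdot\,,\rho(x^*))$, the identity $\hat B(\pi(x),\rho(x^*))=\langle x,x^*\rangle$ from analytic absoluteness, and the definition of the dual norm inside $M$, one gets
$$\|\Phi_0(\rho(x^*))\|_{(\hat X)^*}=\sup\{|\langle x,x^*\rangle|:x\in X,\ \phi(x)<1\}=\phi^*(x^*)=\rho(\phi^*)(\rho(x^*)).$$
Thus $\Phi_0$ is a linear isometry of a dense subspace of $\widehat{X^*}$ into the Banach space $(\hat X)^*$, so it extends uniquely to a linear isometry $\Phi\colon\widehat{X^*}\to(\hat X)^*$ with closed range; continuity of $\hat B$ in its second variable gives $\Phi(\hat x^*)=\hat B(\,\cdot\,,\hat x^*)$ for every $\hat x^*$. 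This is exactly (1); statement (2) follows since $\|\Phi(\hat x^*)\|_{(\hat X)^*}=\|\hat x^*\|_{\widehat{X^*}}=\rho(\phi^*)(\hat x^*)$ and $\rho(\phi^*)$ is the interpretation of $\phi^*$; and (3) is immediate, since $\langle\hat x,\Phi(\hat x^*)\rangle_{\hat X}=\Phi(\hat x^*)(\hat x)=\hat B(\hat x,\hat x^*)$.

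For (4), assume $M\models X$ is uniformly convex. By Theorem~\ref{easybanachtheorem}(2), $\hat X$ is uniformly convex, hence reflexive by the Milman--Pettis theorem. If $\Phi$ failed to be onto, Hahn--Banach together with reflexivity of $\hat X$ would produce a nonzero $\hat x_0\in\hat X$ with $\hat B(\hat x_0,\rho(x^*))=0$ for all $x^*\in X^*$. Pick $x_n\in X$ with $\pi(x_n)\to\hat x_0$; then $\phi(x_n)=\pi(\phi)(\pi(x_n))\to\pi(\phi)(\hat x_0)>0$, so eventually $x_n\ne0$, and in $M$ Hahn--Banach gives $x_n^*\in X^*$ with $\phi^*(x_n^*)=1$ and $\langle x_n,x_n^*\rangle=\phi(x_n)$. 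Since $\|\Phi(\rho(x_n^*))\|_{(\hat X)^*}=1$ by (2),
$$\phi(x_n)=\hat B(\pi(x_n),\rho(x_n^*))=\hat B(\pi(x_n)-\hat x_0,\rho(x_n^*))=\Phi(\rho(x_n^*))(\pi(x_n)-\hat x_0),$$
so $|\phi(x_n)|\le\|\pi(x_n)-\hat x_0\|_{\hat X}\to0$, contradicting $\phi(x_n)\to\pi(\phi)(\hat x_0)>0$; hence $\Phi$ is onto and (4) holds. I expect the main obstacle to be precisely this last step: analytic absoluteness only constrains images of ground-model points, so every estimate must be routed through the density of $\pi''X$ and $\rho''X^*$, and (4) moreover needs the genuinely analytic inputs of Milman--Pettis reflexivity of $\hat X$ and of norm-attaining functional selection carried out inside $M$.
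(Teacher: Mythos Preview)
Your argument is correct. For (1)--(3) you do exactly what the paper does: interpret each $x^*\in X^*$ as a bounded linear functional on $\hat X$, check that this is an isometry using density of $\pi''X$ in $\hat X$, and extend by completion (Corollary~\ref{metrizablecorollary}) to an isometric embedding of $\widehat{X^*}$ into $(\hat X)^*$.

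For (4) your route diverges from the paper's. The paper first proves a general weak${}^*$ density claim (valid without any convexity hypothesis): the set $\{\pi(x^*)\colon x^*\in X^*\}$ is weak${}^*$ dense in $(\hat X)^*$, established by a Hahn--Banach approximation inside $M$ on finite-dimensional data. It then invokes Milman--Pettis to identify the weak and weak${}^*$ topologies on $(\hat X)^*$, and finally Mazur's theorem (norm closure $=$ weak closure for convex sets) to upgrade weak density to norm density. Your argument bypasses both the weak${}^*$ density claim and Mazur: you use Milman--Pettis only to realize a separating functional in $(\hat X)^{**}$ as evaluation at a point $\hat x_0\in\hat X$, and then reach a contradiction by choosing norming functionals $x_n^*$ in $M$ for approximants $x_n$ of $\hat x_0$. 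This is a cleaner and more self-contained path to (4); the paper's approach has the compensating advantage that its weak${}^*$ density claim is of independent interest and holds for arbitrary Banach spaces $X$, not just uniformly convex ones.
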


\begin{proof}
Let $x^*$ be a continuous linear functional from $X$ to $\mathbb{R}$ in the model $M$. By Theorem~\ref{functiontheorem}, $\pi(x^*)$ is a continuous function from $\hat X$ to $\mathbb{R}$; by analytic absoluteness~\ref{absolutenesstheorem}, the function $\pi(x^*)$ is linear and so an element of $(\hat X)^*$.

Let $B\subset X$ be the $\phi$-unit ball; so $\pi(B)\subset\hat X$ is the $\pi(\phi)$-unit ball. Since $\pi''(x^*)\subset\pi(x^*)$ and $\pi''B\subset\pi(B)$ are dense sets, it is the case that $(x^*)''(B)$ is a dense subset
of $(\pi(x^*))''\pi(B)$, and so $\phi^*(x^*)=(\pi(\phi)^*)(\pi(x^*))$.
It follows that the map $x^*\mapsto\pi(x^*)$ is a norm and metric preserving embedding from $\langle X^*, \phi^*\rangle$ to $\langle (\hat X)^*, \pi(\phi)^*\rangle$ which also commutes with the brackets.

Let $\chi\colon X^*\to Y$ be an interpretation of the dual space $X^*$. By Theorem~\ref{metrizablecorollary}, the space $Y$ is just the completion of $\chi''X^*$ under the norm $\chi(\phi^*)$. As a result, there is a unique isometric embedding $h\colon \langle Y, \chi(\phi^*)\rangle \to \langle(\hat X)^*, \pi(\phi)^*\rangle$ such that $h(\chi(x^*))=\pi(x^*)$. Clearly,
$(h\circ\chi)''(\langle\rangle_X)$ is a subset of the bracket $\langle\rangle_{\hat X}$ and $(h\circ \chi)''\phi^*$ is a subset of the norm $(\pi(\phi))^*$ on $(\hat X)^*$. Since in both cases the functions in question are continuous, it follows that $(h\circ\chi)(\langle\rangle_X)$ is equal to the restriction of the bracket $\langle\rangle_{\hat X}$ to $\hat X\times \rng(h)$ and the $(h\circ\chi)(\phi^*)$ is just the restriction of $(\pi(\phi))^*$ to $\rng(h)$. This completes the verification of the first three items.

For the last item, I will use the following general claim of independent interest:

\begin{claim}
\label{weakstarclaim}
The set $\{\pi(x^*)\colon x^*\in X^*\}\subset (\hat X)^*$ is dense in the weak${}^*$ topology.
\end{claim}

\begin{proof}
Recall that the weak${}^*$ topology is just the topology of pointwise convergence of linear functionals in $(\hat X)^*$.
Thus, it will be enough, given a linear functional $x^*\in (\hat X)^*$, points $x_i\in\hat X$ and open sets $O_i\subset\mathbb{R}$ for $i\in n$, such that for all $i\in n$ $x^*(x_i)\in O_i$ holds, to produce a linear functional $x^{**}\in X^*$ such that for all
$i\in n$, $\pi(x^{**})(x_i)\in O_i$ holds.

Suppose for simpicity that the points $x_i\in \hat X$ are linearly independent. Find a real number $\eps>0$ such that
for all $i\in\gw$, the interval $(x^*(x_i)-\eps, x^*(x_i)+\eps)$ is a subset of $O_i$. Find a natural number $r$ such that $\pi(\phi)(x^*)\leq r$. Find points $y_i\in X$ such that

\begin{itemize}
\item $\{y_i\colon i\in n\}$ is a linearly independent collection;
\item $\phi(y_i)\geq\phi^*(x_i)$;
\item $\phi^*(x_i-\pi(y_i))<\eps/2r$.
\end{itemize}

This is easy using the fact that $\pi''X\subset \hat X$ is dense. Find rational numbers $t_i$ for $i\in\gw$ such that 
$\phi(x_i)-\eps/2<t_i<\phi(x_i)$ for each $i\in n$. In the model $M$, use the Hahn--Banach theorem to find a linear functional
$\phi(x^{**})\leq r$ and $x^{**}(y_i)=t_i$ for each $i\in n$. Then, for each $i\in n$, it is clear that
$\pi(x^{**})(x_i)=\pi(x^{**})(y_i)+\pi(x^{**})(x_i-y_i)\in (t-\eps/2, t+\eps/2)\subset O_i$, which completes the proof of the claim.
\end{proof}

For the conclusion of last item, it is enough to show that the interpretation of $X^*$ is dense in $(\hat X)^*$ in the sense of the dual norm $\pi(\phi)^*$: since these are both
complete metric spaces, the density necessitates the conclusion that the interpretation of $X^*$ is in fact equal to $(\hat X)^*$.
Since the interpretation of $X^*$ is a convex subset of $(\hat X)^*$, it is enough to show that it is dense in the sense of the weak topology of $(\hat X)^*$ by \cite[Theorem 3.12]{rudin:functional}. Now (the key step in the proof), since $M\models X$ is uniformly convex, then so is $\hat X$ by Theorem~\ref{easybanachtheorem},
so $\hat X$ is reflexive by Milman--Pettis theorem \cite{pettis:convex} and the weak topology on $(\hat X)^*$ coincides with the weak${}^*$ topology. Thus,
the desired conclusion follows from Claim~\ref{weakstarclaim}.
\end{proof}

\begin{example}
In a transitive model $M$ of set theory, consider the spaces $X=\ell_1$ and $Y=\ell_\infty$ with their usual norms. The space $X$, being separable, is interpreted as $\ell_1$. On the other hand, whenever $a\subset\gw$ is a set which is not in $M$, the characteristic function of $a$
is an element of $\ell_\infty$ which is not in the closure of $Y$. Thus, the interpretation of $Y$ will be a proper closed subset of the normed dual of the interpretation of $X$.
\end{example}

\begin{theorem}
\label{weakstartheorem}
Suppose that $M$ is a transitive model of set theory and $M\models \langle X, \tau\rangle$ is a Banach space and $Y$ the unit ball of its dual with the weak${}^*$ topology.
Suppose that $\pi\colon X\to\hat X$ is an interpretation and write $\hat Y$ for the unit ball of the dual of $\hat X$ with the 
weak${}^*$ topology.

\begin{enumerate}
\item The map $\pi\colon Y\to\hat Y$ extends to an interpretation;
\item the bracket on $X\times Y$ is interpreted as the bracket on $X\times Y$.
\end{enumerate}
\end{theorem}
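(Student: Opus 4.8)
The plan rests on the compactness of $Y$. By the Banach--Alaoglu theorem $Y$ is compact Hausdorff, so by Corollary~\ref{compact1corollary} it has a unique compact preinterpretation, which is its interpretation; and by Theorem~\ref{easybanachtheorem} the interpretation $\hat X$ (with norm $\pi(\phi)$) is again a Banach space, so the space $\hat Y$ described in the theorem is compact Hausdorff as well. Thus it suffices to identify $\hat Y$, together with the natural map, as the compact preinterpretation of $Y$. I first observe that for $x^{*}\in Y$ (a norm-$\le 1$ functional on $X$ in $M$) Theorem~\ref{functiontheorem} produces its interpretation $\pi(x^{*})\colon\hat X\to\mathbb{R}$, which is linear by analytic absoluteness~\ref{absolutenesstheorem} and satisfies $\pi(x^{*})(\pi(x))=x^{*}(x)$ for all $x\in X$; since $\pi$ is isometric (Corollary~\ref{metrizablecorollary}) and $\pi''(B_{X})$ is dense in the unit ball of $\hat X$, it follows that $\|\pi(x^{*})\|\le 1$, so $\pi(x^{*})\in\hat Y$. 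This defines the map $\pi\colon Y\to\hat Y$.

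To compute the interpretation of $Y$ concretely, embed $Y$ (in $M$) as a closed subspace of the compact product $P=\prod_{x\in X}[-\phi(x),\phi(x)]$ via $x^{*}\mapsto(x^{*}(x))_{x\in X}$. By Theorem~\ref{compactproducttheorem} the interpretation of $P$ is $\hat P=\prod_{x\in X}[-\phi(x),\phi(x)]$ (same index set, with the coordinate projections interpreted as coordinate projections), and by Corollary~\ref{subspacecorollary} the interpretation of the closed set $Y$ is the closure $\mathrm{cl}_{\hat P}(\pi''Y)$, where $\pi''Y=\{(x^{*}(x))_{x\in X}\colon x^{*}\in Y\}\subseteq\hat P$. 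The crux is the following approximation lemma: \emph{for every $z^{*}\in\hat Y$ and every finite tuple $y_{1},\dots,y_{m}\in X$, the point $(z^{*}(\pi(y_{k})))_{k\le m}$ lies in the closure of $D=\{(y^{*}(y_{k}))_{k\le m}\colon y^{*}\in Y\}\subseteq\mathbb{R}^{m}$.} If not, then since $\overline D$ is compact and convex one can separate $(z^{*}(\pi(y_{k})))_{k}$ from it and, perturbing the separating functional, obtain rational coefficients $\lambda_{1},\dots,\lambda_{m}$ with $\sum_{k}\lambda_{k}z^{*}(\pi(y_{k}))>\sup_{y^{*}\in Y}\sum_{k}\lambda_{k}y^{*}(y_{k})$. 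Set $w=\sum_{k}\lambda_{k}y_{k}\in X$ (this requires the coefficients to be rational). In $M$ the Hahn--Banach theorem gives $\sup_{y^{*}\in Y}y^{*}(w)=\phi(w)$; since $\pi$ commutes with rational linear combinations and with the norm (Theorems~\ref{functiontheorem} and~\ref{easybanachtheorem}) and $z^{*}$ is linear of norm $\le 1$, we get $\sum_{k}\lambda_{k}z^{*}(\pi(y_{k}))=z^{*}(\pi(w))\le\|\pi(w)\|=\phi(w)$, contradicting the displayed inequality.

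From the lemma one deduces $\mathrm{cl}_{\hat P}(\pi''Y)=\{(z^{*}(\pi(x)))_{x\in X}\colon z^{*}\in\hat Y\}$: the inclusion $\supseteq$ is immediate from the lemma and the definition of the product topology, and for $\subseteq$ one notes that additivity $g(x+y)=g(x)+g(y)$ and rational homogeneity $g(qx)=qg(x)$ are closed conditions satisfied on $\pi''Y$, hence on its closure, so any $g$ in the closure defines a bounded $\mathbb{Q}$-linear functional $\pi(x)\mapsto g(x)$ on the norm-dense $\mathbb{Q}$-subspace $\pi''X$ of $\hat X$, which extends uniquely to an element of $\hat Y$. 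On the bounded set $\hat Y$ the topology inherited from $\hat P$ --- pointwise convergence on $\pi''X$ --- coincides with the weak${}^{*}$ topology, and under the resulting identification the interpretation map of $Y$ is precisely $x^{*}\mapsto\pi(x^{*})$; this proves (1). For (2), the pairing $\langle x,x^{*}\rangle=x^{*}(x)$ is continuous on $X\times Y$ (norm topology on $X$, weak${}^{*}$ on $Y$, using $\|x^{*}\|\le 1$), so by Theorems~\ref{functiontheorem} and~\ref{countableproducttheorem} it is interpreted as a continuous function on $\hat X\times\hat Y$; this function agrees with the pairing $\langle\cdot,\cdot\rangle_{\hat X}$ on the dense set $\pi''X\times\pi''Y$, and the latter pairing is also continuous, so the two coincide.

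The only real work is the approximation lemma, and in it the one delicate point is keeping the argument inside $M$: the separating functional must be rationalized so that $w=\sum_{k}\lambda_{k}y_{k}$ is a vector of the ground-model space $X$ and Hahn--Banach can be applied in $M$. The remaining ingredients --- Alaoglu, the commutation of interpretation with compact products and with passing to a closed subspace, the density of $\pi''X$ in $\hat X$, and the coincidence of pointwise and weak${}^{*}$ convergence on bounded sets --- are routine.
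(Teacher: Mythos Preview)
Your argument is correct, but the route differs from the paper's. The paper works directly with the test criterion of Proposition~\ref{testproposition}: it fixes the subbasis $\gs=\{U(g)\}$ of $Y$ indexed by finite ``basic functions'' $g$ (finite subsets of $X$ paired with rational intervals), checks that the family $\{U(\pi''g)\}$ is a basis for $\hat Y$, and then verifies item~(2) of that proposition by hand --- given a finite cover $U(g)\subset\bigcup_i U(h_i)$ in $M$, it passes to the finite-dimensional subspace $X_0$ generated by the supports, invokes analytic absoluteness to find a witness $z_0$ on $X_0$, and uses Hahn--Banach in $M$ to extend $z_0$ to all of $X$. Compactness of $\hat Y$ then finishes via Proposition~\ref{testproposition}.

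You instead factor through the Banach--Alaoglu embedding $Y\hookrightarrow P=\prod_{x\in X}[-\phi(x),\phi(x)]$, invoke Theorem~\ref{compactproducttheorem} and the closed-subspace case (Theorem~\ref{subspacetheorem} is the right citation; Corollary~\ref{subspacecorollary} is for $G_\delta$ sets and $P$ need not be metrizable) to identify the interpretation of $Y$ with $\mathrm{cl}_{\hat P}(\pi''Y)$, and then prove a separation-style approximation lemma to match this closure with the restriction map $\hat Y\to\hat P$, $z^*\mapsto(z^*(\pi(x)))_{x\in X}$. Both proofs ultimately hinge on Hahn--Banach in $M$, but yours uses it as a separation theorem on a convex set in $\mathbb{R}^m$, while the paper uses it as an extension theorem from a finite-dimensional subspace. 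Your approach is more modular (it consumes the product and subspace theorems rather than reproving their content), at the cost of the extra bookkeeping that identifies pointwise convergence on $\pi''X$ with weak${}^*$ convergence on norm-bounded sets. The paper's approach is a direct instance of its general machinery and avoids the product detour.
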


\begin{proof}
The weak${}^*$ topology is just the pointwise convergence topology. Thus, define basic functions on $X$ as the functions $g$ whose domain is a finite subset of $X$ and whose values are real intervals with rational endpoints. For each basic function $g$ define a basic open set $U(g)\subset Y$ as the set of all points $y\in Y$ such that for every $x\in\dom(g)$, $y(x)\in g(x)$. Let $\gs=\{U(g)\subset Y\colon g$ is a basic function on $X\}$; this is a basis of the topology on the space $Y$. Similarly, define basic functions $g$ on $\hat X$ and their corresponding open sets $U(g)$ on the space $\hat Y$. Let $\hat\gs=\{U(\pi''g)\subset\hat Y\colon g$ is a basic function on $X\}$. 

\begin{claim}
$\hat\gs$ is a basis for the topology on $\hat Y$.
\end{claim}

\begin{proof}
Suppose that $h$ is a basic function on $\hat X$ and $y\in\hat Y$ belongs to $U(h)$. Write $h=\{\langle x_i, q_i\rangle i\in I\}$ and let $\eps>0$ be a rational number such that for every $i\in I$, $(y(x_i)-\eps, y(x_i)+\eps)\subset q_i$. Use the density of $\pi''X$ in $\hat X$ to find points $x'_i\in X_i$ and intervals $q'_i$ with rational endpoints so that the norm of $x_i-\pi(x'_i)$ is less than $<\eps/8$ and $(y(x_i)-\eps/4, y(x_i)+\eps/4)\subset q'_i\subset (y(x_i)-\eps/2, y(x_i)+\eps/2)$.
Let $g=\{\langle x'_i, q'_i\rangle\colon i\in I\}$ and use the linearity and bounded norm of operators in $Y$ to conclude that
$x\in U(\pi''g)\subset U(h)$.
\end{proof}

\begin{claim}
Suppose that $g$ is a basic function on $X$.

\begin{enumerate}
\item whenever $y\in Y$ then $y\in U(g)$ if and only if $\pi(y)\in U(\pi''g)$;
\item if $h_i$ for $i\in I$ and some finite set $I$ are basic functions on $X$ and $U(g)\subset\bigcup_iU(h_i)$, then $U(\pi''g)\subset\bigcup_iU(\pi''h)$.
\end{enumerate}
\end{claim}

\begin{proof}
For the first item, just unravel the definition of the set $U(g)$ and of the map $\pi$ on operators. For the second item, suppose that the conclusion fails and $y\in\hat Y$ is an operator in $U(\pi''g)\setminus\bigcup_iU(\pi''h_i)$. Let $X_0\subset X$ be the finite-dimensional space generated by $\dom(g)\cup\bigcup_i\dom(h_i)$. By analytic absoluteness~\ref{absolutenesstheorem}, there must be an operator $z_0\in M$ on $X_0$ of norm $\leq 1$ which is in $U(g)\setminus\bigcup_iU(h_i)$.  By the Hahn-Banach theorem in the model $M$, $z_0$ can be extended to an operator $z$ on all of $X$ of norm $\leq 1$. Then $z\in U(g)\setminus\bigcup_iU(h_i)$ as desired.
\end{proof}

Now, let $\bar\pi$ be the canonical extension of the map $\pi\colon Y\to\hat Y$ as described in Definition~\ref{extensiondefinition}. First argue that $\bar\pi(U(g))=U(\pi'' g)$. For the right-to-left inclusion note that $U(\pi''g)\subset\hat Y$ is an open set disjoint from $\pi''(Y\setminus U(g))$ by (1) of the claim. For the left-to-right inclusion, if $h$ is a basic function on $X$ and $U(\pi''h)\subset\hat Y$
is disjoint from $\pi''(Y\setminus U(g))$ then $U(h)\subset U(g)$ by (1) of the claim, and so $U(\pi''h)\subset U(\pi''g)$ by (2) of the claim. Now, the canonical extension $\bar\pi$ must be an interpretation of $Y$ in $\hat Y$ by Proposition~\ref{testproposition}, since the space $\hat Y$ is compact.
\end{proof}

\section{Faithfulness}
\label{faithfulnesssection}

It is interesting to see how the interpretations behave when there are more models around. The behavior in the category of interpretable spaces is the expected one. Outside of the category, there is an instructive counterexample.

\begin{theorem}
\label{faithfulnesstheorem}
Suppose that $M_0\subset M_1$ are transitive models of set theory, $M_0\models \langle X_0, {\tau}_0\rangle$ is an interpretable space, $M_1\models \pi_0\colon\langle X_0, {\tau}_0\rangle\to \langle X_1, {\tau}_1\rangle$ is an interpretation over $M_0$, and $\pi_1\colon\langle X_1, {\tau}_1\rangle\to\langle X_2, {\tau}_2\rangle$ is an interpretation over $M_1$. Then $\pi_1\circ\pi_0\colon\langle X_0, {\tau}_0\rangle\to\langle X_2, {\tau}_2\rangle$ is an interpretation over $M_0$.
\end{theorem}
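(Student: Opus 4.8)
The plan is to verify the two defining properties of an interpretation for the composite map $\pi_1\circ\pi_0$: that it is a preinterpretation of $X_0$ over $M_0$, and that it is $\leq$-largest among such preinterpretations. The first part is essentially bookkeeping. Since $\pi_0$ is a preinterpretation over $M_0$, it commutes with finite intersections and arbitrary unions of open sets as computed in $M_0$, preserves the membership relation between points and open sets, and sends $0\mapsto 0$, $X_0\mapsto X_1$. Since $\pi_1$ is a preinterpretation over $M_1$, it has the analogous properties for unions and intersections computed in $M_1$; but every union or intersection of open sets of $X_1$ that is a member of $M_0$ is a fortiori a member of $M_1$, so $\pi_1$ respects all the operations that $M_0$ sees on the image side. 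Composing, $\pi_1\circ\pi_0$ preserves membership (by composing the two biconditionals), sends $0\mapsto 0$ and $X_0\mapsto X_2$, and commutes with the finite-intersection and arbitrary-union operations that $M_0$ recognizes. The range of $(\pi_1\circ\pi_0)$ on open sets generates $\tau_2$ because $\pi_0''\tau_0$ generates $\tau_1$, $\pi_1$ commutes with arbitrary unions, and $\pi_1''\tau_1$ generates $\tau_2$. Hence $\pi_1\circ\pi_0$ is a topological preinterpretation of $X_0$ over $M_0$.

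The heart of the matter is maximality: given an arbitrary preinterpretation $\chi\colon X_0\to \hat X$ over $M_0$, I must produce a reduction $h\colon\hat X\to X_2$ of $\chi$ to $\pi_1\circ\pi_0$. My approach is to factor the construction through the two models in turn, using that $X_0$ is \emph{interpretable} so that I may invoke Theorem~\ref{sievetheorem}. Fix, in $M_0$, a complete sieve $\langle S,O(s)\colon s\in S\rangle$ on $X_0$; by Fact~\ref{sieveproposition} I may assume it is strong and finitely additive. Working in $M_1$, the map $\pi_0$ is an interpretation over $M_0$, so by Theorem~\ref{sievetheorem} the sieve $\langle S,\pi_0(O(s))\colon s\in S\rangle$ is a complete sieve on $X_1$, and $\langle S, \pi_0(O(s))\rangle$ witnesses a completeness system on $X_1$ in the sense relevant to Theorem~\ref{sievetheorem}. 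Now the key observation: the preinterpretation $\chi$ over $M_0$, together with $\pi_1$, will be compared via the sieve. More precisely, since $\chi$ is a preinterpretation over $M_0$, the sieve $\langle S,\chi(O(s))\colon s\in S\rangle$ is also a complete sieve on $\hat X$ by Theorem~\ref{sievetheorem} (as the interpretation is defined without regard to the completeness system, every completeness system on $X_0$ in $M_0$ is interpreted as one on $\hat X$). So both $X_1$ and $\hat X$ carry the $S$-indexed complete sieve pushed forward from $M_0$.

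The reduction is now built by the standard filter-of-closed-sets argument, iterated. First I claim there is a reduction $g\colon\hat X\to X_1$ of $\chi$ to $\pi_0$, regarding both as preinterpretations over $M_0$: for $x\in\hat X$, the family $F_x=\{{\mathrm{cl}}(\pi_0(O))\colon O\in\tau_0,\ x\in\chi(O)\}$ is a filter of closed subsets of $X_1$, for every branch $b$ of $S$ with $x\in\bigcap_n\chi(O(b\restriction n))$ the sets ${\mathrm{cl}}(\pi_0(O(b\restriction n)))$ lie in $F_x$, and completeness of the pushed-forward sieve on $X_1$ gives $\bigcap F_x\neq 0$; Hausdorffness of $X_1$ forces a single point, which I call $g(x)$, and one checks $\chi=g\circ\pi_0$ and $g^{-1}\pi_0(O)=\chi(O)$ by unraveling the definitions, exactly as in the uniqueness half of the proof of Theorem~\ref{sievetheorem}. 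This says $\chi\leq\pi_0$ over $M_0$; but since $\pi_0$ is itself a preinterpretation of $X_0$ over $M_0$ and is reducible, by $\pi_1$, into $\pi_1\circ\pi_0$ — because $\pi_1$ is a reduction of $\pi_0$ to $\pi_1\circ\pi_0$, directly from $\pi_1$ being an interpretation over $M_1$ and the fact $(\pi_1\circ\pi_0)(O)=\pi_1(\pi_0(O))$ — the composite $h=\pi_1\circ g\colon\hat X\to X_2$ satisfies $(\pi_1\circ\pi_0)=h\circ\chi$ and $h^{-1}(\pi_1\circ\pi_0)(O)=\chi(O)$, i.e.\ $h$ is a reduction of $\chi$ to $\pi_1\circ\pi_0$. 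Therefore $\pi_1\circ\pi_0$ is $\leq$-largest, hence the interpretation of $X_0$ over $M_0$.

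The main obstacle I anticipate is the transfer of completeness of the sieve across the two models: one must be careful that the completeness of $\langle S,\pi_0(O(s))\rangle$ as a sieve on $X_1$ is genuinely available in $M_1$ (so that Theorem~\ref{sievetheorem} can be applied with $M_1$ as the ambient ``model'' in the step producing the reduction into $X_2$), and simultaneously that the sieve $\langle S,\chi(O(s))\rangle$ on $\hat X$ is complete in $V$ — these are two separate invocations of Theorem~\ref{sievetheorem} with different ground models, and the $\in$-recursion/well-foundedness ingredients must be checked to survive each step. A secondary subtlety is checking that $\pi_1$ really is a reduction of $\pi_0$ (viewed as a preinterpretation over $M_0$) to $\pi_1\circ\pi_0$; this uses that $\pi_1$ is an interpretation over $M_1\supseteq M_0$ and that the open-set action of the composite is literally the composition of the two open-set actions, which is immediate from the definitions but worth stating explicitly.
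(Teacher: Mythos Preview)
Your approach has a genuine gap in the construction of the reduction $g\colon \hat X\to X_1$. You want to define $g(x)$ as the unique point of $\bigcap F_x$ where $F_x=\{{\mathrm{cl}}(\pi_0(O))\colon O\in\tau_0,\ x\in\chi(O)\}$, invoking completeness of the sieve $\langle S,\pi_0(O(s))\rangle$ on $X_1$. But that completeness holds only in $M_1$, while the filter $F_x$ depends on the point $x\in\hat X$, which lives in $V$ and typically not in $M_1$; completeness in $M_1$ says nothing about $V$-indexed filters of $M_1$-closed sets. Concretely, take $X_0=[0,1]$ in $M_0$; then $X_1$ is the $M_1$-interval, which as a set of points in $V$ is no longer compact once $V$ has new reals. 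If $\chi$ is the actual interpretation of $X_0$ over $M_0$ in $V$ (so $\hat X=[0,1]^V$) and $x\in\hat X$ is a new real, then $\bigcap F_x=\emptyset$ in $X_1$ and $g(x)$ is undefined. This is exactly why $\pi_0$ is not the interpretation over $M_0$ when viewed from $V$: arbitrary $V$-preinterpretations need not reduce to it. There is also a smaller slip: you assert that $\langle S,\chi(O(s))\rangle$ is complete on $\hat X$ for an \emph{arbitrary} preinterpretation $\chi$, but Theorem~\ref{sievetheorem} says precisely that the interpretation is the \emph{unique} preinterpretation with this property.

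The paper's proof sidesteps all of this by never attempting to land in $X_1$ from $V$. It simply iterates Theorem~\ref{sievetheorem}: the complete sieve on $X_0$ in $M_0$ is sent by $\pi_0$ to a complete sieve on $X_1$ \emph{in $M_1$} (applying the theorem inside $M_1$ with ground model $M_0$), and that is sent by $\pi_1$ to a complete sieve on $X_2$ \emph{in $V$} (applying the theorem inside $V$ with ground model $M_1$). Thus the composite preinterpretation $\pi_1\circ\pi_0$ carries a completeness system on $X_0$ to a completeness system on $X_2$, and one final application of Theorem~\ref{sievetheorem} (in $V$ with ground model $M_0$) gives that $\pi_1\circ\pi_0$ is the interpretation---no explicit reduction of an arbitrary $\chi$ is ever built.
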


\begin{proof}
The composition map is clearly a preinterpretation, and this would hold of any topological space $X_0$. If $\langle S, O(s)\colon s\in S\rangle$ is a complete sieve on $X_0$ in the model $M_0$, then by a repeated use of Theorem~\ref{sievetheorem}
the pair $\langle S, \pi_0(O(s))\colon s\in S\rangle$ is a complete sieve on $X_1$ in the model $M_1$, the pair $\langle S, \pi_1(\pi_0(O(s)))\colon s\in S\rangle$ is a complete sieve on $X_2$, and therefore the composition $\pi_1\circ\pi_0$
is an interpretation as desired.
\end{proof}

\begin{example}
\label{productexample}
The conclusion of Theorem~\ref{faithfulnesstheorem} fails for $X=\gw^{\gw_1}$.
To see, this I will first prove a characterization theorem for interpretations of the space $X$ in generic extensions in the presence of the continuum hypothesis.

\begin{claim}
\textnormal{(with Justin Moore)}
Assume the Continuum Hypothesis. Let $P$ be a partial ordering. The following are equivalent:

\begin{enumerate}
\item $P\Vdash$ the interpretation of $X^V$ is $X^{V[G]}$ with the identity map;
\item $P$ adds no reals and preserves the statement ``$T$ has no uncountable branches'' for every tree $T$ of size $\aleph_1$.
\end{enumerate}
\end{claim}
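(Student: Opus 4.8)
The plan is to work with the explicit description of interpretations of $X=\omega^{\omega_1}$ in terms of its basic clopen sets $O_t=\{x\colon t\subseteq x\}$ ($t$ a finite partial function $\omega_1\to\omega$) together with the ``obstruction trees'' attached to ground model covers of $X$ by such sets. Fixing an enumeration of the coordinates by $\omega_1$, given a ground model cover $X=\bigcup_{i\in I}O_{t_i}$ let $S$ be the tree of all $p\in\omega^{<\omega_1}$ (that is, $p\colon\gamma\to\omega$, $\gamma<\omega_1$) with $t_i\not\subseteq p$ for every $i$ such that $\dom(t_i)\subseteq\gamma$. This $S$ is a downward closed, limit closed subtree of $\omega^{<\omega_1}$, and since $\lvert\omega^{<\omega_1}\rvert=\aleph_1$ under the Continuum Hypothesis it has size at most $\aleph_1$. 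For any transitive model $W\supseteq V$ with $\omega_1^W=\omega_1^V$ one has $W\models X=\bigcup_iO_{t_i}$ iff $S$ has no branch of length $\omega_1$ in $W$ (such a branch is exactly a point of $X^W$ in no $O_{t_i}$), and conversely every downward closed, limit closed subtree of $\omega^{<\omega_1}$ arises from a cover in this way. Finally, adding no reals forces $\omega_1$ to be preserved (a collapse gives a new real coding a wellorder of $\omega$ of type $\omega_1$), and then $\omega^\gamma$ for $\gamma<\omega_1$ and all $\omega$-sequences of ordinals below $\omega_1$ are absolute between $V$ and $V[G]$.

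For $(2)\Rightarrow(1)$ I would work in $V[G]$ and verify that the map $\pi$ which is the identity on points and sends a ground model open set $O=\bigcup_kO_{t_k}$ to $\bigcup_kO_{t_k}^{V[G]}$ is the interpretation. Well-definedness of $\pi$ on open sets reduces, by passing to a single basic set (again homeomorphic to $X$), to the transfer ``$X=\bigcup_kO_{t_k}$ in $V$ implies the same in $V[G]$'', which is exactly the preservation hypothesis of $(2)$ applied to the obstruction tree $S$ of this cover; here the bound $\lvert S\rvert\le\aleph_1$, hence the Continuum Hypothesis, is used. The remaining preinterpretation axioms are immediate from absoluteness, since intersections of basic sets are basic and $\pi(X)=X^{V[G]}$. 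That $\pi$ is the $\leq$-largest preinterpretation follows as usual: any preinterpretation $\chi\colon X\to Y$ reduces to $\pi$ via $h\colon Y\to X^{V[G]}$, $h(y)(\alpha)=$ the unique $n$ with $y\in\chi(O_{\{(\alpha,n)\}})$ (the sets $\chi(O_{\{(\alpha,n)\}})$ partition $Y$ as $n$ ranges over $\omega$, because $\chi$ commutes with the corresponding union and pairwise intersections), and one checks $\pi=h\circ\chi$ and $h^{-1}\pi(O)=\chi(O)$ first on basic sets and then by unions; $h(y)$ lands in $X^{V[G]}$ because $\omega_1$ is preserved.

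For $(1)\Rightarrow(2)$ I would prove the contrapositive. If $P$ collapses $\omega_1$ then $X^V$ and $X^{V[G]}$ have no common domain and $(1)$ fails trivially, so assume $\omega_1$ is preserved. It then suffices to show that if $P$ adds a real, or adds an uncountable branch to a tree of size $\aleph_1$ with no uncountable branch in $V$, then there is a ground model cover $X=\bigcup_{i\in I}O_{t_i}$ failing to cover $X^{V[G]}$: for such a cover, any interpretation equal to $X^{V[G]}$ with the identity map must send each $O_{t_i}$ to $O_{t_i}^{V[G]}$ and $X$ to $X^{V[G]}$, so commuting with unions yields $X^{V[G]}=\pi(X)=\bigcup_iO_{t_i}^{V[G]}\subsetneq X^{V[G]}$, a contradiction. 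If $r$ is a new real, I would use the Continuum Hypothesis to list $\mathbb{R}^V=\{y_\alpha\colon\alpha\in[\omega,\omega_1)\}$, reserve the first $\omega$ coordinates for a ``guessed real'' and each coordinate $\alpha\ge\omega$ for a witness of difference from $y_\alpha$, and take the cover by all sets $O_{\{(\alpha,m),(m,y_\alpha(m))\}}$; its uncovered points are precisely the $x$ with $x\restriction\omega\notin\mathbb{R}^V$, of which there are none in $V$ but at least one in $V[G]$ (built from $r$). If $P$ adds an uncountable branch to a tree $T$ of size $\aleph_1$ with no uncountable branch in $V$, I may assume $P$ adds no reals, so every proper initial segment of the branch lies in $V$; a reduction then replaces $T$ by a $\le\omega$-branching tree of height at most $\omega_1$ and size at most $\aleph_1$ (an Aronszajn tree, typically) with the same behavior, and such a tree is isomorphic, by the first paragraph, to the obstruction tree of a ground model cover of $X$ — the required cover.

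The step I expect to be the main obstacle is this last reduction, together with pinning down exactly which trees occur as obstruction trees of covers of $\omega^{\omega_1}$: an obstruction tree is $\le\omega$-branching whereas a general tree of size $\aleph_1$ need not be, so one must argue that the appearance of a new uncountable branch can always be transferred to a $\le\omega$-branching tree, using that for forcings adding no reals the initial segments of the branch are already present in $V$. The Continuum Hypothesis is essential both here and in the size bound $\lvert S\rvert\le\aleph_1$; without it neither implication is expected to hold.
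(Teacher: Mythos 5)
Your $(2)\Rightarrow(1)$ direction and the new-real half of $(1)\Rightarrow(2)$ are essentially correct and essentially the paper's own argument: the paper likewise passes from a ground model cover to the tree of countable partial functions avoiding every member of the cover (size $\aleph_1$ by CH, no uncountable branch in $V$, and a witness in $V[G]$ would give one since no reals are added), and in the new-real case uses a CH-enumeration of the ground model reals with one coordinate per real reserved for a witness of disagreement. The genuine gap is exactly the step you yourself flag as ``the main obstacle'': the case in which $P$ adds no reals but adds an uncountable branch to a tree $T$ of size $\aleph_1$ that is branchless in $V$. Your plan --- replace $T$ by a $\le\omega$-branching tree ``with the same behavior'' and realize it as the obstruction tree of a ground model cover --- is not carried out, and neither half of it is routine. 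The tree of countable initial segments of a prospective branch (which is what the no-new-reals hypothesis hands you) branches $\aleph_1$-fold at successor steps whenever a node of $T$ has uncountably many immediate successors, and there is no routine way to thin this to countable branching while keeping ``branchless in $V$, branch in $V[G]$''; a posteriori such a tree exists, but producing it is precisely the content of this implication. Moreover the characterization you lean on --- that every downward closed, limit closed subtree of $\omega^{<\omega_1}$ arises as the obstruction tree of a cover --- is false: the tree of all $p$ with either $\dom(p)\le\omega$ or $p\restriction\omega$ not eventually zero is downward and limit closed, but membership in its complement cannot be detected by finitely many coordinates, so it comes from no cover. (A smaller slip: if $P$ collapses $\omega_1$, the space $X^{V[G]}$ still means $\omega^{\omega_1^V}$ as computed in $V[G]$, so ``no common domain'' is not a reason for $(1)$ to fail; that case is simply subsumed in the new-real case.)

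The paper resolves the branch case by a different construction rather than a tree-to-tree reduction. After normalizing $T$ (countable increasing sequences have unique suprema, terminal nodes sit on limit levels, nonterminal nodes split in two), it forms $Y=\prod_{t\in T}X_t$ where $X_t=\{s\in T\colon s\le t\}$ is countable discrete; $Y$ is naturally a closed subspace of $X=\omega^{\omega_1}$, so it suffices to exhibit a ground model open cover of $Y^V$ that fails to cover $Y^{V[G]}$. The cover consists of the explicit families $O_t$, $Q_t$, $P_{st}$, $R_{st}$, designed so that any $y\in Y^V$ avoiding all of them would make $\rng(y)$ a countable linearly ordered subset of $T$ with neither a maximum nor limit order type --- impossible --- while in $V[G]$ the point $y(t)=\max(b\cap X_t)$ built from the new branch $b$ avoids them all. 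Some such device is indispensable; as written, your proposal establishes one implication of the equivalence together with only the new-real half of the other.
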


\noindent Here, by $X^{V[G]}$ I mean the product $\gw^{\gw_1^V}$ as evaluated in $V[G]$.

\begin{proof}
It is not difficult to show that to prove (1), one has to prove the following. Let $F$ be a collection of finite partial functions from $\gw_1$ to $\gw$
such that $\forall x\in X\ \exists f\in F\ f\subset x$; then
$P\Vdash\forall x\in X^{V[G]}\ \exists f\in F\ f\subset x$. To this end, look at the tree $T$ of all functions whose domain is some countable ordinal, range is a subset of natural numbers, and $t\in T$ implies that for no $f\in F$,
$f\subset t$. The assumptions imply that $T$ has no cofinal branches. Suppose now that (2) holds. Let $\dot x$ be a $P$-name for a new element of $X$. Since $P$ adds no new reals, all initial segments of $\dot x$ are forced to be in the ground model.
And since $P\Vdash\dot x$ is not a branch through $\check T$, there must be an initial segment of $\dot x$ which is not in $T$ and therefore contains some $f\in F$ as a subset. Thus, (1) holds.

Now, suppose that (2) fails. There are two distinct cases. Suppose first that $P$ adds a new real. Use the CH assumption to find an enumeration $\langle z_\ga\colon\gw\leq\ga<\gw_1\rangle$ of all elements of $\baire$.
For every $\gw\leq\ga<\gw_1$, consider the open set $O_\ga=\{x\in X\colon x\restriction x(\ga)=z_\ga\restriction x(\ga)\}$. Note that $\bigcup_\ga O_\ga=X$ since for every element $x\in X$ there is an ordinal $\ga$ such that $x\restriction\gw=z_\ga$
and then $x\in O_\ga$. However, if $\dot x$ is a $P$-name for any function from $\gw_1$ to $\gw$ such that $\dot x\restriction\gw\notin V$ and for every $\gw\leq\ga <\gw_1$ $\dot x(\ga)$ is a number so large that $z_\ga$ and $\dot x\restriction\gw$
differ below $\dot x(\ga)$, then $P\Vdash\dot x$ does not belong to the natural interpretation of the set $O_\ga$ for any ordinal $\ga$. In other words, an open cover of $X^V$ does not cover $X^{V[G]}$ and (1) fails.

Suppose now that $P$ adds no new reals and instead adds an uncountable branch through some tree $T$ of size $\aleph_1$ which has no uncountable branches in the ground model. One can assume that every countable increasing sequence in $T$ has a unique supremum, every terminal node of $T$ is on a limit level,
and nonterminal nodes split into exactly two immediate successors. For every $t\in T$ let $X_t$ be the set $\{s\in T\colon s\leq t\}$ equipped with discrete topology. The product $Y=\prod_tX_t$ is naturally homeomorphic to a closed subspace of $X$, so it will
be enough to show that there is an open cover of $Y^V$ which is not a cover of $Y^{V[G]}$. Consider the following open subsets of $Y$: $O_t=\{y\in Y\colon y(t)=t\}$ for terminal nodes $t\in T$, $Q_t=\{y\in Y$: if $y(t)=t$ then for neither of the two immediate successors $s$ of $t$, $y(s)=s\}$ for nonterminal nodes $t\in T$, and $P_{st}=\{y\in Y\colon y(s)$ is not compatible with $y(t)\}$ and $R_{st}=\{y\in Y\colon y(t)<y(s)\leq t\}$ for any two nodes $s,t\in T$.

On one hand, in the $P$ extension, the product $Y^{V[G]}$ is not covered by the union of (the natural interpretations of) these open sets: if $b\subset T$ is a cofinal branch in $V[G]$ then the point $y\in Y^{V[G]}$ defined by
$y(t)=$largest element of $b$ which is $\leq t$ does not belong to any of the open sets.  On the other hand, in the ground model the product $Y$ is covered by the union of these open sets: if $y\in Y$ fell out of all of them then define
$b=\rng(y)$ and observe that

\begin{itemize}
\item $b$ is a linearly ordered subset of $T$ by the definition of $P_{st}$;
\item $b$ is countable as $T$ has no cofinal branches in the ground model;
\item $b$ does not have a maximum. Such a maximum $t$ would have to have $y(t)=t$
by the definition of the sets $R(s,t)$, but then if $t$ is nonterminal then $y\in Q_t$ and if $t$ is terminal then $y\in O_t$;
\item $b$ does not have a limit ordertype, since for the supremum $t$, $y(t)<t$ would have to hold and
$y$ would belong to one of the sets $R_{st}$.
\end{itemize}

\noindent Since the last two items cover all possibilities, a contradiction is reached showing that $Y^V$ is covered by the open sets indicated and (1) again fails.
\end{proof}

It is now a routine matter to start with a model of CH and construct generic extensions $V\subset V[G]\subset V[H]$ such that both $V[G]$ and $V[H]$
are $\gs$-closed extensions of $V$ and $V[G]$ contains a branchless tree $T$ of size $\aleph_1$ which does have branches in $V[H]$. For example, to obtain $V[G]$ just add an $\gw_1$-tree $T$ with countable approximations (it will be a Suslin tree by \cite[Theorem 15.23]{jech:set}) and then add a generic branch through it to form $V[H]$. It is well known that $V[H]$ is a $\gs$-closed extension of $V$.
Once this is done, the claims show that the interpretation of $X^V$ in $V[H]$ over $V$ is $X^{V[H]}$, the interpretation in $V[G]$ is $X^{V[G]}$, and the interpretation of $X^{V[G]}$ is not $X^{V[H]}$, in violation of the conclusion of Theorem~\ref{faithfulnesstheorem}.
\end{example}

\begin{theorem}
\label{submodeltheorem}
Suppose that $\hat X$ is an interpretable space and $M$ is an elementary submodel of a large structure containing $\hat X$ as an element
and some basis of $\hat X$ as an element and a subset. Let $\pi\colon \bar M\to M$ be the inverse of the transitive collapse of $M$ and $X=\pi^{-1}(\hat X)$.
Then $\pi\colon X\to\hat X$ is an interpretation of the space $X$ over the model $\bar M$.
\end{theorem}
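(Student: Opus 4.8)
The plan is to verify the hypotheses of Proposition~\ref{testproposition} for the map $\pi\colon X\to\hat X$, where $\bar M$ plays the role of the ground model $M$ and $\bar\pi$ (the canonical extension of $\pi$ to the topology $\tau$ of $X=\pi^{-1}(\hat X)$) is the candidate interpretation. First I would note that, since $M$ contains a basis $\basis$ of $\hat X$ as an element and a subset, the collapse gives a basis $\bar\basis=\pi^{-1}(\basis)$ of the space $X$ in $\bar M$; since $\bar\basis\subseteq\basis$ and $\pi$ restricted to open sets in $\bar\basis$ is just the inclusion of the set into $\hat X$, one gets $\bar\pi(O)=O$ for $O\in\bar\basis$ and more generally $\bar\pi$ on $\tau$ is computed correctly. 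The key point is that elementarity of $M$ transfers the statement ``$\hat X$ is interpretable'' together with a witnessing complete sieve: by Fact~\ref{sieveproposition} there is a complete sieve $\langle \hat S, \hat O(s)\colon s\in\hat S\rangle$ on $\hat X$ which (again by elementarity, since sieves of a given space are definable from a basis) can be taken as an element of $M$, so its preimage $\langle S, O(s)\colon s\in S\rangle=\pi^{-1}$ of it is a complete sieve on $X$ in $\bar M$, and $\pi$ maps it back to the complete sieve we started with on $\hat X$.

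Next I would check the four clauses of Proposition~\ref{testproposition} one at a time, taking $\gs=\bar\basis$ closed under finite intersections (shrink $\basis$ in $M$ to be closed under finite intersections first — still an element and subset of $M$ by elementarity). Clause (1): for $x\in O\in\gs$, we have $x\in\hat X$ and $O=\bar\pi(O)$ is literally the same set, so $\pi(x)=x\in O=\bar\pi(O)$. Clause (2): if $Q\subseteq\bigcup_{j}P_j$ with $Q,P_j\in\gs$, then this inclusion holds between the identical sets in $\hat X$, so $\bar\pi(Q)=Q\subseteq\bigcup_j P_j=\bigcup_j\bar\pi(P_j)$. Clause (3): $\bar\pi''\gs=\gs=\bar\basis=\basis$, which is a basis of $\hat X$. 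Clause (4): the sets on the sieve $S$ lie in $\bar\basis$ after arranging $\basis$ to contain the range of the chosen sieve (harmless), and $\langle S,\bar\pi(O(s))\colon s\in S\rangle$ is exactly the complete sieve $\langle\hat S,\hat O(s)\colon s\in\hat S\rangle$ on $\hat X$ we fixed above. Proposition~\ref{testproposition} then yields that $\bar\pi$ is a topological interpretation of $X$ over $\bar M$, which is the assertion.

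The only genuine subtlety — and the step I expect to require the most care — is the bookkeeping in choosing the basis $\basis\in M$: it must simultaneously be a subset of $M$ (so that each basic open set, and its membership relation with points of $\hat X$, is reflected correctly by the collapse), be closed under finite intersections, and contain the open sets of one complete sieve on $\hat X$ as elements. Each requirement is individually handled by elementarity of $M$ (the relevant objects are definable from $\hat X$ and any basis), but one has to make the choices in the right order and confirm that ``$\basis\subseteq M$ as a subset'' is preserved; in particular one must check that $\pi$ restricted to the basic open sets really is the identity (i.e.\ $\pi(\pi^{-1}(O))=O$), which is immediate since $\pi$ is the inverse collapse and $O\in M\cap\power(\hat X)$ with every element of $O$ also in $M$ by $O\subseteq M$. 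Once this is in place, the verification of (1)--(4) above is purely formal, and the appeal to Proposition~\ref{testproposition} finishes the proof.
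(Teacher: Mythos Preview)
Your overall plan—appeal to Proposition~\ref{testproposition} with the collapsed basis as $\gs$ and a collapsed complete sieve—is a valid route and in spirit close to the paper's (the paper goes through Theorem~\ref{sievetheorem} directly, and Proposition~\ref{testproposition} is essentially a packaging of that theorem together with the preinterpretation check). However, your execution has two genuine gaps.

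First, your justification for $\bar\pi(\bar O)=O$ is wrong. You write that ``every element of $O$ also in $M$ by $O\subseteq M$'', but this misreads the hypothesis: the basis $\basis$ being a subset of $M$ means each basic open $O$ is an \emph{element} of $M$, not a \emph{subset} of $M$. Points of $\hat X$ need not lie in $M$, so $\pi$ is not literally the identity on points, and the canonical extension $\bar\pi(\bar O)=\hat X\setminus\mathrm{cl}\big((\hat X\cap M)\setminus O\big)$ equals $O$ only once you know $(\hat X\cap M)\setminus O$ is dense in $\hat X\setminus O$. This density \emph{does} hold—if a basic $U\in\basis$ meets $\hat X\setminus O$ then by elementarity $M\models\exists x\,(x\in U\setminus O)$, yielding a witness in $M$—but you never make that argument; without it clauses~(3) and~(4) do not go through. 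The cleaner path (which the paper takes) is to note that $\pi$, being an elementary embedding, automatically commutes with finite intersections and arbitrary unions in $\bar M$; hence the map $\bar O\mapsto O$ on open sets is already a preinterpretation, and the canonical extension then agrees with it by the remarks preceding Definition~\ref{extensiondefinition}.

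Second, you need the complete sieve to be not merely an element of $M$ but also a \emph{subset} of $M$: otherwise the tree $\hat S$ may have nodes outside $M$, the collapsed tree is not isomorphic to $\hat S$, and you cannot conclude that $\langle S,\bar\pi(O(s))\colon s\in S\rangle$ is the original complete sieve on $\hat X$. Your remark about ``arranging $\basis$ to contain the range of the chosen sieve'' does not address the tree itself. The paper handles this by refining so that the sieve uses only sets from the basis $\gs$ and distinct immediate successors carry distinct open sets; such a sieve is coded by a subtree of $\gs^{<\gw}$, hence has size at most $|\gs|$, and by elementarity one such sieve lies in $M$ and is then automatically a subset of $M$.
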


\begin{proof}
The map $\pi$ commutes with finite intersections and arbitrary unions in the model $\bar M$ simply because $\pi$ is an elementary embedding. Moreover, the range of $\pi$ generates the topology of the space $\hat X$ since it contains a basis by the assumptions.
Thus, the map $\pi$ is a preinterpretation, and this would be true for any topological space $X$. To prove that $\pi$ is an interpretation, find a basis $\gs$ of $\hat X$ which is an element and a subset of the model $M$.
Given a complete sieve $\langle S, O(s)\colon s\in S\rangle$ for the space $X$, refining and thinning out if necessary it is possible to amend it so that it uses only sets from the basis $\gs$ and so that if $t\in S$ and $s_0, s_1$ are immediate successors
of $t$ then $O(s_0)\neq O(s_1)$. Such a sieve has size $\leq|\gs|$ and one such a sieve must belong to the model $M$ by elementarity; it is then even a subset of $M$.
Then, the interpretation of the complete sieve $\pi^{-1}(\langle S, O(s)\colon s\in S\rangle)$ on $X$ is the complete sieve $\langle S, O(s)\colon s\in S\rangle$ on $\hat X$ and therefore $\pi\colon X\to\hat X$ is an interpretation by Theorem~\ref{sievetheorem}.
\end{proof}

\begin{example}
Whenever $X$ is a second countable space which is not Polish and $M$ is a countable elementary submodel of a large structure containing $X$, then the conclusion of the theorem fails since interpretations of spaces over countable $M$ must be Polish.
\end{example}

\section{Preservation theorems}
\label{preservationsection}

In this section I will show that certain properties of topological spaces survive the interpretation process. This is to say, if $M$ is a transitive model of set theory and $M\models X$ is a topological space with property $\phi$ and $\pi\colon X\to\hat X$ is a topological interpretation, then $\hat X$ has property $\phi$. There are very many open questions.

In the category of compact Hausdorff spaces, many topological properties are preserved simply because every open cover of the interpretation has a finite refinement whose elements are in the range of the interpretation map.
This immediately gives the following:

\begin{proposition}
Suppose that $M$ is a model of set theory and $M\models X$ is a compact Hausdorff space and $f\colon X\to X$ is a continuous map. Suppose that $\pi\colon X\to\hat X$ and $\hat f\colon \hat X\to\hat X$ are interpretations.

\begin{enumerate}
\item if $M\models X$ is connected then $\hat X$ is connected;
\item if $M\models X$ is totally disconnected then $\hat X$ is totally disconnected;
\item the Lebesgue covering dimension of $X$ as computed in $M$ is equal to that of $\hat X$;
\item the topological entropy of $f$ as computed in $M$ is equal to that of $\hat f$.
\end{enumerate}
\end{proposition}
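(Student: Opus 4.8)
The plan is to reduce all four items to one structural principle. Since $X$ is compact Hausdorff, $\hat X$ is compact Hausdorff by Corollary~\ref{compact1corollary} and regular Hausdorff by Theorem~\ref{existencetheorem}, and $\pi''\tau$ is a basis of $\hat X$; hence \emph{every} finite open cover of $\hat X$ is refined by a finite cover of the form $\{\pi(O_j)\colon j<k\}$ with $O_j\in\tau$. Combining this with the preinterpretation axioms, the map $O\mapsto\pi(O)$ is a bijection of $\tau$ onto $\pi''\tau$ which preserves and reflects inclusion, finite unions, finite intersections, disjointness, and the relation ``equals the whole space''; in particular $\bigcup_{j<k}O_j=X$ holds in $M$ iff $\bigcup_{j<k}\pi(O_j)=\hat X$, and $\bigcap_{j\in F}O_j=\emptyset$ iff $\bigcap_{j\in F}\pi(O_j)=\emptyset$. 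I will also use silently that each of the properties under discussion, computed for the space $\langle X,\tau\rangle$, is absolute between $M$ and $V$, since the relevant quantifiers range over finite tuples of elements of $\tau\in M$; so, for instance, the covering dimension of $X$ ``as computed in $M$'' equals the covering dimension of $X$.

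For (1) I would argue by contraposition. If $\hat X=U\cup V$ with $U,V$ disjoint, open and nonempty, refine $\{U,V\}$ by interpreted basic sets and pass to a finite subcover; letting $A$ be the union of those $O$ in the subcover with $\pi(O)\subseteq U$ and $B$ the union of those with $\pi(O)\subseteq V$, the bijection properties give $A\cup B=X$, $A\cap B=\emptyset$ and $A\neq\emptyset\neq B$ in $M$, contradicting connectedness of $X$. For (2), recall that in $M$ a compact Hausdorff totally disconnected space carries a basis $\mathcal C$ of clopen sets; for $O\in\mathcal C$ both $O$ and $X\setminus O$ lie in $\tau$, so $\pi(X\setminus O)=\hat X\setminus\pi(O)$ is open and $\pi(O)$ is clopen, and since $\pi$ commutes with arbitrary unions the family $\{\pi(O)\colon O\in\mathcal C\}$ refines the basis $\pi''\tau$; thus $\hat X$ has a clopen basis and is therefore totally disconnected.

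Item (3) is the one I expect to cause trouble: the naive idea of transporting an order $\le n+1$ refinement of $\{\pi(O_j)\}$ back to $M$ fails because such a refinement need not be built from interpreted basic sets and, worse, passing to a refinement does not in general preserve the order bound. The remedy is to use the \emph{precise shrinking} characterization of covering dimension relativized to a basis: for a compact Hausdorff space $Y$ with a basis $\mathcal B$ closed under finite unions and finite intersections, $\dim Y\le n$ iff every finite indexed cover $\{B_j\colon j<k\}$ with $B_j\in\mathcal B$ admits a shrinking $\{B'_j\colon j<k\}$ with $B'_j\in\mathcal B$, $B'_j\subseteq B_j$, $\bigcup_{j<k}B'_j=Y$, and $\bigcap_{j\in F}B'_j=\emptyset$ whenever $|F|=n+2$. (The substantive direction uses normality to pass from a cover to an open and then a closed shrinking, and then fattens each piece of the closed shrinking to a basic set inside the corresponding open set; an index $j$ is used in the shrinking only where it was used in the cover, which is exactly what keeps the order bound intact.) Applying this with $Y=X$, $\mathcal B=\tau$ and with $Y=\hat X$, $\mathcal B=\pi''\tau$ (both of these are closed under finite unions and intersections), the bijection $O\mapsto\pi(O)$ carries indexed basic covers to indexed basic covers and basic shrinkings of order $\le n+1$ to basic shrinkings of order $\le n+1$, bijectively and in both directions; hence $\dim X\le n$ iff $\dim\hat X\le n$ for every $n$, and the two dimensions coincide. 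The real work here is in setting up the basis-relative shrinking characterization cleanly and keeping the order bookkeeping straight.

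For (4) I would use the Adler--Konheim--McAndrew definition via open covers. By Theorem~\ref{functiontheorem}(2), $\hat f^{-1}(\pi(O))=\pi(f^{-1}O)$ for $O\in\tau$, and iterating this, together with the fact that the interpretation of $f^i$ is $\hat f^{\,i}$, gives $\hat f^{-i}(\pi(O))=\pi(f^{-i}O)$; combining this with the commutation of $\pi$ with finite intersections, for any finite open cover $\mathcal U$ of $X$ in $M$ the cover $\bigvee_{i<m}\hat f^{-i}(\pi\mathcal U)$ equals $\{\pi(W)\colon W\in\bigvee_{i<m}f^{-i}\mathcal U\}$, where $\pi\mathcal U$ abbreviates $\{\pi(U)\colon U\in\mathcal U\}$. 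Since the minimal-subcover number is preserved, $N(\{\pi(W)\colon W\in\mathcal W\})=N(\mathcal W)$ for every finite cover $\mathcal W$ of $X$ in $M$ by the bijection properties, it follows that $h(\hat f,\pi\mathcal U)=h(f,\mathcal U)$ for each such $\mathcal U$. Finally every finite open cover of $\hat X$ is refined by some $\pi\mathcal U$ and $h(\hat f,\cdot)$ is monotone under refinement, so $h(\hat f)=\sup_{\mathcal U}h(\hat f,\pi\mathcal U)=\sup_{\mathcal U}h(f,\mathcal U)=h(f)$, the outer suprema ranging over all finite open covers of $X$, all of which belong to $M$. Checking that these suprema and the defining limits are absolute between $M$ and $V$ is routine.
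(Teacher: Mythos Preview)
Your proposal is correct and in fact considerably more careful than the paper's own treatment. The paper does not prove this proposition at all beyond the sentence immediately preceding it: ``every open cover of the interpretation has a finite refinement whose elements are in the range of the interpretation map. This immediately gives the following.'' You isolate the same structural principle at the outset and then actually carry out the four verifications.

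The main point worth highlighting is your handling of item~(3). The paper's ``immediate'' is optimistic: the direction $\dim\hat X\le\dim X$ is indeed routine (a low-order refinement in $M$ pushes forward to one in $V$), but for $\dim X\le\dim\hat X$ one cannot simply pull back a low-order refinement of $\{\pi(O_j)\}$, since a further refinement by interpreted basic sets can raise the order. Your basis-relative shrinking characterization is a clean way around this, and your sketch of why it holds (closed shrinking, then fatten inside each $V_j$ to a finite union of basic sets using compactness and closure of the basis under finite unions) is correct. This is genuine added value over the paper.

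Items (1), (2), and (4) are handled exactly as one would expect from the paper's hint; your use of Theorem~\ref{functiontheorem}(2) to get $\hat f^{-i}\pi(O)=\pi(f^{-i}O)$ and the monotonicity of $h(\hat f,\cdot)$ under refinement to reduce the supremum over all finite covers of $\hat X$ to one over $\pi$-images is the right argument for entropy. One small remark on (2): you are implicitly using that for compact Hausdorff spaces ``has a clopen basis'' (zero-dimensional) coincides with ``totally disconnected''; this is standard but worth saying, since for non-compact spaces it fails.
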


\noindent I do not know if such categories as Lyusternik--Schnirelman category, small inductive dimension or large inductive dimension are necessarily preserved by interpretations of compact Hausdorff spaces.
In the broader category of interpretable spaces, much more complicated behavior is possible. I include just one preservation schema which yields many results.

\begin{definition}
\label{gooddefinition}
A property $\phi$ of open covers of topological spaces is \emph{good} if

\begin{enumerate}
\item it is upwards absolute: suppose that $M$ is a transitive model of set theory and $M\models X$ is an interpretable space and $C$ is an open cover of $X$ with $\phi(C)$. If $\pi\colon X\to\hat X$ is an interpretation, then $\phi(\pi''C)$ holds;
\item it is diagonalizable: if $X$ is an interpretable space, $C$ is an open cover with $\phi(C)$ and for every $O\in C$, $D_O$ is an open cover of some open set containing ${\mathrm{cl}}(O)$ as a subset such that $\phi(D_O)$ holds,
then there is a refinement of $\bigcup_OD_O$ which has property $\phi$;
\item if a finite cover fails to have $\phi$ then so do all of its refinements.
\end{enumerate}

\noindent Say that $X$ is $\phi$-compact if every open cover has a refinement which satisfies $\phi$, and $X$ is locally $\phi$-compact if for every point $x\in X$ and every neighborhood $x\in O$ there is an open set $x\in P\subset O$ such that
$P$ is $\phi$-compact.
\end{definition}

\noindent Several traditional properties of topological spaces can be expressed as local $\phi$-compactness for a good $\phi$:

\begin{example}
Local metacompactness. Consider the property $\phi$ of open covers $C$ saying ``every point is contained in only finitely many elements of $C$'', or in standard terminology ``$C$ is pointwise finite''. Then $\phi$ is good: $\phi(C)$ says that the intersection $\bigcap_nO_n$ is empty, where $O_n$ is the open set of points contained in more than $n$ many elements of the cover $C$. This statements is absolute by Theorem~\ref{boreltheorem}. For the diagonalization, if $C$ is a pointwise finite cover of $X$ and
for each $O\in C$ there is an pointwise finite open cover $D_O$ of some open set containing ${\mathrm{cl}}(O)$, then the collection $\{P\cap O\colon P\in D_O\}$ is a pointwise finite refinement of $\bigcup_OD_O$.
\end{example}

\begin{example}
Local paracompactness.  Consider the property $\phi$ of open covers $C$ saying ``every point has a neighborhood which has nonempty intersection with only finitely many elements of $C$'', or in standard terminology ``$C$ is locally finite''. Then $\phi$ is good: $\phi(C)$ says that the set $\{O\subset X\colon O$ is an open set with nonempty intersection with only finitely many elements of $C\}$ is a cover of $X$, and this is preserved by interpretations. For the diagonalization, if $C$ is a locally finite cover of $X$ and
for each $O\in C$ there is a locally finite open cover $D_O$ of some open set containing ${\mathrm{cl}}(O)$, then the collection $\{P\cap O\colon P\in D_O\}$ is a locally finite refinement of $\bigcup_OD_O$.
\end{example}

\begin{example}
Local Lindel{\" o}fness. Consider the property $\phi$ of open covers $C$ saying that $\bigcup C$ is covered by countably many elements of $C$.
\end{example}

\begin{example}
Local connectedness. Consider the property $\phi$ of open covers saying $C$ that for every two sets $P, Q\in C$ there are sets $\{O_i\colon i\in n\}$ in $C$ such that $O_0=P, O_{n-1}=Q$, and $O_i\cap O_{i+1}\neq 0$ for every $i\in n-1$.
\end{example}

\noindent Many properties such as (local) complete normality are not expressible in this way.

\begin{theorem}
Suppose that $\phi$ is a good property of covers. Suppose that $M$ is a transitive model of set theory $M\models\langle X, \tau\rangle$ is a locally $\phi$-compact interpretable space. Let $\pi\colon X\to\hat X$ be an interpretation. Then $\hat X$ is locally $\phi$-compact;
in fact, if $M\models O\in\tau$ is $\phi$-compact then $\pi(O)$ is $\phi$-compact.
\end{theorem}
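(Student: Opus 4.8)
The plan is to reduce the statement to its ``in fact'' clause, reduce that to the bare preservation of $\phi$-compactness, and then transfer a suitable ``$\phi$-sieve'' through the interpretation. First, the ``in fact'' clause implies the first assertion: given $x\in\hat X$ and a neighborhood $\hat O\ni x$, pick $O\in\tau$ with $x\in\pi(O)\subseteq\hat O$ and observe that in $M$ one has $O=\bigcup\mathcal P$ for the family $\mathcal P=\{P\in\tau\colon P\subseteq O$ and $X\restriction P$ is $\phi$-compact$\}\in M$, by local $\phi$-compactness. Since $\pi$ commutes with unions taken in $M$, $\pi(O)=\bigcup_{P\in\mathcal P}\pi(P)$, so $x\in\pi(P)$ for some $P\in\mathcal P$; and $\pi(P)$, which by Theorem~\ref{subspacetheorem}(1) is the interpretation of $X\restriction P$, is $\phi$-compact by the ``in fact'' clause. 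The ``in fact'' clause in turn reduces — again via Theorem~\ref{subspacetheorem}(1), and using that an open subspace of an interpretable space is interpretable (the preimage of an open set under an open continuous surjection $f\colon Y\to X$ from a {\v C}ech complete $Y$ is open in $Y$, hence a $G_\gd$ subset of the ambient compact space, hence {\v C}ech complete) — to the statement $(\star)$: if $M\models Z$ is a $\phi$-compact interpretable space and $\rho\colon Z\to\hat Z$ is an interpretation, then $\hat Z$ is $\phi$-compact.

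The heart of the proof is to construct, inside $M$, a complete strong sieve $\langle S,O(s)\colon s\in S\rangle$ on $Z$ with the extra feature that for every $s\in S$ the cover $\{O(t)\colon t$ an immediate successor of $s\}$ of $O(s)$ has property $\phi$. One builds it level by level, interleaving a complete strong sieve supplied by Fact~\ref{sieveproposition} with a sequence $\langle D_n\colon n\in\gw\rangle$ of $\phi$-covers of $Z$ defined recursively: $D_0$ is a $\phi$-refinement of $\{Z\}$ (available since $Z$ is $\phi$-compact), and $D_{n+1}$ is a $\phi$-refinement of the cover $\{V\in\tau\colon{\mathrm{cl}}(V)\subseteq W$ for some $W\in D_n\}$, which covers $Z$ by regular Hausdorffness and both refines and ``shrinks'' $D_n$. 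Intersecting the $n$-th level sets of the complete sieve with the members of $D_n$ and recording parents yields the $\phi$-sieve: strongness and completeness survive because the branch intersections already shrink to compact sets inside the original complete sieve, while the node covers inherit $\phi$ from the $D_n$ through the diagonalizability clause of Definition~\ref{gooddefinition}; all three clauses of goodness get consumed in this bookkeeping.

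For the transfer, note that a complete sieve is in particular a completeness system (take $U=S$, the strict tree order, and $f(s)=O(s)$), so Theorem~\ref{sievetheorem} shows that $\langle S,\rho(O(s))\colon s\in S\rangle$ is a completeness system on $\hat Z$; it follows, as in Fact~\ref{sieveproposition}(3), that for every branch $b\subseteq S$ the set $\hat K_b=\bigcap_n\rho(O(b\restriction n))$ is compact and every open subset of $\hat Z$ containing it already contains some $\rho(O(b\restriction n))$. Moreover, by upwards absoluteness (clause (1) of Definition~\ref{gooddefinition}), $\phi(\{\rho(O(t))\colon t$ an immediate successor of $s\})$ holds for each $s$. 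Given an open cover $\hat C$ of $\hat Z$, compactness of each $\hat K_b$ supplies a finite $\hat D\subseteq\hat C$ with $\hat K_b\subseteq\bigcup\hat D$, whence (using strongness) ${\mathrm{cl}}(\rho(O(b\restriction(n+1))))\subseteq\bigcup\hat D$ for some $n$; so the subtree $S'=\{s\in S\colon{\mathrm{cl}}(\rho(O(s)))$ is not contained in any finite subunion of $\hat C\}$ is wellfounded. A recursion from the leaves of $S'$ then produces the desired $\phi$-refinement of $\hat C$: at a leaf $s$ the ($\phi$-)node cover $\{\rho(O(t))\colon t$ an immediate successor of $s\}$ of $\rho(O(s))$ has each member's closure inside a finite subunion of $\hat C$, so diagonalizability gives a $\phi$-cover of $\rho(O(s))$ refining $\hat C$; climbing back up $S'$ and diagonalizing at each stage against the node covers yields a $\phi$-refinement of $\hat C$ over $\rho(O(0))=\hat Z$. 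The main obstacle is the construction in the second paragraph: since $\phi$-compactness is \emph{not} inherited by open subspaces, the recursion must proceed through $\phi$-covers of $Z$ as a whole and be pushed onto the node sets only via diagonalizability, all while the sieve is kept strong and complete — once that structure is in hand, the transfer and the extraction are routine.
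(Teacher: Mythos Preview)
There are two concrete gaps. First, in reducing to $(\star)$ you discard local $\phi$-compactness: the subspace $Z=X\restriction O$ is $\phi$-compact \emph{and} locally $\phi$-compact (as an open subspace of $X$), but $(\star)$ keeps only the former, and your $\phi$-sieve construction cannot run on $\phi$-compactness alone. The node sets $O(s)=Q(t)\cap V$ need not be $\phi$-compact, and nothing in ``good'' manufactures a $\phi$-cover of an arbitrary open subset of a $\phi$-compact space. For $\phi=$ chain-connected, $\phi$-compact just means connected; any disconnected $O(s)$ admits no chain-connected cover by its own open subsets, so no $\phi$-sieve exists. (Separately, ``intersect with $D_n$ and record parents'' does not yield a sieve: a $V\in D_{n+1}$ containing a point of $V^-$ need not have ${\mathrm{cl}}(V)\subset V^-$, so successors can fail to cover their parent.) Second, the extraction step at a leaf $s$ of $S'$ applies diagonalizability with, for each successor $t$, the \emph{finite} $\hat D_t\subset\hat C$ covering ${\mathrm{cl}}(\rho(O(t)))$; but diagonalizability requires $\phi(\hat D_t)$, and finite covers need not have $\phi$ (chain-connectedness again). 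Replacing $\hat D_t$ by the singleton $\{\bigcup\hat D_t\}$ restores $\phi$ but loses refinement of $\hat C$. Clause~(3) of ``good'' points the other way and is never actually used in your argument.

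The paper's proof avoids both problems by arguing by contradiction and spending local $\phi$-compactness directly. Assuming a cover $C$ of $\pi(P)$ with no $\phi$-refinement, it descends along a complete sieve through a nested sequence of $\phi$-compact open sets $P=P_0\supset P_1\supset\cdots$ in $M$ (each $P_{n+1}$ found inside $P_n\cap O(s_{n+1})$ using \emph{local} $\phi$-compactness) so that $C\restriction\pi(P_n)$ stays bad. The induction step is diagonalizability used contrapositively: a $\phi$-refinement $D'$ of the cover of $P_n$ by small $\phi$-compact pieces is transported to $V$ by upward absoluteness, and if every piece admitted a $\phi$-refinement of $C$, diagonalizing over $\pi''D'$ would give one for $\pi(P_n)$. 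Clause~(3) is used to show no $\pi(P_n)$ is covered by finitely many members of $C$, so the resulting filter has the finite intersection property; sieve completeness then yields a point missed by $C$.
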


\begin{proof}
Suppose for contradiction that $P\in\tau$ is a $\phi$-compact set, and $C$ is an open cover on $\pi(P)$ which shows that $\pi(P)$ fails to be $\phi$-compact--thus, $P$ has no refinement with property $\phi$. Refining if necessary, I may assume that $C\subset\pi''\tau$.
Let $\langle S, O(s)\colon s\in S\rangle$ be a complete sieve on the space $X$ in the model $M$. By induction on $n\in\gw$ build nodes $s_n\in S$ and sets $P_n\in\tau$ so that

\begin{itemize}
\item $s_0=0$ and $s_{n+1}$ is an immediate successor of $s_n$;
\item $P=P_0\supset P_1\supset\dots$ are $\phi$-compact sets in the model $M$;
\item $C\restriction\pi(P_n)$, the cover of $\pi(P_n)$ consisting of intersections of elements of $C$ with $\pi(P_n)$, has no refinement with property $\phi$;
\item ${\mathrm{cl}}(P_n)\subset P_n\cap O(s_n)$.
\end{itemize}

\noindent This is not difficult to do. At the induction step, work in the model $M$ and write $P_n=\bigcup D$ where $Q\in D$ implies that $Q$ is a $\phi$-compact open set and $\bar Q\subset P_n\cap O(t)$ for some immediate successor $t$ of $s_n$.
This is possible as the space $X$ is locally $\phi$-compact. I claim that there must be $O\in D$ such that $C\restriction\pi(O)$ has no refinement with property $\phi$ which makes the induction step immediately possible.
Suppose for contradiction that this fails. Use the $\phi$-compactness of $P_n$ in the model $M$ to find a refinement $D'$ of $D$ with the property $\phi$ such that the closure of every element of $D'$ is a subset of some
element of $D$. Step out of the model $M$. By the upward absoluteness, $\pi''D'$ is a cover of $\pi(P_n)$ with the property $\phi$. For every $O\in D$ there is a cover $D_O$ of $\pi(O)$ which is a refinement of $C\restriction \pi(O)$ and has the property $\phi$
by the contradictory assumption. By the diagonalizability, there is is a refinement of $\bigcup_OD_O$ which is a cover of $\pi(P_n)$ with the property $\phi$. However, this contradicts the third item of the induction hypothesis.

Once the induction process is complete, let $F$ be the collection $\{{\mathrm{cl}}(\pi(P_n))\colon n\in\gw\}\cup\{\hat X\setminus O\colon O\in C\}$ of closed subsets of the space $\hat X$. The collection $F$ has the
finite intersection property since no set $P_n$ can be covered by finitely many sets $O$ such that $\pi(O)$ is an element of $C$: such a finite cover $E$ would have to be in the model $M$,
$\pi''E$ would be a refinement of $C\restriction P_n$, so it would fail $\phi$, thus $M\models\lnot\phi(E)$ by the absoluteness clause of Definition~\ref{gooddefinition}, and $M\models E$ witnesses the failure of $\phi$-compactness
of the set $P_n$ by the third clause of Definition~\ref{gooddefinition}. This would violate the second item of the induction hypothesis at $n$. Now, since the $\pi$-image of the sieve is complete by Theorem~\ref{sievetheorem},
the collection $F$ has a nonempty intersection, containing some element $x\in\hat X$. Then $x\notin\bigcup C$, contradicting the assumption that $C$ was a cover.
\end{proof}

\begin{corollary}
\label{preservationcorollary}
The following properties of interpretable spaces are preserved by interpretations over transitive models of set theory:

\begin{enumerate}
\item local paracompactness;
\item local metacompactness;
\item local Lindel{\" o}fness;
\item local connectedness.
\end{enumerate}
\end{corollary}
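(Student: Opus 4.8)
The plan is to obtain Corollary~\ref{preservationcorollary} as four applications of the preservation theorem proved immediately above, one for each of the cover properties $\phi$ listed in the preceding examples. First I would record, for each of the four cases, that local $\phi$-compactness in the sense of Definition~\ref{gooddefinition} coincides with the classical notion: for $\phi$ the property ``$C$ is locally finite'' this is local paracompactness; for $\phi$ the property ``$C$ is pointwise finite'' it is local metacompactness; for $\phi$ the property ``$\bigcup C$ is a union of countably many members of $C$'' it is local Lindel\"ofness, since a neighbourhood is $\phi$-compact exactly when every open cover of it has a countable subcover; and for $\phi$ the property ``any two members of $C$ are joined by a finite chain of members with consecutive nonempty intersections'' it is local connectedness, by the standard observation that an open set is $\phi$-compact precisely when it is connected. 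Granting this, the preservation theorem delivers each of the four implications, and in fact the sharper assertion that a $\phi$-compact open set is interpreted as a $\phi$-compact open set.

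The substantive work is the verification that these four properties are good in the sense of Definition~\ref{gooddefinition}. For the pointwise-finite and locally-finite cases this is already carried out in the examples, so it remains to treat the Lindel\"of property and the chain property. For the Lindel\"of property, upward absoluteness is immediate from the fact that an interpretation commutes with arbitrary unions: if $\bigcup C=\bigcup_{n\in\gw}O_n$ with each $O_n\in C$, then $\bigcup\pi''C=\pi\bigl(\bigcup C\bigr)=\bigcup_{n\in\gw}\pi(O_n)$, so $\pi''C$ again has a countable subcover; diagonalizability follows by taking a countable subcover $\{O_n:n\in\gw\}$ of $\bigcup C$, a fixed countable subcover of each $D_{O_n}$, and forming the countable refinement consisting of the sets $P\cap O_n$ as $n$ ranges over $\gw$ and $P$ over the chosen subcover of $D_{O_n}$; and the third clause of the definition is vacuous, since a finite cover trivially has the property. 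For the chain property the key point is that for open sets $O,P$ in the model $M$ one has $O\cap P=0$ if and only if $\pi(O)\cap\pi(P)=0$ in $\hat X$, because $\pi(O\cap P)=\pi(O)\cap\pi(P)$ and an open set is empty exactly when its $\pi$-image is; hence the ``nerve graph'' recording which members of a cover have nonempty intersection is preserved verbatim by $\pi$, which gives upward absoluteness at once, and the third clause follows since any chain in a refinement of $C$ projects, via the refinement assignment, to a walk in the nerve of $C$.

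The step I expect to be the main obstacle is diagonalizability for the chain property: the naive refinement $\{P\cap O': P\in D_{O'},\ O'\in C\}$ need not be chained unless one exploits that each $D_{O'}$ covers an open set strictly containing ${\mathrm{cl}}(O')$. The argument I would run is to join two refinement sets lying over the same $O'$ through members of $D_{O'}$ intersected with a slightly larger open neighbourhood of ${\mathrm{cl}}(O')$ (using that such a neighbourhood, being $\phi$-compact, is chained there) and then to splice these local chains together along the chain of $C$ relating the various $O'$, the required overlaps surviving restriction to $O'$ because $D_{O'}$ already covers points of ${\mathrm{cl}}(O')$. Everything else is routine bookkeeping; once the four goodness checks are in place, the corollary is an immediate instance of the preservation theorem applied to an arbitrary transitive model $M$, an interpretable space $X$ that is locally $\phi$-compact in $M$, and any interpretation $\pi\colon X\to\hat X$.
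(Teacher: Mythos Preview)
Your approach is exactly the paper's: the corollary is stated without proof and is meant to follow immediately from the preservation theorem applied with the four cover properties introduced in the preceding examples, and you have correctly filled in the goodness verifications that the paper leaves implicit for the Lindel\"of and chain cases.

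One remark: you make diagonalizability for the chain property harder than it is. You do not need to intersect with the $O'$ at all; the family $E=\bigcup_{O\in C}D_O$ itself already has the chain property and is trivially a refinement of itself. Indeed, given $P\in D_{O_1}$ and $P'\in D_{O_2}$, chain $O_1=Q_0,\dots,Q_k=O_2$ inside $C$, pick $x_i\in Q_i\cap Q_{i+1}$, choose $R_i\in D_{Q_i}$ and $S_i\in D_{Q_{i+1}}$ both containing $x_i$ (so $R_i\cap S_i\neq\emptyset$), and splice together the internal chains $P\to R_0$ in $D_{Q_0}$, $S_0\to R_1$ in $D_{Q_1}$, \dots, $S_{k-1}\to P'$ in $D_{Q_k}$. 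Your sketch invoking ``$\phi$-compactness'' of the enlarged neighbourhood is not quite licensed by the hypothesis (which only gives $\phi(D_O)$, not $\phi$-compactness of $U_O$), but with the simpler argument this issue disappears.
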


The corollary can be sharpened in certain more special circumstances. For example, by a theorem of Frol{\' \i}k \cite{frolik:paracompact}, a regular Hausdorff space $X$ is paracompact {\v C}ech complete if and only if there is a perfect map $f\colon X\to Y$ onto a completely metrizable space 
$Y$. Thus, if $M\models X$ is {\v C}ech complete and paracompact, then the interpretation of $X$ is again {\v C}ech complete and paracompact since the perfect map is interpreted as a perfect map again by Theorem~\ref{perfectmappingtheorem}
and the completely metrizable space is interpreted as completely metrizable again.

\section{Interpretable Borel spaces}
\label{borelsection}

There are important spaces which are not interpretable, including $\mathbb{Q}$ or $C_p(\mathbb{R})$. In general, the interpretation of such spaces exhibits various pathologies. In a good class of examples though, one can adjust the notion of interpretation so that the resulting functor commutes with many natural operations on such spaces.

\begin{definition}
A \emph{Borel space} is a triple $\langle X, \tau, \mathcal{B}\rangle$ where $X$ is a set, $\tau$ is a topology, and $\mathcal{B}$ is the $\gs$-algebra of Borel sets.
\end{definition}

Borel spaces have a natural notion of interpretation between transitive models of set theory:

\begin{definition}
\label{i3definition}
Suppose that $M$ is a transitive model of set theory and $M\models\langle X, \tau, \mathcal{B}\rangle$ is a Borel space. A \emph{preinterpretation} of the Borel space is a map $\pi\colon X\to\hat X$, $\pi\colon\tau\to\hat\tau$ and $\pi\colon \mathcal{B}\to\hat{\mathcal{B}}$ where
$\langle \hat X,\hat\tau, \hat{\mathcal{B}}\rangle$ is a Borel space and

\begin{enumerate}
\item for every $x\in X$ and every $O\in\tau$, $x\in O\liff \pi(x)\in\pi(O)$;
\item $\pi\restriction\tau$ commutes with finite intersections and arbitrary unions in the model $M$, $\pi(0)=0$ and $\pi(X)=\hat X$;
\item $\pi''\tau$ generates the topology $\hat\tau$;
\item $\pi\restriction\mathcal{B}$ extends $\pi\restriction\tau$ and it commutes with complements, countable unions and intersections in the model $M$.
\end{enumerate}

\noindent An \emph{interpretation} is the largest preinterpretation in the sense of reducibility if it exists.
\end{definition}

Thus, Theorem~\ref{boreltheorem} says that the notion of interpretation of Borel spaces essentially coincides with interpretations of topological spaces in the category of interpretable topological spaces. The development of the theory of interpretations of Borel spaces closely follows the topological case. 

\begin{theorem}
If $M$ is a transitive model of set theory and $M\models\langle X, \tau, \mathcal{B}\rangle$ is a Borel space whose topology is regular Hausdorff, then its preinterpretation exists, it is unique up to equivalence of preinterpretations, and its topology is regular Hausdorff again.
\end{theorem}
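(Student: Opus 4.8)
The plan is to follow the proof of Theorem~\ref{existencetheorem} closely, carving out of the topological interpretation the largest subspace that can carry a \emph{well-defined} interpretation of the Borel structure. First I would dispose of the cheap clauses: Borel preinterpretations of $\langle X,\tau,\mathcal B\rangle$ exist at all, since the identity map on $X$ together with $\hat{\mathcal B}=\mathcal B$ satisfies (1)--(4) of Definition~\ref{i3definition}; and the topology of the target of \emph{any} Borel preinterpretation is regular Hausdorff, by the same argument as in Theorem~\ref{existencetheorem} (regularity and Hausdorffness are expressible purely in terms of finite intersections and arbitrary unions of open sets, which $\pi$ preserves). So only the existence of a $\le$-largest Borel preinterpretation requires work.

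For that, let $\pi_0\colon X\to\hat X_0$ be the topological interpretation provided by Theorem~\ref{existencetheorem}, and for each Borel preinterpretation $\pi\colon X\to\hat X$ let $h_\pi\colon\hat X\to\hat X_0$ be the canonical reduction of $\pi$ to $\pi_0$ furnished by the Claim in that proof; its range is contained in $\hat X_0$ because $\pi$ is in particular a topological preinterpretation. Put $\hat X_1=\bigcup\{\rng(h_\pi)\colon\pi\text{ a Borel preinterpretation}\}\subseteq\hat X_0$ (a set, since all the $h_\pi$ land in the fixed set $\hat X_0$), equip it with the subspace topology and with its own Borel $\gs$-algebra $\hat{\mathcal B}_1$, and let $\pi_1\colon X\to\hat X_1$ be the corestriction of $\pi_0$, with $\pi_1(O)=\pi_0(O)\cap\hat X_1$ on open sets. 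The computation in Theorem~\ref{existencetheorem} shows verbatim that $\pi_1$ is a topological preinterpretation: if $M\models O=\bigcup_iO_i$ and $h_\pi(x)\in\pi_1(O)$ then $x\in\pi(O)=\bigcup_i\pi(O_i)$ downstairs, whence $h_\pi(x)\in\pi_1(O_i)$ for some $i$. Moreover, since $h_\pi^{-1}$ commutes with intersection and $h_\pi^{-1}\pi_0(O)=\pi(O)$, one has $h_\pi^{-1}\pi_1(O)=\pi(O)$ for every open $O$.

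It remains to define $\pi_1$ on $\mathcal B$ and to identify $\pi_1$ as the interpretation. Given $B\in\mathcal B\cap M$, choose in $M$ a Borel code $c$ for $B$ (built from open and closed sets by countable unions and intersections, with complements handled as in Claim~\ref{borelclaim4}) and set $\pi_1(B):=\hat B_c$, the subset of $\hat X_1$ obtained by running the code with $\pi_1$ on the generators and closure/complementation in $\hat X_1$, exactly as in the proof of Theorem~\ref{boreltheorem}. The delicate point --- code-independence of $\hat B_c$ --- was resolved there by a well-foundedness/tree argument using interpretability of $X$, which is unavailable here; but it is not needed. For \emph{each} Borel preinterpretation $\pi$, the map $\pi\restriction\mathcal B$ commutes with complements and countable unions and intersections and agrees with $\pi\restriction\tau$ on open sets, so running \emph{any} code for $B$ through $\pi$ yields the single set $\pi(B)$; and since $h_\pi^{-1}$ commutes with complements and countable unions and intersections while $h_\pi^{-1}\pi_1(O)=\pi(O)$, an induction on the code gives $h_\pi^{-1}(\hat B_c)=\pi(B)$, independently of $c$. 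Every point of $\hat X_1$ is of the form $h_\pi(x)$, so membership $h_\pi(x)\in\hat B_c$ is equivalent to $x\in\pi(B)$, which does not depend on $c$; hence $\hat B_c$ does not depend on $c$ and $\pi_1\colon\mathcal B\to\hat{\mathcal B}_1$ is well defined. That $\pi_1$ commutes with complements and countable unions and intersections in $M$ is immediate from how codes for Boolean combinations are assembled from codes of their constituents, so $\pi_1$ is a Borel preinterpretation; and the identity $h_\pi^{-1}\pi_1(B)=\pi(B)$ says exactly that $h_\pi$ reduces $\pi$ to $\pi_1$ in the Borel sense, so $\pi_1$ is $\le$-largest, i.e.\ it is the interpretation. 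Uniqueness up to equivalence then follows as in Theorem~\ref{existencetheorem}: of two largest preinterpretations, each reduces to the other, the two reductions compose to the identity on points, and --- being determined on $\tau$ and commuting with the $\gs$-operations --- they agree with the identity on all of $\mathcal B$, hence witness an equivalence. The main obstacle is precisely this code-independence of $\pi_1$ on Borel sets; the whole point of passing to $\hat X_1$ (the union of the ranges of the $h_\pi$ over \emph{Borel} preinterpretations only, not all topological ones) is that it turns code-independence into a soft consequence of Definition~\ref{i3definition}, sidestepping the well-foundedness argument of Theorem~\ref{boreltheorem}.
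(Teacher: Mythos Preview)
The paper does not actually provide a proof of this theorem; it merely states it and remarks that ``the development of the theory of interpretations of Borel spaces closely follows the topological case.'' Your proposal is thus a genuine filling-in of the omitted argument, and it is correct.

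Your approach follows the template of Theorem~\ref{existencetheorem} as the paper suggests, but you correctly identify and resolve the one new difficulty: code-independence of $\pi_1$ on Borel sets. In Theorem~\ref{boreltheorem} this was handled by a well-foundedness argument requiring a complete sieve, unavailable for general regular Hausdorff spaces. Your observation that restricting $\hat X_1$ to the union of ranges $h_\pi$ over \emph{Borel} preinterpretations makes code-independence automatic---because membership of $h_\pi(x)$ in $\hat B_c$ reduces via $h_\pi^{-1}(\hat B_c)=\pi(B)$ to membership of $x$ in the already well-defined set $\pi(B)$---is exactly the right idea and is cleanly executed. The induction on codes showing $h_\pi^{-1}(\hat B_c)=\pi(B)$ is straightforward since $h_\pi^{-1}$ commutes with complements and countable operations and agrees with the topological reduction on open and closed generators.

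One small point worth making explicit: the paper never redefines ``reduction'' for Borel preinterpretations, so one should note (as your argument implicitly shows) that once both $\pi$ and $\pi_1$ are Borel preinterpretations, a topological reduction $h$ automatically satisfies $h^{-1}\pi_1(B)=\pi(B)$ for all $B\in\mathcal B$ by the same code induction; hence the topological and Borel notions of reducibility coincide, and your $\pi_1$ is indeed $\le$-largest in the intended sense.
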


\begin{theorem}
\label{botheorem}
The interpretation functor on interpretable Borel spaces commutes with the operation of taking a Borel subset.
\end{theorem}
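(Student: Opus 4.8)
The plan is to present the interpretation of the Borel space $C$ as the restriction, to the image of $C$, of a topological interpretation of a suitable interpretable ambient space, and to verify maximality by the adjunction--space construction already used in the proof of Theorem~\ref{subspacetheorem}. Since $\langle X,\tau,\mathcal{B}\rangle$ is an interpretable Borel space, fix in $M$ an interpretable topological space $Z$ of which $X$ is a Borel subset. As $C$ is a Borel subset of $X$ and $X$ of $Z$, the set $C$ is a Borel subset of $Z$; hence $C$ is again an interpretable Borel space, and its subspace topology and its Borel $\sigma$-algebra are the same whether formed inside $X$ or inside $Z$. Let $\rho\colon Z\to\hat Z$ be the topological interpretation of $Z$, which is interpretable by Corollary~\ref{interpretablecorollary} and extends canonically to the $\sigma$-algebra of Borel sets by Theorem~\ref{boreltheorem}; I would first record that $\rho$ with this extension is the interpretation of the Borel space $\langle Z,\tau,\mathcal{B}\rangle$, because any Borel-space preinterpretation of $Z$ restricts to a topological one, which reduces to $\rho$ by some $h$, and $h$ automatically respects Borel sets --- the maps $B\mapsto h^{-1}(\rho(B))$ and the given Borel map are two extensions of $h^{-1}\rho\restriction\tau$ commuting with complements and countable unions and intersections in $M$, hence agree by $\in$-induction on Borel codes in $M$. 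The theorem then follows from the statement $(\ast)$: for every Borel set $B\subseteq Z$ in $M$, the map $\rho\restriction B\colon B\to\rho(B)$, with $O\cap B\mapsto\rho(O)\cap\rho(B)$ on relative open sets and $B'\cap B\mapsto\rho(B')\cap\rho(B)$ on relative Borel sets, is the interpretation of the Borel space $B$. Indeed $\rho(C)\subseteq\rho(X)$ since $\rho$ preserves inclusion of Borel sets (Claim~\ref{borelclaim4}), all the relevant topologies and Borel structures are inherited by restriction, so $(\ast)$ applied to $X$ and to $C$ says precisely that the interpretation of $C$ is that of $X$ restricted to $\rho(C)$, which is the asserted commutation.

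To prove $(\ast)$, note first that the displayed maps are well defined, because two $M$-Borel subsets of $Z$ with the same trace on $B$ have symmetric difference contained in $Z\setminus B$, whose $\rho$-image misses $\rho(B)$ by Theorem~\ref{boreltheorem}; and it is routine, from $\rho$ being an interpretation of the Borel space $Z$, that $\rho\restriction B$ satisfies clauses (1)--(4) of Definition~\ref{i3definition}, i.e.\ is a preinterpretation of the Borel space $B$. The content is maximality. So let $\chi'\colon B\to\hat B'$ be an arbitrary Borel-space preinterpretation of $B$. Form the adjunction space $\hat Z'$ obtained from the disjoint union $Z\sqcup\hat B'$ by identifying each $y\in B$ with $\chi'(y)$; as a set $\hat Z'=(Z\setminus B)\sqcup\hat B'$, the identification being along the injection $\chi'\restriction B$ (which is injective because $B$, a subspace of the regular Hausdorff space $Z$, is Hausdorff). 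Define $\rho'\colon Z\to\hat Z'$ by $\rho'(x)=[x]$, topologize $\hat Z'$ by the basis $\{\rho'(O):=(O\setminus B)\sqcup\chi'(O\cap B)\colon O\in\tau\}$, and put $\rho'(B'):=(B'\setminus B)\sqcup\chi'(B'\cap B)$ for every $M$-Borel $B'\subseteq Z$. One checks in turn: $\rho'$ is a topological preinterpretation of $Z$, the point, finite-intersection, arbitrary-union, $T_0$ and basis conditions all reducing to the corresponding facts for $\chi'$ together with the decomposition $\hat Z'=(Z\setminus B)\sqcup\hat B'$; the assignment $B'\mapsto\rho'(B')$ commutes with complements and countable unions and intersections in $M$, by the same purely set-theoretic computation, using that $\chi'$ does; and $\rho'(B')$ in fact lies in the Borel $\sigma$-algebra of $\hat Z'$ for every $M$-Borel $B'$, which is proved by $\in$-induction on the Borel codes lying in $M$ (wellfounded since $M$ is transitive), the base case being that each $\rho'(O)$ is open. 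Hence $\rho'$ is a preinterpretation of the Borel space $Z$, so there is a reduction $h\colon\hat Z'\to\hat Z$ of $\rho'$ to $\rho$; since $\rho'(B)=\chi'(B)=\hat B'$ and $h$ respects Borel sets, $h^{-1}(\rho(B))=\hat B'$, so $h\restriction\hat B'$ maps into $\rho(B)$ and is at once seen to reduce $\chi'$ to $\rho\restriction B$. Thus $\rho\restriction B$ is the interpretation of the Borel space $B$, proving $(\ast)$.

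I expect the main obstacle to be the last point of the adjunction step: that $B'\mapsto\rho'(B')$ takes values in the Borel $\sigma$-algebra of the glued space $\hat Z'$. This is where transitivity of $M$ is genuinely used --- the induction runs over the Borel codes that lie \emph{inside} $M$, exactly as in the proof of Theorem~\ref{boreltheorem} --- and it is also the point at which one must check that the subspace topology $\hat B'$ inherits from $\hat Z'$ coincides with its own topology (immediate, since $\rho'(O)\cap\hat B'=\chi'(O\cap B)$ and $\chi''$ of the relative topology of $B$ is a basis of $\hat B'$), so that $h\restriction\hat B'$ is a bona fide morphism of Borel spaces. Everything else --- that $C$ is an interpretable Borel space, that the subspace structures are computed transitively, and the preinterpretation axioms for $\rho\restriction B$ and for $\rho'$ --- is bookkeeping of the same kind already carried out for closed subspaces in Theorem~\ref{subspacetheorem}.
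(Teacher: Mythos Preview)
The paper states Theorem~\ref{botheorem} without proof, remarking only that ``the development of the theory of interpretations of Borel spaces closely follows the topological case.'' Your argument supplies precisely the details this remark points to: you pass to an ambient interpretable topological space $Z$, identify the Borel-space interpretation of $Z$ with its topological interpretation extended to Borel sets via Theorem~\ref{boreltheorem}, and then run the adjunction-space construction of Theorem~\ref{subspacetheorem} in the Borel setting to show that restricting to any Borel $B\subseteq Z$ gives the Borel-space interpretation of $B$. The steps check out, including the two you flag as delicate---the $\in$-induction on Borel codes in $M$ showing each $\rho'(B')$ is Borel in the glued space, and the observation that the reduction $h$ automatically respects Borel sets because $h$ is continuous and two maps on $\mathcal{B}$ agreeing on $\tau$ and commuting with complements and countable unions in $M$ must agree everywhere. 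Your proof is correct and is exactly the argument the paper's remark invites the reader to carry out.
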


\begin{definition}
An \emph{interpretable Borel space} is one isomorphic to a Borel subset of an interpretable topological space, with the inherited Borel structure.
\end{definition}

\begin{theorem}
The interpretation functor on interpretable Borel spaces commutes with the operation of countable product.
\end{theorem}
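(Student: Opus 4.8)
The plan is to realize the countable product inside an interpretable topological space and then reduce the claim to Theorem~\ref{countableproducttheorem} on products of interpretable topological spaces, Theorem~\ref{boreltheorem} on the Borel extension of an interpretation, and Theorem~\ref{botheorem} on Borel subsets.

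First I would fix the data. Let $M$ be a transitive model of set theory and, in $M$, let $\langle X_i,\tau_i,\mathcal{B}_i\rangle$ for $i\in\gw$ be interpretable Borel spaces. By definition there are, in $M$, interpretable topological spaces $Z_i$ and Borel subsets $B_i\subseteq Z_i$ with $X_i$ isomorphic to $B_i$ as Borel spaces; since the interpretation functor respects isomorphisms of Borel spaces and $\prod_iX_i$ is then isomorphic to $\prod_iB_i$, I may assume outright that $X_i=B_i\subseteq Z_i$. Let $\chi_i\colon Z_i\to\hat Z_i$ be the topological interpretation of $Z_i$, extended to the $\gs$-algebra of Borel sets by Theorem~\ref{boreltheorem}; by Theorem~\ref{botheorem} the interpretation of the Borel space $X_i$ is $\hat X_i=\chi_i(B_i)\subseteq\hat Z_i$, with interpretation map $\pi_i=\chi_i\restriction B_i$. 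Put $Z=\prod_iZ_i$; it is interpretable by Theorem~\ref{interpretabletheorem}(3). Writing $p_i\colon Z\to Z_i$ for the coordinate projections, the set $\prod_iB_i=\bigcap_ip_i^{-1}(B_i)$ is a Borel subset of $Z$. Since a product of subspace topologies is the subspace topology of the product, the product topology of $\prod_iX_i$ coincides with the subspace topology of $\prod_iB_i\subseteq Z$, and — because $\prod_iB_i$ is Borel in $Z$ — its inherited Borel structure is exactly the product Borel structure of the $X_i$. Thus $\prod_iX_i$ is an interpretable Borel space, realized as the Borel subset $\prod_iB_i$ of the interpretable space $Z$.

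Next I would invoke Theorem~\ref{countableproducttheorem}: the topological interpretation of $Z$ is $\chi=\prod_i\chi_i\colon Z\to\hat Z=\prod_i\hat Z_i$, and each $p_i$ is interpreted as the coordinate projection $\hat p_i\colon\hat Z\to\hat Z_i$. Extend $\chi$ to Borel sets by Theorem~\ref{boreltheorem}. The core computation is
\begin{align*}
\chi\Bigl(\prod_iB_i\Bigr)
&=\chi\Bigl(\bigcap_ip_i^{-1}(B_i)\Bigr)
=\bigcap_i\chi\bigl(p_i^{-1}(B_i)\bigr)\\
&=\bigcap_i\hat p_i^{-1}\bigl(\chi_i(B_i)\bigr)
=\bigcap_i\hat p_i^{-1}(\hat X_i)
=\prod_i\hat X_i,
\end{align*}
the second equality being that $\chi$ commutes with countable intersections (Theorem~\ref{boreltheorem}). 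The third equality rests on the \emph{Borel preimage lemma}: for every $i$ and every Borel $B\subseteq Z_i$ one has $\chi(p_i^{-1}(B))=\hat p_i^{-1}(\chi_i(B))$. To see this, note that both sides, viewed as functions of $B$, are $\gs$-homomorphisms of Borel $\gs$-algebras in $M$ — taking $\hat p_i$-preimage commutes with all Boolean operations, while $\chi$ and $\chi_i$ commute with complements and countable unions and intersections by Theorem~\ref{boreltheorem} — and they agree on open $B$ by Theorem~\ref{functiontheorem}(2) applied to the map $p_i$, whose interpretation is $\hat p_i$; hence they agree on all Borel $B$, since the collection of $B$ on which they agree is a $\gs$-algebra containing the open sets.

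Finally, by Theorem~\ref{botheorem} the interpretation of the Borel space $\prod_iX_i=\prod_iB_i$ is the Borel space $\chi(\prod_iB_i)$ with interpretation map $\chi\restriction\prod_iB_i$; by the computation this is $\prod_i\hat X_i$, carrying the subspace topology and Borel structure inherited from $\hat Z$ — equivalently, the product topology and product Borel structure of the $\hat X_i$ — together with the product map $\prod_i\pi_i$. The clause on coordinate projections follows similarly: the projection $\prod_iX_i\to X_j$ is the restriction of $p_j$, so its interpretation is the restriction of $\hat p_j$ to $\prod_i\hat X_i$, i.e.\ the coordinate projection, by Theorem~\ref{countableproducttheorem}(2). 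I expect the main obstacle to be the careful formulation and proof of the Borel preimage lemma, together with the routine but fiddly verification that the product and inherited Borel structures on $\prod_iX_i$ and on $\prod_i\hat X_i$ genuinely coincide; the substantive topological content is already supplied by Theorems~\ref{countableproducttheorem} and~\ref{boreltheorem}.
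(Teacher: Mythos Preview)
The paper states this theorem without proof, remarking only that the theory of Borel interpretations ``closely follows the topological case''; your proposal is a correct and natural way to fill in the argument, and indeed is the approach most directly suggested by the paper's definition of an interpretable Borel space as a Borel subset of an interpretable topological space. Your reduction to Theorems~\ref{countableproducttheorem}, \ref{boreltheorem}, and~\ref{botheorem} is sound, and the Borel preimage lemma is proved correctly by the $\gs$-algebra argument you give.
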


As a final note, I will show that in a very common case, that of proper bounding forcing extensions, the notions of interpretation of topological and Borel topological spaces essentially coincide.

\begin{theorem}
\label{boundingtheorem}
Suppose that $M$ is a transitive model of set theory such that

\begin{enumerate}
\item every countable subset of $M$ is covered by a set in $M$ which is countable in $M$;
\item every function in $\baire$ is pointwise dominated by some function in $\baire\cap M$.
\end{enumerate}

Suppose $M\models\langle X, \tau, \mathcal{B}\rangle$ is a regular Hausdorff space with a Borel $\gs$-algebra. If $\pi\colon \langle X, \tau\rangle\to\langle \hat X, \hat\tau\rangle$ is a interpretation of the topological space then $\pi$ extends to an interpretation of the Borel space.
\end{theorem}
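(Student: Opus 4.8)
The plan is to follow the template of Theorem~\ref{boreltheorem}, substituting for the complete sieve — unavailable here, since $\langle X,\tau\rangle$ is only assumed regular Hausdorff — a bounded tree of finite approximations that hypotheses (1) and (2) keep under control.

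\emph{Reduction to a well-definedness statement.} Exactly as in the proof of Theorem~\ref{boreltheorem}, I would fix in $M$ the notion of a Borel code $c$ with its evaluation $c\mapsto B_c\subseteq X$, and define the candidate $c\mapsto\hat B_c\subseteq\hat X$ by the parallel $\in$-recursion $\hat B_{\{A\}}=\pi(A)$ for $A$ open, $\hat B_{\{A\}}=\hat X\setminus\pi(X\setminus A)$ for $A$ closed, $\hat B_{\{\cup,D\}}=\bigcup_{d\in D}\hat B_d$, $\hat B_{\{\cap,D\}}=\bigcap_{d\in D}\hat B_d$. An elementary $\in$-induction on rank shows that every Borel code in $M$ is countable in $M$, hence countable in $V$, so that (using transitivity of $M$) each $\hat B_c$ is a genuine Borel subset of $\hat X$ and the recursion makes sense in $V$. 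A second $\in$-induction gives $x\in B_c\liff\pi(x)\in\hat B_c$ for $x\in X$, so $B_c\ne 0\Rightarrow\hat B_c\ne 0$; introducing the dual code $c\mapsto\lnot c$ as in Theorem~\ref{boreltheorem} reduces everything to the implication
\[
B_c=0\text{ in }M\ \Longrightarrow\ \hat B_c=0\text{ in }V .
\]
Granting this, $\pi(B):=\hat B_c$ (for any code $c$ of $B$) is the required extension: it agrees with $\pi$ on $\tau$ by construction, commutes with complements and countable unions and intersections by the recursion, and is unique by Definition~\ref{extensiondefinition} and Definition~\ref{i3definition}.

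\emph{Reduction to a second countable situation.} A Borel code $c$ mentions only countably-many-in-$M$ constituent open and closed sets; let $\tau_c\subseteq\tau$ be the second countable topology they generate in $M$. Restricting $\pi$ to $\tau_c$ and letting $\hat\tau_c$ be the topology generated by $\pi''\tau_c$ yields a topological preinterpretation $\rho\colon\langle X,\tau_c\rangle\to\langle\hat X,\hat\tau_c\rangle$ over $M$, with $\hat B_c$ computed from $\rho$ equal to $\hat B_c$ computed from $\pi$. Collapsing $\tau_c$-inseparable points of $X$ and of $\hat X$ — every constituent of $c$, hence $B_c$ and $\hat B_c$, is saturated — I may assume $\langle X,\tau_c\rangle$ is second countable and $\teeone$... that is, $T_0$. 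Fix an $M$-enumeration $\{V_n\colon n\in\gw\}$ of a basis of $\tau_c$ closed under finite intersections. Each point of $\hat X$ is then encoded by its trace $S=\{n\colon(\cdot)\in\rho(V_n)\}\subseteq\gw$, a real of $V$; because $\rho$ is a preinterpretation and $\tau_c$ is second countable, $S$ satisfies the countable-in-$M$ family $\mathcal R$ of ``complete primeness'' constraints read off from the preinterpretation identities, and membership in $\hat B_c$ translates into a condition $\Phi_c(S)$ assembled from the atoms ``$n\in S$'' and ``$S\cap T\ne 0$'' by exactly the countable Boolean operations recorded in $c$ (the same translation, applied to traces of points of $X$, is verified by $\in$-induction). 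Thus $\hat B_c\ne 0$ in $V$ produces a real $S\in V$ with $\mathcal R(S)\wedge\Phi_c(S)$, whereas $B_c=0$ in $M$ must be made to forbid any such real in $M$; the problem is to transport this obstruction from $M$ to $V$.

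\emph{The core, and the main obstacle.} Here is where the hypotheses enter, and where I expect the real difficulty. In $M$, build in parallel to $c$ the tree $T_c$ of finite inclusion-descending sequences of basic open sets carrying the bookkeeping of Theorem~\ref{boreltheorem} (with basic opens replacing sieve nodes). From a point witnessing $\hat B_c\ne 0$ one extracts an infinite branch of $T_c$ in $V$, using at the union stages that $\rho$ commutes with unions and at the intersection stages that $\rho$ commutes with finite intersections together with second countability, to refine several approximations to a single basic open still containing the point. Hypothesis (2) dominates the integer function coding this branch by some $g\in\gw^\gw\cap M$, whence the subtree $T_c^{\,g}\in M$ of $g$-bounded branches is finitely branching and — since ``$T_c^{\,g}$ has a node of length $i$'' is a $\Sigma_0$ fact about the set $T_c^{\,g}\in M$, hence absolute — infinite, so that K\"onig's lemma inside $M$ produces an infinite branch of $T_c$ lying in $M$. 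The obstacle is that, for a merely regular Hausdorff space, a descending branch of basic open sets need not converge to a point (there is no complete sieve to invoke), so this $M$-branch by itself does not refute $B_c=0$; the repair I would attempt is to enlarge the bookkeeping so that the approximations additionally track a decreasing sequence of closed sets meeting a fixed cofinal family of open covers, the countably many covers and closed sets being selected inside $M$ by appeal to hypothesis (1). Making this precise amounts to showing that, under the covering and bounding hypotheses, the topological interpretation of a regular Hausdorff space over $M$ admits no ``phantom'' new point escaping all $M$-coded filters of closed sets, so that the Theorem~\ref{boreltheorem} argument runs with $M$-coded data in place of a sieve. Once this single claim is established, the remaining verifications are bookkeeping parallel to Theorems~\ref{boreltheorem}, \ref{sievetheorem}, and \ref{botheorem}.
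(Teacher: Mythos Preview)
Your reduction to the implication $\hat B_c\neq 0\Rightarrow B_c\neq 0$ is correct, and so is your tree-building strategy and your identification of the obstacle: a descending branch of approximations need not converge to a point. But your proposed repair --- adding a cofinal family of open covers to force convergence --- is aimed at the wrong target. In a merely regular Hausdorff space there is no such family to appeal to, and the claim that the interpretation ``admits no phantom new point escaping all $M$-coded filters of closed sets'' is simply false in general.

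The paper's fix is simpler and does not try to make the branch converge at all. Build $T_c$ with nonempty \emph{closed} sets $E(p)$ as the basic data from the start (no opens, no covers). Given $x\in\hat B_c$, produce a descending sequence $p$ in $T_c$ with $x\in\bigcap_n\pi(E(p(n)))$, and use hypotheses (1) and (2) to find a finitely branching subtree $S\in M$ of the tree of finite approximations containing $p$ as a branch. Now comes the step you are missing: set $C_n=\bigcup\{E(q):q$ appears at level $n$ of some node of $S\}$. Each $C_n$ is a \emph{finite} union of closed sets, hence closed, and the sequence $\langle C_n:n\in\gw\rangle$ lies in $M$. Since $x\in\bigcap_n\pi(C_n)$, this intersection is nonempty; but $\pi$, being a topological preinterpretation, commutes with the countable-in-$M$ union of the open complements $X\setminus C_n$, so $\bigcap_n C_n=0$ in $M$ would force $\bigcap_n\pi(C_n)=0$. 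Hence $\bigcap_n C_n\neq 0$ in $M$; pick $y$ in it, and \emph{only now} run K\"onig on $S$ (restricted to nodes whose closed sets contain $y$) to obtain a branch in $M$ witnessing $y\in B_c$.

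So the order is reversed from yours: find the point first, then the branch. The substitute for the complete sieve is not any convergence device but the elementary fact that a topological preinterpretation already reflects the statement ``this countable intersection of closed sets is empty''. Note also that your reduction to a second countable subtopology is unnecessary, and that hypothesis~(1) is genuinely needed --- not to select covers, but to cover the countably many closed sets appearing along the $V$-branch $p$ by a set countable in $M$, so that the finitely branching $S$ can be formed inside $M$.
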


\begin{proof}
It will be enough to show that if $\pi$ is a topological preinterpretation the it can be extended to a Borel-topological interpretation, since then the direct limits used to compute the topological and Borel-topological interpretations coincide. The argument follows closely the proof of Theorem~\ref{boreltheorem}. Work in the model $M$. By induction on the complexity of a Borel code $c\in M$, define an ordering $T_c$. An element $p\in T_c$ will be a tuple whose first element will be a nonempty closed subset of $X$ denoted as $E(p)$.

\begin{itemize}
\item if $c=\{A\}$ for a nonempty closed or open set $A\subset X$, then $T_c$ is an ordering of all pairs $p=\langle E, n\rangle$ such that $E\subset A$ is closed and nonempty and $n\in\gw$. The ordering is defined by $\langle E, n\rangle>\langle F, m\rangle$ if $F\subseteq E$ and $n<m$;
\item if $c=\{\bigcup, D\}$ for some countable set $D$ of codes, $T_c$ is the ordering of all triples $\langle E, d, u\rangle$
where $E\subset X$ is a nonempty closed set, $d\in D$, $u\in T_d$, and $E\subseteq E(u)$. I set $\langle E, d, u\rangle>\langle F, e, v\rangle$ if $F\subseteq E$, $d=e$ and $v<u$;
\item if $c=\{\bigcap, D\}$ for some countable set $D=\{d_i\colon i\in\gw\}$ of codes with a fixed enumeration, let
$T_c$ be the set of all tuples $\langle E, u_i\colon i\in n\rangle$ where $E\subset X$ is a nonempty closed set, $u_i\in T_{d_i}$
for all $i\in n$, and $E\subseteq\bigcap{i\in n}E(u_i)$. The ordering is defined by $\langle E, u_i\colon i\in n\rangle>\langle F, v_i\colon i\in m\rangle$ if
$F\subseteq E$, $m>n$, and for all $i\in n$ $u_i>v_i$ in $T_{d_i}$.
\end{itemize}

\begin{claim}
\label{boundingclaim1}
In the model $M$, if $p$ is an infinite descending sequence in $T_c$ then $\bigcap_nE(p(n))\subset B_c$.
\end{claim}

\begin{claim}
\label{boundingclaim2}
In $V$, if $x\in \hat B_c$ is a point then there is an infinite descending sequence $p$ in $T_c$ such that $x\in\bigcap_n\pi(E(p(n))$.
\end{claim}

Now, as in the proof of Theorem~\ref{boreltheorem}, it is just enough to show that if $c\in M$ is a Borel code and $\hat B_c\neq 0$ then $B_c\neq 0$. Suppose then that $\hat B_c\neq 0$. Let $T$ be the tree of all finite attempts tobuild an infinite descending sequence in $T_c$; so $T\in M$. Use Claim~\ref{boundingclaim2} to find an infinite branch $p$ in $T$ such that $\bigcap_n\pi(E(p(n)))\neq 0$. Use the assumptions (1) and (2) of the theorem to find a finitely branching tree $S\subset T$ in the model $M$ such that $b\subset S$. For every number $n\in\gw$ let $C_n=\bigcup\{t(n)\colon t\in S\}$. The set $C_n\subset X$ is a finite union of closed sets and as such it is closed. The set $\bigcap_n\pi(C_n)\subset\hat X$ is nonempty, containing the point $x$,
and since the map $\pi$ is a topological interpretation, the set $\bigcap_nC_n\subset X$ must be nonempty. Let $y\in X$
be a point in this intersection. A compactness argument with the finitely branching tree $S$ shows that there must be an infinite branch $c\subset S$ such that $y\in\bigcap_nc(n)$. Claim~\ref{boundingclaim1} then shows that $y\in B_c$ as desired.
\end{proof}

\noindent Neither of the assumptions of Theorem~\ref{boundingtheorem} can be removed as the following two examples show.

\begin{example}
Let $\kappa$ be an uncountable regular cardinal and let $\langle X, \tau, \mathcal{B}\rangle$ be the subspace of $2^\kappa$ consisting of characteristic functions of finite sets with a Borel structure.

\begin{enumerate}
\item The interpretation of the Borel space $X$ in every extension is $X$ itself with the identity function;
\item In every extension of $V$ in which cofinality of $\kappa$ is countable, the interpretation of the topological space $X$ icontains a characteristic function of an infinite set.
\end{enumerate}
\end{example}

\noindent Thus, for example if the Continuum Hypothesis holds and the Namba forcing cofinalizes $\kappa=\gw_2^V$ to $\gw$
the extension contains no new reals but still in the topological interpretation of the space $X$ it is impossible to faithfully define interpretations of Borel sets.

\begin{proof}
Observe that the space $X$ is an $F_\gs$ subset of the space $2^\kappa$. (1) then follows immediately from  Theorems~\ref{compactproducttheorem} and~\ref{subspacetheorem}. (2) is more difficult. Pass to an extension in which there is a set $a\subset\kappa$ of ordertype $\gw$ which is cofinal in $\kappa$. Let $g\in 2^\kappa$ be the characteristic function of the set $a$. It will be enough to show that the identity map $\pi$
from $X$ to $X\cup\{g\}$ extends to a topological interpretation of $X$. To see why the map $\pi$ commutes with unions of open sets, suppose that in $V$, $O=\bigcup_{i\in I}O_i$ is a union of open subsets of the space $X$. Suppose for simplicity that $O=X$ and the set $O_i$ are obtained as intersections of basic open subsets of $2^\kappa$ with $X$;
I must prove that there is $i\in\gw$ such that $g\in O_i$. To this end, work in $V$ and define the function $h\colon\kappa\to\kappa$
by $h(\gb)=$the least ordinal $\ga$ such that for every finite set $b\subset\ga$, the characteristic function of $b$ belongs to some open set $O_i$ such that the support of $O_i$ is a subset of $\ga$. Since the cardinal $\kappa$ is regular, the function $g$ is well defined. Comparing ordertypes, it is easy to conclude that there must be an ordinal $\ga\in\kappa$ such that the ordinal interval $[\ga, f(\ga)]$ contains no elements of the set $a$. Then, there is an index $i\in I$ such that the characteristic function of $a\cap\ga$ is in the open set $O_i$, and the support of $O_i$ is a subset of $f(\ga)$. Since the map $g$ is coincides with the characteristic function of $a\cap\ga$ below $f(\ga)$, it follows that $g\in O_i$ as desired.
\end{proof}

\begin{example}
\label{cpexample}
Let $\langle X, \tau, \mathcal{B}\rangle$ be the Borel space of continuous functions from $[0,1]$ to $[0,1]$ with pointwise convergence.

\begin{enumerate}
\item The interpretation of the Borel space $X$ in every extension is the space of continuous functions from
$[0,1]$ to $[0,1]$ with the topology of pointwise convergence on $[0,1]\cap V$;
\item In every extension in which there is an unbounded real, the interpretation of the topological space $X$ contains a discontinuous function and it does not extend to a Borel-topological interpretation.
\end{enumerate}
\end{example}

\begin{proof}
Note that the space $X$ is a Borel subset of $Y=[0,1]^{[0,1]}$ with the product topology, consisting of uniformly continuous functions. To see how it is expressed as a Borel set, for positive rationals $\eps, \gd>0$ let $C_{\eps\gd}=\{f\in Y\colon\forall x_0, x_1\in [0,1]\ |x_0-x_1|\leq\eps\to |y_0-y_1|\leq\gd\}$ and observe that $C_{\eps\gd}\subset Y$ is a closed set. Then
$X=\bigcap_\eps\bigcup_\gd C_{\eps\gd}$. 

For (1), apply Theorems~\ref{compactproducttheorem} and~\ref{subspacetheorem} to see that the interpretation of the Borel space $X$ in any extension is just the space $\hat X$ of all uniformly continuous functions from $[0,1]\cap V$ to $[0,1]$. Every $f\in\hat X$ has a unique extension to a continuous function on $[0,1]$, and the extension map will be a homeomorphism between $\hat X$ and the space of all continuous functions from $[0,1]$ to $[0,1]$ with pointwise convergence on the set $[0,1]\cap V$.

For (2), for every $n\in\gw$ let $f_n\colon [0,1]\to\mathbb{R}$ be a continuous function such that $\supp(f_n)\subset (2^{-n-1}, 2^{-n})$
and $f_n(2^{-n}-2^{-n-2})=1$. In the extension, let $a\subset\gw$ be a set such that for every increasing function $h\in\baire\cap V$ there is $m\in\gw$ such that the interval $[m, h(m)]$ contains no numbers in $a$; such a set $a$ exists as the extension is assumed to contain an unbounded real.  Let $g\colon [0,1]\cap V\to\mathbb{R}$
be the function defined as $g=\sum_{n\in a}f_n$. I will show that the identity map $\pi\colon X\to X\cup\{g\}$ extends to a topological preinterpretation of $X$.

Indeed, suppose that $O=\bigcup_iO_i$ is a union of basic open sets in $V$. Suppose for simplicity that $O=X$ and the sets $O_i$ are basic open. It will be enough to show that there is an index $i\in I$ such that $g\in O_i$. Work in $V$. For every finite set $b\subset\gw$ let $k_b=\sum_{n\in b}f_n$. This is a function in $X$, therefore it belongs to one of the sets $O_i$. Let
$h\in\baire$ be a function such that for every $m\in\gw$ and every $b\subset m$ there is $i\in I$ such that $k_b\in O_i$ and
$\supp(O_i)\cap (0, 2^{-h(m)})=0$. Now, the choice of the set $a$ implies that there is $m\in\gw$ such that there are no elements of $a$ between $m$ and $h(m)$. Let $b=a\cap m$, and let $i\in I$ be an index such that $k_b\in O_i$ and $\supp(O_i)\cap (0, 2^{-h(m)})=0$. Since outside of the interval $(0, 2^{-h(m)})$ the functions $g$ and $k_b$ are equal, it follows that $g\in O_i$ as desired.

Now, the map $\pi$ does not extend to a Borel-topological preinterpretation of $X$, since the $F_{\gs\gd}$ set
$\{f\in X\colon f(0)=0\land\forall m\exists n>m\ f(2^{-n}-2^{-n-2})=1\}$ is empty in $V$ while the only candidate for its interpretation in the space $X\cup \{g\}$ is the nonempty set $\{g\}$. It follows that the topological interpretation of $X$ (which must contain a copy of $X\cup \{g\}$) cannot be extended to a Borel-topological interpretation of $X$.
\end{proof}

\section{Comparison with Fremlin's work}
\label{fremlinsection}

In \cite{fremlin:top}, Fremlin defines an interpretation functor for topological spaces in the special case in which $V$ is a generic extension of $M$. Here, I will show that the Fremlin interpretation and the interpretations introduced in the current paper coincide on the categories of interpretable topological spaces and interpretable Borel spaces.

First, it is necessary to identify the definition of the Fremlin interpretation. In \cite[2Aa]{fremlin:top}, for a topological space $\langle X, \tau\rangle$ in $M$ a space $\langle \tilde X, \tilde\tau\rangle$ is defined together with a map $A\mapsto\tilde A$ for all subsets $A\subset X$ in the model $M$. I will call the map $\pi\colon X\to\tilde X$ given by $\{\pi(x)\}= \tilde{\{x\}}$ together with the map $\pi\colon\tau\to\tilde\tau$ given by $\pi(O)=\tilde O$ the \emph{Fremlin interpretation} of the topological space $X$. Given a function $\phi\colon X\to Y$ between topological spaces in the model $M$, a map $\tilde\phi\colon\tilde X\to\tilde Y$ is defined in \cite[2Ca]{fremlin:top}. I will call $\tilde\phi$ the \emph{Fremlin interpretation} of the map $\phi$.

Note that Fremlin interpretations are defined as specific sets. This is in contradistinction with the treatment of the present paper, where interpretations are defined up to a certain commutative diagram. However, up to this insignificant notational detail, the Fremlin interpretations and the interpretations of the present paper coincide on the categories of interpretable topological spaces and interpretable Borel spaces. This is the contents of the following theorem.

\begin{theorem}
\label{fremlintheorem}
Let $V$ be a generic extension of $M$. 

\begin{enumerate}
\item The Fremlin interpretation of an interpretable topological space is an interpretation in the sense of Definition~\ref{i1definition};
\item the Fremlin interpretation of a continuous function between interpretable topological spaces is an interpretation in the sense of Definition~\ref{i2definition};
\item the Fremlin interpretation of an interpretable Borel space is an interpretation in the sense of Definition~\ref{i3definition}.
\end{enumerate}
\end{theorem}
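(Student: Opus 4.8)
The plan is to establish the three items by showing in each case that the Fremlin-constructed object satisfies the defining (pre)interpretation clauses of Definitions~\ref{i1definition}, \ref{i2definition}, \ref{i3definition}, and then to invoke the uniqueness results of the present paper (Theorem~\ref{existencetheorem}, Theorem~\ref{functiontheorem}, and the Borel analogue) to conclude that the Fremlin object \emph{is} the interpretation up to the canonical equivalence. The first step is to unwind Fremlin's definition in \cite[2Aa]{fremlin:top}: the space $\tilde X$ is built from names for points of $X$ in the generic extension, and the map $A\mapsto\tilde A$ is defined by a forcing computation. I would first verify that the map $\pi\colon X\to\tilde X$, $\pi\colon\tau\to\tilde\tau$ extracted as in the theorem statement is a topological preinterpretation in the sense of the present paper: clauses (1)--(3) of the preinterpretation definition follow directly from the basic properties of $A\mapsto\tilde A$ catalogued by Fremlin (it commutes with finite intersections, arbitrary unions, respects $\in$, sends $\emptyset$ to $\emptyset$ and $X$ to $\tilde X$), and clause (4), that $\pi''\tau$ is a basis of $\tilde\tau$, is part of Fremlin's construction of the topology on $\tilde X$.

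Having checked that the Fremlin interpretation is a preinterpretation, for item (1) it remains to show it is the \emph{largest} one, i.e.\ that every preinterpretation $\chi\colon X\to\hat X$ reduces to it. The cleanest route, given that $X$ is interpretable, is to use the completeness machinery: fix in $M$ a complete sieve $\langle S, O(s)\colon s\in S\rangle$ on $X$ (Fact~\ref{sieveproposition}), and show that its Fremlin image $\langle S, \widetilde{O(s)}\colon s\in S\rangle$ is a complete sieve on $\tilde X$. This should follow from Fremlin's analogue of the Baire-category/completeness arguments, or can be verified directly using a genericity argument: an infinite branch $b$ through $S$ with $\bigcap_n\widetilde{O(b(n))}$ nonempty in $V$ corresponds, via names, to a branch witnessing the same in a forcing extension, and completeness of the original sieve transfers. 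Once $\langle S, \widetilde{O(s)}\colon s\in S\rangle$ is complete, Theorem~\ref{sievetheorem} applies and identifies the Fremlin interpretation as \emph{the} interpretation. Item (2) then follows by combining item (1) with the uniqueness clause of Theorem~\ref{functiontheorem}: the Fremlin map $\tilde\phi$ is continuous and, by Fremlin's \cite[2Ca]{fremlin:top}, contains the pointwise image $\{\langle\pi(x),\chi(\phi(x))\rangle\colon x\in X\}$, so by the uniqueness of the continuous extension it equals the interpretation $\hat\phi$.

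For item (3), the Borel case, I would first record that the Fremlin map $A\mapsto\tilde A$ is defined for \emph{all} subsets of $X$ in $M$, in particular for all Borel sets, and that Fremlin shows it commutes with complements and countable unions and intersections. Thus the restriction of the Fremlin interpretation to $\mathcal{B}$ is a Borel-space preinterpretation in the sense of Definition~\ref{i3definition}, and since an interpretable Borel space sits inside an interpretable topological space, Theorem~\ref{boreltheorem} guarantees that the topological interpretation already carries a unique Borel extension; by item (1) the Fremlin topological part agrees with ours, and by the uniqueness of the Borel extension the Fremlin Borel part agrees as well, so the Fremlin interpretation is the Borel-space interpretation. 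The main obstacle I anticipate is purely one of translation: Fremlin's formalism is phrased in terms of the forcing relation and names, whereas the present paper studiously avoids forcing, so the real work is checking that Fremlin's $A\mapsto\tilde A$ satisfies exactly the combinatorial clauses (commutation with unions/intersections, preservation of the finite-intersection-property arguments underlying completeness of sieves) that our definitions demand, with no hidden discrepancy in how ``the generic extension'' versus ``an arbitrary transitive extension'' is handled. Once that dictionary is in place, the three items are short corollaries of the uniqueness theorems already proved.
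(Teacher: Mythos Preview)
Your treatment of the preinterpretation check and of items (2) and (3) matches the paper's proof closely: the paper also quotes Fremlin's catalogue (finite intersections, arbitrary unions, basis generation) to get a preinterpretation, derives (2) from \cite[2Cc]{fremlin:top} plus the uniqueness clause of Theorem~\ref{functiontheorem}, and derives (3) from (1) together with the fact that both Fremlin's functor and the present one commute with passage to a Borel subspace (Theorem~\ref{botheorem} and \cite[2Ce, 2Cg]{fremlin:top}).

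For item (1), however, the paper does \emph{not} attempt to verify directly that the Fremlin image of a complete sieve is complete. Instead it bootstraps in three stages: first, compact Hausdorff spaces, where Fremlin himself proves the target is compact Hausdorff (\cite[4A]{fremlin:top}), so Corollary~\ref{compact1corollary} applies; second, \v Cech-complete spaces, handled as $G_\delta$ subsets of compacta via Fremlin's subspace commutation and Corollary~\ref{subspacecorollary}; third, a general interpretable $Y$, written as a continuous open image $f\colon X\to Y$ of a \v Cech-complete $X$. In this last stage the paper takes the reduction $h\colon\tilde Y\to\hat Y$ of the Fremlin preinterpretation to the interpretation, observes that $h\circ\tilde f$ is the interpretation of $f$, and invokes the Open Mapping Theorem~\ref{openmappingtheorem} to conclude that $h\circ\tilde f$ is surjective, forcing $h$ to be a bijection.

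Your direct sieve argument is the soft spot. The assertion that ``$\langle S,\widetilde{O(s)}\rangle$ is a complete sieve on $\tilde X$'' is exactly the content to be proved, and you do not prove it: Fremlin has no such statement, and ``a genericity argument'' is not a proof. One really would have to show, working with names and the forcing relation, that every filter of closed sets in $\tilde X$ small along a branch has nonempty intersection---nontrivial, and precisely what the paper sidesteps by reducing to the compact case where Fremlin has already done the analogous work. If you can fill this in, your route is a legitimate alternative and arguably more uniform; as written, it has a gap at the decisive step, while the paper's staged approach trades directness for the ability to cite only results Fremlin actually states.
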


\begin{proof}
To shorten the expressions in this proof, the word ``interpretation'' without ``Fremlin'' in front of it will always mean an interpretation in the sense of Definitions~\ref{i1definition}, ~\ref{i2definition}, and ~\ref{i3definition} of the current paper. I will show in turn that a Fremlin interpretation is a preinterpretation, and that it is an interpretation in the case of a compact Hausdorff space, a $G_\gd$ subset of a compact Hausdorff space,
and a continuous open image of a $G_\gd$ subset of a compact Hausdorff space. This will prove (1).

To see that the Fremlin interpretation $\pi\colon \langle X, \tau\rangle\to\langle\tilde X, \tilde\tau\rangle$ is a preinterpretation, consult \cite[2Ab(iv)]{fremlin:top} to see that the the map preserves finite intersections and consult \cite[2Ab(vi)]{fremlin:top}
to see the preservation of arbitrary unions of open sets. Finally, the topology  $\tilde\tau$ is generated by $\pi''\tau$ essentially by its definition; cf.\ \cite[2Ac]{fremlin:top}.

Now, the target space of a Fremlin interpretation $\pi$ of a compact Hausdorff space $X\in M$ is again compact Hausdorff by \cite[4A]{fremlin:top}. Corollary~\ref{compact1corollary} shows that any preinterpretation of $X$ with a compact Hausdorff target space is an interpretation and so $\pi$ is an interpretation.  If $Y\subset X$ is a universally Baire (in particular, $G_\gd$) subset of a topological space then the Fremlin interpretation of $Y$ is (up to an obvious commutative diagram) equal to the restriction of the Fremlin interpretation of $X$ to $Y$ by \cite[2Ce, 2Cg ff.]{fremlin:top}. By Corollary~\ref{subspacecorollary}, the Fremlin interpretation of a \v Cech complete space (as a $G_\gd$ subset of a compact Hausdorff space) is an interpretation.

Now, suppose that $\langle X, \tau\rangle$ is a \v Cech-complete space, $\langle Y, \gs\rangle$ is a regular Hausdorff space, and $f\colon X\to Y$ is a continuous open surjective map, all in the model $M$. Let $\pi\colon \langle X, \tau\rangle\to \langle \tilde X, \tilde\tau\rangle$, $\chi\colon \langle Y, \gs\rangle\to\langle\tilde Y, \tilde\gs\rangle$, and $\tilde f\colon\tilde X\to\tilde Y$ be Fremlin interpretations of these objects. Let $\nu\colon \langle Y, \gs\rangle\to \langle\hat Y, \hat\gs\rangle$ be an interpretation.
I must prove that $\nu$ is reducible to $\chi$.

Since $\chi$ is a preinterpretation of $Y$ by the second paragraph of the present proof, there is a map $h\colon\tilde Y\to\hat Y$ which reduces the preinterpretation $\chi$ to the interpretation $\nu$. It follows from \cite[2Cc]{fremlin:top} and the uniqueness part of Theorem~\ref{functiontheorem} that $h\circ\tilde f$ is an interpretation of the function $f$. By Theorem~\ref{openmappingtheorem}, the map $h\circ\tilde f$ is onto $\hat Y$, and this can happen only if $h$ is a bijection. Then $h^{-1}$ reduces $\nu$ to $\chi$ as desired.

(2) now follows immediately from \cite[2Cc]{fremlin:top} and the uniqueness part of Theorem~\ref{functiontheorem}. (3) follows from (1), \cite[2Ce, 2Cg ff.]{fremlin:top} and Theorem~\ref{botheorem}; both Fremlin interpretations and interpretations of Borel spaces introduced in the current paper commute with taking a Borel subset of an interpretable topological space.
\end{proof}

\bibliographystyle{plain}
\bibliography{odkazy}

\end{document}